\numberwithin{equation}{section}
\setlist[enumerate,1]{label={\rm(\arabic*)}, ref={\rm\arabic*}} 
\newtheorem{theorem}{Theorem}[section]
\newtheorem{corollary}[theorem]{Corollary}
\newtheorem{lemma}[theorem]{Lemma}
\newtheorem{proposition}[theorem]{Proposition}
\theoremstyle{definition}
\newtheorem{definition}[theorem]{Definition}
\theoremstyle{remark}
\newtheorem{remark}[theorem]{Remark}
\newtheorem{example}[theorem]{Example}
\newcommand{\C}{\mathbb C}
\newcommand{\Z}{\mathbb Z}
\newcommand{\PP}{\mathbb P}
\newcommand{\F}{\mathcal{F}}
\newcommand{\OO}{\mathcal{O}}
\newcommand{\chapter}{\section}
\newcommand{\Ee}{\mathcal{E}}
\newcommand{\ZZ}{\mathcal{Z}}
\newcommand{\ttt}{\mathfrak{t}}
\newcommand{\V}{\mathcal V}
\newcommand{\w}{w}
\newcommand{\gl}{\mathfrak{gl}}
\newcommand{\ssl}{\mathfrak{sl}}
\newcommand{\he}{\mathfrak{h}}
\newcommand{\geg}{\mathfrak{g}}
\newcommand{\kek}{\mathfrak{k}}
\newcommand{\p}{\mathfrak{p}}
\newcommand{\nen}{\mathfrak{n}}
\newcommand{\lel}{\mathfrak{l}}
\newcommand{\ov}{\overline}
\newcommand{\W}{\mathrm W}
\newcommand{\eps}{\varepsilon}
\newcommand{\m}{\mathfrak{m}}
\newcommand{\bb}{\mathfrak b}
\newcommand{\uu}{\mathfrak u}
\newcommand{\I}{\mathcal I}
\newcommand{\Ss}{\mathcal S}
\newcommand{\into}{\hookrightarrow}
\newcommand{\Aa}{\mathcal A}
\newcommand{\Hs}{\mathrm H}
\newcommand{\Gs}{\mathrm G}
\newcommand{\Ts}{\mathrm T}
\newcommand{\Ks}{\mathrm K}
\newcommand{\Bs}{\mathrm B}
\newcommand{\Ps}{\mathrm P}
\newcommand{\Ws}{\mathrm W}
\newcommand{\Ls}{\mathrm L}
\newcommand{\Ns}{\mathrm N}
\newcommand{\Qs}{\mathrm Q}
\newcommand{\U}{\mathrm U}
\newcommand{\Cs}{\mathbb C^\times}
\newcommand{\odd}{\mathrm{odd}}
\renewcommand{\D}{\mathrm{D}}
\newcommand{\Vect}{\operatorname{Vect}}
\newcommand{\Tr}{\operatorname{Tr}}
\newcommand{\diag}{\operatorname{diag}}
\newcommand{\im}{\operatorname{im}}
\newcommand{\Hom}{\operatorname{Hom}}
\newcommand{\Spec}{\operatorname{Spec}}
\newcommand{\pt}{\operatorname{pt}}
\newcommand{\ch}{\operatorname{ch}}
\newcommand{\td}{\operatorname{td}}
\newcommand{\id}{\operatorname{id}}
\newcommand{\ad}{\operatorname{ad}}
\newcommand{\Ad}{\operatorname{Ad}}
\newcommand{\Lie}{\operatorname{Lie}}
\newcommand{\GL}{\operatorname{GL}}
\newcommand{\SL}{\operatorname{SL}}
\newcommand{\PSL}{\operatorname{PSL}}
\newcommand{\Span}{\operatorname{span}}
\newcommand{\coker}{\operatorname{coker}}
\newcommand{\Stab}{\operatorname{Stab}}
\newcommand{\reg}{\mathrm{reg}}
\newcommand{\tot}{\mathrm{tot}}
\newcommand{\Gr}{\operatorname{Gr}}
\newcommand{\Sym}{\operatorname{Sym}}
\newcommand{\lra}{\longrightarrow}
\newcommand{\supth}[1]{\ensuremath{#1^{\mathrm{th}}}}
\title{Spectrum of equivariant cohomology as a fixed point scheme}
\author{Tam\'as Hausel}
\address{Institute of Science and Technology Austria (ISTA), Am Campus 1,  
3400 Klosterneuburg, Austria }
\email{tamas.hausel@ist.ac.at}
\author{Kamil Rychlewicz}
\address{Institute of Science and Technology Austria (ISTA), Am Campus 1, 
3400 Klosterneuburg, Austria }
\email{kamil.rychlewicz@ist.ac.at}
\begin{document}


\maketitle

\begin{prelims}

\DisplayAbstractInEnglish

\bigskip

\DisplayKeyWords

\medskip

\DisplayMSCclass

\end{prelims}


\newpage

\setcounter{tocdepth}{1}

\tableofcontents


\section{Introduction}

In recent work \cite{hausel-icm,hausel-icmtalk} a certain infinitesimal fixed point scheme for the action of $\GL_n$ on $\Gr(k,n)$ -- the Grassmannian of $k$-planes in $\C^n$ -- is used to model the Hitchin map on a particular minuscule upward flow in the $\GL_n$-Higgs moduli space. In turn, it was noticed that this fixed point scheme is isomorphic to the spectrum of equivariant cohomology of $\Gr(k,n)$, and thus the Hitchin system on these minuscule upward flows can be modelled as the spectrum of equivariant cohomology of Grassmannians. In this paper we show that the appearance of the spectrum of equivariant cohomology as a fixed point scheme is not a coincidence and holds in more general situations. 

We start  more generally with partial flag varieties. Let $\Gs$ be a connected complex reductive group and $\Ps\subset \Gs$ be a parabolic subgroup. The partial flag variety is the projective homogeneous space $\Gs/\Ps$ of parabolic subgroups of $\Gs$ conjugate to $\Ps$. Equivalently, we can think of points in $\Gs/\Ps$ as parabolic Lie subalgebras conjugate to the parabolic Lie subalgebra $\p:=\Lie(\Ps)\subset \geg$. Using this point of view, we can define the Grothendieck--Springer partial resolution as
\begin{align}\label{gs}\mu_\Ps\colon\tilde{\geg}_\Ps:=\{(x,\p^\prime)\in \geg\times\Gs/\Ps:x\in\p^\prime\}\lra \geg,\end{align}
given by projection to the first coordinate. It is a proper dominant morphism. Over regular elements in $\geg$, the morphism $\mu_\Ps$ is finite; \textit{cf.} \cite{Akyildiz} and Lemma~\ref{isoreg}. Recall that $x\in\geg$ is regular when its centraliser $\geg^x\subset \Gs$ under the adjoint action has dimension equal to the rank of $\Gs$. The regular elements of $\geg$ form an open dense subset in $\geg$. An equivalent definition of being regular is that the corresponding fiber of $\pi$ is finite for $\Ps=\Bs$ a Borel subgroup.  This implies that the Grothendieck--Springer partial resolution is generically finite-to-one, \textit{i.e.} an alteration. 

One often studies the Grothendieck--Springer map as part of the commutative diagram

$$ \begin{tikzcd}
  \tilde{\geg}_\Ps\arrow{r}{\nu_\Ps} \arrow{d}{\mu_\Ps} &
    \ttt/\!\!/\W_\Ls \arrow{d}{\pi}  \\
   \geg\arrow{r}[swap]{\rho} &
   \ttt/\!\!/\W\rlap{.}
\end{tikzcd}$$
Here $\rho$ is the natural map
$$\rho\colon\geg\lra\geg/ \!\!/\Gs\cong \ttt/ \!\!/\W,$$
where $\ttt = \Lie(\Ts)$ is the Lie algebra of the maximal torus and $\Ws = N_\Gs(\Ts)/\Ts$ is the Weyl group of $\Gs$.
We can define a map
$$\nu_\Ps\colon\tilde{\geg}_\Ps\lra \p/ \!\!/\Ps\cong {\mathfrak l}/ \!\!/\Ls\cong \ttt/ \!\!/\W_\Ls,$$
where $\Ls:=\Ps/\Ps_u$, the quotient with the unipotent radical of $\Ps$, is the Levi quotient of $\Ps$, $\mathfrak{l}$ is its Lie algebra and $\W_\Ls$ is its Weyl group. If $P'$ is conjugate to $P$, with Levi quotient $L'$, then for any $x\in\p'$ the map is given by sending $(x,\p^\prime)\in \tilde{\geg}_\Ps$ to the image of $x\in \p^\prime$ in $\p^\prime/ \!\!/\Ps^\prime\cong \mathfrak{l}^\prime/\!\!/\Ls^\prime$ and then canonically identifying $\mathfrak{l}^\prime/\!\!/\Ls^\prime\cong {\mathfrak l}/ \!\!/\Ls$. If $\Ps\cong\Bs$ is Borel, then $\Ls\cong \Ts$ is the maximal torus and this later is used to define the universal Cartan subalgebra; see \textit{e.g.} \cite[Lemma 6.1.1]{ChGi}. 

 Fix a principal $\ssl_2$-triple $\langle e,f,h\rangle\cong \ssl_2\subset\geg$, where $e\in \geg$ is regular nilpotent. Let
 \begin{align}\label{kostant}\Ss:=e+C_{\geg}(f)\subset \geg \end{align}
 be the Kostant section, where $C_\geg(f)$ is the centraliser of $f$ in $\geg$.
 We have a corresponding principal $\SL_2\to \Gs$ subgroup giving
 \begin{align}\label{tau}\tau\colon\Cs \subset \SL_2 \lra \Gs.\end{align}
 We define a $\Cs$-action on $\geg$ by
 \begin{align} \label{actions} \lambda\cdot x=\lambda^{-2} \Ad_{\tau(\lambda)}(x).\end{align}
 As $\ad_{\tau(\lambda)}(e)=\lambda^2 e$, we see that this $\Cs$-action leaves the Kostant section invariant.\footnote{This action is considered \textit{e.g.} in \cite{GanGin}, where the associated grading is referred to as the \emph{Kazhdan grading}.}

Now write $\tilde{\Ss}_\Ps:=\mu_\Ps^{-1}(\Ss)$. Then we have the commutative diagram\footnote{In the case $\Ps=\Bs$ this diagram was communicated to us by Zhiwei Yun.} 
$$
\begin{tikzcd}
\tilde{\Ss}_\Ps\arrow[hookrightarrow]{r}{\tilde{\iota}}\arrow{d}{\mu_\Ps} \arrow[ bend left]{rr}[black]{\cong} 
  & \tilde{\mathfrak{g}}_\Ps\arrow{r}{\nu_\Ps}\arrow{d}{\mu_\Ps} 
  &  \ttt/\!\!/\W_\Ls \arrow{d}{\pi} \\
 \Ss\arrow[hookrightarrow]{r}[swap]{\iota} \arrow[bend right]{rr}[swap,black]{\cong} 
  & \geg\arrow{r}[swap]{\rho} 
  & \ttt/\!\!/\W
\end{tikzcd}
$$
such that $\rho\circ \iota$ is an isomorphism, as
$\Ss$ is the Kostant section. On the other hand, $\nu_\Ps \circ \tilde{\iota}$ is finite as $\mu_\Ps$ and $\pi$ are, when restricted to regular elements. Finally, the degree of the finite maps $\mu_\Ps$ and $\pi$ both equal the Euler characteristic $\chi(\Gs/\Ps)$. Thus it follows that $\nu_\Ps \circ \tilde{\iota}$ is a finite map  to a normal variety $\ttt/\!\!/\W$ of degree $1$, thus an isomorphism. 

 We also note that the equivariant cohomology algebra
 $$H^*_\Gs(\Gs/\Ps;\C)\cong H^*_{\Gs\times \Ps}(\Gs;\C)\cong H^*_\Ps\cong H^*_\Ls\cong \C[\ttt]^{\W_\Ls}$$
 is naturally an $H^*_\Gs:= H^*(B\Gs;\C) \cong \C[\ttt]^\W$-algebra. From this algebra structure we have a canonical algebra homomorphism $\varphi\colon H^*_\Gs\to H^*_\Gs(\Gs/\Ps;\C)$. We denote the induced map between the affine spectra by
 $$f\colon \Spec\left(H^*_\Gs(\Gs/\Ps;\C)\right)\lra \Spec\left(H^*_\Gs\right),$$
 which is $\Cs$-equivariant with respect to the actions induced by the gradings on both sides. As the odd cohomology $H^{\odd}(\Gs/\Ps;\C)$ is trivial, the space $\Gs/\Ps$ is equivariantly formal; see \cite{GKM}. In other words, the $H^*_\Gs$-module $H_\Gs^*(\Gs/\Ps;\C)$ is free. 
Then we have the following commutative diagram: 
\begin{equation}
\label{big} \begin{tikzcd}
\tilde{\Ss}_\Ps\arrow[hookrightarrow]{r}{\tilde{\iota}}\arrow{d}{\mu_\Ps} \arrow[ bend left]{rr}[black]{\cong} 
  & \tilde{\mathfrak{g}}_\Ps\arrow{r}{\nu_\Ps}\arrow{d}{\mu_\Ps} 
  &  \ttt/\!\!/\W_\Ls \arrow{r}{\cong}\arrow{d}{\pi} 
  & \Spec\left(H^*_\Gs(\Gs/\Ps;\C)\right)\arrow{d}{f}  \\
 \Ss\arrow[hookrightarrow]{r}[swap]{\iota} \arrow[bend right]{rr}[black,swap]{\cong} 
  & \geg\arrow{r}[swap]{\rho} 
  & \ttt/\!\!/\W\arrow{r}{\cong}
 & \Spec\left(H^*_\Gs\right)\rlap{.}
\end{tikzcd}
\end{equation}

Thus we see that the partial Grothendieck--Springer resolution $\mu_\Ps$ over the Kostant section ${\Ss}$ is precisely the spectrum of the $\Gs$-equivariant cohomology algebra of the partial flag variety $\Gs/\Ps$. In this paper our motivation is to show that the appearance of the spectrum of equivariant cohomology in \eqref{big} is not a coincidence. 
We will show that the same holds for $\Hs$-regular actions of a principally paired group $\Hs$   on a smooth projective variety $X$.

\begin{definition}\leavevmode
\begin{enumerate}
\item A complex linear algebraic group $\Hs$ is {\em principally paired} if it contains a pair $\{e,h\}\subset \he$ in its Lie algebra such that $[h,e]=2e$ and $e$ is a regular nilpotent, as well as an algebraic group homomorphism $B(\SL_2)\to \Hs$ from the Borel subgroup of $\SL_2$ whose differential maps the regular unipotent to $e$ and the appropriate diagonal element to $h$.
\item An action of a principally paired group $\Hs$ on a smooth projective variety $X$ is {\em regular} when a regular unipotent element $u\in\Hs$ has finitely many fixed points. 
\end{enumerate}
\end{definition}

In fact, a unipotent element always has a connected fixed point set, see \cite{Horrocks}, so for a regular action we have $X^u=\{ o\}$ for some $o\in X$. Examples of principally paired groups include parabolic subgroups of reductive groups (see Lemma~\ref{parabolic}), such as Borel subgroups and reductive groups themselves.
While examples of $\Hs$-regular varieties include for $\Hs=\Gs$ the partial flag varieties $\Gs/\Ps$ considered above (see \cite{Akyildiz}), smooth Schubert varieties are regular when $\Hs=\Bs\subset \Gs$ is a Borel subgroup, and Bott--Samelson resolutions will be examples for parabolic subgroups $\Hs=\Ps\subset\Gs$ of reductive groups.

We construct (see Section~\ref{vecsec}) a vector field $V_\he$ on $\he\times X$ such that for any $y\in \he$ its restriction 
$$(V_\he)_y\in H^0(X;T_X)$$ 
to $\{y\}\times X$ is the infinitesimal vector field on $X$ generated by $y$. 
Recall the Kostant section from \eqref{kostant} for a reductive group. For an arbitrary principally paired group, we proceed as follows. Choose a Levi subgroup $\Ls$ in $\Hs$, so that $\Hs = \Ns \rtimes \Ls$, where $\Ns$ is the unipotent radical of $\Hs$. The regular nilpotent $e\in \Hs$ then splits into $e = e_n + e_l$ with $e_n\in \nen$, $e_l\in \lel$. The latter can be completed to an $\ssl_2$-triple $(e_l,f_l,h_l)$ in $\lel$, and we take $\Ss = e + C_{\lel}(f_l)$. We prove in Theorem~\ref{restkos} that it is a section of the natural map $\he\to \he/\!\!/ \Hs\cong \ttt/\!\!/W\cong \Spec(H^*_\Hs)$; in particular, $\Ss\cong \Spec(H^*_\Hs)$.

Denote by $V_\Ss:=V_\he|_{\Ss\times X}$ the vector field $V_\he$ restricted to $\Ss\times X$. Let $\ZZ_\Ss\subset \Ss\times X$ be the zero scheme of $V_\Ss$, \textit{i.e.} the subscheme defined by the sheaf of ideals generated by $V_{\Ss}(\OO_{\Ss\times X})\subset \OO_{\Ss\times X}$, where $V_{\Ss}$ acts on $\OO_{\Ss\times X}$ as a derivation. The distinguished homomorphism $B(\SL_2)\to \Hs$ restricted to the diagonal torus gives a map $\tau\colon\Cs\to \Hs$, and a $\Cs$-action defined as in \eqref{actions} will preserve $\Ss$. We also pull back the action of $\Hs$ via $\tau\colon\Cs\to \Hs$ on $X$ to an action of $\Cs$ on $X$. Then $\ZZ_\Ss$ will be preserved by the diagonal $\Cs$-action on $\Ss\times X$. Our main theorem is the following. 

\begin{theorem}\label{main}
Suppose a principally paired group $\Hs$ acts regularly on a smooth projective complex variety $X$. Then the zero scheme $\ZZ_\Ss\subset \Ss\times X$ of the vector field $V_\Ss$ is reduced and affine, and its coordinate ring, graded by the $\Cs$-action above, is isomorphic as a graded ring
$$
\begin{tikzcd}
  \C[\ZZ_{\Ss}]\arrow{r}{\cong} &
  H_\Hs^*(X;\C)   \\
   \C[\Ss]\arrow{r}{\cong} \arrow{u}{\pi^*}&
  H^*_\Hs \arrow{u}
\end{tikzcd}
$$
to the $\Hs$-equivariant cohomology of\, $X$, so
that the structure map $H^*_\Hs\to H_\Hs^*(X;\C)$ agrees with the pullback map $H^*_\Hs\cong \C[\Ss]\to \C[\ZZ_{\Ss}]$ of the natural projection $\pi\colon\ZZ_\Ss \to \Ss$. In particular, we have 
$$ \begin{tikzcd}
 \ZZ_\Ss  \arrow{d}{\pi}  &
  \Spec(H_\Hs^*(X;\C)) \arrow{d} \arrow{l}{\cong} \\
   \Ss&
  \Spec(H^*_\Hs) \arrow{l}{\cong}\rlap{;}
\end{tikzcd}$$
\textit{i.e.} the spectrum of equivariant cohomology of\, $X$ is $\Cs$-equivariantly isomorphic to the zero scheme $\ZZ_\Ss\subset \Ss\times X$ over $\Ss\cong \Spec(H^*_\Hs)$.
\end{theorem}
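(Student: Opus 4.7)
The plan is to argue that the spectrum of equivariant cohomology and the zero scheme $\ZZ_\Ss$ agree by showing both arise as the same finite flat family over $\Ss$, identifying them via a Cartan-model construction, and verifying the identification on the distinguished fiber over $e$. This proceeds in three stages: (i) finite flatness of $\pi\colon\ZZ_\Ss\to\Ss$ of degree $\chi(X)$; (ii) construction of a graded $\C[\Ss]$-algebra map $\Phi\colon H^*_\Hs(X;\C)\to\C[\ZZ_\Ss]$; (iii) reduction to the fiber over $e$ via graded Nakayama and appeal to an Akyildiz--Carrell-type isomorphism.

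For stage (i), $\pi$ is proper since $X$ is projective. By upper semicontinuity the locus in $\Ss$ with positive-dimensional fiber is closed, and it is $\Cs$-invariant because the action $\lambda\cdot y=\lambda^{-2}\Ad_{\tau(\lambda)}(y)$ on $\Ss$ together with the pulled back $\Cs$-action on $X$ via $\tau$ conjugates $V_y$ to $V_{\lambda\cdot y}$. The $\Cs$-weights on $\C[\Ss]$ are all positive, with $e$ the unique $\Cs$-fixed point, so any nonempty closed $\Cs$-invariant subset must contain $e$; but the fiber over $e$ is $Z(V_e)$ with support $X^{\exp(e)}=\{o\}$ by the regular action hypothesis, hence zero-dimensional. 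Thus $\pi$ is finite and $\ZZ_\Ss$ is affine. Next, $V_\Ss$ is a section of $\pi_2^*T_X$ on the smooth variety $\Ss\times X$ with zero scheme of the expected codimension $\dim X$, so $\ZZ_\Ss$ is a local complete intersection and therefore Cohen--Macaulay; miracle flatness over the smooth base $\Ss$ gives flatness, and the generic rank $\chi(X)$ is computed at a regular semisimple $y\in\Ss$, where $Z(V_y)=X^\Ts$.

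For stage (ii), use the algebraic Cartan model: for reductive $\Hs$, $H^*_\Hs(X;\C)$ is the cohomology of $((\Omega^\bullet(X)\otimes\C[\he])^\Hs,\,d-\iota_V)$; for general principally paired $\Hs=\Ns\rtimes\Ls$ first reduce to the reductive Levi using the isomorphism $H^*_\Hs(X;\C)\cong H^*_\Ls(X;\C)$ from the algebraic contractibility of $\Ns$. A closed class is represented by $\alpha$ with $(d-\iota_V)\alpha=0$, and $\Phi([\alpha])$ is the restriction to $\ZZ_\Ss$ of the zero-form part $\alpha_0\in\C[\Ss]\otimes\OO(X)$ of $\alpha|_\Ss$. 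If $\alpha=(d-\iota_V)\beta$ then $\alpha_0=-\iota_V\beta_1$ lies in the defining ideal of $\ZZ_\Ss$, since $V_\Ss$ vanishes there, so $\Phi$ is well-defined. Multiplicativity and $\C[\Ss]$-linearity are immediate from the Cartan model product, giving compatibility with the structure maps $H^*_\Hs\to H^*_\Hs(X;\C)$ and $\C[\Ss]\to\C[\ZZ_\Ss]$.

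For stage (iii), both $H^*_\Hs(X;\C)$ and $\C[\ZZ_\Ss]$ are graded free $\C[\Ss]$-modules of rank $\chi(X)$: the former by equivariant formality of $X$ (which in our setting follows from the Bia\l ynicki--Birula decomposition associated to the principal $\Cs$-action $\tau$, whose fixed points on $X$ are isolated), the latter by stage (i). Since $\C[\Ss]$ is connected and positively graded, graded Nakayama implies that $\Phi$ is an isomorphism if and only if its specialisation at $e$ is; this specialisation identifies with a map $H^*(X;\C)\to\C[Z(V_e)]$ that I claim is the classical Akyildiz--Carrell isomorphism associated to the regular nilpotent vector field $V_e$. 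The main obstacle is establishing this last identification in the required generality: the original Akyildiz--Carrell result is for partial flag varieties, so for an arbitrary $\Hs$-regular smooth projective $X$ I would need to verify it using exactness of the Koszul complex on $X$ induced by $V_e\in H^0(X,T_X)$ --- which has a single scheme-theoretic zero at $o$ --- combined with a degeneration argument for the spectral sequence computing $H^*(X;\C)\cong\bigoplus_i H^i(X,\Omega_X^i)$ using the principal $\Cs$-action.
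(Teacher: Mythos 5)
The central gap is in your stage (ii). On a projective variety equivariant cohomology classes cannot be represented by global \emph{algebraic} forms, so your Cartan representatives $\alpha$ necessarily live in the smooth (or, after passing to hypercohomology, holomorphic) category; the zero-form part $\alpha_0$ restricted to $\ZZ_\Ss$ is then only a $C^\infty$, set-theoretic function on the points of $\ZZ_\Ss$, and nothing in your argument shows it is a \emph{regular} function, i.e.\ an element of $\C[\ZZ_\Ss]$. This is not a technicality: producing regular functions is exactly the hard point, and the paper devotes Lemmas \ref{nicefun}, \ref{chern1}, \ref{chern2} (and \ref{genred}, \ref{genprinc} beyond the solvable case) to it. There one proves that $H^*_\Hs(X)$ is generated over $H^*_\Hs$ by equivariant Chern classes of $\Hs$-linearised vector bundles (Bia\l{}ynicki-Birula plus-cells are $\Hs$-stable, Riemann--Roch applied to their structure sheaves, Thomason's resolution by equivariant bundles), and for such classes the candidate function has the explicit form $(y,x)\mapsto \Tr_{\Lambda^k\Ee_x}(\Lambda^k y_x)$, which is visibly regular on $\Ss\times X$ (cf.\ Remark \ref{chernred}). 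Without a substitute for this step your $\Phi$ does not land in $\C[\ZZ_\Ss]$; moreover your well-definedness argument (``$\iota_V\beta_1$ lies in the defining ideal'') implicitly uses reducedness of $\ZZ_\Ss$, which is itself part of the statement and is never established in your proposal (in your setup it would follow from the Cohen--Macaulayness you obtain in stage (i) together with generic reducedness over the regular semisimple locus, as in Lemma \ref{lemcm}, but this has to be said).

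In stage (iii) you acknowledge but do not close the remaining gap: identifying the specialisation of $\Phi$ at $e$ with the Akyildiz--Carrell isomorphism. Note that \cite{ACLS}, \cite{AC} are not limited to flag varieties: they apply to any holomorphic vector field with a single zero on a smooth projective variety equipped with a $\Cs$-action satisfying $t_*V=t^2V$, which is exactly what $V_e$ with the principal $\Cs$ provides, so there is no need to reprove them; but you still need either compatibility of your $\Phi$ with that isomorphism or an independent surjectivity argument on the fibre over $e$, and both again require explicit regular representatives (in the paper this is handled by the Chern-class generation plus a Poincar\'e-series comparison, rather than Nakayama). There is also a smaller issue in reducing to the Levi: $H^*_\Hs(X)\cong H^*_\Ls(X)$ gives a Cartan model over $\C[\lel]$, while $\Ss=e+C_\lel(f_l)$ sits inside $\he$, not $\lel$, so the restriction you perform needs the identification $\C[\he]^\Hs\cong\C[\lel]^\Ls$ (Lemma \ref{genrest}) and a compatibility check. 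Your overall skeleton --- finite flatness of degree $\chi(X)$, graded Nakayama, Akyildiz--Carrell on the central fibre --- is a genuinely different and potentially attractive route compared to the paper's (which proves the solvable case over $e+\ttt$ by localisation at torus fixed points and then descends to $\Ss$ by a Weyl-group quotient), but as written stage (ii) is a genuine gap.
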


We first study  the case of solvable principally paired groups. Then the general case is reduced to the Borel subgroup.

There is another version of our Theorem~\ref{main} where we do not restrict to the Kostant section $\Ss$. Namely, if a reductive group $\Gs$ acts regularly on $X$ and we denote by $\ZZ_{\geg}\subset \geg\times X$ the zero scheme of $V_\geg$, then the $\Gs$-action on $\geg\times X$ leaves $\ZZ_{\geg}$ invariant.
We have the following. 

\begin{theorem}\label{maintot}
Suppose a complex reductive group $\Gs$ acts regularly on a smooth projective complex variety $X$. Then the $\Gs$-invariant part of the algebra of the global functions on the total zero scheme $\ZZ_\geg$
$$ \begin{tikzcd}
  \C[\ZZ_{\geg}]^\Gs \arrow{r}{\cong} &
  H_\Gs^*(X;\C)  \\
  \C[\geg]^\Gs \arrow{r}{\cong} \arrow{u}&
  H^*_\Gs \arrow{u}
\end{tikzcd}$$
is graded isomorphic to the equivariant cohomology of\, $X$ over $\C[\geg]^\Gs\cong H^*_\Gs$. The gradings on $\C[\geg]^\Gs$ and $\C[\ZZ_{\geg}]^\Gs$ are induced by the weight $-2$ action of\, $\Cs$ on $\geg$ and the trivial action on $X$. 
 \end{theorem}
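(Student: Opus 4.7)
The plan is to deduce Theorem~\ref{maintot} from Theorem~\ref{main} applied to $\Hs = \Gs$. Since $\Gs$ is reductive and hence principally paired, Theorem~\ref{main} yields a graded $\Cs$-equivariant isomorphism $\C[\ZZ_\Ss] \cong H^*_\Gs(X;\C)$ compatible with $\C[\Ss] \cong H^*_\Gs$. By Kostant's theorem on the Chevalley quotient, the Kostant section identifies $\C[\geg]^\Gs \cong \C[\Ss]$, so the theorem reduces to showing that the restriction map
$$\iota^* \colon \C[\ZZ_\geg]^\Gs \longrightarrow \C[\ZZ_\Ss],$$
induced by the closed immersion $\ZZ_\Ss = \ZZ_\geg \cap (\Ss \times X) \hookrightarrow \ZZ_\geg$, is a graded $\C[\Ss]$-algebra isomorphism.

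First I would verify the compatibility of the structure maps so that $\iota^*$ is a $\C[\Ss]$-algebra map, and then show that $\iota^*$ becomes an isomorphism after localizing to the regular locus $\Ss^\reg$. The point is that $\Gs \cdot \Ss^\reg = \geg^\reg$, with connected abelian centralizers $C_\Gs(s)$ for $s \in \Ss^\reg$; these act trivially on the reduced finite fibers $\ZZ_\Ss|_s$ provided by Theorem~\ref{main}. Thus $\ZZ_\geg|_{\geg^\reg} = \Gs \cdot \ZZ_\Ss|_{\Ss^\reg}$ is essentially a fiber bundle over $\Ss^\reg$ with fiber $\ZZ_\Ss|_s$, and a standard Kostant-slice argument yields $\C[\ZZ_\geg|_{\geg^\reg}]^\Gs \cong \C[\ZZ_\Ss|_{\Ss^\reg}]$.

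The main obstacle is extending the agreement from $\Ss^\reg$ to all of $\Ss$. Here $\C[\ZZ_\Ss]$ is a free $\C[\Ss]$-module of rank $\chi(X)$ by Theorem~\ref{main} combined with equivariant formality, and so in particular is torsion-free. To finish I would argue that $\C[\ZZ_\geg]^\Gs$ is likewise a torsion-free graded $\C[\Ss]$-module of generic rank $\chi(X)$ with the same $\Cs$-graded Hilbert--Poincar\'e series; together with injectivity of $\iota^*$ on the torsion-free part, which follows from density of $\geg^\reg$ in $\geg$ and reductivity of $\Gs$, this would force $\iota^*$ to be an isomorphism. The genuinely subtle step is handling irreducible components of $\ZZ_\geg$ that do not dominate $\geg$ and hence lie over non-regular elements: one must verify via $\Gs$-equivariance and reductivity that such components contribute no extra $\Gs$-invariant global functions beyond those arising from the regular locus, and this is where the main technical work lies.
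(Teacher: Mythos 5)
Your reduction to showing that the restriction $\iota^*\colon \C[\ZZ_{\geg}]^\Gs\to\C[\ZZ_\Ss]$ is an isomorphism is exactly the paper's strategy (its $\ZZ_\Ss$ is the scheme $\ZZ_\Gs$ of Theorem \ref{general}), and your Kostant-slice argument over the regular locus matches Lemma \ref{inj1}. But the two steps on which your plan actually rests are not carried out, and the first of them is argued incorrectly. Injectivity of $\iota^*$ does \emph{not} follow from density of $\geg^\reg$ in $\geg$: the total zero scheme $\ZZ_\geg$ has large pieces lying entirely over non-regular elements (already $\{0\}\times X\subset\ZZ_\geg$), so $\ZZ_\reg$ is not dense in $\ZZ_\geg$ and a function can vanish on the regular part without vanishing globally on the nose of a density argument. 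You name this problem yourself ("components that do not dominate $\geg$") but defer it as "the main technical work" -- that work is precisely the content of the paper's proof. The paper resolves it by a projectivity argument: for any $(v,p)\in\ZZ_\tot$, the point $p$ lies on a projective irreducible component $P$ of the zero scheme of $V_v$, and Lemmas \ref{grpfix} and \ref{grpzer} (projective analogues of the fixed-point lemmas, applied to the Jordan decomposition $v=d+n$ inside a Borel) produce a torus-fixed point $x\in P$ with $(\ttt'+\C v)\times\{x\}\subset\ZZ_\tot$, whence $(v,x)\in\overline{\ZZ_\reg}$; Lemma \ref{projinj} then forces any global function to be constant along $\{v\}\times P$ and hence determined by its restriction to $\overline{\ZZ_\reg}$ (Lemma \ref{inj2}). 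Nothing in your proposal substitutes for this step.

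The second gap is the claimed equality of graded Hilbert series of $\C[\ZZ_\geg]^\Gs$ and $\C[\ZZ_\Ss]$, for which you give no method of computation; together with injectivity it would indeed finish the proof, but it is essentially equivalent to the surjectivity you are trying to establish. The paper instead proves surjectivity directly (Lemma \ref{surjtot}): by Lemma \ref{genprinc} and Remark \ref{chernred}, $\C[\ZZ_\Gs]$ is generated over $\C[\geg]^\Gs$ by the functions $(v,x)\mapsto\Tr_{\Lambda^k\Ee_x}(\Lambda^k v_x)$ attached to $\Gs$-equivariant bundles $\Ee$, and these same trace formulas define $\Gs$-invariant regular functions on all of $\ZZ_\tot$ restricting to the generators. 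If you want to salvage your Hilbert-series route you would still need an independent handle on $\C[\ZZ_\geg]^\Gs$ over the non-regular locus, which brings you back to the same missing ingredient; as written, the proposal identifies the right statement and the right difficulty but does not prove the theorem.
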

 
Note that for partial flag varieties $X=\Gs/\Ps$, the total zero scheme $\ZZ_{\geg}\cong \tilde{\geg}_\Ps\to \geg$ is just the Grothendieck--Springer resolution as above. 

However, here the total zero scheme is no longer affine. On the other hand, this version also holds for GKM  (Goresky--Kottwitz--MacPherson) spaces, including toric varieties. Recall from \cite{GKM} that a smooth projective variety $X$ with an action of a torus $\Ts$ is a GKM space if the number of both the zero- and the $1$-dimensional orbits is finite. We can form the total zero scheme $\ZZ_\ttt\subset \ttt\times X$ as the zero scheme of the vector field $V_\ttt$ generated by the $\Ts$-action, as before.
 
\begin{theorem} \label{maingkm}
 Suppose that a torus $\Ts$ acts on a smooth projective complex variety $X$ with finitely many zero- and $1$-dimensional orbits. Then the algebra of the global functions on the total zero scheme $\ZZ_\ttt$
 $$ \begin{tikzcd}
  \C[\ZZ_{\ttt}] \arrow{r}{\cong}  &
  H_\Ts^*(X;\C)   \\
   \C[\ttt] \arrow{r}{\cong} \arrow{u}&
  H^*_\Ts \arrow{u}
 \end{tikzcd}$$
 is graded isomorphic to the equivariant cohomology of\, $X$ over $\C[\ttt]\cong H^*_\Ts$. The gradings on $\C[\ttt]$ and $\C[\ZZ_{\ttt}]$ are induced by the weight $-2$ action on $\ttt$.
 \end{theorem}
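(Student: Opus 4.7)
The plan is to compare the coordinate ring $\C[\ZZ_\ttt]$ directly with the combinatorial Goresky--Kottwitz--MacPherson (GKM) presentation of $H^*_\Ts(X;\C)$. Recall that the GKM theorem identifies $H^*_\Ts(X;\C)$ with the subring of $\bigoplus_{p \in X^\Ts} \C[\ttt]$ consisting of tuples $(f_p)_{p \in X^\Ts}$ satisfying $f_p \equiv f_q \pmod{\alpha_\gamma}$ whenever $\gamma$ is a one-dimensional orbit with endpoints $p, q$ and $\Ts$-weight $\alpha_\gamma \in \ttt^*$. I would establish the same presentation for $\C[\ZZ_\ttt]$ via restriction to the vertical sections $\ttt \times \{p\}$.

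The first step is a local analysis of $\ZZ_\ttt$. By $\Ts$-equivariant linearisation at each fixed point $p \in X^\Ts$ (available because $X$ is smooth at $p$), choose coordinates $x_1, \ldots, x_n$ in which $V_\ttt = \sum_i \alpha_i(y)\, x_i\, \partial_{x_i}$, with $\alpha_1, \ldots, \alpha_n \in \ttt^*$ the tangent weights at $p$. The local ideal $(\alpha_1(y) x_1, \ldots, \alpha_n(y) x_n)$ is then reduced, with prime decomposition indexed by subsets $S \subset \{1,\ldots,n\}$ (the $S$-component being the prime $(\{x_i\}_{i\in S},\{\alpha_i\}_{i\notin S})$). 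Globally, the GKM finiteness hypothesis forces, for every proper subtorus $\Ts' \subset \Ts$, each connected component of $X^{\Ts'}$ to be either a $\Ts$-fixed point or a $\Ts$-invariant $\PP^1$; otherwise a positive-dimensional family of one-dimensional orbits would appear. This identifies the irreducible components of $\ZZ_\ttt$ as the vertical sections $\ttt \times \{p\}$ for $p \in X^\Ts$, the horizontal components $\ker(\alpha_\gamma) \times \overline{\gamma}$ for each one-dimensional orbit $\gamma$, and possible further components lying over the origin of $\ttt$ (most notably $\{0\} \times X$), whose global sections will be determined by the data on the verticals.

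The second step is to compute $\C[\ZZ_\ttt]$ via the restriction map $\Phi \colon \C[\ZZ_\ttt] \to \bigoplus_p \C[\ttt]$ to the vertical sections. Injectivity: a function vanishing on every vertical also vanishes on each horizontal $\ker(\alpha_\gamma) \times \overline{\gamma}$, since its restriction factors through $H^0(\PP^1,\OO)\otimes \C[\ker(\alpha_\gamma)] = \C[\ker(\alpha_\gamma)]$ and is pinned down by the two endpoint values; similarly it vanishes on any higher-dimensional component over $0 \in \ttt$, whose global $X$-sections are constants determined by the value at any fixed point. The image of $\Phi$ lies in the GKM subring, because compatibility along each horizontal forces $f_p|_{\ker \alpha_\gamma} = f_q|_{\ker \alpha_\gamma}$, i.e.\ $f_p \equiv f_q \pmod{\alpha_\gamma}$. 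Surjectivity: given a GKM-compatible tuple $(f_p)$, one defines $F \in \C[\ZZ_\ttt]$ by setting its restriction to each vertical to $f_p$, its restriction to each horizontal to the common reduction $\overline{f_p} = \overline{f_q} \in \C[\ker(\alpha_\gamma)]$, and its restriction to $\{0\} \times X$ to the common constant $f_p(0)$ (which is independent of $p$ because the GKM graph of $X$ is connected and the congruences evaluated at $0$ yield $f_p(0) = f_q(0)$ along every edge). One then checks the pairwise compatibility of these restrictions on the various intersections of components.

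Matching with $H^*_\Ts(X;\C)$ is then a reformulation of the GKM theorem, and the grading correspondence (cohomological degree $2k$ to polynomial degree $k$) is immediate from the stated weight $-2$ $\Cs$-action on $\ttt$ together with the trivial action on $X$. The main obstacle is the first step: controlling the irreducible components of $\ZZ_\ttt$ and verifying reducedness. The GKM finiteness hypothesis is essential both for the combinatorial structure of $H^*_\Ts(X;\C)$ and for ruling out higher-dimensional components of the form $\Ts' \times Y$ arising from positive-dimensional fixed loci of codimension-$\geq 2$ subtori $\Ts' \subset \Ts$.
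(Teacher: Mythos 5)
Your overall frame -- restrict functions on $\ZZ_\ttt$ to the fixed--point sections $\ttt\times\{p\}$, show the image satisfies the GKM congruences because of the pieces $\ker(\alpha_\gamma)\times\overline{\gamma}$, and then quote the GKM theorem -- is the same as the paper's, and those two steps are essentially sound. But your Step 1 contains a false claim and your surjectivity step has a genuine gap. It is not true that for every proper subtorus $\Ts'\subset\Ts$ the components of $X^{\Ts'}$ are points or invariant $\PP^1$'s (this holds only for subtori of corank one), nor that the remaining components of $\ZZ_\ttt$ all lie over $0\in\ttt$: for $X=\PP^1\times\PP^1\times\PP^1$ with the standard $(\Cs)^3$-action, $\Lie(\Cs\times 1\times 1)\times(\{0\}\times\PP^1\times\PP^1)$ is an irreducible component of $\ZZ_\ttt$ lying over a line in $\ttt$, with $X$-part of dimension two. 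This particular point is repairable, and the repair is exactly what the paper does: one needs no classification of components at all, only the observation that every closed point $(v,x)\in\ZZ_\ttt$ lies on the projective subvariety $\{v\}\times\overline{\Ts\cdot x}\subset\ZZ_\ttt$, which meets $\ttt\times X^\Ts$ by Borel's fixed point theorem; since regular functions are constant on connected projective subvarieties, restriction to the verticals is injective (the paper's Lemma on projective subvarieties meeting a closed subvariety).

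The serious gap is surjectivity. You propose to build $F\in\C[\ZZ_\ttt]$ by prescribing its restrictions to the irreducible components and then ``checking pairwise compatibility on the intersections''. On a reduced, non-normal scheme this is not a valid gluing principle: for the three concurrent lines $\Spec\C[x,y]/(xy(x-y))$ the tuple of restrictions $(0,0,t)$ agrees pairwise at the origin but is not the restriction of any global function. The scheme $\ZZ_\ttt$ at a point $(0,p)$ is exactly of this concurrent type (the vertical through $p$, all horizontals through $p$, $\{0\}\times X$, and the intermediate components you did not list all pass through it), so whether the GKM-compatible tuple extends to a regular function on $\ZZ_\ttt$ is precisely the content of the theorem, not a routine verification -- asserting it begs the question, and since your component list is incomplete the gluing data is incomplete as well. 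The paper avoids gluing entirely: it exhibits global functions directly, namely $(v,x)\mapsto\Tr_{\Lambda^k\Ee_x}(\Lambda^k v_x)$ for $\Ts$-linearised bundles $\Ee$, shows (by taking the limit along a generic one-parameter subgroup) that such a function realises the localisation data of $c_k^\Ts(\Ee)$, i.e.\ that the map $\rho$ sending a class to the function $(v,x)\mapsto c|_{\zeta_i}(v)$ (with $\zeta_i$ the Bia{\l}ynicki-Birula limit of $x$) lands in $\C[\ZZ_\ttt]$, and then concludes by a sandwich: restriction $r:\C[\ZZ_\ttt]\to\C[\ttt]^s$ is injective with image inside the GKM ring (your edge argument), while $r\circ\rho$ is the GKM localisation, so $r$ and $\rho$ are mutually inverse isomorphisms. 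To salvage your route you would have to prove a gluing/seminormality-type statement for $\ZZ_\ttt$ along all of its components, which is substantially more than pairwise compatibility and is nowhere addressed in the proposal.
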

 
 The proof is straightforward, using the explicit description of $ H_\Ts^*(X;\C)$ from \cite{GKM}. We expect this version to hold for an even larger class of group actions, including spherical varieties. However, in this paper we concentrate on a more restrictive class of regular group actions. In that case, as in Theorem~\ref{main}, we can find an affine zero scheme $\ZZ_\Ss \subset \Ss\times X$ which is precisely the spectrum of equivariant cohomology of $X$.
 
 Our main Theorem~\ref{main} was proved for the case of regular actions of the Borel $\Bs(\SL_2)$ by Brion--Carrell; see \cite[Theorem 1 and Proposition 2]{BC}. The strategy of our proof of Theorem~\ref{main} -- in the case of more general Borel subgroups -- is broadly following the approach of the proof in \cite{BC}. Using vector fields with possibly degenerate isolated zeros to obtain topological information on a complex manifold from infinitesimal information goes back to the pioneering works of Bott \cite{bott,baum-bott}. For a comprehensive survey; see \cite{carrell}. 
 
 We should also mention that there are other papers in the literature which study the spectrum of equivariant cohomology geometrically, see \textit{e.g.} \cite{goresky-macpherson} and the references therein. A more recent example is \cite{hikita}, where the spectrum of equivariant cohomology of certain varieties also appears as a fixed point scheme, albeit of another -- 3D-mirror -- variety. 
 
 We finally note that many of our examples in this paper will be equivariant cohomology rings of partial flag varieties, and as such they model the Hitchin system on various Lagrangian upward flows; see  \cite{hausel-icm,hausel-icmtalk}. The pictures arising \textit{e.g.} in Section~\ref{examples}
 could then be  thought of as depicting the various fixed point schemes, spectra of equivariant cohomology or the Hitchin systems on corresponding upward flows. 
 
 The contents of the paper is as follows. In Section~\ref{secgen} we describe the basic properties of actions of algebraic groups and vector fields associated with them. In particular, in Section~\ref{vecsec} we introduce the \emph{total vector field} which underlies the constructions used throughout the paper. In Sections~\ref{secreg} and~\ref{sl2p} we discuss regular elements and principal integrable $\bb(\ssl_2)$-pairs. In Section~\ref{kostsecsec} we generalize the Kostant section to arbitrary principally paired groups, and in Section~\ref{secregact} we discuss basic properties of regular actions. Section~\ref{solvsec} contains the proof of Theorem~\ref{finsolv}, which is the equivalent of Theorem~\ref{main} for the solvable group. Based on that, in Section~\ref{redarbsec} we prove Theorem~\ref{main}. In Section~\ref{secsing} we generalize the theorem to some singular varieties. Finally, Sections~\ref{sectot} and~\ref{secgkm} contain the proofs and examples for Theorems~\ref{maintot} and~\ref{maingkm}.
 
\subsection*{Acknowledgments} We would like to thank David Ben-Zvi, Michel Brion, Jim Carrell, Harrison Chen, Nigel Hitchin, Quoc Ho, Vadim Kaloshin, Friedrich Knop, Jakub L\"owit, Anne Moreau, Rich\'ard Rim\'anyi, Andr\'as Szenes, Zsolt Szil\'agyi, Michael Thaddeus and Zhiwei Yun for useful comments and discussions. We also thank the referees for useful comments.  All figures were generated in Mathematica.

\section{Generalities} 
\label{secgen}

\subsection{Notation}

We consider all  the algebraic varieties, including algebraic groups, to be defined over $\C$. For an algebraic variety $X$, by $\C[X] = \OO_X(X)$ we denote the algebra of regular functions on $X$. All the cohomology groups will be understood to have complex coefficients. For a Lie algebra $\geg$ and a subset $V\subset \geg$, we denote by $C_\geg(V)$, $N_\geg(V)$ the centraliser and normaliser of $V$ in $\geg$, respectively. If $V = \{v\}$, then we also write $C_\geg(v)$, $N_\geg(v)$. We drop the lower index if the ambient Lie algebra is obvious. For any $\Z_{\ge 0}$-graded $\C$-algebra $R = \bigoplus_{n=0}^\infty R_n$, we denote by $P_R(t)$ its Poincar\'{e} series, \textit{i.e.}
$$P_R(t) = \sum_{n=0}^\infty \dim_\C(R_n) t^n.$$
Let $\diag(v_1,v_2,\dots,v_n)$ be the diagonal $n\times n$ matrix with diagonal entries $v_1$, $v_2$, \dots, $v_n$. We will denote by $I_n = \diag(1,1,\dots,1)$ the $n\times n$ identity matrix. For any algebraic group $\Gs$ with Lie algebra $\geg$, by $\geg_n$ we denote the set (in general not a subalgebra) of nilpotent elements of $\geg$, as defined in \cite[Section~I.4.5]{Borel}. For a commutative algebra $A$ with a filtration $F_\bullet$, we denote by $\Gr_F(A)$ the associated graded algebra.

\subsection{Vector fields}
\label{vecsec}

Recall that a vector field on a smooth algebraic variety $X$ is a derivation on the sheaf of regular functions on $X$. This means that for any Zariski-open subset $U\subset X$, we are given a $\C$-linear derivation $\OO_X(U) \to \OO_X(U)$, and it is natural with respect to $U$. Given a vector field $V$ on $X$, if $x\in X$ is a closed point in $X$, we can restrict the derivation defined by $V$ to the local ring $\OO_{X,x}$. By restricting to the maximal ideal $\m_x\subset \OO_{X,x}$ and evaluating the derivations of functions at $x$, we get a map $\m_x\to \C$. In fact, by the Leibniz rule it has to vanish on $\m_x^2$; hence we get a tangent vector $V_x\in \Hom_{\C}(\m_x/\m_x^2,\C) \simeq T_{x,X}$.

Whenever an algebraic group $\Hs$ acts on a variety $X$, it yields a Lie algebra homomorphism $\phi\colon \he\to\Vect(X)$ from $\he = \Lie(\Hs)$ to vector fields on $X$; see \cite{CoDr}. We will want to define the total vector field on $\he\times X$. As this is a local problem on $X$, we can restrict to an affine open set $U$. Then 
\begin{align}
\label{ohtu}
\C[\he\times U] = \C[\he] \otimes_\C \C[U], 
\end{align}
and we need to define a derivation on this $\C$-algebra. We can view $\phi|_U$ as an element of $\he^*\otimes_\C \Vect(U)$. As $\C[\he] = S^*(\he^*)$, we have a multiplication map $\he^* \otimes \C[\he] \to \C[\he]$. Additionally, the elements of
$\Vect(U)$ are by definition the derivations on $\C[U]$, which gives a $\C$-bilinear map $\Vect(U) \otimes \C[U] \to \C[U]$. Those two maps together with \eqref{ohtu} lead to a $\C$-bilinear map
$$(\he^* \otimes \Vect(U)) \otimes \C[\he\times U] \lra \C[\he\times U]. $$
Fixing $\phi|_U\in (\he^* \otimes \Vect(U))$ gives a derivation $\C[\he\times U] \to \C[\he\times U]$.

\begin{definition}\label{totvec}
The vector field defined by this derivation will be called the \emph{total vector field} of $\Hs$-action on~$X$.
\end{definition}
Explicitly, let $\phi = \sum \psi_i \otimes D_i$ for $\psi_i\in \he^*$, $D_i\in\Vect(U)$.
Then the defined derivation on $f \otimes g \in \C[\he]\otimes \C[U]$ takes value
\begin{align}
\label{derim}
    \sum (\psi_i \cdot f) \otimes D_i(g) \in \C[\he]\otimes \C[U].
\end{align}
This gives the total vector field on $\he\times X$. One can note that the vector field is tangent to $\{y\}\times X$ for any $y\in \he$; \textit{i.e.} as a derivation it preserves the set of functions vanishing on $\{y\}\times X$. Indeed, locally such functions are sums of $f \otimes g \in \C[\he]\otimes \C[U]$ such that $f(y) = 0$, and in such case the image of the derivation \eqref{derim} also vanishes at $\{y\}\times X$. The vector field restricted to $\{y\} \times X$ is precisely $\phi(y)$, and for any $y\in\he$ with $\Hs$ acting on $X$, we will denote this vector field by $V_y$. Later we will consider restrictions of the total zero schemes to bigger subsets of $\he$.
 
One sees that for any $y\in\he$ and $x\in X$, the value $V_y|_x$ of the vector field $V_y$ at $x$ can be recovered by considering the derivative at $1_{\Hs}$ of the map $\Hs\to X$ defined as $g\mapsto g\cdot x$ and evaluating it on $y$.

\begin{definition} Let $V$ be a vector field on a smooth variety $X$. For each open set $U\subset X$, it gives a derivation $D_V^U\colon  \OO_X(U) \to \OO_X(U)$. Let us consider the ideal sheaf generated by the image $D_V(\OO_X) \subset \OO_X$. This is the defining ideal of the \emph{zero scheme} of $V$ on $X$.
\end{definition}

\begin{remark}\label{remtan}
One can also view vector fields on smooth varieties as sections of the tangent bundle. As the tangent bundle is a locally free sheaf, we can define the zero scheme of the vector field by considering it locally as a tuple of regular functions (see Lemma~\ref{lemcm}). In other words, if the tangent bundle is free over an open subset $U \subset X$, after the choice of a trivialisation, its section $V$ is defined by $n$-tuple of regular functions $f_1$, $f_2$,\dots, $f_n$. Then the zero scheme of $V$ on $U$ is the zero scheme of the ideal $(f_1,f_2,\dots,f_n)\in \OO_X(U)$.
\end{remark}

\subsection{Background results on algebraic groups and vector fields}

We first recall (part of) the theorem of Borel on solvable groups (\textit{cf.} \cite[Theorem 10.6]{Borel}, see also \cite[Theorem 16.33]{Milne}) that we will often tacitly use throughout.

\begin{theorem}\label{solv}
Let $\Hs$ be a connected solvable group with Lie algebra $\he$ and $\Hs_u$ its set of unipotent elements. Then:
\begin{enumerate}
\item $\Hs_u$ is a connected normal closed, unipotent subgroup of\, $\Hs$ containing $[\Hs,\Hs]$. 
\item The maximal tori in $\Hs$ are all conjugate. If\, $\Ts$ is a maximal torus, then $\Hs = \Hs_u \rtimes \Ts$. The Lie algebra of $\Hs_u$ consists of all nilpotent elements of\, $\he$.
\item If\, $\Ts$ is a maximal torus, then any semisimple element of\, $\Hs$ is conjugate to a unique element of\, $\Ts$.
\end{enumerate}
\end{theorem}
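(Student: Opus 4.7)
The plan is to derive the theorem from the Lie--Kolchin theorem, which I would first establish: any connected solvable algebraic group acting on a nonempty projective variety has a fixed point. Choosing a faithful representation $\Hs \hookrightarrow \GL(V)$, this yields a common eigenflag, so that $\Hs \subset \Bs_n$ sits inside the upper triangular group. Lie--Kolchin itself is proved by induction on $\dim \Hs$: since $\Hs$ is connected and solvable, $[\Hs,\Hs]$ is a proper closed connected normal subgroup, and by induction it stabilises a flag; the variety of $[\Hs,\Hs]$-fixed flags is then a nonempty projective $\Hs/[\Hs,\Hs]$-variety on which the fixed-point theorem applies to the quotient (a torus, hence a product of $\mathbb{G}_m$'s, for which the fixed-point statement is elementary).

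For (1) I would set $\Hs_u := \Hs \cap \U_n$, where $\U_n$ is the unipotent radical of $\Bs_n$. This is closed, normal in $\Hs$ (since $\U_n \triangleleft \Bs_n$), and by uniqueness of Jordan decomposition consists precisely of the unipotent elements of $\Hs$. The quotient $\Hs/\Hs_u$ embeds in the torus $\Bs_n/\U_n$ and is therefore itself a torus, which forces $[\Hs,\Hs] \subset \Hs_u$. Connectedness of $\Hs_u$ follows from viewing $\Hs \to \Hs/\Hs_u$ as a principal $\Hs_u$-bundle over a connected base: since $\Hs$ is connected, so is $\Hs_u$. Finally, the equality $\Lie(\Hs_u) = \he_n$ follows from the infinitesimal Jordan decomposition together with unipotence of $\Hs_u$.

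For (2), the existence of a maximal torus $\Ts \subset \Hs$ is standard, obtained for instance as a maximal subtorus of the diagonal image in $\Bs_n$. One verifies $\Ts \cap \Hs_u = \{1\}$ (semisimple versus unipotent), so that $\Ts \cdot \Hs_u$ is a subgroup of dimension $\dim \Ts + \dim \Hs_u$; surjectivity of $\Ts \to \Hs/\Hs_u$---whence $\Hs = \Hs_u \rtimes \Ts$---follows by induction on $\dim \Hs_u$ along its lower central series, reducing each step to splitting a central extension of $\Ts$ by $\mathbb{G}_a$, which vanishes in the appropriate algebraic group cohomology because any rational representation of a torus decomposes into characters and the relevant cocycles are coboundaries. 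Conjugacy of maximal tori is equivalent to transitivity of the $\Hs_u$-action by conjugation on the set of such splittings, and reduces analogously to the vanishing of $H^1$ of $\Ts$ acting on $\Hs_u$.

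For (3), given a semisimple $s \in \Hs$, its image in the torus $\Hs/\Hs_u$ lifts via the splitting to a semisimple element of $\Hs$ contained in some maximal torus; conjugacy of maximal tori moves this torus into $\Ts$, placing $s$ into $\Ts$ after conjugation. Uniqueness within $\Ts$ follows because the Weyl group $N_\Hs(\Ts)/\Ts$ is trivial in the connected solvable case: any normaliser element induces an automorphism of $\Ts$ that must be inner among all automorphisms reachable by a connected group, and such inner automorphisms of a torus are trivial. The main obstacle throughout is the semidirect decomposition and conjugacy of maximal tori in (2), which rests on the cohomological vanishing statements sketched above and constitutes the technical heart of Borel's structure theorem.
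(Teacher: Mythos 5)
The paper does not prove this statement at all: Theorem \ref{solv} is quoted as classical background, with references to Borel (Theorem 10.6) and Milne (Theorem 16.33), and is used tacitly throughout. So your sketch can only be compared with the standard textbook proofs, and in outline it follows them: Lie--Kolchin to put $\Hs$ inside $\Bs_n$, Jordan decomposition to identify $\Hs_u = \Hs\cap \U_n$ as a closed normal subgroup with torus quotient (whence $[\Hs,\Hs]\subset \Hs_u$), and induction along the central series of $\Hs_u$, with vanishing of the relevant torus cohomology, for both the splitting $\Hs = \Hs_u\rtimes\Ts$ and the conjugacy of maximal tori. That is the right strategy and essentially the one in the cited sources.

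Two steps are genuinely flawed as written, though. (i) Connectedness of $\Hs_u$: ``connected total space and connected base imply connected fiber'' is false for principal bundles --- $\mathbb{G}_m\to\mathbb{G}_m$, $z\mapsto z^2$, is a principal $\mu_2$-bundle with connected total space and base; the exact sequence only gives $\pi_1(\mathrm{base})\to\pi_0(\mathrm{fiber})\to\pi_0(\mathrm{total})$, and your base $\Hs/\Hs_u$ is a torus, not simply connected. Over $\C$ the conclusion still holds, but for a different reason: a unipotent element of finite order in characteristic zero is trivial, so the finite group $\Hs_u/\Hs_u^\circ$, all of whose elements are unipotent, is trivial; alternatively one runs the usual argument through the connected subgroup $[\Hs,\Hs]\subset\Hs_u$ and the commutative quotient $\Hs/[\Hs,\Hs]$. (ii) Part (3): lifting the image $\bar{s}\in\Hs/\Hs_u$ through the splitting produces $t\in\Ts$ with $s=ut$, $u\in\Hs_u$, but nothing in your argument shows that $s$ is conjugate to $t$; that a semisimple element of the coset $\Hs_u t$ is $\Hs_u$-conjugate into $\Ts$ is exactly the content of the claim and needs the same weight-space/cohomological induction you invoked in (2). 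For uniqueness, your Weyl-group argument is both incomplete (the conjugating element need not normalize $\Ts$, so one must first move the second maximal torus inside a centralizer, which uses further unproved parts of Borel's theorem) and unnecessary: if $s,s'\in\Ts$ and $gsg^{-1}=s'$, write $g=ut$ with $u\in\Hs_u$, $t\in\Ts$; then $s's^{-1}=usu^{-1}s^{-1}\in\Ts\cap\Hs_u=\{1\}$ by normality of $\Hs_u$ and commutativity of $\Ts$, so $s=s'$.
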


\begin{remark}
\label{nilalg}
Let $\he_n$ be the set of nilpotent elements of $\he$. It follows from the above that $\he_n$ is a Lie subalgebra of~$\he$. As it consists of nilpotent elements, hence acts nilpotently by the adjoint action, by Engel's theorem it is nilpotent itself. Moreover, it contains $[\he,\he]$. In addition, from the second statement we get that $\he = \he_n \oplus \ttt$ for $\ttt = \Lie(\Ts)$.
\end{remark}

Now assume we are given a group action $\Hs\lefttorightarrow X$ of an algebraic group. For any $g\in \Hs(\C)$ the action of $g$ is an isomorphism $X\to X$. If we fix any closed point $x\in X$, its derivative $\D g|_{x}$ at $x$ is an isomorphism $T_{x,X}\to T_{gx,X}$. We will simply write it as $\D g$ if $x$ can be inferred from the context.

\begin{lemma} \label{lemad}
 Let an algebraic group $\Hs$ act on a variety $X$. Then for any $g\in \Hs$, $y\in\he = \Lie(\Hs)$ and $x\in X$, we have
 $${V_{\Ad_g(y)}}|_{gx} = \D g\left({V_y}|_{x}\right).$$
\end{lemma}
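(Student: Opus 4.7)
My plan is to reduce the identity to the fact that the action map intertwines conjugation on $\Hs$ with the action of $g$ on $X$, then differentiate. Concretely, for each closed point $x\in X$ introduce the orbit map $a_x\colon \Hs\to X$ defined by $a_x(h)=h\cdot x$. The excerpt already observes, just before the lemma, that $V_y|_x$ is obtained by differentiating $a_x$ at $1_\Hs$ and evaluating on $y$, i.e.\ $V_y|_x=\D(a_x)|_{1}(y)$ under the identification $T_{1_\Hs}\Hs\cong \he$. This is the key reinterpretation that lets me turn a statement about vector fields into a statement about compositions of morphisms of varieties.

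Next I would introduce the inner automorphism $c_g\colon \Hs\to\Hs$, $h\mapsto ghg^{-1}$, whose differential at $1_\Hs$ is by definition $\Ad_g\colon\he\to\he$, and let $\sigma_g\colon X\to X$ denote the automorphism $x\mapsto gx$, whose differential at $x$ is what we have been abbreviating $\D g$. The main calculation is the factorisation of morphisms
\[
a_{gx}\circ c_g \;=\; \sigma_g\circ a_x\colon \Hs\to X,
\]
which follows from the associativity of the $\Hs$-action: $c_g(h)\cdot(gx)=ghg^{-1}gx=g(hx)=\sigma_g(a_x(h))$.

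To finish, I would apply the chain rule at $1_\Hs$ to the two sides of this identity and evaluate on $y\in\he$. The left-hand side becomes $\D(a_{gx})|_1\circ \Ad_g$ applied to $y$, which by the reinterpretation above equals $V_{\Ad_g(y)}|_{gx}$; the right-hand side becomes $\D\sigma_g|_x\circ \D(a_x)|_1$ applied to $y$, which equals $\D g(V_y|_x)$. Comparing the two outputs gives the claimed formula.

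I do not anticipate any real obstacle here: the content is naturality of the differential, and the only thing to be careful about is bookkeeping of base points, since $a_x$ and $a_{gx}$ are different morphisms and the differentials are taken at different points of $X$. Once the factorisation $a_{gx}\circ c_g=\sigma_g\circ a_x$ is written down, everything is formal from the chain rule.
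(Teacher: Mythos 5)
Your argument is correct and is essentially the paper's proof in different packaging: the paper differentiates the same identity $(ghg^{-1})\cdot(gx)=g\cdot(hx)$ by chasing the tangent vector $(0,y,0,0)$ at $(g,1,g^{-1},gx)$ through a commutative diagram of multiplication and action maps, which is exactly your factorisation $a_{gx}\circ c_g=\sigma_g\circ a_x$ followed by the chain rule at $1_\Hs$. No gaps; the base-point bookkeeping you flag is the only subtlety, and you handle it correctly.
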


\begin{proof}
Let $\mu\colon \Hs\times \Hs\to \Hs$ denote the multiplication map and $\rho\colon \Hs\times X\to X$ denote the action of $\Hs$ on $X$. Consider the following commutative diagram: 
$$
\begin{tikzcd}
& \Hs\times \Hs\times \Hs\times X \arrow[ld, "\mu\times\id\times\id"] \arrow[rd, "\id\times\id\times\rho"]
\\
\Hs\times \Hs\times X\arrow[dd, "\mu\times\id"] & & \Hs\times \Hs\times X\arrow[dd, "\id\times\rho"]
\\ \\
\Hs\times X\arrow[dr,"\rho"] && \Hs\times X\arrow[dl,"\rho"]
\\
& X\rlap{.}
\end{tikzcd}
$$
If we fix a point at the top, it yields an analogous commutative diagram of differential maps. Take $(g,1,g^{-1},gx) \in \Hs\times \Hs\times \Hs\times X$ and $(0,y,0,0)$ in its tangent space. Going through the left branch, it is mapped to ${V_{\Ad_g(y)}}|_{gx}$, and going through the right one, it is mapped to $\D g({V_y}|_{x})$.
\end{proof}

\begin{lemma} \label{algder}
 Let $A$ be a commutative $\C$-algebra. Let $D_Y\colon A\to A$ be a $\C$-linear derivation and $\V$ a $\C$-vector space of\, $\C$-derivations $A\to A$ normalised by $D_Y$; \textit{i.e.} for any $D_W\in \V$ we have $[D_W,D_Y]\in \V$. Let $\m_x$ be a radical ideal in $A$ that contains $\im D_W$ for all $D_W\in \V$. Then for any $f\in \sqrt{(\im D_W)_{D_W\in \V}}$ we have $D_Y f\in \m_x$.
\end{lemma}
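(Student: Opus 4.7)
The plan is to introduce the ideal $I := (\im D_W : D_W\in\V)$ generated by all the images, to show that $I$ is stable under $D_Y$ as a derivation, and then to apply the multinomial Leibniz rule to a power of $f$ lying in $I$.

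First I would verify the invariance $D_Y(I)\subseteq I$. For an arbitrary generator $g\,D_W(a)$ of $I$, Leibniz combined with the identity $D_Y D_W = [D_Y,D_W] + D_W D_Y$ gives
\[ D_Y\bigl(g\, D_W(a)\bigr) = D_Y(g)\,D_W(a) + g\,[D_Y,D_W](a) + g\,D_W(D_Y a). \]
The first and third summands lie in $I$ tautologically, and for the middle one the normalisation hypothesis on $\V$ yields $[D_Y,D_W] = -[D_W,D_Y]\in\V$, so $[D_Y,D_W](a)\in I$ as well. Thus $D_Y(I)\subseteq I$, and iterating, $D_Y^k(I)\subseteq I$ for every $k\geq 0$. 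This is the only step where the normalisation hypothesis is actually used.

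Next, take $f\in\sqrt I$ and pick $n\geq 1$ with $f^n\in I$. Since $I\subseteq\m_x$ and $\m_x$ is radical, $f\in\m_x$ already. The multinomial form of the Leibniz rule gives
\[ D_Y^n(f^n) = \sum_{k_1+\cdots+k_n=n}\binom{n}{k_1,\ldots,k_n}\, D_Y^{k_1}(f)\cdots D_Y^{k_n}(f). \]
The tuple $(1,\ldots,1)$ is the unique composition of $n$ into $n$ positive parts (any other has some $k_i=0$, since any $k_j\geq 2$ forces another coordinate to vanish), and it contributes $n!(D_Y f)^n$. Every remaining summand contains a factor $D_Y^0(f)=f\in\m_x$, hence itself lies in $\m_x$. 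Combined with $D_Y^n(f^n)\in I\subseteq\m_x$ from the first step, this forces $n!(D_Y f)^n\in\m_x$, and since we are over $\C$ and $\m_x$ is radical, $D_Y f\in\m_x$.

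I do not expect any genuine obstacle in this argument; the only conceptual point is the $D_Y$-stability of $I$, while the rest is a clean combinatorial expansion whose non-leading terms are absorbed into $\m_x$ by radicality. The mild subtlety to watch is that we are bounding $D_Y f$ in $\m_x$ rather than in $\sqrt I$, but the latter in fact follows from the same argument, since all the non-leading summands in the expansion are multiples of $f\in\sqrt I$.
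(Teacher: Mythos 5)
Your proof is correct and follows essentially the same two-step strategy as the paper: use the normalisation hypothesis to control iterated applications of $D_Y$ on the ideal $\I=(\im D_W)_{D_W\in\V}$, then expand $D_Y^n(f^n)$ by the Leibniz rule and absorb every non-leading term into $\m_x$ via the factor $f\in\m_x$, finishing with radicality (and division by $n!$ over $\C$). The only mild difference is in the first step: you prove the stronger statement $D_Y(\I)\subseteq\I$ directly on generators, while the paper only establishes $D_Y^n(\I)\subseteq\m_x$ by induction using the operator identity $D_Y^{n+1}D_W=D_Y^{n}(D_WD_Y+D_Z)$; both versions suffice.
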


\begin{proof}
Let $\I = (\im D_W)_{D_W\in \V}$ be the ideal generated by images of all the derivations from $\V$. We first prove by induction that $(D_Y)^n(I)\subset \m_x$ for all $n\ge 0$. The case $n=0$ follows from the assumption that $\I\subset\m_x$. Now assume that $(D_Y)^n(I)\subset \m_x$ for some $n\ge 0$. Fix one particular derivation $D_W\in \V$; we will want to prove that $(D_Y)^{n+1} \im D_W\subset \m_x$. We have $D_Y D_W - D_W D_Y = D_Z\in \V$; therefore, 
$$D_Y^{n+1}D_W - D_Y^{n} D_W D_Y = D_Y^n D_Z,$$
 hence
$$D_Y^{n+1}D_W = D_Y^{n}(D_W D_Y + D_Z).$$
Now clearly $\im D_W D_Y + D_Z\subset \I$; hence by the inductive assumption the image of the right-hand side is always in $\m_x$. Therefore, $\im D_Y^{n+1}D_W \subset \m_x$, as we wanted to prove.

Now assume that $f\in \sqrt{\I}$ and let $f^k \subset \I$. We then know that $f^k\in \m_x$, therefore $f\in\m_x$. By the above we also know that $D_Y^k f^k\in \m_x$. By the Leibniz rule $D_Y^k f^k$ is the sum of terms of the form
$$\prod_{i=1}^k \left(D_Y^{\alpha_i} f\right)$$
for non-negative integers $\alpha_1,\alpha_2\dots,\alpha_k$ such that $\alpha_1+\alpha_2+\dots+\alpha_k = k$. Note that for all the terms except for $(D_Y f)^k$, at least one of $\alpha_1$, $\alpha_2$, \dots, $\alpha_k$ is zero, and all those terms belong to $\m_X$ as $f\in\m_x$. Therefore, we get $(D_Y f)^k\in \m_x$, hence $D_Y f\in \m_x$.
\end{proof}

As a geometric counterpart, we get the following lemma, which will prove very useful in our proofs.

\begin{lemma} \label{lemfix}
 Let $Y$ be a vector field on a smooth variety $X$. Assume that $\V$ is a subspace of the $\C$-vector space of all global vector fields. If\, $Y$ normalises $\V$, \textit{i.e.} $[Y,\V]\subset \V$, then $Y$ is tangent to the reduced zero scheme of\, $\V$.
 
 In particular, if a Lie group $\Hs$ acts on $X$ and a subspace $\V\subset \he$ has isolated $($simultaneous$)$ fixed points, then they are fixed by the normaliser $N_{\he}(\V)$ of\, $\V$ in $\he$. 
\end{lemma}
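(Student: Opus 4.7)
The first statement is essentially the geometric repackaging of Lemma~\ref{algder}, so my plan is to reduce to it. The problem is local on $X$, so pick an affine open $U \subset X$ and let $A = \C[U]$. Then $Y$ and every $W \in \V$ restrict to $\C$-linear derivations $D_Y, D_W : A \to A$, the normalisation hypothesis $[Y,\V] \subset \V$ is exactly the hypothesis of Lemma~\ref{algder} on the space of derivations $\{D_W : W \in \V\}$, and the reduced zero scheme of $\V$ on $U$ is by definition cut out by $\sqrt{\I}$ where $\I := (\im D_W)_{W\in\V}$. For any closed point $x$ of this reduced zero scheme, $\m_x$ is a radical ideal containing each $\im D_W$, so Lemma~\ref{algder} gives $D_Y f \in \m_x$ for every $f \in \sqrt{\I}$. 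Letting $x$ vary over the reduced zero scheme (in particular over all closed points of $V(\sqrt{\I})$) and invoking the Nullstellensatz, $D_Y f$ vanishes identically on $V(\sqrt{\I})$, i.e.\ $D_Y f \in \sqrt{\I}$. Thus $D_Y$ preserves $\sqrt{\I}$, which is precisely the statement that $Y$ is tangent to the reduced zero scheme.

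For the ``in particular'' part, suppose $\Hs$ acts on $X$ and $\V \subset \he$ consists of elements all of whose associated vector fields $\{V_w : w \in \V\}$ have a common isolated zero $x \in X$. The map $\he \to \Vect(X)$ is a Lie algebra homomorphism (see \S\ref{vecsec}), so for $Y \in N_\he(\V)$ the vector field $V_Y$ normalises the subspace $\phi(\V) \subset \Vect(X)$. By the first part, $V_Y$ is tangent to the reduced zero scheme $Z$ of $\phi(\V)$.

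Now $x$ is an isolated point of $Z$ by assumption. Since $Z$ is reduced and $x$ is isolated in it, $\{x\}$ is a connected component of $Z$ which as a reduced scheme is just the point $\Spec \C$, so the tangent space $T_xZ = 0$. A vector field tangent to $Z$ must have its value at $x$ lie in $T_xZ$, hence $V_Y|_x = 0$, meaning $x$ is a zero of $V_Y$ as well. This is the desired conclusion that the isolated simultaneous fixed points of $\V$ are fixed by $N_\he(\V)$.

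I do not anticipate a serious obstacle: the only point requiring care is the passage from ``$D_Y f$ lies in every $\m_x$ for $x$ in the reduced zero scheme'' to ``$D_Y f \in \sqrt{\I}$'', which uses Hilbert's Nullstellensatz and the fact that $\sqrt{\I}$ is the intersection of the maximal ideals of $A$ containing $\I$. The rest is a direct application of Lemma~\ref{algder} and the elementary observation that a vector field tangent to a reduced $0$-dimensional subscheme at a point must vanish there.
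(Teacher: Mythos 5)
Your proposal is correct and takes essentially the same route as the paper: localise to an affine chart and apply Lemma~\ref{algder} to the derivations $D_Y$ and $\{D_W : W\in\V\}$, with the maximal ideal $\m_x$ of a simultaneous zero playing the role of the radical ideal. Your extra Nullstellensatz step (upgrading the pointwise conclusion to $D_Y\sqrt{\I}\subset\sqrt{\I}$) and your explicit argument for the ``in particular'' clause via $T_x Z=0$ at an isolated reduced point only spell out what the paper leaves implicit.
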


Note that even the reduced zero scheme of $\V$ might be singular. A vector from a tangent space to $X$ is considered tangent to a subscheme $Z$ if it is in the image of the tangent space of $Z$; see the discussion in Section~\ref{vecsec}. Equivalently, in a local affine neighbourhood, it annihilates all the functions that vanish on $Z$, \textit{i.e.} those from the defining ideal of $Z$.

\begin{proof}
As the statement is local, we can assume that $X = \Spec A$ is affine. Let $x\in X$ be a simultaneous zero of $\V$. Then $x$ corresponds to a maximal ideal $m_x\triangleleft A$. The space $\V$ gives rise to a vector space of $\C$-derivations $A\to A$, and $Y$ to a single derivation $D_Y\colon A\to A$. By the assumption on $x$, for any $D_W\in V$ we have $\im D_W\subset m_x$. Hence, by Lemma~\ref{algder}, the derivation $D_Y$ vanishes at the point $x$ on the ideal of the reduced zero scheme of $\V$. Thus $Y$ is tangent to that scheme.
\end{proof}

\begin{remark}
 There is an alternate, analytic proof, which works under the assumption that $\V$ is finite-dimensional -- which will  always be the case for us. It is non-algebraic and hence also non-translatable to other fields, but one could argue it is less technically demanding and moreover works in a smooth, not necessarily algebraic setting, so we present it here as well. In fact, the assumption that $\V$ is finite-dimensional can also be dropped if we use the fact that the functions we deal with are all analytic, hence they vanish locally if all the derivatives in a point vanish -- this approach mimics the algebraic proof.

 Let $\phi = [Y,-]\big|_{\V}$ be the commutator map $\V\to \V$ induced by $Y$.
 Let $x$ be fixed by $\V$, and let us consider a local one-parameter subgroup $\Psi_t$ around $x$ defined by the vector field $Y$.
 For any vector field $W$, we have
 $$[Y,W]_x = \frac{d}{dt}\left(\left(D_x \Psi_t\right)^{-1} W_{\Psi_t(x)}\right)\big|_{t=0}$$
 and analogously
 $$[Y,W]_{\Psi_t(x)} = \frac{d}{du}\left((D_x \Psi_u)^{-1} W_{\Psi_{t+u}(x)}\right)\big|_{u=0}.$$
 Composing this with the linear map $(D_x\Psi_t)^{-1}$, we get, for $W\in \V$, the following:
 $$\left(D_x\Psi_t\right)^{-1}\phi(W)_{\Psi_t(x)} = \frac{d}{du}\left( \left(D_x \Psi_u\right)^{-1} W_{\Psi_{u}(x)}\right)\big|_{u=t}.$$
 Hence if we consider the map $\tau\colon (-\eps,\eps)\to \Hom(\V,T_p X)$ defined as
 $$\tau(t)(Y) = (D_x \Psi_u)^{-1} W_{\Psi_{t}(x)}, $$
 we get
 $$\frac{d}{dt} \tau(t) = \phi^* \tau(t).$$
 
 We get a linear equation, and in particular, as $\tau(0)$ vanishes (because $\V$ vanishes at $x$), we get  that $\tau$ vanishes  around $0$ as well; hence $\tau$ moves along fixed points of $\V$.
\end{remark}

The next lemma will be used to show that zeros of generalised Jordan matrices are zeros of the torus.

\begin{lemma} \label{lemzer}
Let a Lie algebra $\he$ acts on a smooth variety $X$. Let $d,n \in \he$ commute, and assume that the Lie subalgebra generated by $[\he,\he]$ and $n$ is nilpotent. Let $x\in X$ be an isolated zero of the vector field $V_j$ associated to $j=d+n$. Then $x$ is also a simultaneous zero of\, $C_{\he}(d)$. In particular, $x$ is a zero of any abelian subalgebra of\, $\he$ containing $d$.
\end{lemma}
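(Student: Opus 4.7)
The plan is to apply Lemma \ref{lemfix} in two stages, enlarging the subspace $\V$ each time.

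As a first step, since $[d,j]=[n,j]=0$, both $d$ and $n$ lie in $C_\he(j)\subseteq N_\he(\C j)$. Applying Lemma \ref{lemfix} to $\V=\C j$---for which $x$ is by hypothesis an isolated simultaneous zero---we immediately obtain $V_d|_x=V_n|_x=0$, and indeed every element of $C_\he(j)$ fixes $x$.

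The target is then to apply Lemma \ref{lemfix} to the larger subspace $\V:=\C j+\m$, where $\m:=\langle[\he,\he],n\rangle$. Since $[\he,\he]\subseteq\m$, $\m$ is an $\he$-ideal, and $\he$ itself is solvable. For any $y\in C_\he(d)$ we have $[y,j]=[y,n]\in\m\subseteq\V$ and $[y,\m]\subseteq\m\subseteq\V$, so $C_\he(d)\subseteq N_\he(\V)$; moreover $\V$ has $x$ as an isolated zero because its zero scheme lies inside that of $V_j$. Provided $x$ is also a \emph{simultaneous} zero of $\V$, i.e.\ provided the whole of $\m$ fixes $x$, Lemma \ref{lemfix} yields $V_y|_x=0$ for every $y\in C_\he(d)$. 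The ``in particular'' statement then follows because any abelian subalgebra containing $d$ is automatically in $C_\he(d)$.

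The heart of the argument, and the main obstacle, is therefore showing that $\m$ fixes $x$. I would prove this by induction along the upper central series $0=Z^0\subsetneq Z^1\subsetneq\dots\subsetneq Z^K=\m$, each $Z^k$ being a characteristic ideal of $\m$ and hence an $\he$-ideal. At the inductive step one applies Lemma \ref{lemfix} to $\V_k:=\C j+Z^k$, which has $x$ as an isolated simultaneous zero by the inductive hypothesis, aiming to conclude that $Z^{k+1}\subseteq N_\he(\V_k)$. For $m\in Z^{k+1}$ the brackets $[m,n]\in[Z^{k+1},\m]\subseteq Z^k$ and $[m,Z^k]\subseteq Z^{k-1}$ both lie in $\V_k$; the delicate term is $[m,d]$, which lies in $Z^{k+1}$ but not a priori in $Z^k$ since the outer derivation $\ad_d$ preserves but does not lower the upper central series. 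To handle this, decompose $\m=\m^0\oplus\m^+$ into the generalised $\ad_d$-eigenspaces: on $\m^+$, where $\ad_d$ is invertible, $\ad_j=\ad_d+\ad_n$ is also invertible (invertible semisimple plus commuting nilpotent), and the identity $V_{[a,j]}|_x=\pm\, dV_j|_x\,V_a|_x$ (a consequence of $V_{[a,b]}=[V_a,V_b]$ together with $V_j|_x=0$) lets one bootstrap from Step 1 via the $\ad_j$-equivariance of the evaluation map $a\mapsto V_a|_x$; on $\m^0$, $\ad_d$ is nilpotent, so a joint filtration by $\ad_\m$ and $\ad_d$ does terminate, and the induction goes through directly.
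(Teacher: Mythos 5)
Your first step is fine, and the observation that $C_\he(d)$ normalises $\V=\C j+\m$ is correct; but the claim you isolate as the heart of the argument --- that the whole of $\m=\langle[\he,\he],n\rangle$ vanishes at $x$ --- is false, so the intended application of Lemma~\ref{lemfix} to $\V=\C j+\m$ can never be made. Concretely, let $\he=\bb_2=\Span(h,e)$ with $[h,e]=2e$ act on $X=\PP^1$ through the standard $\Bs_2$-action, and take $d=h$, $n=0$, so $j=h$ and $\m=[\he,\he]=\C e$. The point $x=[0:1]$ is an isolated zero of $V_h$, and the conclusion of Lemma~\ref{lemzer} does hold there (the torus fixes $x$), but $V_e|_x\neq 0$, since $e$ fixes only $[1:0]$. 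The same example shows why the proposed bootstrap on $\m^+$ cannot work: the evaluation $a\mapsto V_a|_x$ intertwines $\ad_j$ on $\he$ with the linearisation $\D V_j|_x$ on $T_xX$ (up to sign), and when both are invertible on the relevant pieces --- here $e$ has $\ad_h$-weight $2$ and $V_e|_x$ is an eigenvector of $\D V_h|_x$ with nonzero eigenvalue --- this equivariance is perfectly consistent with $V_a|_x\neq 0$. Invertibility of $\ad_j$ on $\m^+$ therefore yields no vanishing, and indeed no statement of the form ``all of $\m$ fixes $x$'' can be extracted from the hypotheses.

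The paper's proof never tries to kill all of $\m$; it only establishes vanishing of the smaller algebra $C'(d)=C_\he(d)\cap\kek$, where $\kek$ is your $\m$, and this is exactly where the nilpotency hypothesis enters. After your Step 1, $x$ is an isolated simultaneous zero of $\Span(d,n)$. Elements of $N_{C'(d)}(\C n)$ commute with $d$ and normalise $\C n$, hence normalise $\Span(d,n)$, so Lemma~\ref{lemfix} makes them vanish at $x$; iterating, every iterated normaliser $N^i_{C'(d)}(\C n)$ vanishes at $x$. Since $C'(d)$ is nilpotent, this increasing chain can only stabilise at a self-normalising subalgebra, i.e.\ at $C'(d)$ itself. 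Finally $[C_\he(d),C'(d)]\subset C_\he(d)\cap[\he,\he]\subset C'(d)$, so $C_\he(d)$ normalises $\C d+C'(d)$, which contains $j$ (as $n\in C'(d)$) and vanishes at $x$; one last application of Lemma~\ref{lemfix} then gives that all of $C_\he(d)$ vanishes at $x$. So the correct replacement for your goal ``$\m$ fixes $x$'' is ``$C_\he(d)\cap\m$ fixes $x$'', and the climbing is done by iterated normalisers of $\C n$ inside that nilpotent subalgebra, not along the upper central series of $\m$.
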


\begin{proof}
Let $\kek$ be the Lie subalgebra generated by $[\he,\he]$ and $n$. By Lemma~\ref{lemfix} we first get that $x$ is a zero of $d$ and $n$, as they commute with $j$.

We will first prove that $x$ is a zero of $C'(d) = C_{\he}(d)\cap \kek$. As $\kek$ is nilpotent by assumption, its subalgebra $C'(d)$ is nilpotent as well.

By definition $d$ is in the center of $C(d)$; in particular, it commutes with $C'(d)$. Hence from Lemma~\ref{lemfix} we have that $x$ is a zero of $N_{C'(d)}(\C\cdot n)$. It is therefore an isolated simultaneous zero of $d$ and $N_{C'(d)}(\C\cdot n)$, and we can apply the same argument repeatedly to get that for $i=1,2,\dots$ it is a zero of $N_{C'(d)}^i(\C\cdot n)$.

The sequence $(N_{C'(d)}^i(\C\cdot n))_{n=1}^\infty$ has to stabilise at a Lie subalgebra of $C'(d)$ which is its own normaliser in $C'(d)$. As $C'(d)$ is nilpotent, it then has to be equal to the whole $C'(d)$ (see \cite[Section~I.4.1, Proposition~3]{BouLie13}). Therefore, $d$ and $C'(d)$ vanish at $x$. But $[C_{\he}(d),C_{\he}(d)] \subset C_{\he}(d)\cap [\he,\he] \subset C_{\he}(d)\cap \kek = C'(d)$; hence $C'(d)$ is normalised by the whole $C_{\he}(d)$. Therefore, by Lemma~\ref{lemfix} the whole $C_{\he}(d)$ vanishes at $x$.
\end{proof}

From Remark~\ref{nilalg} the assumptions about $d$ and $n$ hold whenever $\he$ is solvable, $[d,n]=0$ and $n\in \he_n$ (as $\he_n$ is nilpotent and contains $[\he,\he]$ as well as $n$).

\subsection{Regular elements} \label{secreg}
Let $\Hs$ be an algebraic group and $\Ts\subset \Hs$ be a maximal torus, of dimension $r$. We will call an element $v\in \he = \Lie(\Hs)$ \emph{regular} if $\dim C_\he(v) = r$. 
This is stronger than the usual notion of a regular element in the literature (see \textit{e.g.} \cite{Chev}) -- an element whose centraliser has minimal possible dimension. All the centralisers have dimension at least $r$, but it is possible that no regular element exists. For example for $\Hs = \Cs \times \C$ -- the product of the multiplicative and the additive group -- all centralisers are $2$-dimensional.

\begin{example}
 For $\Hs = \GL_n(\C)$ or $\Hs = \SL_n(\C)$, a regular element of $\he$ is a matrix with all eigenspaces of dimension $1$. Among the regular elements, the regular semisimple ones are the diagonalisable matrices with distinct eigenvalues, and a regular nilpotent matrix is conjugate to a single Jordan block.
\end{example}

\begin{example}\label{exregnilp}
More generally, any reductive group $\Gs$ contains regular elements in its Lie algebra, in particular a regular nilpotent element. Indeed, once we choose a maximal torus $\Ts\subset \Gs$ and positive roots, we can take $e = x_1 + x_2 + \dots + x_s$, where $x_1$, $x_2$, \dots, $x_s$ are the root vectors of $\geg$ corresponding to the positive simple roots ($s=r - \dim Z(\Gs)$). Then $e$ is a regular nilpotent in $\Gs$ (see \cite[Section 4.2, Theorem 4]{Kostsec}).
\end{example}

 The condition $\dim C_{\he}(w) > r$ is a Zariski-closed condition on $w$ as it means that $[w,-]$ has sufficiently small rank, which amounts to the vanishing of some minors of a matrix. Therefore, if $\Hs$ admits a regular element in its Lie algebra, the subset of regular elements $\he^\reg\subset \he$ is open and dense.

Note that if $\Hs$ is solvable, then by Theorem~\ref{solv} we have $[\he,\he]\subset \he_n$. This means that for any $v\in\he$ we have $[v,\he]\subset \he_n$. As the codimension of $\he_n$ is exactly $r = \dim \Ts$, the dimension of maximal torus, $v$ being regular is equivalent to $[v,\he] = \he_n$.

Also note  that if $\Hs'\subset \Hs$ is a subgroup which contains a maximal torus $\Ts$ of $\Hs$, then any regular $v\in \he$ contained in $\he'$ is also regular in $\he'$. Indeed, if $r = \dim \Ts$, then $\dim C_{\he'}(v) \le \dim C_{\he}(v) = r$, but at the same time $\dim C_{\he'}(v)$ cannot be less than the dimension of the maximal torus $\Ts$ of $\Hs'$. This means in particular that the centraliser $C_\he(v)$ is contained in $\he'$.

\subsection{\texorpdfstring{$\ssl_2$-triples and $\bb(\ssl_2)$-pairs}{sl2-triples and b(sl2)-pairs}}
\label{sl2p}

 The classical version of the Carrell--Liebermann theorem (\textit{cf.} \cite[Main Theorem and Remark 2.7]{CL}) deals with an arbitrary vector field $V$ on a smooth projective variety $X$, which vanishes in a discrete, non-empty set. They prove the following

 \begin{theorem}
  Let $X$ be a smooth projective complex variety and $V$ a vector field with finitely many zeros, and denote its zero scheme by $Z$. Then there exists an increasing filtration $F_\bullet$ on $\C[Z]$ such that
  $$H^*(X) \simeq \Gr_F(\C[Z]).$$
It is an isomorphism of graded $\C$-algebras, and with respect to the grading, the degree of an element is multiplied by 2 when we switch from the right-hand side to the left-hand side. In particular, $X$ only has even cohomology.
 \end{theorem}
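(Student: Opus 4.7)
The plan is to combine the Koszul resolution of $\OO_Z$ by contraction with $V$ with the Hodge-theoretic vanishing of off-diagonal Hodge numbers for compact K\"ahler manifolds admitting a holomorphic vector field with only isolated zeros.

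First, since $V$ is a global section of the rank-$n$ tangent bundle $T_X$ whose zero scheme $Z$ has the expected codimension $n = \dim X$, contraction with $V$ gives the Koszul resolution
$$K^\bullet:\quad 0 \to \Omega_X^n \xrightarrow{\iota_V} \Omega_X^{n-1} \xrightarrow{\iota_V} \cdots \xrightarrow{\iota_V} \Omega_X^0 \to 0$$
of $\OO_Z$, with $\Omega_X^p$ placed in cohomological degree $-p$. Since $Z$ is $0$-dimensional, the hypercohomology satisfies $H^0(X;K^\bullet) = \C[Z]$ and vanishes in other degrees. The stupid filtration on $K^\bullet$ then produces a hypercohomology spectral sequence
$$E_1^{-p,q} = H^q(X;\Omega_X^p) \Longrightarrow H^{q-p}(X;K^\bullet),$$
and thereby a natural filtration $F_\bullet$ on its abutment $\C[Z]$, which I take to be the filtration of the theorem.

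The crucial non-formal input is the classical Frankel--Carrell--Liebermann vanishing: on a compact K\"ahler manifold admitting a holomorphic vector field with only isolated zeros, $H^q(X;\Omega_X^p) = 0$ whenever $p \ne q$. The argument is harmonic-theoretic -- one checks that $\iota_V$ preserves harmonic forms, and iterating it along a fixed row of the Hodge diamond is forced to be injective by the isolation of zeros, which together with the Hodge symmetry $h^{p,q} = h^{q,p}$ forces the off-diagonal entries to vanish. Combined with the Hodge decomposition this yields $H^{odd}(X;\C) = 0$ and $H^{2p}(X;\C) \cong H^p(X;\Omega_X^p)$.

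With the off-diagonal $E_1$ terms now gone, the entire $E_1$ page is concentrated on the anti-diagonal $p = q$, which lies in total degree zero. Since each differential $d_r$ shifts total degree by $1$, all $d_r$ necessarily vanish, $E_\infty = E_1$, and hence
$$Gr_F \C[Z] \;=\; \bigoplus_{p} E_\infty^{-p,p} \;=\; \bigoplus_p H^p(X;\Omega_X^p) \;\cong\; \bigoplus_p H^{2p}(X;\C) \;=\; H^*(X;\C),$$
with degrees doubled as stated. The main obstacle is the off-diagonal Hodge vanishing: it genuinely requires K\"ahler harmonic theory on compact manifolds and is the only non-formal input; everything else -- the Koszul resolution, the hypercohomology spectral sequence and its collapse by the total-degree argument -- is then immediate.
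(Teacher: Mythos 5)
The paper does not actually prove this statement: it is quoted verbatim from Carrell--Liebermann \cite{CL}, so the only meaningful comparison is with the classical argument. Your skeleton is precisely that classical route, and the formal part of it is correct: since the zeros are isolated, $V$ is a regular section of $T_X$, contraction gives the Koszul resolution of $\OO_Z$, the hypercohomology of $(\Omega_X^\bullet,\iota_V)$ is $\C[Z]$ concentrated in total degree $0$, and once one knows $H^q(X;\Omega_X^p)=0$ for $p\neq q$ the stupid-filtration spectral sequence has nowhere to map, so $E_1=E_\infty$ and $Gr_F(\C[Z])\cong\bigoplus_p H^p(X;\Omega_X^p)\cong H^{even}(X;\C)$ with degrees doubled. (One small hypothesis you should make explicit: $Z$ must be nonempty, as in the paper's surrounding text -- otherwise the statement is false, e.g.\ for an abelian variety, and your hypercohomology computation gives $\C[Z]=0$.)

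The substantive issue is how you handle the ``crucial non-formal input''. Invoking the off-diagonal Hodge vanishing as a classical citation is legitimate -- it is exactly the $\dim Z=0$ case of Carrell--Liebermann's 1973 vanishing theorem, and the 1977 paper cited here deduces the filtration statement from it just as you do. But your one-sentence justification of that vanishing is not a correct argument: $\iota_V$ does not preserve harmonic forms, and there is no ``iterated injectivity along a row of the Hodge diamond forced by isolation of zeros''; if anything contraction tends to kill classes (for instance $\iota_V\colon H^0(X;\Omega_X^1)\to H^0(X;\OO_X)$ is zero, since a global holomorphic function vanishing on the nonempty set $Z$ is identically zero). The genuine content in the classical proof runs in the opposite logical direction from yours: Carrell--Liebermann prove, for any compact K\"ahler $X$ and any holomorphic $V$ with nonempty zero set, that this very hypercohomology spectral sequence degenerates at $E_1$ (a Hodge-theoretic argument), and the vanishing $H^q(X;\Omega_X^p)=0$ for $|p-q|>\dim Z$ is then read off from the support of the abutment. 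Note that the support constraint alone is not enough: without the degeneration input, off-diagonal terms could a priori cancel in pairs under higher differentials, so if your citation were withdrawn your argument would have a real gap at exactly this point. As a proof by citation plus formal consequences it stands; just be aware that the cited vanishing is the theorem's hard half, and your heuristic for it would not survive scrutiny.
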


 The theorem therefore gives some information on cohomology, but this depends on determining the filtration $F_\bullet$. This can be hard in general. Only if $V$ comes with a $\Cs$-action which satisfies $t_*(V) = t^k V$ for some non-zero integer $k$, do we get $H^*(X) \cong \C[Z(V)]$ (see \cite{ACLS}, \cite[Theorem 1.1]{AC}). We will consider those vector fields as coming from an action of a Lie group. Hence we have the following definition.

\begin{definition}
 For any complex Lie algebra $\he$, by a \emph{$\bb(\ssl_2)$-pair} in $\he$, we mean a pair $(e,h)$ of elements of $\he$ that satisfy the condition $[h,e] = 2e$. By an \emph{$\ssl_2$-triple} in $\he$, we mean a triple $(e,f,h)$ of elements of $\he$ such that $[h,e]=2e$, $[h,f] = -2f$, $[e,f] = h$.
\end{definition}

If $\Gs$ is a semisimple group, then by the Jacobson--Morozov theorem (see \textit{e.g.} \cite[Theorem 3.7.1]{ChGi}), for any nilpotent element $e\in\geg$, there exists an $\ssl_2$-triple $(e,f,h)$ in $\geg$ such that $f$ is nilpotent and $h$ is semisimple. The same is then true for any reductive Lie group $\Gs$ as a reductive Lie algebra is a direct sum of its center and a semisimple ideal (\textit{cf.}~\cite[Theorem II.11]{Jac}).

Let us consider the connected subgroup $\Ks\subset \Gs$ whose Lie algebra $\mathfrak k$ is the smallest one which contains $e$, $f$, $h$ (see \cite[Section~II.7.1]{Borel}).
Then the Lie algebra of $[\Ks,\Ks]$ is equal to $[\mathfrak k,\mathfrak k]$ (see \cite[Proposition 7.8]{Borel}). However, by \cite[Corollary 7.9]{Borel} we have $[\mathfrak k,\mathfrak k] = [\Span(e,f,h),\Span(e,f,h)] = \Span(e,f,h)$. Hence we get an algebraic subgroup $[\Ks,\Ks]$ (contained in $\Ks$, hence equal to $\Ks$) of $\Gs$ whose Lie algebra is $\Span(e,f,h)$. As its Lie algebra is semisimple, the group itself is semisimple. By \cite[Theorem 20.33]{Milne}, if it is non-trivial, it has to be either $\SL_2(\C)$ or $\PSL_2(\C)$. In either case, there is a covering map
\begin{align}\label{phi}\phi\colon \SL_2(\C)\lra \Ks.\end{align}
As any automorphism of $\ssl_2(\C)$ lifts to an automorphism of $\SL_2(\C)$, we can assume that the canonical basis $e_0$, $f_0$, $f_0$ of $\ssl_2$ maps to $e$, $f$, $h$, respectively. Hence we get the following.

\begin{proposition}\label{reductive}
 For any nilpotent element $e$ in the Lie algebra $\geg$ of an algebraic reductive group, there exists an $\ssl_2$-triple $(e,f,h)$ within $\geg$ with $f$ nilpotent and $h$ semisimple. If $e\neq 0$, the element $h$ integrates to a map $\Cs\to \Gs$ with discrete kernel, whose differential is $h$.
\end{proposition}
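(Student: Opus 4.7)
The plan is to combine the Jacobson--Morozov theorem (for the infinitesimal part) with the classification of three-dimensional connected semisimple algebraic groups (for the integration to $\Cs$).

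First, I would reduce the existence of the triple to the semisimple case. Since $\Gs$ is reductive, its Lie algebra splits as $\geg = Z(\geg) \oplus [\geg,\geg]$, where $[\geg,\geg]$ is semisimple and $Z(\geg)$ consists of semisimple elements (under any faithful representation). Because $e$ is nilpotent, it lies in $[\geg,\geg]$, so Jacobson--Morozov applied to this semisimple Lie algebra produces an $\ssl_2$-triple $(e,f,h)$ with $f$ nilpotent and $h$ semisimple, all lying in $\geg$.

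For the integration statement, assume $e\neq 0$ and let $\Ks \subset \Gs$ be the connected algebraic subgroup whose Lie algebra $\mathfrak k$ is the smallest Lie subalgebra of $\geg$ containing $e,f,h$. Then $\Lie([\Ks,\Ks]) = [\mathfrak k,\mathfrak k]$, and since $\mathfrak k \supset \Span(e,f,h)$ with $[\Span(e,f,h),\Span(e,f,h)] = \Span(e,f,h)$, the derived group $[\Ks,\Ks]$ is a nontrivial connected algebraic group whose Lie algebra is precisely $\Span(e,f,h) \cong \ssl_2$. The classification of connected semisimple algebraic groups of type $A_1$ forces $[\Ks,\Ks]$ to be isomorphic to either $\SL_2(\C)$ or $\PSL_2(\C)$, so in either case there is an isogeny $\phi : \SL_2(\C) \to [\Ks,\Ks] \hookrightarrow \Gs$. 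Since every automorphism of $\ssl_2$ lifts to $\SL_2$, we may arrange $\phi$ so that its differential sends the standard basis $(e_0,f_0,h_0)$ of $\ssl_2$ to $(e,f,h)$. Restricting $\phi$ to the diagonal torus $\Cs \subset \SL_2$ yields the required homomorphism $\Cs \to \Gs$; its differential sends the canonical generator of $\Lie(\Cs)$ to $h$, and because $h \neq 0$ (forced by $[h,e]=2e$ and $e\neq 0$) the differential is injective, so the kernel of this one-parameter subgroup is discrete.

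The main obstacle I anticipate is the clean identification of $[\Ks,\Ks]$ as a closed algebraic subgroup with the claimed Lie algebra $\Span(e,f,h)$, as well as ensuring the lifted morphism can be normalized so the canonical triple in $\ssl_2$ matches the chosen triple in $\geg$ exactly rather than merely up to conjugation; both points ultimately rest on standard results from the structure theory of algebraic groups (e.g.\ \cite[7.1, 7.8, Cor.\ 7.9]{Borel} and \cite[Thm.\ 20.33]{Milne}) which were already referenced in the preceding discussion.
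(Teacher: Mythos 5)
Your proposal is correct and follows essentially the same route as the paper: reduce to the semisimple part via $\geg = Z(\geg)\oplus[\geg,\geg]$ and Jacobson--Morozov, then take the connected subgroup $\Ks$ generated by $\Span(e,f,h)$, identify $[\Ks,\Ks]$ as $\SL_2(\C)$ or $\PSL_2(\C)$ via \cite[7.8, Cor.~7.9]{Borel} and \cite[Thm.~20.33]{Milne}, lift to a covering $\SL_2(\C)\to\Gs$ matching the standard triple, and restrict to the diagonal torus. The only cosmetic difference is that the paper notes $[\Ks,\Ks]=\Ks$ rather than working with $[\Ks,\Ks]$ directly, which changes nothing of substance.
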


\begin{remark}\label{princesl2}
As we saw in Example~\ref{exregnilp}, if $\Gs$ is reductive, then there exists a principal nilpotent $e\in \geg$. By Proposition~\ref{reductive} this means that there is an $\ssl_2$-triple $(e,f,h)$ with $e$ principal nilpotent, $f$ nilpotent and $h$ semisimple. By the general theory of representations of $\ssl_2$, the ranks of the operators $[e,-]$ and $[f,-]$ are equal; hence $f$ is also regular. This motivates the following definition.
\end{remark}

\begin{definition}
 An $\ssl_2$-triple $(e,f,h)$ will be called \emph{principal} if $e$ and $f$ are regular nilpotents.
\end{definition}

\begin{definition}
 For a linear algebraic group $\Hs$, an \emph{integrable $\bb(\ssl_2)$-pair} in $\he = \Lie(\Hs)$ is an $\ssl_2$-pair $(e,h)$ in $\he$ which consists of a nilpotent element $e$ and a semisimple element $h$ which is tangent to some one-parameter subgroup $H\colon \Cs\to \Hs$; \textit{i.e.} $h = DH_{|1}(1)$. This means that $(e,h)$ comes from an algebraic group morphism $\Bs_2 = \Bs(\SL_2)\to \Hs$. We call an integrable $\bb(\ssl_2)$-pair \emph{principal} if $e$ is a regular element of~$\he$.
\end{definition}

\begin{remark}
 Note that, unlike an $\ssl_2$-triple, a $\bb(\ssl_2)$-pair does not have to be integrable. As an easy counterexample, we may take
 $$h = 
 \begin{pmatrix}
  \pi & 0 & 0\\
  0 & \pi-2 & 0 \\
  0 & 0 & 2-2\pi
 \end{pmatrix},
 \quad
 e = 
 \begin{pmatrix}
  0 & 1 & 0\\
  0 & 0 & 0 \\
  0 & 0 & 0
 \end{pmatrix}
 $$
 for $\Hs = \SL_3(\C)$. Then $[h,e] = 2e$, but $h$ is not tangent to a $1$-dimensional torus (we can replace $\pi$ with any irrational number).
\end{remark}

\begin{definition} We call a connected linear algebraic group $H$ {\em principally paired} if it contains a principal integrable $\bb(\ssl_2)$-pair. 
\end{definition}

For example, a reductive group is principally paired because of Proposition~\ref{reductive}. More generally, we have the following. 

\begin{lemma} \label{parabolic}
  Let $\Gs$ be a reductive group. Then any parabolic subgroup $\Ps\subset \Gs$ is principally paired.
 \end{lemma}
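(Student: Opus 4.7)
The plan is to import a principal $\ssl_2$-triple for the ambient reductive group $\Gs$ and observe that, for a standard choice, both the nilpotent and the semisimple members of the triple already lie in $\p := \Lie(\Ps)$, and the integrating $\SL_2 \to \Gs$ map carries the upper-triangular Borel $\Bs(\SL_2)$ into $\Ps$.

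First, fix a Borel subgroup $\Bs \subset \Ps$ and a maximal torus $\Ts \subset \Bs$. Let $\nen_\Bs \subset \bb := \Lie(\Bs)$ be the Lie algebra of the unipotent radical of $\Bs$. As in Example \ref{exregnilp}, take $e = \sum_{\alpha} x_\alpha \in \nen_\Bs \subset \p$, summed over positive simple roots of $(\Gs, \Ts)$; this $e$ is a regular nilpotent in $\geg$. Since $\Ts \subset \Ps$, the rank of $\Ps$ equals the rank of $\Gs$, so by the discussion following Example \ref{exregnilp} the centraliser $C_{\geg}(e)$ is contained in $\p$ and $e$ is also regular in $\p$.

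Next, complete $e$ to a principal $\ssl_2$-triple $(e, f, h)$ in $\geg$ via the standard construction, in which $f \in \nen_\Bs^-$ is a linear combination of negative simple root vectors and $h = 2\rho^\vee \in \ttt$; such a triple exists by Proposition \ref{reductive} (Remark \ref{princesl2}) and the usual normalisation placing $h$ in $\ttt$. In particular both $e$ and $h$ lie in $\p$, they satisfy $[h,e]=2e$, and $h$ is semisimple, so $(e,h)$ is a $\bb(\ssl_2)$-pair in $\p$ with $e$ regular.

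It remains to integrate $(e,h)$ to an algebraic group homomorphism $\Bs(\SL_2) \to \Ps$. By Proposition \ref{reductive} the triple $(e,f,h)$ integrates to $\phi:\SL_2 \to \Gs$ with $D\phi$ sending the standard basis to $(e,f,h)$. Restrict $\phi$ to $\Bs(\SL_2)$: the diagonal one-parameter subgroup maps into the one-parameter subgroup of $\Gs$ tangent to $h \in \ttt$, which is contained in $\Ts \subset \Bs$, and the upper unipotent one-parameter subgroup maps into the one-parameter unipotent subgroup tangent to $e \in \nen_\Bs$, which is contained in the unipotent radical of $\Bs$. Since $\Bs(\SL_2)$ is generated by these two one-parameter subgroups, $\phi(\Bs(\SL_2)) \subset \Bs \subset \Ps$, giving the required homomorphism $\Bs(\SL_2) \to \Ps$ that integrates the subalgebra $\Span(e,h) \subset \p$. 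Hence $\Ps$ is principally paired.

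The only real point that needs care is choosing $h$ inside $\ttt$ (and not merely semisimple in $\geg$), because otherwise the integrating one-parameter subgroup might not lie in $\Bs$; taking the standard principal triple makes this automatic, so no genuine obstacle arises.
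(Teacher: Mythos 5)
Your proof is correct, but it takes a genuinely different route from the paper's. The paper reduces at once to the case $\Ps=\Bs$ a Borel subgroup and then argues abstractly: the image of $\Bs(\SL_2)$ under the map $\phi:\SL_2\to\Gs$ of Proposition \ref{reductive} is a connected solvable subgroup, hence is contained in \emph{some} Borel subgroup of $\Gs$, and conjugacy of Borel subgroups then makes every Borel — and hence every parabolic containing one — principally paired, without needing any explicit form of the triple. You instead fix $\Bs\subset\Ps$ and $\Ts\subset\Bs$ in advance and invoke the classical normal form of the principal $\ssl_2$-triple adapted to that choice ($e$ a sum of simple root vectors, $h=2\rho^\vee\in\ttt$, $f$ a combination of negative simple root vectors), so that the containment $\phi(\Bs(\SL_2))\subset\Bs\subset\Ps$ is verified directly and no conjugation is needed. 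What you gain is explicitness (and you make explicit the point the paper leaves implicit in its reduction, namely that $e$ remains regular in $\p$ because $\p$ contains a maximal torus, cf.\ Section \ref{secreg}); indeed your normalisation is the one the paper uses later anyway (Example \ref{exgr}, Section \ref{redarbsec}). What it costs is an extra classical input not contained in Proposition \ref{reductive} — that $h$ can be taken in $\ttt$ — plus one step you assert without justification: that the image of the diagonal torus of $\SL_2$ lands in $\Ts$. That step does hold, either because in characteristic zero a homomorphism from the connected group $\Cs$ is determined by its differential and $2\rho^\vee$ is an actual cocharacter of $\Ts$ with differential $h$, or because $\alpha(h)=2>0$ for every simple root, so $h$ is regular and the connected, $h$-centralising image lies in $C_\Gs(h)^0=\Ts$; with that remark added your argument is complete.
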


\begin{proof}
Because there is a Borel subgroup $\Bs\subset \Ps$, it is enough to prove the result for $\Bs=\Ps$. Note that if $\Bs = \Bs_2$ is the Borel subgroup of $\SL_2(\C)$, then the image $\phi(\Bs_2)$ of \eqref{phi} is a solvable connected subgroup of $\Gs$; hence it is contained in a Borel subgroup of $\Gs$. All Borel subgroups of $\Gs$ are conjugate (see \cite[Theorem 11.1]{Borel}); hence they are all principally paired.
\end{proof}

\subsection{Kostant section and generalisations}\label{kostsecsec}
The seminal work of Kostant shows the following theorem (\textit{cf.} \cite[Theorem 0.10]{Kostsec}).

\begin{theorem}\label{kostant2}
Assume that $\Gs$ is a semisimple group and $(e,f,h)$ is a principal $\ssl_2$-triple. 
Then every regular element of $\geg = \Lie(\Gs)$ is conjugate to exactly one element of $\Ss = e + C_\geg(f)$. Moreover, the restriction $\C[\geg]^\Gs \to \C[\Ss]$ is an isomorphism.
\end{theorem}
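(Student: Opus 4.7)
The plan is to prove both assertions by showing that the restriction map $\sigma \colon \Ss \to \geg /\!\!/ \Gs$, coming from the adjoint quotient $\rho\colon \geg \to \geg /\!\!/ \Gs$, is a $\Cs$-equivariant isomorphism of affine spaces. Once this is established, the coordinate ring statement is immediate, and the uniqueness of $\Gs$-conjugates in $\Ss$ follows because each fibre of $\rho$ contains at most one regular orbit (distinct regular orbits give distinct invariants), so the bijection $\sigma$ selects one representative per regular orbit.

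First I would use the representation theory of the principal $\ssl_2$-triple $\langle e,f,h\rangle$ acting adjointly on $\geg$. Decomposing $\geg$ into irreducible $\ssl_2$-representations, regularity of $e$ and $f$ (Remark~\ref{princesl2}) gives $\dim C_\geg(e) = \dim C_\geg(f) = r$, and a direct weight count shows
\[\geg = [e,\geg] \oplus C_\geg(f),\]
because on each irreducible summand $\ad(e)$ omits only the lowest-weight line, which is precisely the corresponding summand of $C_\geg(f)$. Since $[e,\geg]$ is tangent to the $\Gs$-orbit of $e$ and $C_\geg(f) = T_e \Ss$, this expresses transversality of $\Ss$ to the orbit at the base point. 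To globalise, I would consider $\Phi\colon \Gs\times \Ss \to \geg$, $(g,s)\mapsto \Ad_g(s)$, whose differential is surjective at $(1,e)$ by the decomposition above.

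Next I would invoke the $\Cs$-action $\lambda\cdot x = \lambda^{-2}\Ad_{\tau(\lambda)}(x)$ of \eqref{actions}. It preserves $\Ss$ and has $e$ as its unique fixed point: for $y \in C_\geg(f)$ of $h$-weight $-2m$ with $m\ge 0$, one computes $\lambda\cdot(e+y) = e + \lambda^{-2m-2}y \to e$ as $\lambda\to\infty$. The same action induces a $\Cs$-action on $\geg /\!\!/ \Gs$ of positive weights $2d_1,\dots,2d_r$ (the degrees of the basic invariants from Chevalley's theorem, applied to the reductive $\Gs$), and $\sigma$ is $\Cs$-equivariant. By $\Gs$-equivariance of $\Phi$ and its $\Cs$-equivariance for the action $\lambda\cdot(g,s)=(\tau(\lambda)g\tau(\lambda)^{-1},\lambda\cdot s)$ on the source, the surjectivity of $d\Phi$ at $(1,e)$ propagates along the contracting flow to every point of $\{1\}\times \Ss$. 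Consequently $\Phi$ is smooth along $\{1\}\times \Ss$, every $s\in\Ss$ is regular, and $\Gs \cdot \Ss$ is a $\Cs$-saturated open subset of $\geg^{\reg}$.

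Finally, I would conclude that $\sigma$ is an isomorphism. Surjectivity: every fibre of $\rho$ contains regular elements in its closure, and the open $\Cs$-stable subset $\Gs \cdot \Ss$ must meet each such orbit. Injectivity: if $s,s'\in\Ss$ satisfy $\rho(s)=\rho(s')$, they are regular and lie in the same fibre, which contains a unique regular orbit; the transversal meeting of $\Ss$ with this orbit combined with $\Cs$-equivariance (both $s$ and $s'$ flow to $e$) forces $s=s'$. Thus $\sigma$ is a $\Cs$-equivariant bijective morphism between $\A^r$ and the normal affine variety $\geg/\!\!/\Gs \cong \A^r$, hence an isomorphism, yielding both conclusions of the theorem. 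The main obstacle is upgrading the infinitesimal transversality at $e$ to a global statement over all of $\Ss$; the $\Cs$-contraction of $\Ss$ to its unique fixed point $e$ is exactly the mechanism that converts the local infinitesimal information into the required global conclusion.
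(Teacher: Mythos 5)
Your opening two steps are correct and constitute the standard modern route to the Kostant slice (note that the paper itself does not prove Theorem~\ref{kostant2}: it quotes \cite[Theorem 0.10]{Kostsec}, and its own related statements, Lemma~\ref{lembal}, Proposition~\ref{isoquot} and Theorems~\ref{kostarb}--\ref{restkos}, are routed through the isomorphism $\U^-\times\Ss\to e+\bb^-$ rather than through transversality). In particular the $\ssl_2$-decomposition $\geg=[e,\geg]\oplus C_\geg(f)$, the contraction of $\Ss$ to $e$ under $\lambda\cdot x=\lambda^{-2}\Ad_{\tau(\lambda)}(x)$, and the propagation argument showing that every element of $\Ss$ is regular and that $\Phi\colon\Gs\times\Ss\to\geg$ is submersive along $\{1\}\times\Ss$ (hence $\Gs\cdot\Ss$ open) are all sound.

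The gap is in your final paragraph, where both surjectivity and injectivity of $\sigma$ are asserted by mechanisms that do not work as stated. For surjectivity you say that the open $\Cs$-stable set $\Gs\cdot\Ss$ ``must meet each regular orbit'': openness together with $\Gs$- and $\Cs$-stability does not imply this --- the set $\geg^{\mathrm{rs}}$ of regular semisimple elements is open, $\Gs$-stable and $\Cs$-stable, yet misses the regular nilpotent orbit --- and the contracting flow is not available for an arbitrary regular $x$, since the components of $h$-weight $>2$ blow up under $\lambda^{-2}\Ad_{\tau(\lambda)}$. You need a genuine extra input here, e.g.\ finiteness of $\sigma$ via the graded Nakayama lemma from finiteness of $\sigma^{-1}(0)$ (which your transversality does give, since $\Ss$ meets the regular nilpotent orbit transversally, hence in a finite set), or first conjugating every regular element into $e+\bb^-$ as Kostant does (this is exactly the role of Lemma~\ref{lembal} and Theorem~\ref{kostarb} in the paper). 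For injectivity you invoke ``each fibre of $\rho$ contains a unique regular orbit'', which is itself a nontrivial theorem of Kostant of comparable depth (it needs that $\rho(x)$ determines the semisimple part up to conjugacy and that regular nilpotents of the reductive centraliser $C_\geg(x_s)$ form a single orbit); your parenthetical ``distinct regular orbits give distinct invariants'' merely restates it. Even granting it, the closing sentence --- transversality plus the fact that $s$ and $s'$ both flow to $e$ forces $s=s'$ --- is not an argument: every point of $\Ss$ flows to $e$, and transversality only makes $\Ss\cap O$ finite, not a single point. So the uniqueness of the representative in $\Ss$, which is the heart of the theorem, is not established.
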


The affine plane $\Ss$ is called the \emph{Kostant section}. We will provide in Theorems~\ref{kostarb} and~\ref{restkos} a version that works for arbitrary principally paired groups.

\subsubsection{Solvable groups} \label{secsolvkos}
First assume  that $\Hs$ is a solvable group. Let $\Ts$ be its maximal torus and $\he_n$ be the nilpotent part of $\he = \Lie(\Hs)$. Assume that $e\in\he_n, h\in\ttt$ are such that $(e,h)$ is a principal integrable $\bb(\ssl_2)$-pair. Let $\{H^t\}_{t\in\Cs}$ be the one-parameter subgroup in $\Hs$ to which $h\in\he$ integrates.

\begin{lemma}\label{etreg}
 All elements of $e+\ttt$ are regular and not conjugate to one another.
\end{lemma}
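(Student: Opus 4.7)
My plan has two parts, corresponding to the two assertions.

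\textbf{Regularity.} I will exploit the $\Cs$-action from \eqref{actions} to deform $e+t$ to $e$. Let $\tau:\Cs\to\Hs$ be the one-parameter subgroup integrating $h\in\ttt$, and put $\lambda\cdot x:=\lambda^{-2}\Ad_{\tau(\lambda)}(x)$ on $\he$. Since dimensions of centralisers are preserved by both conjugation and nonzero scaling, this action preserves the regular locus. Now $\tau(\lambda)\in\Ts$, and $\Ts$ is abelian, so $\Ad_{\tau(\lambda)}$ acts trivially on $\ttt$; combined with $\Ad_{\tau(\lambda)}(e)=\lambda^{2}e$ (exponentiating $[h,e]=2e$), I obtain
$$\lambda\cdot(e+t)\;=\;e+\lambda^{-2}t\qquad\text{for every }t\in\ttt.$$
Letting $\lambda\to\infty$ the family degenerates to $e$. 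Since $\Hs$ is solvable, Theorem \ref{solv} gives $[\he,\he]\subset\he_n$, hence $[v,\he]\subset\he_n$ for every $v\in\he$, so $\dim C_\he(v)\ge\dim\he-\dim\he_n=r$. Thus the regular locus $\{v:\dim C_\he(v)=r\}$ is Zariski-open (cut out by non-vanishing of certain minors of $\ad v$) and contains $e$ by hypothesis. If some $e+t$ were non-regular, the whole $\Cs$-orbit $\{e+\lambda^{-2}t:\lambda\in\Cs\}$ would lie in the closed non-regular locus, and taking $\lambda\to\infty$ would force $e$ itself to be non-regular, a contradiction.

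\textbf{Non-conjugacy.} The key observation is that the adjoint action of $\Hs$ on $\he/\he_n$ is trivial: indeed $\he_n$ is the Lie algebra of the normal unipotent subgroup $\Hs_u$, so $\he_n$ is $\Ad$-stable, and the differential of the induced $\Hs$-action on $\he/\he_n$ is $\ad$, which factors through $\he/[\he,\he]$ and acts trivially since $[\he,\he]\subset\he_n$. Because $\Hs$ is connected, the action on $\he/\he_n$ itself is trivial. Consequently, for any $g\in\Hs$,
$$\Ad_g(e+t)\;\equiv\;e+t\;\equiv\;t\pmod{\he_n}.$$
If $\Ad_g(e+t_1)=e+t_2$, then $t_1\equiv t_2\pmod{\he_n}$; but $\ttt\cap\he_n=0$ by Theorem \ref{solv}, so $t_1=t_2$.

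\textbf{Main obstacle.} Neither step is deep, but the regularity part requires the lower bound $\dim C_\he(v)\ge r$ that is peculiar to the solvable setting, and the integrability of $h$ (so that $\tau$ genuinely exists as an algebraic $\Cs$) is what allows me to run the degeneration argument algebraically rather than just infinitesimally.
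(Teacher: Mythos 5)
Your proof is correct and follows essentially the same route as the paper: for regularity you degenerate $e+t$ to $e$ via $\Ad_{H^\lambda}$ (the paper writes $\Ad_{H^t}(e+v)=t^2e+v$) and use closedness of the non-regular locus, and for non-conjugacy you use that $\Ad_g(x)\equiv x \pmod{\he_n}$ together with $\ttt\cap\he_n=0$. The only cosmetic difference is that you derive $\Ad_g(x)-x\in\he_n$ from connectedness of $\Hs$ and triviality of the induced action on $\he/\he_n$, whereas the paper cites Borel for $\Ad_M(x)-x\in[\he,\he]\subset\he_n$.
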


\begin{proof}
 Assume that for some $v\in \ttt$ the element $e+v$ is not regular. This means that $\dim C_\he(v) \ge r+1$. As $\Ad_{H^t}(e+v) = t^2 e + v$, for any $t\in \Cs$ we have
 $$\dim C_\he\left(e + v/t^2\right) = \dim C_\he\left(t^2e + v\right) = \dim C_\he (e+v) \ge r+1.$$
 As the set of non-regular elements is closed in $\he$, we get $\dim C_\he(e) \ge r+1$. This contradicts the regularity assumption.
 
 For any $x \in \he$ and $M\in \Hs$, we have $\Ad_M(x) - x \in [\he,\he] \subset \he_n$ by \cite[Propositions 3.17 and~7.8]{Borel}. Therefore, no two distinct elements from $e+\ttt$ can be conjugate to one another as they differ on the $\ttt$-component. 
\end{proof}

The following lemma is based on an argument provided  by Anne Moreau.

\begin{lemma}\label{etkost}
 Every regular element of\, $\he$ is conjugate to a unique element of $e+\ttt$.
\end{lemma}

\begin{proof}
  We know that $\he = \ttt \oplus \he_n$. Assume that $x = v + n$, where $v\in\ttt$ and $n\in\he_n$, is regular. This means that $[x,\he] = \he_n$ (see Section~\ref{secreg}). Let us consider the map
  \begin{align} \label{ad} \Ad_{-}(x)\colon \Hs \lra \he.\end{align}
  As in the proof of Lemma~\ref{etreg}, we see that the image is actually contained in $v + \he_n$.
 
 Note that the image of the derivative of \eqref{ad} at $1$ is $[x,\he] = \he_n = T_0(v+\he_n)$. Therefore, by \cite[Theorem 4.3.6]{Springer} the morphism $\Ad_{-}(x)\colon \Hs \to v+\he_n$ is dominant. Analogously, as $e+v$ is regular by Lemma~\ref{etreg}, the morphism $\Ad_{-}(e+v)\colon \Hs \to v+\he_n$ is dominant. Therefore, the images of $\Ad_{-}(x)$ and $\Ad_{-}(e+v)$ are both dense in $v+\he_n$. By \cite[Theorem 1.9.5]{Springer} they both contain open dense subsets of $v+\he_n$, and hence they intersect, which means that $x$ and $e+v$ are conjugate.
 
 The uniqueness follows from Lemma~\ref{etreg}.
\end{proof}

Now we will also provide an equivalent of the classical Jordan form, for arbitrary solvable groups. Recall that by Remark~\ref{nilalg}
every $x\in\he$ is of the form $x = \w + n$, where $\w\in\ttt$ and $n\in\he_n$.

\begin{theorem}\label{jordan}
 For any $x = \w + n\in\he$ with $\w\in\ttt$, $n\in\he_n$, there exists an $M\in \Hs$ such that $x = \Ad_M(\w+n')$ with $[\w,n']=0$ and $n'\in\he_n$.
\end{theorem}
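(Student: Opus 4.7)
The plan is to use the abstract Jordan decomposition of $x$ in the algebraic Lie algebra $\he$, combined with the conjugacy of semisimple elements into $\ttt$, and to recover the given $\ttt$-component $w$ via a projection argument.

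First I would apply the functorial Jordan decomposition available in the Lie algebra of any linear algebraic group: write $x = x_s + x_n$ with $x_s, x_n \in \he$, $[x_s, x_n] = 0$, $x_s$ semisimple and $x_n$ nilpotent. Since $\he_n$ is precisely the set of nilpotents of $\he$ by Theorem~\ref{solv}, we have $x_n \in \he_n$, and since $\Hs_u \triangleleft \Hs$, the subspace $\he_n = \Lie(\Hs_u)$ is preserved by the adjoint action. Next I would invoke the Lie algebra analogue of Theorem~\ref{solv}(3): every semisimple element of $\he$ is $\Ad_{\Hs}$-conjugate to some element of $\ttt$. This gives $M \in \Hs$ and $w'' \in \ttt$ with $x_s = \Ad_M(w'')$. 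Setting $n' := \Ad_{M^{-1}}(x_n)$, we obtain $n' \in \he_n$, $[w'', n'] = \Ad_{M^{-1}}[x_s, x_n] = 0$, and
$$\Ad_M(w'' + n') = x_s + x_n = x.$$

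It remains to show $w'' = w$. This follows from the observation already used in the proof of Lemma~\ref{etreg}: for any $y \in \he$ and $M \in \Hs$, one has $\Ad_M(y) - y \in [\he,\he] \subset \he_n$. Taking $y = w'' + n'$ gives $x - (w'' + n') \in \he_n$; decomposing along $\he = \ttt \oplus \he_n$, the $\ttt$-components must agree, forcing $w = w''$. Thus $x = \Ad_M(w + n')$ with $n' \in \he_n$ and $[w, n'] = 0$, as desired.

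The principal obstacle is justifying the Lie algebra version of the torus conjugacy statement in the second step. For classical matrix groups this is transparent, and in general it can be deduced from the conjugacy of Cartan subalgebras of $\he$ together with the fact that a semisimple element lies in some Cartan subalgebra, which for a connected solvable algebraic group coincides with the Lie algebra of a maximal torus. A more elementary alternative, avoiding abstract Jordan decomposition altogether, would be an induction on the nilpotency class of $\he_n$: use the $\ad_w$-stable decomposition $\he_n = C_{\he_n}(w) \oplus [w, \he_n]$ to write $n = n_0 + [w, y_0]$ and verify that $\Ad_{\exp(-y_0)}(w + n_0) \equiv w + n$ modulo $[\he_n, \he_n]$, then iterate down the descending central series of $\he_n$.
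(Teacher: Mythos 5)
Your proof is correct and follows essentially the same route as the paper's: take the Jordan decomposition $x=x_s+x_n$, conjugate the semisimple part into $\ttt$, set $n'=\Ad_{M^{-1}}(x_n)$, and recover $\w$ from the fact that conjugation moves elements only within $[\he,\he]\subset\he_n$ together with the decomposition $\he=\ttt\oplus\he_n$. The step you flag as the principal obstacle (the Lie-algebra version of conjugating a semisimple element into $\ttt$) is handled in the paper by the same appeal to Theorem~\ref{solv}, so your additional justification via Cartan subalgebras only makes the argument more complete.
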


\begin{proof}
  We have the Jordan decomposition (see \cite[Theorem 4.4]{Borel}) $x = x_s + x_n$, where $x_s$ is semisimple, $x_n$ is nilpotent and $[x_s,x_n]=0$. Then by Theorem~\ref{solv} the element $x_s$ is conjugate to an element of $\ttt$. Hence there exists an $M\in \Hs$ such that $\Ad_{M^{-1}}(x_s) \in \ttt$. Note that
  \begin{align*}\Ad_{M^{-1}}(x_s) - x_s\in [\he,\he]\end{align*}
  as in the proof of Lemma~\ref{etreg}. Moreover, 
  \begin{align*} x_s-\w = (x-x_n) - (x-n) = n-x_n\in \he_n.\end{align*}
  As $[\he,\he]\subset \he_n$ by Theorem~\ref{solv}, we therefore get $\Ad_M^{-1}(x_s) - \w \in \he_n$. As both $\Ad_{M^{-1}}(x_s)$ and $\w$ lie in $\ttt$, we get that they are equal. Therefore, putting $n' = \Ad_{M^{-1}}x_n$ we get
$$x = x_s + x_n = \Ad_M(\w) + \Ad_M(n') = \Ad_M(\w + n'),$$
and the conditions are satisfied.
\end{proof}

\noindent
Note that if $\w\in\ttt^\reg:=\ttt\cap \he^\reg$ is a regular element in $\ttt$, then the only nilpotent $n'$ commuting with $\w$ is $0$. Therefore, we get the following. 

\begin{corollary}\label{corre}
For every $\w\in\ttt^\reg$ and $n\in\he_n$, the elements $\w$ and $n+\w$ are conjugate.
\end{corollary}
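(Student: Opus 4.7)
The plan is to obtain the corollary as an immediate consequence of Theorem~\ref{jordan} together with a dimension count using the regularity of $\w$. First I would apply Theorem~\ref{jordan} to the element $x = \w + n$: since $\w\in\ttt$ and $n\in\he_n$, the theorem yields some $M\in\Hs$ with
\[ \w + n = \Ad_M(\w + n'), \qquad n'\in\he_n, \quad [\w,n']=0. \]
The task then reduces to showing that the only $n'\in\he_n$ commuting with $\w$ is $n'=0$.

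Next I would exploit the regularity assumption. By definition $\w\in\ttt^{\reg}$ means $\dim C_{\he}(\w) = r = \dim \Ts$. Since $\ttt$ is abelian we have $\ttt\subseteq C_{\he}(\w)$, and because $\dim \ttt = r$ this inclusion is an equality: $C_{\he}(\w) = \ttt$. Invoking the decomposition $\he = \ttt \oplus \he_n$ recorded in Remark~\ref{nilalg}, I conclude
\[ C_{\he}(\w)\cap \he_n = \ttt \cap \he_n = 0. \]
Hence $n'=0$, and therefore $\w + n = \Ad_M(\w)$, which is exactly the claim that $\w$ and $\w+n$ are conjugate.

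There is really no serious obstacle: the corollary is essentially a clean book-keeping step once Theorem~\ref{jordan} and the direct sum $\he = \ttt\oplus\he_n$ are in hand. The only point that requires a moment of thought is the identification $C_{\he}(\w) = \ttt$, which follows from the abelianness of $\ttt$ combined with the dimension equality forced by regularity; given this, the vanishing of $n'$ is automatic from the direct sum decomposition.
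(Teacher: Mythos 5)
Your proof is correct and follows the paper's own route: the paper likewise deduces the corollary from Theorem~\ref{jordan} by observing that for $\w\in\ttt^{\reg}$ the only nilpotent commuting with $\w$ is $0$, which forces $n'=0$. Your identification $C_{\he}(\w)=\ttt$ (from $\ttt\subseteq C_{\he}(\w)$ and the dimension count) together with $\ttt\cap\he_n=0$ is exactly the justification the paper leaves implicit, so this is the same argument, just spelled out.
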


\begin{example}\label{exjor} Let us give two examples for $\Hs = \Bs_3$, the Borel subgroup (of upper-triangular matrices) of $\SL_3(\C)$. Let the principal nilpotent element $e$ be of the form
$$e = \begin{pmatrix}
 0 & 1 & 0\\
 0 & 0 & 1\\
 0 & 0 & 0
\end{pmatrix}.$$
\begin{enumerate}
\item Let $\w\in\ttt$ be of the form $\w = \diag(0,v_1,v_2) - \frac{v_1+v_2}{3}I_3$ with $v_1\neq 0$, $v_2\neq 0$, $v_1\neq v_2$. Then note that the matrix $e+\w$ is diagonalisable in the basis defined by the matrix
$$
M_{\w} =
\begin{pmatrix}
 1 & \frac{1}{v_1} & \frac{1}{v_2(v_2-v_1)} \\
 0 & 1 & \frac{1}{v_2-v_1} \\
 0 & 0 & 1 
\end{pmatrix}; 
$$
\textit{i.e.}
$
 e + \w = 
M_{\w} \w M_{\w}^{-1}
$.

\item Consider the matrix $e+\w$, where $\w\in\ttt$ is of the form $\w = \diag(0,v_1,0) - \frac{v_1}{3}I_3$ with $v_1\neq 0$.
If we take
$$M_{\w} =
\begin{pmatrix}
 1 & \frac{1}{v_1} & 0 \\
 0 & 1 & 1 \\
 0 & 0 & -v_1 
\end{pmatrix},$$
then 
$$
\begin{pmatrix}
 0 & 1 & 0 \\
 0 & v_1 & 1 \\
 0 & 0 & 0
\end{pmatrix} = 
M_{\w}
\begin{pmatrix}
 0 & 0 & 1 \\
 0 & v_1 & 0 \\
 0 & 0 & 0
\end{pmatrix}
M_{\w}^{-1}.
$$
Therefore, for $e+\w = \begin{pmatrix}
 0 & 1 & 0 \\
 0 & v_1 & 1 \\
 0 & 0 & 0
\end{pmatrix} - \frac{v_1}{3}I_3$ we get 
$$(e+\w) = M_{\w}
\begin{pmatrix}
 -v_1/3 & 0 & 1 \\
 0 & 2v_1/3 & 0 \\
 0 & 0 & -v_1/3
\end{pmatrix} M_{\w}^{-1}.$$
The matrix $M_{\w}$ used here does not have determinant $1$. We can however multiply it by any cubic root of $v_1^{-1}$ to get a matrix from $\Bs_3$.
\end{enumerate}
In the case of $\Hs = \Bs_n$, one can apply the following intuition. If $\w$ is a regular element of $\ttt$, then it is a diagonal matrix with distinct eigenvalues. Then if we add any upper-triangular matrix, it is still diagonalisable. Moreover, as all the entries are on or above the diagonal, we can diagonalise it by conjugating with an upper-triangular
matrix.  
\end{example}

\begin{remark}
 Even for $\Hs = \Bs_m$, the Borel subgroup of $\SL_m$, we cannot require $\w+n'$ from Theorem~\ref{jordan} to be of the classical Jordan form under no additional assumption on $x$. Even for $\w = 0$, there is an infinite number of nilpotent orbits of adjoint action of $\Bs_m$ on $\bb_m$ for $m\ge 6$; see \cite{DjoMal}.
 One can prove that if $x$ is a regular matrix, then we can actually find an $n'$ which is a nilpotent Jordan matrix.
\end{remark}

\subsubsection{Reductive groups}
Assume that $\Gs$ is a reductive group. Let $\Ts$ be its maximal torus, $\Bs$ a Borel subgroup containing $\Ts$, $\Bs^-$ the opposite Borel, $\U$ and $\U^-$ the respective unipotent subgroups. Let $\geg$, $\ttt$, $\bb$, $\bb^-$, $\uu$, $\uu^-$ be the corresponding Lie algebras.  Let $(e,f,h)$ be a principal $\ssl_2$-triple in $\geg$ such that $e\in \uu$, $f\in\uu^-$, $h\in \ttt$. Let
$$\Ss = e + C_\geg(f)$$
be the Kostant section.
 
\begin{lemma}\label{lembal}
 Under the assumptions above 
 $$ \Ad_{-}(-)\colon  \U^- \times \Ss \lra e+\bb^-$$
 is an isomorphism.
\end{lemma}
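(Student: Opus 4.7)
The plan is to equip both sides with compatible contracting $\Cs$-actions via the principal $\tau:\Cs \to \Gs$ integrating $h$, check $\Ad_{-}(-)$ is $\Cs$-equivariant, verify its differential at the unique fixed point is an isomorphism, and conclude by a graded Nakayama argument. The key input is the \emph{principal grading} $\geg = \bigoplus_{i \in 2\Z}\geg_i$ coming from $\ad h$ (only even weights, since $(e,f,h)$ is principal), under which $\ttt = \geg_0$, $\uu = \bigoplus_{i \ge 2}\geg_i$ and $\uu^- = \bigoplus_{i \le -2}\geg_i$.

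First I verify the map is well-defined. Lowest-weight vectors of irreducible $\ssl_2$-summands span $C_\geg(f)$, so $C_\geg(f) \subset \bigoplus_{i \le 0}\geg_i = \bb^-$ and hence $\Ss \subset e + \bb^-$. For $v \in \uu^-$ I have $[v,e] \in \bb^-$ (the bracket shifts the grading by $+2$ from a starting weight $\le -2$), and $[\bb^-,\bb^-] \subset \bb^-$ since $\bb^-$ is a Lie subalgebra; expanding $\Ad_{\exp v}(e+c) = e + [v,e] + c + \tfrac{1}{2}[v,[v,e+c]] + \cdots$ then shows the image lies in $e+\bb^-$. Next I equip $\Ss$ and $e+\bb^-$ with $\lambda\cdot x = \lambda^{-2}\Ad_{\tau(\lambda)}(x)$ and $\U^-$ with $\lambda\cdot g = \tau(\lambda)g\tau(\lambda)^{-1}$; since $\Ad_{\tau(\lambda)}(e) = \lambda^2 e$, each of these spaces is preserved, and $\Ad_{-}(-)$ is $\Cs$-equivariant by direct computation. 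All $\Cs$-weights on the relevant tangent spaces are strictly negative ($-2+i \le -2$ on $\geg_i \subset \bb^-$, $i \le -2$ on $\geg_i \subset \uu^-$, and $-2 - 2m_k < 0$ on the summand of $C_\geg(f)$ of $\ad h$-weight $-2m_k$), so each side is contracted to its unique fixed point, $(1,e)$ and $e$ respectively.

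The differential of $\Ad_{-}(-)$ at $(1,e)$ is $(v,c) \mapsto [v,e] + c:\uu^- \oplus C_\geg(f) \to \bb^-$. Injectivity of $[e,-]|_{\uu^-}$ holds because $C_\geg(e) \subset \bigoplus_{i \ge 0}\geg_i$ meets $\uu^-$ trivially; $[e,\uu^-] \cap C_\geg(f) = 0$ follows from Kostant's decomposition $\geg = [e,\geg] \oplus C_\geg(f)$; and the dimensions satisfy $\dim[e,\uu^-] + \dim C_\geg(f) = \dim\uu^- + r = \dim\bb^-$. Hence $\bb^- = [e,\uu^-] \oplus C_\geg(f)$ and the differential is a linear isomorphism.

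To conclude, both $\U^- \times \Ss$ and $e+\bb^-$ are (via $\exp$ and translation) affine spaces with linear contracting $\Cs$-actions, so their coordinate rings are polynomial rings with strictly negatively-graded augmentation ideals $\m$, and $\m/\m^2$ is identified with the cotangent space at the fixed point. The pullback $\phi^*$ is a graded $\C$-algebra map which is an isomorphism on $\m/\m^2$ by the previous paragraph; by graded Nakayama it is surjective, and since the multisets of $\Cs$-weights on the two tangent spaces agree, the Hilbert series of source and target coincide, forcing $\phi^*$ to be an isomorphism. The main technical obstacle is this last step --- upgrading the infinitesimal isomorphism at the fixed point to a global one --- but the contracting-$\Cs$-action framework makes it a routine graded Nakayama argument once the preceding pieces are in place.
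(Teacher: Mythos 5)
Your proof is correct, but it takes a genuinely different route from the paper's. The paper does not argue from scratch: for semisimple $\Gs$ it simply invokes Kostant's theorem that $\Ad_{-}(-):\U^-\times\Ss\to e+\bb^-$ is an isomorphism (\cite[Theorem 1.2]{Kost}; see also \cite[Theorem 7.5]{Ginz}), and the actual content of its proof is the reduction of the general reductive case to the semisimple one: writing $\geg=Z(\geg)\oplus\geg^{\ad}$, identifying $\U^-$ with its image in the adjoint group $\Gs^{\ad}$, and splitting both sides of the map as a product with the centre, on which the adjoint action is trivial. You instead re-prove the black box directly: the contracting action $\lambda\cdot x=\lambda^{-2}\Ad_{\tau(\lambda)}(x)$ (the same one the paper uses elsewhere), the computation $d\phi_{(1,e)}(v,c)=[v,e]+c$ combined with $\geg=[e,\geg]\oplus C_\geg(f)$ and $C_\geg(e)\cap\uu^-=0$ to obtain $\bb^-=[e,\uu^-]\oplus C_\geg(f)$, and the graded Nakayama/Hilbert-series upgrade from the fixed point; note that the equality of weight multisets you need at the end is automatic, since $d\phi$ at the fixed point is a $\Cs$-equivariant isomorphism (equivalently, it is visible from your decomposition of $\bb^-$). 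This is essentially the standard proof of Kostant's statement --- Ginzburg's argument in \cite{Ginz} runs along the same contracting-$\Cs$ lines --- so what your approach buys is self-containedness and a uniform treatment of reductive (not merely semisimple) $\Gs$, the centre entering only as weight $-2$ directions inside $C_\geg(f)$, which makes the paper's $\Gs^{\ad}$-splitting diagram unnecessary; what the paper's approach buys is brevity, at the cost of the bookkeeping with $Z(\geg)$. Two small points to tidy: the first-order term in the expansion of $\Ad_{\exp v}(e+c)$ is $[v,e+c]$, not $[v,e]$ (harmless, since $[v,c]\in\bb^-$ as well), and the surjectivity step labelled ``graded Nakayama'' hides the usual induction on degree (cf.\ Corollary~\ref{cornak} in the appendix), which is routine but worth writing out once.
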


\begin{proof}
If $\Gs$ is semisimple, then the map
 $$ \Ad_{-}(-)\colon  \U^- \times \Ss \lra e+\bb^-$$
is an isomorphism (see \cite[Theorem 1.2]{Kost}; see also another proof in \cite[Theorem 7.5]{Ginz}).

Now if $\Gs$ is an arbitrary reductive group, let $\Gs^{\ad}$ be its adjoint group, and let $\pi\colon \Gs\to \Gs^{\ad}$ be the quotient map. From \cite[Proposition 17.20]{Milne} we have that $\pi(\Bs)$ and $\pi(\U^-)$ are Borel and maximal unipotent in $\Gs^{\ad}$, respectively. Note that $\ker \pi = Z(\Gs)$ and the connected component of $Z(\Gs)$ is a torus (\textit{cf.} \cite[Proposition 19.12]{Milne}). As a torus contains no non-trivial unipotent elements, we have $\ker \pi\cap U^- = \{1\}$. Therefore, $\pi|_{\U^-}$ is an isomorphism $\U^-\cong \pi(\U^-)$. We then know from the above that
$$\Ad_{-}(-)\colon  \pi(\U^-) \times \Ss_{\Gs^{\ad}} \lra e+\bb^-_{\Gs^{\ad}}$$
is an isomorphism. From \cite[Theorem II.11]{Jac} we can identify $\geg^{\ad}$ with an ideal inside $\geg$ such that $\geg = Z(\geg) \oplus \geg^{\ad}$. Then we have 
$$\pi(\U^-) \times \Ss_{\Gs} \cong (\pi(\U^-) \times \Ss_{G^{\ad}}) \times Z(\geg)$$
and 
$$\bb^-_{\Gs} = \bb^-_{\Gs^{\ad}}\times Z(\geg).$$
As the adjoint representation is trivial on the center of a Lie algebra, we have the following diagram, where the middle column is the product of the left and right and the horizontal arrows are the projections: 
$$
\begin{tikzcd}
\pi(\U^-) \times \Ss_{\Gs^{\ad}} \arrow[dd, "\Ad_{-}(-)"]
& \arrow[l, twoheadrightarrow] \U^- \times \Ss_{\Gs} \arrow[dd, "\Ad_{-}(-)"] \arrow[r, twoheadrightarrow] &
Z(\geg) \arrow[dd, "="] 
\\ \\
e+\bb^-_{\Gs^{\ad}} &
\arrow[l, twoheadrightarrow] e+\bb^-_{\Gs} \arrow[r, twoheadrightarrow] 
&
Z(\geg)\rlap{.}
\end{tikzcd}
$$
As the outer vertical arrows are isomorphisms, we get that  for $\Gs$ the map
$$ \Ad_{-}(-)\colon  \U^- \times \Ss \lra e+\bb^-$$
is also an isomorphism.
\end{proof}
\noindent
Let us now consider the preimage of $e+\ttt$, and for any $\w\in\ttt$ denote by $A(\w)\in \U^-$, $\chi(\w)\in \Ss$ the elements such that
 \begin{equation} \label{adchi0}
 \Ad_{A(\w)}(e+\w) = \chi(\w).
 \end{equation}
 Note that we have two inclusions of affine spaces $\Ss\into \geg$ and $e+\ttt \into \geg$. The former  induces the isomorphism $\Ss\cong \geg/\!\!/\Gs$, \textit{i.e.} $\C[\geg]^\Gs\xrightarrow{\cong} \C[\Ss]$ (by \cite[Section 4.7, Theorem 7]{Kostsec}). The latter induces a map $\C[\geg]^\Gs\to \C[e+\ttt]$. However, a regular element $\w\in\ttt$ is conjugate to $\w+e$ (see Corollary~\ref{corre}). Let us then consider the composition $\C[\geg]^\Gs\to \C[e+\ttt]\to \C[\ttt]$, where the last map comes from translation by $e$. It is equal to the map $\C[\geg]^\Gs\to \C[\ttt]$ coming from the inclusion $\ttt\to\geg$ -- as the dual maps of schemes agree on a dense subset of $\ttt$. 
 
 Note that if we compose $\chi^*\colon  \C[\Ss]\to \C[\ttt]$ with the isomorphism $\C[\geg]^\Gs\to \C[\Ss]$ described above, then we get the composite map above $\C[\geg]^\Gs\to \C[\ttt]$, which  we now know is induced by the inclusion $\ttt\to\geg$. By Chevalley's restriction theorem (\textit{cf.} \cite[Theorem 3.1.38]{ChGi}), this map is an inclusion whose image is $\C[\ttt]^\W$.\footnote{\label{footchev}Chevalley's theorem is originally formulated for semisimple groups. However, if we again consider $\geg^{\ad}$ as an ideal of $\geg$ such that $\geg = \geg^{\ad}\oplus Z(\geg)$, we have
\[ \C[\geg]^\Gs = \C[\geg^{\ad}\oplus Z(\geg)]^{\Gs^{\ad}} = \C[\geg^{\ad}]^{\Gs^{\ad}} \oplus Z(\geg) = \C[\ttt\cap\geg^{\ad}]^\Ws \oplus Z(\geg) = \C[\ttt]^\Ws,\]
 where the third equality follows from Chevalley's original theorem for $\Gs^{\ad}$.}
 Therefore, we get the following. 

 \begin{proposition}\label{isoquot}
 \label{kostiso}
  The map $\chi\colon \ttt\to\Ss$ defined by the property \eqref{adchi0} induces an isomorphism $\ttt/\!\!/\W\to \Ss$.
 \end{proposition}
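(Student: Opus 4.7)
The proposition is essentially proved in the paragraph immediately preceding it; the plan is to organize that discussion into a clean argument that identifies $\chi^*$ with the Chevalley restriction map via the Kostant isomorphism.

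First I would check that $\chi:\ttt\to\Ss$ is a well-defined morphism of varieties. This is immediate from Lemma~\ref{lembal}: for every $\w\in\ttt$ the element $e+\w$ lies in $e+\bb^-$, so the isomorphism $\U^-\times\Ss\to e+\bb^-$ produces a unique $(A(\w),\chi(\w))$ with $\Ad_{A(\w)}(e+\w)=\chi(\w)$, depending algebraically on $\w$. Since $\chi(\w)$ is obtained from $e+\w$ by an adjoint action, for any $\Gs$-invariant $P\in\C[\geg]^\Gs$ we have
\[
P(\chi(\w)) = P(\Ad_{A(\w)}(e+\w)) = P(e+\w),
\]
so $\chi^*$ fits into a commutative diagram with the two inclusions $\Ss\hookrightarrow\geg$ and $e+\ttt\hookrightarrow\geg$.

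The second step is to replace $e+\ttt$ by $\ttt$. By Corollary~\ref{corre}, for every regular $\w\in\ttt^{\reg}$ the elements $\w$ and $e+\w$ are $\Hs$-conjugate (applied inside a Borel subgroup of $\Gs$, using $\ttt^{\reg}$ open and dense in $\ttt$), so $P(e+\w)=P(\w)$ on the dense set $\ttt^{\reg}$ and hence on all of $\ttt$ by continuity. Consequently the composition
\[
\C[\geg]^\Gs \xrightarrow{\ \cong\ } \C[\Ss] \xrightarrow{\ \chi^*\ } \C[\ttt]
\]
coincides with the classical restriction map along $\ttt\hookrightarrow\geg$, where the first arrow is the Kostant isomorphism $\C[\geg]^\Gs\xrightarrow{\cong}\C[\Ss]$ recalled in Theorem~\ref{kostant2}.

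The conclusion follows from Chevalley's restriction theorem (cited in footnote~\ref{footchev}), which asserts that the restriction $\C[\geg]^\Gs\to\C[\ttt]$ is injective with image exactly $\C[\ttt]^\W$. Combined with the fact that Kostant's arrow is an isomorphism, this forces $\chi^*:\C[\Ss]\to\C[\ttt]$ to be injective with image $\C[\ttt]^\W$, i.e. $\chi$ induces an isomorphism $\ttt/\!\!/\W\xrightarrow{\cong}\Ss$ as claimed. There is no real obstacle here beyond carefully chaining the three ingredients (Lemma~\ref{lembal}, Corollary~\ref{corre}, and Chevalley/Kostant); the only small point requiring attention is verifying that the equality $P(e+\w)=P(\w)$ is first established on the dense open locus $\ttt^{\reg}$ and then extended by continuity, so that the identification with Chevalley restriction is valid as an equality of morphisms.
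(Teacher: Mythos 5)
Your proposal is correct and follows essentially the same route as the paper: identify $\chi^*$ composed with the Kostant isomorphism $\C[\geg]^\Gs\cong\C[\Ss]$ with the restriction along $\ttt\hookrightarrow\geg$, using Corollary~\ref{corre} on the dense locus $\ttt^{\reg}$ and then Chevalley's restriction theorem. The only stylistic difference is that you make explicit the application of Corollary~\ref{corre} inside a Borel subgroup and the density argument, which the paper treats in passing.
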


\subsubsection{Principally paired groups}\label{kostgensec}
Now let $\Hs$ be any principally paired group. Let $\Ns$ be the unipotent radical of $\Hs$. Then $\Ns$ is a normal subgroup of $\Hs$, and $\Hs/\Ns$ is reductive. Let $\Ls\subset \Hs$ be any \emph{Levi subgroup}, \textit{i.e.} a section of $\Hs\to \Hs/\Ns$. By Mostow's Levi decomposition (see \cite{Mostow}), we can take for $\Ls$ any maximal reductive subgroup of $\Hs$. We have $\Hs = \Ns\rtimes \Ls$ and hence $\he = \nen \oplus \lel$, where $\he$, $\nen$, $\lel$ are the Lie algebras of $\Hs$, $\Ns$, $\Ls$, respectively. Let $r$ be the dimension of a maximal torus.

Assume that $(e,h)$ is an integrable principal $\bb(\ssl_2)$-pair within $\he$, and let $\{H^t\}$ be the embedding of $\Cs$ to which $h$ integrates. We can choose $\Ls$ such that $h\in\lel$; hence we will assume this from now on. We then have $e = e_n + e_l$, where $e_n\in\nen$, $e_l\in\lel$. Let us consider, by the Jacobson--Morozov theorem (\textit{cf.} Section~\ref{sl2p}), the $\ssl_2$-triple $(e_l,f_l,h_l)$ within $\lel$. 

\begin{lemma}\label{redpartreg}
 For $\Hs$ and $(e,h)$ as above, $e_l$ is a regular element of\, $\lel$.
\end{lemma}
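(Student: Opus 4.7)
The plan is to bound $\dim C_\lel(e_l)$ from above by $r$ using the regularity of $e$ in $\he$, and from below by $r$ using the reductivity of $\lel$, so that $e_l$ is forced to be regular in $\lel$.

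First, I will exploit that $\nen$ is an ideal of $\he$ to split the equation $[x,e]=0$ for $x = x_l + x_n$ into the $\lel$-component $[x_l, e_l] = 0$ and the $\nen$-component $M(x_n) = [x_l, e_n]$, where $M := \ad_e|_\nen : \nen \to \nen$ is well-defined because $\nen$ is an ideal. This description makes the projection $p : C_\he(e) \to C_\lel(e_l)$, $x \mapsto x_l$, a surjection onto $\ker \phi$ with kernel $\ker M = C_\nen(e)$, where $\phi : C_\lel(e_l) \to \nen/\im M$ is the linear map induced by $x_l \mapsto [x_l, e_n]$. Applying rank-nullity to both $p$ and $\phi$ yields
$\dim C_\he(e) = \dim \ker \phi + \dim \ker M$ and $\dim C_\lel(e_l) = \dim \ker \phi + \dim \im \phi$, and the trivial estimate $\dim \im \phi \leq \dim(\nen/\im M) = \dim \ker M$ then gives $\dim C_\lel(e_l) \leq \dim C_\he(e) = r$, the last equality being the regularity assumption on $e$.

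The reverse inequality $\dim C_\lel(e_l) \geq r$ comes from the standard fact that in any reductive Lie algebra every centralizer has dimension at least the rank: Jordan decomposition reduces to the nilpotent case, and for nilpotent $x_n \in \lel$ one has $\dim \ker \ad_{x_n} \geq \dim \lel^{h_l} \geq \rank \lel$ via Jacobson--Morozov and elementary $\ssl_2$-representation theory. Note that $\rank \lel = r$ because the chosen Levi $\Ls$ contains a maximal torus of $\Hs$. The conceptual pivot, and the only nontrivial step, is spotting the numerical coincidence $\dim \im \phi \leq \dim \ker M$; once it is in hand, regularity of $e$ in $\he$ transfers automatically to regularity of $e_l$ in $\lel$, and the rest is routine bookkeeping with the Levi decomposition $\Hs = \Ns \rtimes \Ls$.
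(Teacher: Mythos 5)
Your proof is correct and rests on the same key observation as the paper's: since $\nen$ is an ideal, the $\lel$-component of $[e,x]$ is $[e_l,x_l]$, so the regularity of $e$ in $\he$ forces $\dim C_\lel(e_l)\le r$, and the standard lower bound $\dim C_\lel(e_l)\ge \rank \lel = r$ for the reductive algebra $\lel$ finishes the argument. The only difference is presentational: the paper obtains the upper bound in one line on the image side, noting $[e,\he]\subset \nen\oplus[e_l,\lel]$ and comparing codimensions, while your bookkeeping with $p$, $\phi$ and $M$ is the dual, kernel-side version of the same estimate (and your explicit $\ssl_2$-theoretic proof of the lower bound makes precise what the paper leaves to the general remark in its section on regular elements).
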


\begin{proof}
We know that $e$ is a regular element of $\he$. This means that $[e,\he]$ is of codimension $r$ in $\he$. But note that $[e,\he] \subset \nen \oplus [e_l,\lel]$ as $\nen$ is an ideal. Therefore, $[e_l,\lel]$ is of codimension at most $r$ in $\lel$. Therefore, $\dim C_\lel(e_l) \le r$; hence actually $\dim C_\lel(e_l) = r$, and $e_l$ is regular in $\lel$.
\end{proof}

Now, let $\Bs_l$ be a Borel subgroup of $\Ls$ whose Lie algebra contains $e_l$ and $h$, and inside it let $\Ts$ be a torus whose Lie algebra contains $h$. In fact, $\Bs_l$ is defined uniquely by those properties; see \cite[Proposition 3.2.14]{ChGi}.  Let $\Bs = \Ns\rtimes \Bs_l$ -- it is easy to see that $\Bs$ is then a Borel subgroup of $\Hs$. Let $\U$ be its subgroup of unipotent elements. Given $\Bs_l$ and $\Ts$, let $\Bs_l^-$ be the opposite Borel subgroup of $\Ls$ and $\U_l$, $\U_l^-$ the unipotent subgroups of $\Bs_l$ and $\Bs_l^-$. By $\bb$, $\bb_l$, $\ttt$, $\bb_l^-$, $\uu$, $\uu_l$, $\uu_l^-$, we denote the corresponding Lie algebras. Let $\Ws$ be the Weyl group of $\Hs$ (equal to the Weyl group of $\Ls$).

\begin{lemma}\label{posint}
 All of the weights of the $\{H^t\}$-action on $\uu$ are positive even integers.
\end{lemma}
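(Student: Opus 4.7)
The plan is to work inside the Borel $\Bs$, which is solvable and already contains $e \in \uu$ and the one-parameter subgroup $\{H^t\}$, and to reduce the computation of $h$-weights on $\uu$ to the smaller datum of $\ttt$-weights appearing in the abelianisation $\uu^{\mathrm{ab}} := \uu / [\uu,\uu]$. First, since $\Ts \subset \Bs$ is a maximal torus of $\Hs$ and $e \in \bb$, the regularity of $e$ in $\he$ implies regularity in $\bb$ (cf.\ Section~\ref{secreg}), i.e.\ $\dim C_\bb(e) = r = \dim \ttt$. Decompose $\uu = \bigoplus_\alpha \uu_\alpha$ into $\ttt$-weight spaces and write $e = \sum_\alpha e^\alpha$ with $e^\alpha \in \uu_\alpha$; comparing $\ttt$-weights in $[h, e] = 2e$ immediately yields $\alpha(h) = 2$ for every $\alpha$ with $e^\alpha \neq 0$.

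The crucial step is then to use regularity of $e$ in $\bb$ to identify this set of $\alpha$'s with the set of $\ttt$-weights appearing in $\uu^{\mathrm{ab}}$. Since $[\bb, \bb] \subset \uu$, the operator $\mathrm{ad}(e)$ maps $\bb$ into $\uu$, and the equality $\dim \bb - \dim C_\bb(e) = \dim \uu$ forces $\mathrm{ad}(e)(\bb) = \uu$. Composing with the quotient $\uu \twoheadrightarrow \uu^{\mathrm{ab}}$ and noting that $\mathrm{ad}(e)(\uu) \subset [\uu,\uu]$, the induced map $\ttt \to \uu^{\mathrm{ab}}$, $t \mapsto -\sum_\alpha \alpha(t)\, \bar e^\alpha$, is still surjective. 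For any weight $\alpha$ of $\uu^{\mathrm{ab}}$ this forces $\bar e^\alpha \neq 0$ (and hence $e^\alpha \neq 0$, so $\alpha(h) = 2$ by the previous paragraph) and also $\dim \uu^{\mathrm{ab}}_\alpha = 1$ (otherwise the image above, restricted to $\uu^{\mathrm{ab}}_\alpha$, could not exceed the line $\C \bar e^\alpha$).

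Finally, I would invoke the standard fact that a nilpotent Lie algebra $\uu$ is generated by any lift of a spanning set of $\uu^{\mathrm{ab}}$: picking weight-vector lifts $v_1, \ldots, v_d \in \uu$ of the weights $\alpha_1, \ldots, \alpha_d$ of $\uu^{\mathrm{ab}}$, every element of $\uu$ is a linear combination of iterated brackets $[v_{i_1}, [v_{i_2}, [\cdots, v_{i_k}] \cdots]]$, whose $\ttt$-weight is $\alpha_{i_1} + \cdots + \alpha_{i_k}$. Hence every weight of $\uu$ is such a sum with $k \ge 1$, and evaluating at $h$ gives $2k \ge 2$, a positive even integer. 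The heart of the argument is the middle step — translating regularity of $e$ in $\bb$ into both $\bar e^\alpha \neq 0$ for every weight of $\uu^{\mathrm{ab}}$ and one-dimensionality of each weight space; the first and last steps are, respectively, routine $\ttt$-weight bookkeeping and a standard nilpotent-generation lemma.
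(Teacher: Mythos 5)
Your proof is correct, but it takes a genuinely different route from the paper's. Both arguments rest on the same two inputs: regularity of $e$ in $\bb$ (giving $\ad(e)(\bb)=\uu$ via the dimension count $\dim\bb-\dim C_\bb(e)=\dim\uu$) and the relation $[h,e]=2e$. The paper exploits these directly by an infinite-descent argument: choosing $h$-weight vectors, it notes that $[e,-]$ raises $h$-weights by $2$, so any weight vector $w\in\uu$ whose weight is not a positive even integer could be pulled back through $[e,-]$ indefinitely (each preimage having nonzero weight, hence lying in $\uu$ rather than $\ttt$), producing infinitely many distinct weights in a finite-dimensional space. You instead pass to the abelianisation $\uu/[\uu,\uu]$: since $\ad(e)(\uu)\subset[\uu,\uu]$, surjectivity of $\ad(e)$ on $\bb$ descends to surjectivity of $t\mapsto -\sum_\alpha\alpha(t)\bar e^\alpha$ from $\ttt$, which pins the $\ttt$-weights of $\uu/[\uu,\uu]$ to those $\alpha$ with $e^\alpha\neq 0$, hence $\alpha(h)=2$ by comparing components in $[h,e]=2e$; you then propagate to all of $\uu$ via the standard fact that the nilpotent algebra $\uu$ is generated by weight-vector lifts of a spanning set of the abelianisation, weights adding under iterated brackets. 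Your route is slightly longer and needs the nilpotent-generation lemma (which the paper never states, though it is standard), but it buys more refined information: every $\ttt$-weight of $\uu$ is a sum of the ``simple'' weights occurring in $\uu/[\uu,\uu]$, each of those weight spaces is one-dimensional, and each evaluates to exactly $2$ on $h$ --- a structural statement reminiscent of the reductive case, of which the lemma's conclusion is an immediate corollary. The paper's descent is shorter and purely linear-algebraic, but yields only the weight positivity/parity claim itself.
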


\begin{proof}
As $e$ is regular in $\Hs$, it has to be regular in $\Bs$ as well. Therefore, $[e,\bb] = \uu$ (\textit{cf.} Section~\ref{secreg}).

We can choose a basis of $\bb$ which consists of eigenvectors of $[h,-]$. We then choose from it a subset $\{v_1,v_2,\dots,v_k\}$ such that $\{[e,v_i]\}_{i=1}^k$ forms a basis of $\uu$. Then $[e,-]$ is an isomorphism $\Span(v_1,\dots,v_k) \to \uu$. Let $\phi$ denote this restricted commutator operator $[e,-]$. For any $v\in\bb$ we have
$$[h,[e,v]] = [[h,e],v] + [e,[h,v]] = 2[e,v] + [e,[h,v]];$$
hence if $[h,v] = \lambda v$, we get $[h,[e,v]] = (\lambda+2)[e,v]$. Therefore, for an $h$-weight vector $v$, $\phi$ satisfies the condition
$$[h,v] = \lambda v \iff [h,\phi(v)] = (\lambda+2)\phi(v).$$
Let us consider a weight vector $w\in \uu$ such that $[h,w] = \lambda w$ and assume that $\lambda$ is not a positive even integer. We now know that $w = \phi(w_1)$ for some $w_1\in\bb$ with $[h,w_1] = (\lambda-2) w_1$. As $\lambda-2 \neq 0$, we have $w_1\in \uu$ (as $\ttt$ has only zero weights of $H^t$-action). Then analogously $w_1 = \phi(w_2)$ for $w_2\in\bb$ of weight $\lambda-4$. As again $\lambda-4\neq 0$, we get $w_2 = \phi(w_3)$, and we continue this procedure to get an infinite sequence $w=w_0$, $w_1$, $w_2, \dots$ such that $w_i$ is a weight vector of weight $w_i - 2i$. However, $\bb$ is finite-dimensional, so we get a contradiction.
\end{proof}

\noindent
For our principally paired $\Hs$, the role of the Kostant section will be played by 
\begin{align}\label{ppS} \Ss := e + C_{\lel}(f_l)\subset \he.\end{align}
Note that in case $\Hs$ is solvable, this is the same as what we consider in Section~\ref{secsolvkos}; \textit{i.e.} $\Ss = e +\ttt$, where $\ttt = \lel$ is the Lie algebra of a maximal torus.

\begin{lemma}
\label{congen}
The conjugation map
$$
\Ad_{-}(-)\colon  \U_l^- \times \Ss \lra e + \bb_l^-
$$
is an isomorphism.
\end{lemma}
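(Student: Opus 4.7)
My plan is to reduce this statement to Lemma~\ref{lembal} applied to the reductive Levi $\Ls$. Since $\Hs = \Ns\rtimes\Ls$ and $\U_l^-\subset \Ls$, conjugation by any $u\in\U_l^-$ preserves the decomposition $\he = \nen\oplus\lel$. Writing a general element of $\Ss$ as $x = e_n + (e_l + y)$ with $y\in C_\lel(f_l)$, we get
$$\Ad_u(x) = \Ad_u(e_n) + \Ad_u(e_l + y),$$
where the first summand lies in $\nen$ and the second in $\lel$. The target $e+\bb_l^- = e_n + (e_l + \bb_l^-)$ also decomposes with $\nen$-component equal to $e_n$. So the entire lemma will reduce to the following two ingredients: (i) $\Ad_u(e_n) = e_n$ for every $u\in\U_l^-$, and (ii) the map $\Ad_{-}(-) : \U_l^- \times (e_l + C_\lel(f_l)) \to e_l + \bb_l^-$ is an isomorphism. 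The latter is exactly Lemma~\ref{lembal} applied to the reductive group $\Ls$ with its principal $\ssl_2$-triple $(e_l,f_l,h_l)$, which exists by Lemma~\ref{redpartreg}.

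The main point is therefore (i), i.e.\ showing $[\uu_l^-, e_n] = 0$; this is where I expect the work to be. For this I will use weights of the $\{H^t\}$-action. Since $h\in\lel$ and $[h,e] = 2e$, and the decomposition $\he=\nen\oplus\lel$ is $h$-stable, we obtain $[h, e_n] = 2e_n$, so $e_n$ has $h$-weight $2$. By Lemma~\ref{posint}, all $h$-weights on $\uu = \nen\oplus\uu_l$ are strictly positive even integers; in particular all $h$-weights on $\nen$ are $\geq 2$. On the other hand $\uu_l^-$ is the sum of the negative root spaces of $\lel$ with respect to $\Ts$, so its $h$-weights are the negatives of the (positive even) weights on $\uu_l$, hence all strictly negative even integers. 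If $X\in\uu_l^-$ is a weight vector of weight $-2k$ with $k\geq 1$, then $[X,e_n]\in\nen$ (because $\nen$ is a Lie ideal and $X\in\lel$) has $h$-weight $2 - 2k \leq 0$. Since no nonzero element of $\nen$ can have nonpositive weight, $[X,e_n] = 0$. Hence $[\uu_l^-,e_n] = 0$, and exponentiating inside the connected unipotent group $\U_l^-$ gives $\Ad_u(e_n) = e_n$ for every $u\in\U_l^-$.

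Granted this, the lemma is straightforward. The map $\Ad_{-}(-):\U_l^-\times\Ss\to e+\bb_l^-$ factors as
$$\U_l^-\times\Ss \xrightarrow{(u,x)\,\mapsto\,(u,\,x - e_n)} \U_l^- \times (e_l + C_\lel(f_l)) \xrightarrow{\Ad_{-}(-)} e_l + \bb_l^- \xrightarrow{z\,\mapsto\,z + e_n} e + \bb_l^-,$$
where the first and third maps are translations (hence isomorphisms of affine spaces) and the middle map is an isomorphism by Lemma~\ref{lembal} applied to $\Ls$. Checking dimensions as a sanity check: $\dim(\U_l^-\times\Ss) = \dim\uu_l^- + \rank(\lel) = \dim\bb_l^- = \dim(e+\bb_l^-)$. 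Thus $\Ad_{-}(-):\U_l^-\times\Ss\to e+\bb_l^-$ is an isomorphism, as desired.
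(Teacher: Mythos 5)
Your proof is correct and follows essentially the same route as the paper: both reduce to Lemma~\ref{lembal} for the Levi $\Ls$ (using Lemma~\ref{redpartreg} for regularity of $e_l$), prove $[\uu_l^-,e_n]=0$ by the same $h$-weight argument via Lemma~\ref{posint} (weights on $\uu_l^-$ negative, $e_n$ of weight $2$ minimal in $\nen$), and then conclude by translating by $e_n$. The only differences are cosmetic (explicit factorization through translations and a dimension sanity check).
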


\begin{proof}
By Lemma~\ref{redpartreg} we know that the conjugation map
\begin{equation}
\label{ident}
\Ad_{-}(-)\colon  \U_l^- \times (e_l + C_\lel(f_l)) \lra e_l + \bb_l^-
\end{equation}
is an isomorphism. But note that the weights of the $\Ts$-action on $\uu_l^-$ are exactly the negatives of the weights on $\uu_l$. Hence by Lemma~\ref{posint}, evaluated on $h$ they are all negative even integers. As $\nen$ is an ideal in $\he$, we have
$$[\uu_l^-,e_n] \subset \nen.$$
However, we know (again from Lemma~\ref{posint}) that the $h$-weight of $e_n$ (equal to 2) is the lowest possible among the weights in $\nen$. All the $h$-weights in $[\uu_l^-,e_n]$ would be lower. Therefore, in fact $[\uu_l^-,e_n] = 0$. Hence $\U_l^-$ commutes with $e_n$.

Then we get the conclusion simply by adding $e_n$ to both sides of \eqref{ident}.
\end{proof}
Now note that we are given two one-parameter subgroups: $H^t$ and $H_l^t$, generated by $h$ and $h_l$, respectively. We show that they actually only differ by a center of $\Ls$.

\begin{lemma}\label{semcen}
 Let $\Gs$ be a reductive group and $e$ a regular nilpotent element in $\geg = \Lie(\Gs)$. Then the only semisimple elements in its centraliser $C_\geg(e)$ are the ones in the center $Z(\geg)$.
\end{lemma}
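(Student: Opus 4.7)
The plan is first to reduce to the semisimple case. Since $\Gs$ is reductive, we have $\geg = Z(\geg) \oplus \geg^{ss}$ with $\geg^{ss} = [\geg,\geg]$ semisimple and $Z(\geg)$ a toral (hence purely semisimple) abelian subalgebra. The nilpotent element $e$ must lie in $\geg^{ss}$, because its projection to $Z(\geg)$ would be simultaneously semisimple and nilpotent, hence zero. Correspondingly $C_\geg(e) = Z(\geg) \oplus C_{\geg^{ss}}(e)$, and a dimension count forces $e$ to be regular in $\geg^{ss}$ as well. Any semisimple element $s\in C_\geg(e)$ splits as $s = s_1+s_2$ with $s_1\in Z(\geg)$, $s_2\in C_{\geg^{ss}}(e)$; since $[s_1,s_2]=0$ and $s_1$ is semisimple, $s$ is semisimple iff $s_2$ is. Thus the lemma reduces to showing that in the semisimple case, $C_\geg(e)$ contains no nonzero semisimple element.

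Assuming now that $\geg$ is semisimple, I would invoke Proposition~\ref{reductive} to extend $e$ to an $\ssl_2$-triple $(e,f,h)$, which is principal by the argument of Remark~\ref{princesl2}. The key structural input is that, under the principal $\ssl_2$, the adjoint representation decomposes as $\geg \cong \bigoplus_{i=1}^r V(2m_i)$, with all exponents $m_i$ strictly positive (no trivial summand appears, because the $0$-weight space of $\ad h$ is a Cartan subalgebra of dimension $r$, equal to the number of irreducible summands). The subspace $C_\geg(e) = \ker(\ad e)$ is then spanned by the highest-weight vectors of these summands, and therefore lies entirely in the direct sum $\nen^+ := \bigoplus_{k>0} \geg_k$ of strictly positive $\ad h$-weight spaces, i.e.\ in the nilradical of the Borel subalgebra determined by the regular semisimple element $h$.

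The conclusion then follows because every element of $\nen^+$ is $\ad$-nilpotent (as $\ad h$ acts with positive eigenvalues on it), and in a semisimple Lie algebra $\ad$-nilpotency is equivalent to nilpotency of the element itself. Hence $C_\geg(e)$ consists entirely of nilpotent elements, so it contains no nonzero semisimple element, finishing the reduced claim and therefore the lemma. The only nontrivial step is the containment $C_\geg(e)\subset\nen^+$; once one sets up the principal $\ssl_2$ and observes that the exponents are positive, this is essentially a one-line consequence of $\ssl_2$-representation theory, so I do not expect any serious obstacle in the proof.
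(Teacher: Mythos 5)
Your reduction to the semisimple case and the principal-$\ssl_2$ framework are sound, but there is a genuine gap exactly at the step you yourself flag as the crux: the claim that all exponents $m_i$ are strictly positive. The dimension count you offer does not prove it: every irreducible summand $V(2m)$ -- including the trivial one $V(0)$ -- contributes exactly one line to the zero weight space of $\ad h$, so the equality $\dim\geg_0 = r = (\text{number of summands})$ holds whether or not a trivial summand occurs, and hence cannot rule one out. Worse, ruling out a trivial summand means precisely showing that there is no nonzero $v$ with $[h,v]=[e,v]=0$; since $\geg_0=\ker(\ad h)$ is a Cartan subalgebra, such a $v$ would be a nonzero semisimple element of $C_\geg(e)$ -- that is, the assertion is essentially the semisimple case of the lemma you are proving, so as written the argument is circular at its key point. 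The gap is repairable in two standard ways: either cite the fact that a principal $\ssl_2$-triple in a semisimple Lie algebra has trivial centraliser (equivalently, all exponents are $\geq 1$; this goes back to Kostant), or argue directly: after conjugation $e=x_1+\cdots+x_r$ is a sum of simple root vectors, $\geg_0$ is the corresponding Cartan subalgebra, and for $v\in\geg_0$ the relation $[v,e]=\sum_i\alpha_i(v)\,x_i=0$ forces $\alpha_i(v)=0$ for every simple root $\alpha_i$, whence $v\in Z(\geg)=0$.

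For comparison, the paper's proof avoids $\ssl_2$-representation theory altogether and is more elementary: it places the semisimple element $v$ and $e$ in a common Borel subalgebra $\bb$, chooses a maximal torus with $v\in\ttt$, uses regularity of $e$ in $\bb$ to get $[e,\bb]=\nen$, deduces that $\bb$ is generated by $\ttt$ and $e$, concludes that $v$ centralises $\bb$, and finally that a semisimple element centralising a Borel subalgebra is central. If you repair the exponent step (by citation or by the root-vector computation above), your route is a correct, though heavier, alternative to the paper's argument.
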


\begin{proof}
 Assume that $v\in\geg$ is a semisimple element such that $[v,e] = 0$. Choose a Borel subgroup $\Bs\subset \Gs$ whose Borel subalgebra $\bb\subset \geg$ contains $e$ and $v$, and let $\nen$ be the nilpotent part of $\bb$. We can choose a maximal torus $\Ts$ within $\Bs$ whose Lie algebra contains $v$. Let $r = \dim \Ts = \dim C_{\geg}(e)$.
 
 As $e$ is regular in $\Hs$, it is also regular in $\bb$ and $\nen = [\bb, e]$ (\textit{cf.} Section~\ref{secreg}). However, $\bb = \ttt \oplus \nen$, so by iterating we easily see that $\bb$ is generated by $e$ and $\ttt$. Then as $[v,e]=0$, this easily leads to $[v,\bb] = 0$. As $\bb$ was a Borel subalgebra and $v$ is semisimple, from this $[v,\geg] = 0$ follows.
\end{proof}
From this lemma, as $[h,e] = [h_l,e] = 2e$, we infer $h-h_l\in Z(\lel)$.
In the map $\Ad_{-}(-)$ from Lemma~\ref{congen}, let us consider the preimage of $e+\ttt$, and for any $w\in\ttt$ denote by $A(\w)\in \U^-$, $\chi(\w)\in \Ss$ the elements such that
 \begin{equation}
 \Ad_{A(\w)}(e+\w) = \chi(\w).
 \end{equation}

We will now want to generalise Kostant's Theorem~\ref{kostant2}. First, we find the contracting $\Cs$-action on $\Ss$ from \eqref{ppS}. Note that as $e_l$ is regular in $\Ls$,  $f_l$ is also regular in $\Ls$ (see Remark~\ref{princesl2}). Moreover, as all of the weights of the ${H^t}$-action on $\uu_l$ are positive integers, on $\uu_l^-$ they are all negative integers. As the weight of the action on $f_l$ is $-2$ (note that we use Lemma~\ref{semcen} to switch between the actions of $h_l$ and $h$), $f_l$ must lie in $\uu_l^-$. In particular, $f_l \in \bb_l^-$, and as $\bb_l^-$ contains the Lie algebra of the maximal torus of $\Ls$, we have that $f_l$ is regular in $\bb_l^-$. This means that $C_{\he}(f_l)\subset \bb_l^-$ (\textit{cf.} Section~\ref{secreg}). In particular, all of the weights of the ${H^t}$-action on $C_{\he}(f_l)$ are non-positive integers. Therefore, for any $x\in C_{\he}(f_l)$ we have
$$\Ad_{H^t}(x+e) = \Ad_{H^t}(x) + t^2 e = t^2 \left( \Ad_{H^t}(x)/t^2 + e\right)$$
and
$$\lim_{t\to\infty} \Ad_{H^t}(x)/t^2 = 0.$$
Therefore, if we define the action of $\Cs$ on $\Hs$ by
$$t \cdot v = t^{-2} \Ad_{H^t}(v),$$
then it preserves $\Ss$ and for any $v\in \Ss$ we have 
$$\lim_{t\to\infty} t\cdot v = e.$$

\begin{theorem}\label{kostarb}
 Every element of $\Ss$ is regular in $\he$. Moreover, every regular orbit of adjoint action of\, $\Hs$ on $\he$ meets~$\Ss$.
\end{theorem}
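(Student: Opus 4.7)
For the first claim, I use the contracting $\Cs$-action introduced just before the theorem: $t\cdot v:=t^{-2}\Ad_{H^t}(v)$ preserves $\Ss$ and the centralizer dimensions of elements of $\he$ (since conjugation and scaling each preserve $\dim C_\he(-)$), and it satisfies $\lim_{t\to\infty} t\cdot v = e$ for every $v\in\Ss$. If some $v\in\Ss$ were non-regular, then the entire $\Cs$-orbit of $v$, and hence its closure, would consist of non-regular elements; but this closure contains $e$, contradicting the regularity of $e$ that is part of the hypothesis that $(e,h)$ is a principal integrable $\bb(\ssl_2)$-pair.

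For the second claim, given a regular $x\in\he$, I plan to conjugate $x$ into $\Ss$ in three stages. Write $x=x_n+x_l$ with $x_n\in\nen$ and $x_l\in\lel$. Since $\lel$ is reductive, every element of $\lel$ lies in some Borel subalgebra and all such Borels are $\Ls$-conjugate, so there exists $g_1\in\Ls$ with $\Ad_{g_1}(x_l)\in\bb_l$. Because $\nen$ is an $\Ls$-invariant ideal, $\Ad_{g_1}(x_n)\in\nen$, and therefore $\Ad_{g_1}(x)\in\nen\oplus\bb_l=\bb$. Regularity is preserved by $\Hs$-conjugation, and since $\bb$ contains the maximal torus $\ttt$ of $\Hs$, the discussion in \S\ref{secreg} shows that $\Ad_{g_1}(x)$ is still regular when viewed inside the solvable Lie algebra $\bb$.

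The Borel subgroup $\Bs=\Ns\rtimes\Bs_l$ is connected, solvable, and principally paired, since by construction $(e,h)$ lies in $\bb$ and integrates to a homomorphism $B(\SL_2)\to\Bs$. Lemma~\ref{etkost} applied to $\Bs$ then furnishes $g_2\in\Bs$ and $\w\in\ttt$ with $\Ad_{g_2 g_1}(x)=e+\w$, while the construction immediately preceding the theorem (via Lemma~\ref{congen}) provides $A(\w)\in\U_l^-$ with $\Ad_{A(\w)}(e+\w)=\chi(\w)\in\Ss$. Composing the three conjugations exhibits $x$ as $\Hs$-conjugate to $\chi(\w)\in\Ss$, which is the required statement.

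The only step that draws on input not stated explicitly in the excerpt is the first stage of the second claim: every element of a reductive Lie algebra lies in some Borel subalgebra. This is a standard consequence of the Jordan decomposition together with the reductivity of the centralizer of a semisimple element (any nilpotent element of a reductive Lie algebra lies in a Borel, and adding in the semisimple part keeps us in the same Borel). I do not attempt to prove uniqueness of $\w$ or of the $\Ss$-representative of a regular $\Hs$-orbit, since Theorem~\ref{kostarb} only asserts existence; the full invariant-theoretic identification is taken up later in Theorem~\ref{restkos}.
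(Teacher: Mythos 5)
Your proof is correct and follows essentially the same route as the paper: the first claim via the contracting $\Cs$-action that degenerates any $v\in\Ss$ to $e$ combined with closedness of the non-regular locus, and the second claim by conjugating a regular element into $\bb$, applying Lemma~\ref{etkost} to the solvable principally paired group $\Bs$, and then moving $e+\w$ into $\Ss$ by $A(\w)$ and $\chi$ from Lemma~\ref{congen}. The only (harmless) divergence is that you justify the passage into $\bb$ via the decomposition $\he=\nen\oplus\lel$ and conjugacy of Borel subalgebras of the reductive $\lel$, whereas the paper directly invokes that a regular element lies in a Borel subalgebra of $\he$ and that these are all conjugate; both are valid, and your variant merely makes that sub-step explicit for non-reductive $\Hs$.
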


\begin{proof}
 For the first part, we proceed as in the proof of Lemma~\ref{etreg}. Assuming that for some $x\in C_{\he}(f_l)$ the element $x+e$ is not regular, we get that $\Ad_{H^t}(x)/t^2+e$ is not regular for any $t$, and from continuity ($t\to \infty$) we get that $e$ is not regular.
 
 Now assume that some $y\in \he$ is regular. It lies in a Borel subalgebra, and by \cite[Section~16.4]{HumphAlg} all Borel subalgebras are conjugate; hence we can assume $y\in \bb$. As $\Bs$ contains a maximal torus of $\Hs$, we have that $y$ is regular in $\bb$ as well. Therefore, by Lemma~\ref{etkost} it is conjugate to an element of the form $e + v$ for $v\in\ttt$. It is then conjugate to $\chi(v) \in \Ss$.
\end{proof}

To finish the proof of $\C[\he]^\Hs = \C[\Ss]$, we need to state the following lemma, already known for reductive groups.

\begin{lemma}\label{genrest}
We have $\C[\he]^\Hs = \C[\lel]^\Ls = \C[\ttt]^\Ws$.
\end{lemma}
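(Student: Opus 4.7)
The second equality $\C[\lel]^\Ls = \C[\ttt]^\Ws$ is Chevalley's restriction theorem applied to the reductive group $\Ls$, with the extension from the semisimple to the reductive case exactly as in the footnote to Proposition~\ref{isoquot}. The content of the lemma is thus the first equality $\C[\he]^\Hs = \C[\lel]^\Ls$.

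My plan is to compare the restriction map $\rho:\C[\he]\to\C[\lel]$ along $\lel\hookrightarrow\he$ with the pullback of the linear projection $p:\he\to\lel$ that kills $\nen$. Since $\nen$ is an ideal of $\he$, for any $m\in\nen$ and $x\in\he$ we have $\Ad_{\exp(m)}(x)-x=[m,x]+\tfrac12[m,[m,x]]+\cdots\in\nen$, so $p$ is $\Ns$-invariant; and $p$ is $\Ls$-equivariant because $\nen$ and $\lel$ are both $\Ls$-stable summands of $\he$. Hence $p^*$ restricts to a map $\C[\lel]^\Ls\to\C[\he]^{\Ns\rtimes\Ls}=\C[\he]^\Hs$, which is a right inverse to $\rho$ because $p|_\lel=\id_\lel$. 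The problem therefore reduces to showing $p^*\circ\rho=\id$ on $\C[\he]^\Hs$, i.e.\ that every $\Hs$-invariant $f$ satisfies $f(x_l+x_n)=f(x_l)$ for all $x_l\in\lel$, $x_n\in\nen$.

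By polynomial continuity it suffices to verify this identity on a Zariski dense subset of $\he$. The locus $U\subset\lel$ of $x_l$ for which $\ad(x_l):\nen\to\nen$ is invertible is Zariski open, and it is non-empty because picking $x_l\in\ttt$ avoiding the finitely many hyperplanes where some $\Ts$-weight of $\nen$ vanishes gives such an $x_l$; hence $U$ is dense in $\lel$. For any $x_l\in U$ and any $x_n\in\nen$, filtering $\nen$ by its descending central series $\nen\supset[\nen,\nen]\supset\cdots$ and using that $\ad(x_n)$ strictly raises filtration degree shows that $\ad(x_l+x_n)$ remains invertible on $\nen$ (on each graded piece the operator coincides with $\ad(x_l)$). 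Consequently the orbit map $\Ns\to x_l+\nen$, $g\mapsto\Ad_g(x_l+x_n)$, has bijective differential at the identity, so by homogeneity its image is open in $x_l+\nen$; being an orbit of a connected unipotent group acting on an affine variety, it is also closed by Rosenlicht's theorem, hence equals $x_l+\nen$ and in particular contains $x_l$. This forces $f(x_l+x_n)=f(x_l)$ on the dense open set of such pairs, finishing the proof.

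The main obstacle will be this last orbit argument -- the filtration trick guaranteeing that $\ad(x_l+x_n)$ stays invertible under arbitrary nilpotent perturbation, and the appeal to Rosenlicht's closed-orbit theorem to fill out $x_l+\nen$. Everything preceding it is a routine verification that $\rho$ and $p^*$ are mutually inverse between $\Hs$- and $\Ls$-invariants.
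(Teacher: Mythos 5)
Your proof is correct, and its key step takes a genuinely different route from the paper's. The first half coincides: your linear projection $p:\he\to\lel$ killing $\nen$ is exactly the differential of the quotient $\Hs\to\Ls$ that the paper pulls back along, and both arguments observe that $p^*$ lands in $\C[\he]^\Hs$ and is a right inverse to restriction (the Chevalley part is the same as well). Where you diverge is the injectivity step, i.e.\ the identity $f(x_l+x_n)=f(x_l)$ for $\Hs$-invariant $f$: the paper restricts to a Borel subalgebra $\bb=\ttt\oplus\uu$ (every element of $\he$ lies in one, and they are all conjugate) and then uses the contracting $\Cs$-action, since by Lemma~\ref{posint} the $h$-weights on $\uu$ are positive while those on $\ttt$ vanish, so an invariant polynomial on $\bb$ can involve only the $\ttt$-variables. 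You instead prove the stronger geometric fact that for generic $x_l$ the $\Ns$-orbit of $x_l+x_n$ is all of the affine space $x_l+\nen$, via invertibility of $\ad(x_l+x_n)$ on $\nen$ (your lower-central-series argument is sound, as each term of that series is an ideal of $\he$) combined with the Kostant--Rosenlicht theorem that orbits of unipotent groups on affine varieties are closed; this buys the more structural statement that generic fibres of $p$ are single $\Ns$-orbits, at the cost of importing Rosenlicht's theorem, which the paper avoids. One point you should make explicit: the nonemptiness of your locus $U$ presumes that no $\Ts$-weight of $\nen$ vanishes identically on $\ttt$, which is not automatic for an arbitrary Levi decomposition (e.g.\ it fails for $\Cs\times\C$); in the principally paired setting it follows immediately from Lemma~\ref{posint}, since $\nen\subset\uu$ and all $h$-weights there are positive, so with that citation added your proof is complete.
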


\begin{proof}
 The second equality is just Chevalley's restriction theorem (\textit{cf.} \cite[Theorem 3.1.38]{ChGi} and footnote~\ref{footchev}). We need to prove that the restriction map $\C[\he]^\Hs\to \C[\lel]^\Ls$ is an isomorphism.
 
 Let us first prove that it is surjective. We have the projection map $\pi\colon \Hs\to \Ls$, and then we can use it to pull back any $\Ls$-invariant function on $\lel$. If $f$ is such function, its pullback is $f\circ \pi_*$, and for any $g\in \Hs$ and $v\in\he$, we have
 $$(f\circ\pi_*)(\Ad_g(v)) = f(\Ad_{\pi(g)}(\pi_*(v))) = f(\pi_*(v)) = (f\circ\pi_*)(v);$$
 hence $f\circ \pi_*$ is $\Hs$-invariant (and obviously restricts to $f$ on $\lel$).
 
 Now we prove the injectivity. As every element of $\Hs$ is contained in a Lie algebra of a Borel subgroup, and they are all conjugate (\textit{cf.} \cite[Theorem 11.1]{Borel}), a function from $\C[\he]^\Hs$ is fully determined by its values on $\bb$. We know that $\bb = \ttt \oplus \uu$ and the weights of the $\{H^t\}$-action on $\ttt$ are all 0, and on $\uu$ they are all positive. 
 
 Therefore, any polynomial on $\bb$ which is invariant under this action can only contain the $\ttt$-variables. Hence it is uniquely determined by its values on $\ttt$.
 \end{proof}
 
 From the proof of Lemma~\ref{congen} and from Proposition~\ref{isoquot}, the map $\chi$ defines an isomorphism $\C[\Ss]\to \C[\ttt]^\Ws$, and when composed with the restriction from $\C[\he]^\Hs$, it clearly gives the restriction $\C[\he]^\Hs \to \C[\ttt]^\Ws$ (note that $x$ and $\chi(x)$ are always conjugate). Then from Lemma~\ref{genrest} we get the following. 
 
 \begin{theorem}\label{restkos}
  The restriction map $\C[\he]^\Hs\to \C[\Ss]$ is an isomorphism.
 \end{theorem}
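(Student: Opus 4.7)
The plan is to factor the restriction map $\C[\he]^\Hs \to \C[\Ss]$ through the Cartan subalgebra $\ttt$ via the map $\chi:\ttt \to \Ss$ introduced after Lemma \ref{congen}, and to deduce the result from Lemma \ref{genrest} together with Proposition \ref{isoquot} applied to the reductive Levi $\Ls$.

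First, I would analyse the pullback $\chi^*: \C[\Ss] \to \C[\ttt]$. The commutation $[\uu_l^-, e_n] = 0$ established inside the proof of Lemma \ref{congen} gives
$$\chi(\w) = \Ad_{A(\w)}(e + \w) = e_n + \Ad_{A(\w)}(e_l + \w) = e_n + \chi_\lel(\w),$$
where $\chi_\lel: \ttt \to \Ss_\lel := e_l + C_\lel(f_l)$ is the analogous map for the reductive Levi $\Ls$. Since $\Ss = e_n + \Ss_\lel$ is the affine translate of $\Ss_\lel$ by the fixed element $e_n$, the map $\chi^*$ factors as $\C[\Ss] \cong \C[\Ss_\lel] \xrightarrow{\chi_\lel^*} \C[\ttt]^\Ws$, and the latter is an isomorphism by Proposition \ref{isoquot}. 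Hence $\chi^*: \C[\Ss] \to \C[\ttt]^\Ws$ is an isomorphism.

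Next, I would verify that the composition
$$ \C[\he]^\Hs \xrightarrow{\mathrm{res}_\Ss} \C[\Ss] \xrightarrow{\chi^*} \C[\ttt] $$
agrees with the Chevalley-type restriction $\mathrm{res}_\ttt: \C[\he]^\Hs \to \C[\ttt]$. For $f \in \C[\he]^\Hs$ and $\w \in \ttt$, $\Hs$-invariance gives $f(\chi(\w)) = f(\Ad_{A(\w)}(e+\w)) = f(e+\w)$. Applying Corollary \ref{corre} inside the solvable Borel subgroup $\Bs \subset \Hs$ (whose Lie algebra contains both $e \in \uu$ and $\w \in \ttt$), the element $e+\w$ is $\Bs$-conjugate, hence $\Hs$-conjugate, to $\w$ whenever $\w \in \ttt^{\reg}$, so $f(e+\w) = f(\w)$ on the dense open subset $\ttt^{\reg} \subset \ttt$. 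Since polynomials agreeing on a dense subset coincide, the two maps agree on all of $\ttt$.

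Combining these facts with Lemma \ref{genrest}, which asserts that $\mathrm{res}_\ttt: \C[\he]^\Hs \to \C[\ttt]^\Ws$ is an isomorphism, the restriction $\C[\he]^\Hs \to \C[\Ss]$ equals $(\chi^*)^{-1} \circ \mathrm{res}_\ttt$ and is therefore itself an isomorphism. The main conceptual step is the bootstrapping from the reductive Levi case to the general principally paired case, which relies on the centralisation $[\uu_l^-, e_n] = 0$ in order to realise $\Ss$ as an affine translate of the reductive Kostant section $\Ss_\lel$; once this geometric fact is in hand, the remaining arguments are essentially formal.
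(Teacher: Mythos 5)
Your proposal is correct and follows essentially the same route as the paper: the paper also obtains the isomorphism $\chi^*:\C[\Ss]\to\C[\ttt]^\Ws$ by reducing to the Levi via $[\uu_l^-,e_n]=0$ (proof of Lemma \ref{congen}) together with Proposition \ref{isoquot}, identifies $\chi^*\circ\mathrm{res}_\Ss$ with the Chevalley restriction $\C[\he]^\Hs\to\C[\ttt]^\Ws$ using conjugacy of $e+\w$ with $\chi(\w)$ and with $\w$ on the dense set $\ttt^\reg$, and concludes with Lemma \ref{genrest}. Your write-up merely spells out these steps (the translate $\Ss=e_n+\Ss_\lel$ and the density argument via Corollary \ref{corre}) in slightly more detail than the paper does.
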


 In particular, this means that no elements of $\Ss$ are conjugate to each other. Together with Theorem~\ref{kostarb} this gives the following. 

 \begin{corollary}
  Every regular orbit of adjoint action of\, $\Hs$ on $\he$ meets $\Ss$ exactly once.
 \end{corollary}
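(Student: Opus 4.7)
The plan is to obtain the corollary as a near-immediate consequence of the two preceding results: Theorem~\ref{kostarb} furnishes the ``at least once'' half, while Theorem~\ref{restkos} furnishes the ``at most once'' half. So the only real work is phrasing how uniqueness follows from the statement that $\C[\he]^\Hs \to \C[\Ss]$ is an isomorphism.

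First I would recall that by Theorem~\ref{kostarb} every regular adjoint orbit of $\Hs$ on $\he$ has nonempty intersection with $\Ss$. It thus suffices to show that if $x, y \in \Ss$ lie in the same $\Hs$-orbit, then $x = y$. Suppose such $x, y$ are conjugate. Then for every $\Hs$-invariant regular function $F \in \C[\he]^\Hs$ we have $F(x) = F(y)$, and consequently the restrictions $F|_\Ss \in \C[\Ss]$ take the same value at $x$ and $y$.

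Next I would invoke Theorem~\ref{restkos}: the restriction map $\C[\he]^\Hs \to \C[\Ss]$ is an isomorphism, so every regular function on $\Ss$ arises in this way. Hence every $g \in \C[\Ss]$ satisfies $g(x) = g(y)$. Since $\Ss$ is an affine variety, its regular functions separate closed points, and therefore $x = y$.

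There is no real obstacle here; the content has already been absorbed into Theorems~\ref{kostarb} and \ref{restkos}. The only thing to be mindful of is not to conflate ``same orbit'' with ``same $\Ws$-orbit in $\ttt$'': the argument must be that $\Hs$-invariants on $\he$ pull back via $\chi$ to $\Ws$-invariants on $\ttt$ (as established in the proof of Theorem~\ref{restkos}), and that it is the isomorphism $\C[\he]^\Hs \cong \C[\Ss]$ — not merely its injectivity — that lets us separate points of $\Ss$ using $\Hs$-invariant functions.
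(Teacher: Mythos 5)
Your proposal is correct and follows essentially the same route as the paper: Theorem~\ref{kostarb} gives that every regular orbit meets $\Ss$, and the surjectivity contained in Theorem~\ref{restkos} (the isomorphism $\C[\he]^\Hs \cong \C[\Ss]$) shows no two distinct points of $\Ss$ can be conjugate, since regular functions on the affine space $\Ss$ separate points. This is exactly the paper's (terser) argument, just spelled out.
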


\subsection{Regular actions and fixed point sets}
\label{secregact}

\begin{definition}\label{defreg}
Assume we are given a principally paired $\Hs$ with $(e,h)$  the integrable principal $\bb(\ssl_2)$-pair in $\he$. If $\Hs$ acts on a smooth projective variety $X$, we say that it acts \emph{regularly} if $e$ has a unique zero $o\in X$. 
\end{definition}

\begin{remark}
The choice of integrable principal pair $(e,h)$ in $\he$ is not unique. However, we will see below in  Lemma~\ref{isoreg} that the property of the action being regular does not depend on the choice. 

Note that as $e$ is nilpotent, it generates an additive subgroup of $\Hs$ (by \cite[Proposition 1.10, Theorem~4.4 and Section~II.7.3]{Borel}), and hence by \cite[Theorem 4.1]{Horrocks} the zero scheme $X^e$ of $V_e$ is connected. It is therefore enough to assume that the fixed points of $e$ are isolated. We will in fact prove in Lemma~\ref{isoreg} that all of the regular elements of $\he$ have isolated fixed points on $X$.
\end{remark}

\begin{example}\label{exgr2} This example is from  the PhD thesis of Ersan Akyildiz \cite{Akyphd}; see also \cite{Akyildiz}.
Consider a complex reductive group $\Gs$, with the choice of $e$ as in Example~\ref{exregnilp}. By the discussion in Section~\ref{sl2p}, there exists an $h\in\geg$ which makes $\Gs$ principally paired. Let $X= \Gs/\Bs$ be the full flag variety of $\Gs$. Then for any $x=g\Bs\in X$, by Lemma~\ref{lemad}
 $$V_e|_{x} = \D g(V_{\Ad_{g^-1}(e)}|_{[1]}).$$
Therefore, $V_e$ vanishes at $x$ if and only if $\Ad_{g^-1}(e)$ vanishes at $[1] = \Bs$. This means that $\Ad_{g^-1}(e) \in \bb = \Lie(\Bs)$, or in other words, $e\in\Lie(g\Bs g^{-1})$. The subgroup $g\Bs g^{-1}$ is of course a Borel subgroup of $\Gs$. By Section~\ref{sl2p} the group $\Bs$ is the unique Borel subgroup of $\Gs$ whose Lie algebra contains $e$. Therefore, $e\in\Lie(g\Bs g^{-1})$ only if $g\Bs g^{-1} = \Bs$. By \cite[Theorem~11.16]{Borel} this is true only for $g \in \Bs$; \textit{i.e.} $x = [1]$. Therefore, $\Gs$ acts regularly on the full flag variety $\Gs/\Bs$.
 
 Hence $G$ also acts regularly on all the partial flag varieties $\Gs/\Ps$. Indeed, assume that $x\in \Gs/\Ps$ is fixed by~$e$. If we denote by $\pi_\Ps$ the projection $\pi_\Ps\colon \Gs/\Bs\to \Gs/\Ps$, then $\pi_\Ps^{-1}(x)$ is a closed subvariety of $\Gs/\Bs$, closed under the action of $\Gs_a$ generated by $e$. Hence by the Borel fixed point theorem (see \cite[Corollary 17.3]{Milne}), it contains a fixed point of $\Gs_a$, which is unique. Therefore, $x$ is its image.
\end{example}

\begin{example}[see \protect{\cite[Section 6]{BC}}]\label{exsl2}
 Let $\Hs = \SL_2(\C)$, and consider the irreducible representation $V$ of $\SL_2(\C)$ of dimension $n+1$. In particular, the regular nilpotent
 $$e = \begin{pmatrix} 0 & 1 \\ 0 & 0\end{pmatrix}$$
   acts on $V$ with  matrix
 $$\begin{pmatrix}
       0 & 1 & 0 & 0 & \dots & 0\\
       0 & 0 & 1 & 0 & \dots & 0\\
       0 & 0 & 0 & 1 & \dots & 0\\
       \vdots & \vdots & \vdots & \vdots & \ddots & \vdots \\
       0 & 0 & 0 & 0 & \dots & 1 \\
       0 & 0 & 0 & 0 & \dots & 0
      \end{pmatrix}.
 $$
 If we consider $X = \PP(V)$, the action is clearly regular and the only fixed point of $e$ corresponds to the vector of highest weight in $V$.
\end{example}

\subsubsection{Solvable groups}

\begin{lemma}\label{regzer}
Let $\Hs$ be a solvable group. Let $\Ts$ be its maximal torus and $\he_n$ be the nilpotent part of\, $\he = \Lie(\Hs)$. Assume that $e\in\he_n, h\in\ttt$ are such that $(e,h)$ is an integrable $\bb(\ssl_2)$-pair and that $\Hs$ acts regularly on a smooth projective variety $X$. Then any element of $e+\ttt$ has isolated zeros on $X$.
\end{lemma}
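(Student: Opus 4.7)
The strategy is to realise $X^{e+w}$ as a generic fibre of a proper family over $\A^1$ whose special fibre at $0$ is the zero-dimensional scheme $X^e=\{o\}$, and then to conclude by upper semicontinuity of fibre dimension. The case $w=0$ is the regularity hypothesis itself, so I assume $w\neq 0$ throughout.

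First, I would parametrise the affine line $L=\{e+sw\mid s\in\A^1\}\subset\he$ by $s\in\A^1$ and restrict the total vector field $V_\he$ of Definition \ref{totvec} to $L\times X\cong \A^1\times X$; call the resulting zero subscheme $\ZZ\subset\A^1\times X$. The projection $\pi:\ZZ\to\A^1$ is proper because $X$ is projective, and its fibre over $s$ is the zero scheme $X^{e+sw}$ of $V_{e+sw}$. In particular the fibre $\pi^{-1}(0)=X^e$ is supported at the single point $o$ by Definition \ref{defreg}, hence is zero-dimensional.

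Next, I would show that every nonzero fibre is isomorphic to $X^{e+w}$ by exploiting the one-parameter subgroup $H^\lambda$. Since $h,w\in\ttt$ commute, $\Ad_{H^\lambda}(e+w)=\lambda^2 e+w$ for every $\lambda\in\Cs$. Given $s\in\Cs$ choose $\lambda\in\Cs$ with $\lambda^{-2}=s$; then Lemma \ref{lemad} shows that $H^\lambda$ carries $X^{e+w}$ isomorphically onto $X^{\lambda^2 e+w}$. But $\lambda^2 e+w=\lambda^2(e+sw)$, and rescaling a vector field by a nonzero constant does not change its zero scheme, so $X^{\lambda^2 e+w}=X^{e+sw}=\pi^{-1}(s)$. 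Hence every $\pi^{-1}(s)$ with $s\neq 0$ is isomorphic to $X^{e+w}$.

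Finally, upper semicontinuity of fibre dimension for the proper morphism $\pi$ guarantees that $S:=\{s\in\A^1\mid \dim\pi^{-1}(s)\geq 1\}$ is Zariski closed in $\A^1$. Since $0\notin S$ and $S$ is either $\emptyset$ or contains all of $\Cs$ (because the nonzero fibres are mutually isomorphic), the only possibility consistent with $S$ being closed is $S=\emptyset$, as $\Cs$ is not Zariski closed in $\A^1$. Therefore $X^{e+w}=\pi^{-1}(1)$ is zero-dimensional, i.e.\ consists of isolated points. The only step that requires any thought is the identification of nonzero fibres via the contracting $\Cs$-action on $\Ss$; everything else is a formal degeneration argument.
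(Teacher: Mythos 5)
Correct, and essentially the paper's argument: both proofs degenerate $e+s\w$ to $e$ via conjugation by $H^\lambda$ (which rescales $e$ but fixes $\ttt$), identify all fibres over nonzero parameters with the zero set of $e+\w$, and play semicontinuity of fibre dimension against the zero-dimensional special fibre at $e$ guaranteed by regularity. The only (harmless) variation is that you restrict to the line $\{e+s\w\}\cong\A^1$ and push the locus of positive-dimensional fibres down to a closed subset of $\A^1$ using properness of $\pi$, whereas the paper works over all of $\ttt$ and instead uses the point $o$, which lies in every fibre, so that the closed set $D\subset\ZZ$ of Chevalley's theorem must contain $(0,o)$.
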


\begin{proof}
 We will denote by $\{H^t\}_{t\in\Cs}$ the one-parameter subgroup to which $h$ integrates.
 Define $\ZZ \in \ttt \times X$ as the zero scheme of the total vector field restricted to $e+\ttt \cong \ttt$. In other words, for any $\w\in\ttt$ that vector field restricted to $\{\w\} \times X$ equals $V_{e+\w}$ (\textit{cf.} Definition~\ref{defz}).
Also consider  an action of $\Cs$ on $\ttt \times X$ which is defined on $\ttt$ by multiplication by $t^{-2}$  and on $X$ by the action of $H^t$. By Lemma~\ref{lemad} this action preserves $\ZZ$ as $\Ad_{H^t}(e) = t^2 e$.
 
Consider the map $\pi\colon \ZZ\to \ttt$ defined as the projection onto the first factor of $\ttt \times X$. As it is a morphism of schemes locally of finite type, by Chevalley’s semicontinuity theorem, \textit{cf.} \cite[Th\'eor\`eme~13.1.3]{EGA43}, the set 
 $$D = \{ (\w,x)\in \ZZ : \dim \pi_{\w} \ge 1 \}$$
is closed. Here
$$\pi_\w:=\pi^{-1}(\w)\subset \ZZ$$
denotes the fiber.  Suppose $D$ is non-empty. Hence we have some $\w\in\ttt$ such that $\dim \{x\in\ZZ : (\w+e)|_{x} = 0\} \ge 1$. Note that for any $t\in\Cs$ we have
$$t^2\w + e = t^2(\w + t^{-2}e) = t^2 \Ad_{H^t}^{-1}(\w + e).$$
Therefore, the zero set of $t^2\w + e$ is the same as the zero set of $\Ad_{H^t}^{-1}(\w + e)$, which by Lemma~\ref{lemad} is isomorphic -- via the action of $H^t$ -- to the zero set of $\w + e$. Hence for each $t\neq 0$ we have $(t\w,o)\in D$, where $o\in X$ is the unique fixed point of $e$. Because $D$ is closed, we get $(0,o)\in D$. Hence $\dim \pi_0 \ge 1$, which is impossible as $\pi_0 = \{(0,o)\}$ by our regularity assumption.
\end{proof}

\begin{theorem}\label{isolsolv}
 Assume $\Hs$ and $X$ are as in Lemma~\ref{regzer} and $e$ is a principal nilpotent. Then any regular element of\, $\he$ has isolated zeros on $X$.
\end{theorem}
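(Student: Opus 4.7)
The plan is to reduce to Lemma \ref{regzer} via conjugation. The strategy has three conceptual ingredients, all of which are already available at this point in the paper, so the argument should be quite short.

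First, I would invoke Lemma \ref{etkost} (the solvable Kostant-section statement): since $e$ is a principal nilpotent and $(e,h)$ is an integrable $\bb(\ssl_2)$-pair, every regular element $y \in \he$ is $\Hs$-conjugate to a (unique) element of the form $e + \w$ with $\w \in \ttt$. So there is some $g \in \Hs$ with $\Ad_g(e+\w) = y$.

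Second, I would apply Lemma \ref{lemad}, which says that for any $g \in \Hs$ and any $z \in \he$, one has $V_{\Ad_g(z)}|_{gx} = \D g (V_z|_x)$. Since $\D g$ is a linear isomorphism on tangent spaces, this means the map $x \mapsto g\cdot x$ takes the zero scheme of $V_{e+\w}$ bijectively (and in fact schematically) onto the zero scheme of $V_y = V_{\Ad_g(e+\w)}$. In particular, isolatedness of the zeros is preserved under this conjugation.

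Third, I would combine the first two points with Lemma \ref{regzer}, which precisely guarantees that $V_{e+\w}$ has only isolated zeros on $X$ for every $\w \in \ttt$ (this used the regularity assumption on the $\Hs$-action together with the $\Cs$-rescaling argument via the one-parameter subgroup $\{H^t\}$). Translating this finite zero set by $g$ gives a finite zero set for $V_y$, which is what we want.

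I do not expect any serious obstacle here: the real work was done in Lemma \ref{regzer} (via the semicontinuity/contraction argument) and Lemma \ref{etkost} (the solvable version of the Kostant normal form). The only thing to be careful about is to quote Lemma \ref{etkost} with its correct hypothesis, namely that $e$ is a principal nilpotent, which is exactly the extra assumption placed on $e$ in this theorem compared with Lemma \ref{regzer}.
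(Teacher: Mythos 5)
Your proposal is correct and is essentially the paper's own argument: the paper's proof of this theorem is precisely the one-line combination of Lemma \ref{etkost} and Lemma \ref{regzer}, with the conjugation-transport of zeros via Lemma \ref{lemad} left implicit, which you simply spell out. No gaps.
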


\begin{proof}
This now follows directly from Lemmas~\ref{regzer} and~\ref{etkost}. 
\end{proof}

In particular, regular semisimple elements have isolated zeros on $X$. Therefore, we get the following. 

\begin{corollary}\label{fintor}
There are finitely many $\Ts$-fixed points on $X$.
\end{corollary}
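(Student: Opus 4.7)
The plan is to find a regular element $w \in \ttt$ and apply Theorem~\ref{isolsolv} together with projectivity of $X$, observing that every $\Ts$-fixed point lies in the zero set of $V_w$.

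First I would verify that $\ttt$ contains a regular element. Decompose the nilpotent part $\he_n$ under the adjoint $\Ts$-action as $\he_n = \bigoplus_\alpha (\he_n)_\alpha$, where $\alpha$ ranges over a finite set of characters of $\Ts$. In the solvable setting, the Borel subgroup of $\Hs$ is $\Hs$ itself, so $\uu = \he_n$, and Lemma~\ref{posint} tells us that $\alpha(h)$ is a positive even integer for every weight $\alpha$ actually appearing. In particular, none of these $\alpha$ is the zero character, so the zero weight space of $\ttt$ on $\he_n$ is trivial. For $w$ in the complement of the finite union of hyperplanes $\{\alpha = 0\}\subset \ttt$, the operator $\ad w$ is injective on $\he_n$, hence bijective, which yields $C_\he(w) = \ttt$ and thus the regularity of $w$.

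Next, for any $x\in X^\Ts$, the orbit map $\Ts\to X$, $g\mapsto g\cdot x$, is constant, so its differential at $1_\Ts$ is zero, i.e.\ $V_t|_x = 0$ for every $t\in\ttt$. In particular $V_w|_x = 0$, so $X^\Ts$ is contained in the zero set of $V_w$. By Theorem~\ref{isolsolv} applied to the regular element $w$, this zero set is discrete, and since $X$ is projective it is finite. Therefore $X^\Ts$ is finite.

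The argument is essentially automatic once one knows that $\ttt$ meets $\he^\reg$; the only step demanding any calculation is the production of a regular element in $\ttt$, which is handled by the weight-space decomposition combined with Lemma~\ref{posint}.
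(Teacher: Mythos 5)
Your proof is correct and takes essentially the same route as the paper: the corollary is deduced from Theorem~\ref{isolsolv} applied to a regular element of $\ttt$, whose zero set (discrete, closed in the projective $X$) contains $X^\Ts$. Your verification that $\ttt$ contains regular elements, via the weight decomposition of $\he_n$ and Lemma~\ref{posint}, is precisely the argument the paper records when it observes that $\ttt^\reg$ is a nonempty open subset of $\ttt$.
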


\subsubsection{General principally paired groups}
With the use of the results of Section~\ref{kostgensec}, we can also provide a version of Theorem~\ref{isolsolv} for arbitrary principally paired groups.

\begin{lemma}\label{isoreg}
Let a principally paired group $\Hs$ act regularly on a smooth projective variety $X$. Then all the regular elements of\, $\he$ have isolated zeros on $X$.
\end{lemma}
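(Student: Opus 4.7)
The plan is to reduce the general principally paired case to the solvable case already handled by Theorem~\ref{isolsolv}, via passage to a well-chosen Borel subgroup of $\Hs$.

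First, I would take the Borel subgroup $\Bs = \Ns \rtimes \Bs_l$ constructed in \S\ref{kostgensec}, whose Lie algebra $\bb$ contains both $e$ and $h$. I claim $\Bs$ is itself principally paired, with the same pair $(e,h)$ playing the role of the integrable principal $\bb(\ssl_2)$-pair. Nilpotency of $e$ and the relation $[h,e]=2e$ are inherited; integrability of $h$ holds because the one-parameter subgroup $\{H^t\}$ factors through $\Bs$; and regularity of $e$ in $\bb$ follows from regularity of $e$ in $\he$ combined with the observation in \S\ref{secreg} that, since $\Bs$ contains a maximal torus of $\Hs$, any element of $\bb$ regular in $\he$ is regular in $\bb$.

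Next, I would observe that the restricted $\Bs$-action on $X$ is regular in the sense of Definition~\ref{defreg}: the distinguished nilpotent is the same $e$, whose zero set on $X$ is the single point $o$ by the regularity of the $\Hs$-action. Hence Theorem~\ref{isolsolv} applies to the solvable principally paired group $\Bs$, yielding that every regular element of $\bb$ has isolated zeros on $X$.

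Finally, given an arbitrary regular element $y \in \he$, I would use that $y$ lies in some Borel subalgebra of $\he$, and that all Borel subalgebras are conjugate under $\Hs$ (\cite[Theorem 11.1]{Borel}, \cite[16.4]{HumphAlg}). Thus there exists $g \in \Hs$ with $\Ad_{g^{-1}}(y) \in \bb$, and since $\Bs$ contains a maximal torus of $\Hs$, $\Ad_{g^{-1}}(y)$ is still regular (now inside $\bb$). By the previous paragraph, $V_{\Ad_{g^{-1}}(y)}$ has isolated zeros on $X$. Lemma~\ref{lemad} identifies the zero set of $V_y$ with the image, under the isomorphism $X \xrightarrow{\D g} X$ induced by $g$, of the zero set of $V_{\Ad_{g^{-1}}(y)}$. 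Hence the zeros of $V_y$ are isolated as well, which completes the argument.

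No step is really an obstacle here, since all the technical content has been packaged into Theorem~\ref{isolsolv} and the Borel-conjugacy statement; the only subtlety worth double-checking is that $e$ remains a \emph{regular} nilpotent in $\bb$ (not merely a nilpotent), which is exactly what the observation about Borels containing the maximal torus delivers.
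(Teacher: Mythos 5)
Your proof is correct, and it takes a slightly different route from the paper's. The paper's own argument invokes Theorem~\ref{kostarb} to conjugate an arbitrary regular element into the generalised Kostant section $\Ss=e+C_{\lel}(f_l)$, and then reruns the semicontinuity/contraction argument of Lemma~\ref{regzer} once more, this time over $\Ss$, using the contracting $\Cs$-action $t\cdot v=t^{-2}\Ad_{H^t}(v)$ constructed in \S\ref{kostgensec} (which retracts $\Ss$ to $e$) together with the fact that $X^e=\{o\}$. You instead bypass $\Ss$ altogether: you conjugate the regular element into the fixed Borel subalgebra $\bb$ (exactly the step the paper itself performs inside the proof of Theorem~\ref{kostarb}, with the same references and the same \S\ref{secreg} observation that regularity persists in a subgroup containing a maximal torus), note that $(\Bs,(e,h))$ is a principally paired solvable group acting regularly with the same distinguished zero $o$, quote Theorem~\ref{isolsolv}, and transfer back with Lemma~\ref{lemad}. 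Both proofs ultimately rest on the same contraction-plus-Chevalley-semicontinuity mechanism; yours reuses it in the form already packaged in Theorem~\ref{isolsolv} (where the slice is $e+\ttt$ rather than $\Ss$), so it does not need the section $\Ss$ or its contracting action for this particular lemma, at the cost of the Borel-conjugacy and conjugation-transfer steps, which are harmless. The only points worth stating explicitly, as you do, are that $e$ stays \emph{regular} in $\bb$ and that the one-parameter subgroup $\{H^t\}$ lands in $\Bs$ (its image is a connected subgroup with Lie algebra $\C h\subset\ttt\subset\bb$), so the pair $(e,h)$ is indeed a principal integrable $\bb(\ssl_2)$-pair for $\Bs$.
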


\begin{proof}
 We know from Lemma~\ref{kostarb} that every regular element of $\he$ is conjugate to an element of $\Ss$ from \eqref{ppS}. Therefore, it is enough to prove the statement for the elements of $\Ss$. The argument is the same as in the proof of Lemma~\ref{regzer}, using the contracting action from Section~\ref{kostgensec}. Note that if $p\in X$ is a zero of $x+e$, then $H^t p$ is a zero of $\Ad_{H^t}(x)/t^2 + e$. Therefore, if $(x+e,p) \in D$, we have $(\Ad_{H^t}(x)/t^2 + e,H^tp)\in D$ for any $t\in\Cs$ and then $\left(e,\lim_{t\to \infty} H^t p\right) \in D$.
\end{proof}

\section{Main theorem for solvable groups}
\label{solvsec}

We first consider a solvable group $\Hs$ acting on a variety $X$. We will prove that if the action is regular, then for a maximal torus $\Ts\subset \Hs$, we can find $\Spec H_\Ts^*(X)$ as a particular subscheme of $\ttt\times X$. This generalises the result of \cite{BC} for the Borel subgroup of $\SL_2(\C)$. The goal of this section is to find necessary assumptions on $\Hs$ and construct the scheme $\ZZ = \Spec H_\Ts^*(X)$ inside $\ttt\times X$.

\subsection{Principally paired solvable groups}
Assume that $\Hs$ is a principally paired solvable group and $(e,h)$ the principal integrable $\bb(\ssl_2)$-pair within~$\Hs$. By $\{H^t\}_{t\in\Cs}$ we denote the one-parameter subgroup to which $h$ integrates. Let $\Ts\subset \Hs$ be the maximal torus which contains it. From Theorem~\ref{solv} we have $\Hs = \Ts \ltimes \Hs_u$, where $\Hs_u\subset \Hs$ is the subgroup of unipotent elements. We denote by $r$ the dimension of $\Ts$ (or $\ttt$), equal to the rank of $\Hs$. The torus $\Ts$ acts on the Lie algebra $\he$ by the adjoint action $\Ad$. It splits into two representations $\he = \ttt\oplus \he_n$, where $\he_n = \Lie(\Hs_u)$. The first one is trivial, and the weights of the other, $\alpha_1, \alpha_2, \dots, \alpha_k \in\ttt^*$, will be called the \emph{roots} of $\Hs$.
This means that if $v_1$, $v_2$, \dots, $v_k$ are the root vectors, then for any map $\phi\colon \Cs\to\Ts$ we have
$$\Ad_{\phi(t)} (v_i) = t^{\alpha_i(\D \phi|_1(1))} v_i.$$
We denote by $\ttt^\reg = \ttt\cap\he^\reg$ the subset of $\ttt$ consisting of regular elements. As any element of $\ttt$ commutes with the whole $\ttt$, the condition of $v\in\ttt$ being regular means $C_\he(v) = \ttt$. This means that $[v,-]$ does not have zeros on $\he_n$; \textit{i.e.} $\alpha_1(v)$, $\alpha_2(v)$, \dots, $\alpha_k(v)$ are all non-zero. Hence we see that the elements of $\ttt^\reg$ are those in $\ttt$ that are not annihilated by any root of $\Hs$. As $h\in\ttt$ is regular, all of the roots are non-zero on $h$ -- by Lemma~\ref{posint} they are even positive integers when evaluated on $h$ -- hence non-zero. Therefore, $\ttt^\reg$ is a non-empty open subset of $\ttt$, and its complement is a union of hyperplanes.

In our applications $\Hs$ will mostly be the Borel subgroup of some principally paired algebraic group $\Gs$. Let us give an example below.

\begin{example}\label{exgr}
 A simple case of the above is $\Hs = \Bs_m:=\Bs(\SL_m)$, the Borel subgroup of $\SL_m$ consisting of upper-triangular matrices. Let $\bb_m$ be its Lie algebra. We have the torus $\Ts\subset \Bs_m$ consisting of the diagonal matrices of determinant $1$ and its Lie algebra $\ttt\subset\bb_m$ consisting of the traceless diagonal matrices.
   
 We can identify $\ttt$ with $\C^{m-1}$ via the isomorphism
 $$(v_1,v_2,\dots,v_{m-1}) \longmapsto 
  \diag(0,v_1,v_2,\dots,v_{m-1}) - \frac{v_1+v_2+\dots+v_{m-1}}{m} I_m; 
 $$
  \textit{i.e.} $(v_1,v_2,\dots,v_{m-1})$ corresponds to the unique matrix $A$ in $\ttt$ with $a_{ii}-a_{11} = v_{i-1}$ for $i=1,2,\dots,m-1$.
 Then we can take \textit{e.g.}
 $$e = \begin{pmatrix}
       0 & 1 & 0 & 0 & \dots & 0\\
       0 & 0 & 1 & 0 & \dots & 0\\
       0 & 0 & 0 & 1 & \dots & 0\\
       \vdots & \vdots & \vdots & \vdots & \ddots & \vdots \\
       0 & 0 & 0 & 0 & \dots & 1 \\
       0 & 0 & 0 & 0 & \dots & 0
      \end{pmatrix}\in\geg
 $$
 and 
 $$
 h = \begin{pmatrix}
       m-1 & 0 & 0 & \dots & 0\\
       0 & m-3 & 0 & \dots & 0\\
       0 & 0 & m-5 & \dots & 0\\
       \vdots & \vdots & \vdots & \ddots & \vdots \\
       0 & 0 & 0 & \dots & 1-m
      \end{pmatrix},
 $$
 or equivalently $h = (-2,-4,\dots,2-2m)\in \C^{m-1}$. Then
 $$
 H^t = \diag(t^{m-1}, t^{m-3},t^{m-5},\dots, t^{3-m}, t^{1-m}).
 $$
 The regular elements of $\ttt$ are the diagonal traceless matrices with pairwise distinct diagonal entries.
 
 We can generalise this example by taking $\Hs$ to be a Borel subgroup of any reductive group $\Gs$. This choice defines the choice of positive roots (as those whose root vectors lie in $\he$). We can therefore take $e = x_1 + x_2 + \dots + x_s$, where $x_1$, $x_2$, \dots, $x_s$ are the root vectors of $\geg$ corresponding to the positive simple roots ($s=r - \dim Z(\Gs)$). Then $e$ is a regular nilpotent in $\Gs$ and $\Hs$ (see Example~\ref{exregnilp}). From the discussion in Section~\ref{sl2p}, we see that there exists an $h$ that satisfies the conditions.
\end{example}

\subsection{Uniform diagonalisations}
We saw in Corollary~\ref{corre} that $e+\w$ is always conjugate to $\w$ if $\w\in\ttt^\reg$. In the first case in Example~\ref{exjor}, we have a closed formula for the conjugating matrix. We generalise this observation here.

\begin{theorem}\label{unif}
There exists a morphism $M\colon \ttt^\reg\to \Hs$, denoted by $\w\mapsto M_\w$,  that satisfies the equality
$$\Ad_{M_\w}(\w) = e+\w$$
for any $\w\in\ttt^\reg$.
\end{theorem}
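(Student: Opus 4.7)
The plan is to realise $M_\w$ as the preimage of the point $e\in\he_n$ under a uniform orbit map. By Corollary~\ref{corre} such an $M_\w\in\Hs$ exists pointwise for each $\w\in\ttt^\reg$; the task is to make this choice depend algebraically on $\w$. Since $\Ts$ centralises $\ttt$, it suffices to look for $M_\w$ in $\Hs_u$. I would therefore introduce the morphism
$$\Phi\colon \ttt^\reg \times \Hs_u \longrightarrow \ttt^\reg \times \he_n,\qquad (\w,u)\longmapsto \bigl(\w,\,\Ad_u(\w)-\w\bigr),$$
which is well defined because $\Ad_u(\w)-\w\in[\he,\he]\subset\he_n$ by Theorem~\ref{solv}. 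Once $\Phi$ is shown to be an isomorphism, setting $M_\w := \mathrm{pr}_{\Hs_u}\bigl(\Phi^{-1}(\w,e)\bigr)$ yields a morphism $\ttt^\reg \to \Hs_u \subset \Hs$ with $\Ad_{M_\w}(\w)=e+\w$.

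The first step is to establish that $\Phi$ is a bijection on closed points. For fixed $\w\in\ttt^\reg$ the restriction is, up to translation by $\w$, the orbit map $\phi_\w\colon \Hs_u\to \w+\he_n$, $u\mapsto \Ad_u(\w)$. Regularity of $\w$ gives $C_\he(\w)=\ttt$, so $[\w,-]\colon \he_n\to \he_n$ is injective and hence bijective by dimension. Two consequences follow: the orbit of $\w$ is open in $\w+\he_n$, since its tangent space at $\w$ equals $[\he_n,\w]=\he_n$; and the stabiliser of $\w$ in $\Hs_u$ has Lie algebra $\ker([\w,-]|_{\he_n})=0$. Since any closed subgroup of a unipotent group in characteristic zero is connected (via $\exp$), this stabiliser is trivial. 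Combined with the classical fact that orbits of unipotent groups acting on affine varieties are closed (Kostant--Rosenlicht), $\phi_\w$ is a bijection onto $\w+\he_n$, so $\Phi$ is bijective.

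The second step is to verify that $\D\Phi$ is an isomorphism at every $(\w,u)$. Trivialising $T_u\Hs_u \cong \he_n$ by left translation, a tangent vector $(\w',n)$ is sent to $\bigl(\w',\,\Ad_u([n,\w]) + \Ad_u(\w')-\w'\bigr)$. If this vanishes, then $\w'=0$ and $[n,\w]=0$ by invertibility of $\Ad_u$, whence $n=0$ by injectivity of $[\w,-]|_{\he_n}$. So $\D\Phi$ is injective, hence an isomorphism by equal dimensions.

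Thus $\Phi$ is a bijective \'etale morphism between smooth irreducible varieties over $\C$, and is therefore an isomorphism (for instance by Zariski's Main Theorem), which completes the construction. The main technical subtlety is the passage from fibrewise bijectivity of $\Phi$ to it being an isomorphism of schemes, but this is standard in characteristic zero once bijectivity of the differential has been verified; the regularity of $\w$ is what powers every step.
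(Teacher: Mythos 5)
Your proposal is correct, and its skeleton is the same as the paper's: globalise the pointwise conjugation into a single morphism from $\Hs_u\times\ttt^\reg$ to (a translate of) $\he_n\times\ttt^\reg$, prove that this morphism is an isomorphism, and read off $M_\w$ from the inverse over $\{e\}\times\ttt^\reg$. Where you genuinely diverge is in how the two key ingredients are established. For fibrewise bijectivity, the paper quotes Corollary~\ref{corre} for existence and proves uniqueness of the unipotent conjugator by a centraliser argument ($A^{-1}A'$ centralises $\w$, the centraliser of the corresponding subgroup of $\Ts$ is connected with Lie algebra $\ttt$, hence lies in $\Ts$, and $\Ts\cap\Hs_u=\{1\}$); you instead run an orbit-theoretic argument: the $\Hs_u$-orbit of $\w$ is open in $\w+\he_n$ because $[\he_n,\w]=\he_n$, closed by Kostant--Rosenlicht, hence everything, and the stabiliser is trivial because its Lie algebra vanishes and unipotent groups in characteristic zero have no nontrivial finite subgroups. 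This is self-contained (it even reproves the relevant case of Corollary~\ref{corre}) at the cost of invoking Rosenlicht's closed-orbit theorem and smoothness of stabilisers in characteristic zero. For the passage from bijectivity to isomorphism, the paper deliberately avoids any differential computation: it factors the map via Grothendieck's form of Zariski's main theorem, notes it is birational (dominant and injective in characteristic zero), and uses finiteness plus normality of the target to conclude; you instead compute $\D\Phi$ explicitly, getting injectivity from $\w'=0$, invertibility of $\Ad_u$, and injectivity of $[\w,-]|_{\he_n}$, so that $\Phi$ is bijective \'etale and hence an isomorphism. Your route gives slightly more information (\'etaleness for free) and is arguably more hands-on, while the paper's route needs only set-theoretic bijectivity together with normality of the target; both are complete proofs of the statement.
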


\begin{proof}
  From Corollary~\ref{corre} we know that for each $\w\in\ttt^\reg$ and $n\in\he_n$, there exists an $A\in\Hs$ such that
  \begin{align}\label{conjugate}\Ad_{A}(\w) = n+\w.\end{align}
  We have to prove that for $n = e$ we can choose such matrices in a way that varies regularly when $\w$ varies.
 
We know by Theorem~\ref{solv} that there exists a $V\in\Ts$ such that $AV\in \Hs_u$. Any element of $\Ts$ clearly centralises $\w$. Therefore, $AV$ also satisfies $ \Ad_{AV}(\w) = n+\w.$  Hence we can assume that $A\in \Hs_u$. We first show that $A\in \Hs_u$ is unique with respect to \eqref{conjugate}. Indeed, assume to the contrary that $A$, $A'$ are both unipotent and $\Ad_{A}(\w) = \Ad_{A'}(\w) = n + \w$. Then
 $$\Ad_{A^{-1}A'}(\w) = \Ad_{A^{-1}}(n+\w) = \w.$$
 Thus $A^{-1}A'$ centralises $\w$. Hence it centralises $\Aa(\w)$, the smallest closed subgroup of $\Hs$ whose Lie algebra contains $\w$. The group $\Aa(\w)$ is contained in the torus $\Ts$; therefore, by \cite[Section~19.4]{Humph} its centraliser $C_{\Hs}(\Aa(\w))$ is connected. But $\Lie(C_{\Hs}(\Aa(\w)))$ has to commute with $\Lie(\Aa(\w))$, which contains $\w$. By the regularity assumption $C_\he(\w) = \ttt$; thus from the connectivity we get $C_{\Hs}(\Aa(\w))\subset \Ts$. Therefore, $A^{-1}A'\in\Ts$, but as $A^{-1}A'$ is unipotent, we get $A^{-1}A' = 1$, hence $A = A'$.

Now consider the map
$$\phi\colon  \Hs_u \times \ttt^\reg \lra \he_n\oplus\ttt^\reg$$
$$\phi(A, \w) = \Ad_{A}(\w).$$
We have just proved that $\phi$ is a bijection. Now by Grothendieck's version of Zariski's main theorem (\textit{cf.} \cite[Th\'eor\`eme 4.4.3]{EGA31}), it can be factored as $\phi = \tilde{\phi}\circ\iota$, where $\iota\colon \Hs_u\times \ttt^\reg \to Y$ is an open embedding and $Y\to\tilde{\phi}$ is finite. By restricting $Y$ to the closure of $\im\iota$, we can assume that $\im\iota$ is dense in $Y$. The map $\phi$ is clearly dominant, and its source is irreducible; hence by \cite[Proposition 7.16]{Harr} it is birational. Therefore, $\tilde{\phi}$ is birational as well, but it is finite and its target is normal; hence $\tilde{\phi}$ is an isomorphism. Therefore, $\phi$ is an open embedding, which has to be an isomorphism as it is surjective.

Hence we get the desired map $M\colon \ttt^\reg\to \Hs_u$ by considering the first coordinate of $\phi^{-1}|_{\{e\}\times\ttt^\reg}$.
\end{proof}

\subsection{Regular actions}
From now on we will assume that our principally paired solvable group $\Hs$ acts regularly on a smooth projective variety $X$  (see Definition~\ref{defreg}). By Lemma~\ref{lemzer} the unique zero $o\in X$ of $e$ is a zero of the whole $\he$.

\begin{example}
In Example~\ref{exgr2} we see regular actions of a reductive group $\Gs$ on flag varieties. In Example~\ref{exsl2} we constructed a regular action of $\SL_2$ on $\PP^n$. In both cases, when we restrict to a Borel subgroup, we get a solvable principally paired group (see Example~\ref{exgr}) acting regularly on smooth projective varieties.
\end{example}

By Corollary~\ref{fintor} there are finitely many fixed points of the torus $\Ts$ acting on $X$. We will call them $\zeta_0 = o$, $\zeta_1$, $\dots$, $\zeta_s$. Moreover, combining Lemma~\ref{regzer} with Lemma~\ref{lemfix}, we get that for any $\w\in\ttt^\reg$ the only zeros of $V_\w$ on $X$ are $\zeta_0$, $\zeta_1$, \dots, $\zeta_s$.

Now, following the idea of \cite{BC}, we define the scheme whose coordinate ring will turn out to be the $\Hs$-equivariant cohomology of $X$. As $\Hs$ is homotopically equivalent to its maximal torus $\Ts$, this is the same as the $\Ts$-equivariant cohomology.

\begin{definition}\label{defz}
Let $\ZZ \subset \ttt\times X$ be defined as the zero scheme of the total vector field (see Definition~\ref{totvec}) restricted to $e+\ttt \cong \ttt$. We will denote that restricted vector field by $V_{e+\ttt}$. In other words, for any $\w\in\ttt$ the vector field $V_{e+\ttt}$ restricted to $\{\w\}\times X$ equals $V_{e+\w}$.
\end{definition}

We will also consider an action of $\Cs$ on $\ttt\times X$ which is defined on $\ttt$ by multiplication by $t^{-2}$ and on $X$ by the action of $H^t$. Clearly from Lemma~\ref{lemad} this action preserves $\ZZ$ as $\Ad_{H^t}(e) = t^2 e$. Our goal will be to prove the following theorem.

\begin{theorem}\label{finsolv}
Let $\Hs$ be a principally paired solvable group acting regularly on a smooth complex projective variety~$X$. Then there is a homomorphism
$$\rho\colon  H_\Ts^*(X)\lra \C[\ZZ],$$
to be defined in \eqref{rho}, which is an isomorphism of graded $\C[\ttt]$-algebras. Moreover, the zero scheme $\ZZ$ is affine, so that we have the following diagram with vertical isomorphisms:
$$ \begin{tikzcd}
 \ZZ  \arrow{d}{\pi} \arrow{r}{\rho^*}  &
  \Spec H_\Ts^*(X;\C) \arrow{d} \\
   \ttt \arrow{r}{\cong}&
  \Spec H^*_\Ts.
\end{tikzcd}$$
\end{theorem}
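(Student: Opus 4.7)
The plan is to follow the Brion--Carrell strategy \cite{BC} in the family setting: establish that the projection $\pi:\ZZ\to\ttt$ is finite and flat of the correct rank, construct the comparison $\rho$ from a parametric Cartan model, and then conclude by graded Nakayama applied at the special fibre $\w=0$, where the classical Carrell--Lieberman theorem provides the base case.

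First I would show that $\pi$ is finite and that $\C[\ZZ]$ is a graded free $\C[\ttt]$-module. Since $X$ is projective, the map $\ttt\times X\to\ttt$ is proper, hence so is $\pi$; by Lemma \ref{regzer} every fibre of $\pi$ is set-theoretically finite, so $\pi$ is finite and $\ZZ$ is affine. Locally on $\ttt\times X$ the scheme $\ZZ$ is cut out by the $\dim X$ components of $V_{e+\ttt}$ in a trivialization of $T_X$, and since the fibres are zero-dimensional this makes $\ZZ$ a local complete intersection of expected codimension, hence Cohen--Macaulay. Miracle flatness then gives flatness of the finite morphism $\pi$ over the smooth base $\ttt$. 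Over $\w\in\ttt^\reg$, Theorem \ref{unif} shows that $\Ad_{M_\w}$ carries the zero scheme of $V_\w$ onto that of $V_{e+\w}$, so $\pi^{-1}(\w)$ consists of the reduced set $\{M_\w\cdot\zeta_i\}_{i=0}^s$, whence $\C[\ZZ]$ is free of rank $s+1=|X^\Ts|$. On the topological side, the one-parameter subgroup $\{H^t\}$ is regular, so its fixed locus equals $X^\Ts$, and the Bialynicki-Birula decomposition equips $X$ with an algebraic decomposition into $s+1$ affine cells; hence $X$ has only even cohomology of total dimension $s+1$, is $\Ts$-equivariantly formal, and $H_\Ts^*(X)$ is graded free over $\C[\ttt]\cong H_\Ts^*$ of the same Poincar\'e series as $\C[\ZZ]$.

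Next I would define $\rho$ by mimicking the Brion--Carrell Cartan model construction in the family. An equivariant class $[\alpha]\in H_\Ts^*(X)\cong H_\Hs^*(X)$ is represented by a form $\alpha\in(\C[\ttt]\otimes\Omega^*(X))^\Ts$ closed under a shifted Cartan differential $d-\iota_{V_{e+\w}}$, and $\rho([\alpha])$ is defined as the restriction to $\ZZ$ of the $\Omega^0$-component $\alpha_0\in\C[\ttt\times X]$. Independence of representative is to be established using the Koszul resolution of $\OO_\ZZ$ obtained by contracting $\Omega^\bullet_{\ttt\times X/\ttt}$ against $V_{e+\ttt}$: any exact contribution $(d-\iota_{V_{e+\w}})\beta$ yields a function of the form $\iota_{V_{e+\ttt}}\beta_1$, which vanishes on $\ZZ$ by construction. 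Consistency of the shifted Cartan complex with genuine equivariant cohomology relies on the $\Cs$-action generated by $h$, which scales $e$ with weight $2$ and deforms the unshifted Cartan differential into the shifted one as $t\to 0$, preserving the gradings throughout. To conclude, I would specialise $\rho$ at $\w=0$: the resulting map $H^*(X)\to\C[\ZZ(V_e)]$ is precisely the Carrell--Lieberman identification \cite{ACLS,AC} for $V_e$ together with its weight-$2$ contracting $\Cs$-action, hence an isomorphism. Graded Nakayama then upgrades this to surjectivity of $\rho$, and the matching Poincar\'e series forces $\rho$ to be an isomorphism of graded $\C[\ttt]$-algebras. Passing to $\Spec$ yields the commutative square in the statement.

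The main technical obstacle is the construction and verification of $\rho$: one must confirm that the shifted Cartan complex with differential $d-\iota_{V_{e+\w}}$ truly computes $H_\Ts^*(X)$, that the assignment $\alpha\mapsto\alpha_0|_\ZZ$ descends to cohomology classes, and that it lands in $\C[\ZZ]$ globally rather than merely over $\ttt^\reg$. The crucial technical input throughout is the contracting $\Cs$-action of $\{H^t\}$, which simultaneously grades both sides compatibly, bridges the parametric family $V_{e+\w}$ to its nilpotent limit $V_e$, and controls the delicate behaviour near $\w=0$, where the unshifted vector field $V_\w$ would have positive-dimensional zero locus.
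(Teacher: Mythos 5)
Your outer scaffolding (finiteness and flatness of $\pi$, affineness of $\ZZ$, equivariant formality via the Bia{\l}ynicki-Birula decomposition, specialisation at $\w=0$ to the Akyildiz--Carrell--Liebermann isomorphism, graded Nakayama plus Poincar\'e series) matches the numerical half of the paper's argument, and your finiteness/miracle-flatness route to affineness and freeness of $\C[\ZZ]$ is a reasonable variant of Lemma \ref{lemcm}. The genuine gap is the construction of $\rho$ itself. The theorem asks for the specific map \eqref{rho}, given by $\rho(c)(\w,x)=c|_{\zeta_i}(\w)$, and everything hard in the paper's proof goes into making that definition legitimate: reducedness of $\ZZ$ (Lemma \ref{lemcm}); the fact that every closed point of $\ZZ$ has the form $(\w,M_\w\zeta_i)$ with $\zeta_i$ independent of choices, which rests on Theorem \ref{jordan}, Lemma \ref{lemzer} and the $\Hs$-stability of the plus-cells (Theorem \ref{ABBst}); the proof that $\rho(c)$ is a \emph{regular} function on $\ZZ$, obtained for equivariant Chern classes of $\Hs$-linearised bundles via the fibrewise trace formula and Jordan decomposition (Lemma \ref{nicefun}); and the generation of $H^*_\Ts(X)$ by such Chern classes (Riemann--Roch on the $\Hs$-stable cells, Thomason, graded Nakayama; Lemmas \ref{chern1} and \ref{chern2}). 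You replace all of this by a ``shifted Cartan model'' with differential $d-\iota_{V_{e+\w}}$ and the prescription $[\alpha]\mapsto\alpha_0|_{\ZZ}$, and you yourself flag its verification as the main obstacle; as written it does not close.

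Concretely: (i) that $\bigl((\C[\ttt]\otimes\Omega^*(X))^\Ts,\, d-\iota_{V_{e+\w}}\bigr)$ computes $H^*_\Ts(X)$ is itself a nontrivial claim — the standard Cartan differential involves only the torus contractions, the group $\Hs$ is noncompact, and ``the $\Cs$-action deforms one differential into the other'' is not a proof; (ii) even granting (i), a Cartan-model representative is only a smooth form, so $\alpha_0|_{\ZZ}$ is a priori a $\C$-valued function on closed points, not an element of the algebraic coordinate ring $\C[\ZZ]$; landing in $\C[\ZZ]$ is the entire content of the theorem, and the only mechanism available in this setting is to use algebraic representatives, which is exactly the Chern-class/trace argument you have omitted (your own worry ``that it lands in $\C[\ZZ]$ globally'' is the unfilled hole, not a side issue); (iii) your surjectivity step needs that the specialisation of your map at $\w=0$ coincides with the ACLS isomorphism $H^*(X)\cong\C[\ZZ]/\I\C[\ZZ]$, which is asserted rather than proved — the paper sidesteps this by proving injectivity of $\rho$ directly from injectivity of localisation on the equivariantly formal $X$ and only then comparing Poincar\'e series, using ACLS solely to compute the Poincar\'e series of $\C[\ZZ]/\I\C[\ZZ]$. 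Finally, even if your Cartan-model map could be constructed, you would still owe an argument that it agrees with the map \eqref{rho} named in the statement. So the proposal is an outline whose hard middle — a well-defined graded $\C[\ttt]$-algebra homomorphism into the ring of regular functions agreeing with \eqref{rho} — is missing.
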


We will first study the structure of $\ZZ$ with connection to the torus-fixed points $\zeta_0$, \dots, $\zeta_s$. We will also prove that $\ZZ$ is reduced. This will allow us to define a map $\rho\colon  H^*_\Ts(X)\to \C[\ZZ]$ by specifying $\rho(c)$ by its values. To show that $\rho(c)$ is a regular function on $\ZZ$, we will prove that $H^*_\Ts(X)$ is generated by Chern classes of $\Hs$-equivariant vector bundles.

\subsection{Equivariant cohomology and Bia{\l}ynicki-Birula decomposition}
We know that the $\Ts$-equivariant cohomology $H_{\Ts}^*(\pt) = \C[\ttt]$ of the point is the ring of polynomials on $\ttt$. By $\I$ we will denote the ideal of polynomials vanishing at $0$, equivalently $\I = \bigoplus_{n>0} H_{\Ts}^n(\pt)$. The multiplicative group $\Cs$ acts on $X$ by the means of the morphism $H\colon \Cs\to \Hs$, $t\mapsto H^t$. 
This action has finitely many fixed points $\zeta_0$, $\zeta_1$, $\dots$, $\zeta_s$. We may then consider its Bia{\l}ynicki-Birula plus- and minus-decompositions (see \cite{BB1}), \textit{i.e.}
$$W_i^+ = \{x\in X: \lim_{t\to 0} H^t\cdot x = \zeta_i \}, \quad W_i^- = \{x\in X: \lim_{t\to \infty} H^t\cdot x = \zeta_i \}.$$
All these sets are locally closed varieties, isomorphic to affine spaces.

When such decompositions exist, the odd cohomology of $X$ vanishes; see \cite{BB2}. Then by Goresky--Kottwitz--MacPherson (\textit{cf.}
 \cite[Corollary 1.3.2]{GKM}), the $\Ts$-space $X$ is equivariantly formal. In particular,
\begin{align}\label{formal}H_{\Ts}^*(X) \cong H_{\Ts}^*(\pt)\otimes H^*(X)\end{align}
as $H_{\Ts}^*(\pt)$-modules and $H^*(X) \cong H_{\Ts}^*(X)/\I H_{\Ts}^*(X)$ as $\C$-algebras.

\begin{theorem}\label{ABBst}
The Bia{\l}ynicki-Birula plus-decomposition $X = \bigcup_{i=0}^s W_i^+$ is $\Hs$-stable.
\end{theorem}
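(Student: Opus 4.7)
The plan is to use the Levi-type decomposition $\Hs = \Ts\ltimes \Hs_u$ from Theorem \ref{solv} and verify the stability of each $W_i^+$ separately under the maximal torus $\Ts$ and under the unipotent radical $\Hs_u$. Since any $g\in\Hs$ can be written as $g=tu$ with $t\in\Ts$, $u\in\Hs_u$, once both factors preserve $W_i^+$ we are done: $gx=t(ux)\in t\cdot W_i^+\subseteq W_i^+$.

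For the torus, the key observation is that the contracting $\Cs$-action on $X$ comes from $H^t\in\Ts$, and hence commutes with every $g\in\Ts$. Therefore for $x\in W_i^+$ and $g\in\Ts$ one has
$$\lim_{t\to 0} H^t(gx)\;=\;g\,\Bigl(\lim_{t\to 0} H^t x\Bigr)\;=\;g\zeta_i.$$
By construction the $\zeta_i$ are precisely the $\Ts$-fixed points of $X$, so $g\zeta_i=\zeta_i$ and $gx\in W_i^+$.

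For $\Hs_u$ the core input is that all $h$-weights on $\he_n=\Lie(\Hs_u)$ are strictly positive even integers. This is (the solvable case of) Lemma \ref{posint}, whose argument uses only regularity of $e$ in $\he$ and the resulting equality $[e,\he]=\he_n$. As a consequence, conjugation by $H^t$ on $\Hs_u$ has the identity as an attracting fixed point as $t\to 0$: for every $u\in\Hs_u$ one has $\lim_{t\to 0} H^t u H^{-t}=1$. Given $x\in W_i^+$, this yields
$$\lim_{t\to 0} H^t(ux)\;=\;\lim_{t\to 0}\bigl(H^t u H^{-t}\bigr)\cdot\bigl(H^t x\bigr)\;=\;1\cdot\zeta_i\;=\;\zeta_i,$$
so $ux\in W_i^+$, completing the argument.

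The main technical point to confirm carefully is the contraction $H^t u H^{-t}\to 1$ at the level of the group rather than only at the level of its Lie algebra. This follows from the fact that $\Hs_u$ is unipotent, hence isomorphic as a variety to $\he_n$ via an exponential (equivalently, a polynomial) map which intertwines the conjugation action of $H^t$ on $\Hs_u$ with its adjoint action on $\he_n$; the positivity of weights on $\he_n$ then translates into the desired contracting behaviour on $\Hs_u$. Everything else in the proof is essentially formal once the Lie-algebraic positivity of weights is in hand.
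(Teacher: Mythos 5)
Your proof is correct and follows essentially the same route as the paper: decompose $g\in\Hs$ as a torus element times a unipotent element via Theorem \ref{solv}, use that $H^t$ commutes with $\Ts$ and that $\zeta_i$ is $\Ts$-fixed, and contract the unipotent factor to the identity under conjugation by $H^t$ using the exponential together with the positivity of weights on $\he_n$ from Lemma \ref{posint}. The only cosmetic difference is that you treat the two factors in separate steps, whereas the paper combines them in a single limit computation with $M=DU$.
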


\begin{proof}
Assume that $x\in W_i^+$, \textit{i.e.} $\lim_{t\to 0} H^t\cdot x = \zeta_i$. Let $M\in \Hs$ and $x' = Mx$, and let $\zeta_j = \lim_{t\to 0} H^t\cdot x'$. Then
\begin{align}\label{bbeq1}
    H^t x' = H^tMx = (H^t M (H^t)^{-1}) H^t x.
\end{align}
Let $M = D \cdot U$, where $D\in\Ts$ and $U\in \Hs_u$. As $H^t\in \Ts$, it commutes with $D$; therefore, 
\begin{align}\label{bbeq2}
H^t M (H^t)^{-1} = D H^t U (H^t)^{-1}.
\end{align}
Now as $U \in \Hs_u$, we have $U = \exp(u)$ for some $u\in\he_n$. Here $\exp$ should be understood as the algebraic exponential for unipotent groups (see \cite[Proposition 14.32]{Milne}). We then have
$$H^t U (H^t)^{-1} = H^t \exp(u) (H^t)^{-1} = \exp(\Ad_{H^t}(u)).$$
By Lemma~\ref{posint} the weights of the $H^t$-action on $\he_n$ are positive. Therefore, $\lim_{t\to 0} \Ad_{H^t}(u) = 0$, hence $\lim_{t\to 0} H^t U (H^t)^{-1} = 1.$ Combining \eqref{bbeq1} and \eqref{bbeq2} gives
$$H^t x' = D H^t U (H^t)^{-1} H^t x.$$
Passing to limit $t\to 0$ then yields
$$\zeta_j = D \zeta_i.$$
As $\zeta_i$ is fixed by $\Ts$, we get $i=j$, hence $x'\in W_i^+$ as desired.
\end{proof}

\subsection{Structure of \texorpdfstring{$\ZZ$}{Z}}
\label{structure}

In order to prove $H^*_\Ts(X)\cong \C[\ZZ]$, we study the structure of $\ZZ$ and construct a map $H^*_\Ts(X)\to \C[\ZZ]$. Let $(\w,x)\in\ZZ$. This means that $e+\w$ vanishes on $x$, and by Lemma~\ref{regzer} it is an isolated zero. By Theorem~\ref{jordan} there exists an $M\in \Hs$ such that $e+\w = \Ad_M(\w + n')$, where $[\w,n'] = 0$ and $n'\in [\he,\he]$. Then by Lemma~\ref{lemad} we have that $M^{-1} x$ is a zero of $\w+n'$, and by Lemma~\ref{lemzer} it is a zero of $\ttt$. Hence we get $x = M\zeta_i$ for some $i\in\{0,1,\dots,s\}$. Moreover, $\zeta_i$ is a zero not only  of $\ttt$ but also of $n'$.

\begin{example}
We continue Example~\ref{exjor} and use the notation from Example~\ref{exgr} for the elements of $\ttt$.
\begin{enumerate}
\item Let $\w\in\ttt\cong \C^2$ be of the form $\w = (v_1,v_2)$ with $v\neq 0$, $w\neq 0$, $v\neq w$. We know that $e+\w = M_{\w} \w M_{\w}^{-1}$, and therefore any zero of $e+\w$ is of the form $x = M_\w \zeta_i$, and conversely, for any $i$ the point $M_\w \zeta_i$ is fixed by $\w+e$.
\item If $\w = (v_1,0)$ with $v_1\neq 0$, then we have a matrix $M_\w\in\Bs_3$ such that
$$(e+\w) = M_{\w}
\begin{pmatrix}
 -v_1/3 & 0 & 1 \\
 0 & 2v_1/3 & 0 \\
 0 & 0 & -v_1/3
\end{pmatrix} M_{\w}^{-1}.$$
Therefore, every zero of $e+\w$ is of the form $x = M_\w \zeta_i$ for an $i$ such that $\zeta_i$ is also a zero of
$$E_{13}= \begin{pmatrix}
 0 & 0 & 1 \\
 0 & 0 & 0 \\
 0 & 0 & 0
\end{pmatrix}.$$
But conversely, if $\zeta_i$ is additionally a zero of $E_{13}$, then $M_\w\zeta_i$ is a zero of $e+\w$.
\end{enumerate}
\end{example}

\begin{remark}\label{remun}
By Theorem~\ref{ABBst}, if $x = M\zeta_i$, then $\zeta_i$ is in the same plus-cell as $x$. But $\zeta_i$ itself is a torus-fixed point; hence $\zeta_i = \lim_{t\to 0} H^t\cdot x$. In particular, this means that regardless of the potential choice of $M$ we might make, we always get the same torus-fixed point; \textit{i.e.} if $x = M_1\zeta_{i_1} = M_2\zeta_{i_2}$, then $i_1 = i_2$. The elements $M$ and $n'$ are however not unique.

Note that for $i=0,1,\dots,s$ and $\w\in\ttt$, there is at most one zero of $e+\w$ in the plus-cell of $\zeta_i$. Indeed, assume that there are two such points. By the above, if we choose any $M$ such that $e+\w = \Ad_M(\w+n')$, then they are of the form $x_1 = M\zeta_{i_1}$, $x_2 = M\zeta_{i_2}$. But as in the last paragraph, in fact we have $i_2 = i_1 = i$. Therefore, $x_1 = x_2$.
\end{remark}

The converse statement also holds for particular torus-fixed points. Assume that we are given $\w\in\ttt$ and $M_\w\in \Hs$, $n'\in\he_n$ such that $e+\w = \Ad_{M_\w}(\w+n')$ and $[w,n']=0$. In this case, if $\zeta_i$ is a zero of $n'$, then $M_\w \zeta_i$ is a zero of $e+\w$. However, for given $\w$ the corresponding vector field $V_{n'}$ in general does not vanish in all the torus-fixed points.

\begin{example}
Let us consider the standard action of $\Bs_3$ on $\PP^2$; \textit{i.e.} we define
$$
\begin{pmatrix}
a & b & c \\
0 & d & e \\
0 & 0 & f
\end{pmatrix}
\cdot [v_0:v_1:v_2]
= [u_0:u_1:u_2]
$$
for $u_0$, $u_1$, $u_2$ such that
$$\begin{pmatrix}
a & b & c \\
0 & d & e \\
0 & 0 & f
\end{pmatrix}
\begin{pmatrix}
v_0 \\ v_1 \\ v_2
\end{pmatrix}
= 
\begin{pmatrix}
u_0 \\ u_1 \\ u_2
\end{pmatrix}.
$$

We have three torus-fixed points $\zeta_1=o=[1:0:0]$, $\zeta_2 = [0:1:0]$, $\zeta_3= [0:0:1]$. For $\w = (v_1,v_2)\in\C^2\cong \ttt$ regular, there exists a matrix $M_\w$ such that $e+\w = M_\w \w M_\w^{-1}$. Then $M_\w \zeta_i$ is a fixed point of $e+\w$ for $i=1,2,3$.

However, if $\w = (v_1,0)$ with $v_1\neq 0$, then there exists a matrix $M_\w$ such that $e+\w = M_\w (\w + e_{13}) M_\w^{-1}$. The vector field $V_{e_{13}}$ corresponding to $e_{13}$ vanishes at $\zeta_1$ and $\zeta_2$ (but not at $\zeta_3$); therefore, the zeros of $e+\w$ are exactly of the form $M_\w \zeta_1$ and $M_\w \zeta_2$.

Specializing even more, if we consider $\w = (0,0)$, then $e+\w = e$ is already a Jordan matrix (we can take $M_\w = I_3$). Its only zero is $\zeta_1=o$, so the only fixed point of $e+\w$ is $o$.
\end{example}

We will define a map $H_\Ts^*(X)\to \C[\ZZ]$ by constructing, for each element of $H_\Ts^*(X)$, a function in $\C[\ZZ]$ by its values. So that it is well defined, we first show that $\ZZ$ is reduced.

Remember that we defined a $\Cs$-action on $X$ and $\ttt$ -- see the comment below Definition~\ref{defz}. It turns out (\textit{cf.} \cite[Proposition 1]{CarDef}) that if we consider the Bia{\l}ynicki-Birula minus-decomposition on $X$, then the minus-cell $X_o:=W_0^-$ corresponding to $o$ is open. In other words, all of the weights of the action around $o$ are negative. Therefore, we can choose on $X_o$ coordinates $x_1$, $x_2$, \dots, $x_n$ that are weight vectors of $\Ts$, and the   values of weights on $h$ are positive integers $a_1$, $a_2$, \dots, $a_n$. Using these coordinates we model $X_o$ as a vector space; thus we can identify the tangent spaces to its points with $X_o$ itself.

We also have the grading on $\C[\ttt]$ defined by the action of $\Cs$ on $\ttt$ (of weight $-2$). Therefore, choosing coordinates $v_1,\dots,v_r$ on $\ttt$, we have
$$\C[\ttt\times X_o] = \C[v_1,v_2,\dots,v_{r},x_1,x_2,\dots,x_n]$$
with $\deg v_i = 2$ (for $i=1,2,\dots,r$) and $\deg x_i = a_i$ (for $i=1,2,\dots,n$). The tangent bundle of $X_o$, as an affine space, is trivial, and the coordinates on $X_o$ define its trivialisation; hence we can speak of coordinates of $V_{e+\ttt}$ (\textit{cf.} Remark~\ref{remtan}). We now prove the following Lemma, which for $\Hs = \Bs_2$ was proved in \cite[Theorem 4]{CarDef}.

\begin{lemma}\label{lemcm}
The scheme $\ZZ$ is complete intersection and reduced and contained in $\ttt\times X_o$, hence affine. The ideal of $\ZZ$ in $\C[\ttt\times X_o] = \C[v_1,v_2,\dots,v_{r},x_1,x_2,\dots,x_n]$ is then generated by the vertical coordinates of the vector field $V_{e+\ttt}$:
$$\left(V_{e+\ttt}\right)_1,\left(V_{e+\ttt}\right)_2,\dots, \left(V_{e+\ttt}\right)_n.$$
The degree of each $\left(V_{e+\ttt}\right)_i$ is equal to $a_i+2$, and together with $v_1$, $v_2$, \dots, $v_{r}$, they form a homogeneous regular sequence in $\C[v_1,v_2,\dots,v_{r},x_1,x_2,\dots,x_n]$.
\end{lemma}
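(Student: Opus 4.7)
The plan is to address the four claims in order: first the containment $\ZZ\subset\ttt\times X_o$ (which at the same time gives affineness of $\ZZ$), then the description of the defining ideal with the degree count, then the regular sequence property (from which complete intersection follows), and finally reducedness.

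For the containment I would exploit the $\Cs$-action $\sigma_t:(\w,p)\mapsto(t^{-2}\w,H^tp)$ on $\ttt\times X$. Because $\Ad_{H^t}(e+\w)=t^2e+\w$, a direct application of Lemma~\ref{lemad} gives the pushforward identity $\sigma_{t*}V_{e+\ttt}=t^2V_{e+\ttt}$, so $\ZZ$ is $\Cs$-stable. For any $(\w,p)\in\ZZ$ the full orbit lies in $\ZZ$; as $t\to\infty$ the first coordinate tends to $0\in\ttt$, and since $X$ is projective the point $H^tp$ has a limit $p^*\in X$. Closedness of $\ZZ$ places $(0,p^*)$ in $\ZZ$, so $p^*$ is a zero of $V_e$, hence $p^*=o$ by the regular action hypothesis, and therefore $p\in W_0^-=X_o$.

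On $\ttt\times X_o$ the tangent bundle of $X_o$ is trivial in the coordinates $x_i$, so $V_{e+\ttt}=\sum_{i=1}^n f_i\,\partial/\partial x_i$ with $f_i\in\C[\ttt\times X_o]$ and the defining ideal of $\ZZ$ equals $(f_1,\dots,f_n)$ by Remark~\ref{remtan}. The same identity $\sigma_{t*}V_{e+\ttt}=t^2V_{e+\ttt}$, combined with the gradings $\deg v_j=2$ and $\deg x_i=a_i$, forces $\deg f_i=a_i+2$ by matching weights on each summand $f_i\,\partial/\partial x_i$. To verify the regular sequence, I would first kill $v_1,\dots,v_r$ (trivially regular, with quotient $\C[X_o]=\C[x_1,\dots,x_n]$) and notice that each $f_i$ reduces to the $i$-th component $\bar f_i$ of $V_e$. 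The regular action hypothesis makes $o$ the unique zero of $V_e$ on $X$ and hence on $X_o$, so the quotient $\C[X_o]/(\bar f_1,\dots,\bar f_n)$ is Artinian. In the Cohen--Macaulay polynomial ring $\C[X_o]$ an ideal generated by $n$ elements whose quotient is zero-dimensional is automatically a regular sequence; concatenating with $v_1,\dots,v_r$ produces the homogeneous regular sequence of length $n+r$ and shows that $\ZZ$ is a complete intersection of pure dimension $r$.

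Finally, for reducedness I would invoke Serre's criterion: as a complete intersection in a smooth variety $\ZZ$ is Cohen--Macaulay and therefore $S_1$, so it suffices to check generic reducedness. Since $\ZZ$ has pure dimension $r$ and the map $\pi:\ZZ\to\ttt$ is finite, each irreducible component of $\ZZ$ dominates $\ttt$, and so the generic points of $\ZZ$ lie over the generic point of $\ttt$; it therefore suffices to show that the closed fiber of $\pi$ over a very general $\w\in\ttt$ is reduced. Take $\w\in\ttt^{\reg}$ avoiding the finitely many hyperplanes on which some weight of the $\ttt$-action on some $T_{\zeta_i}X$ vanishes; Corollary~\ref{corre} provides $A\in\Hs$ with $\Ad_A(\w)=e+\w$, and by Lemma~\ref{lemad} the zeros of $V_{e+\w}$ are $A\zeta_0,\dots,A\zeta_s$, with the derivative at each being conjugate to the action of $\w$ on some $T_{\zeta_i}X$, hence non-singular by the choice of $\w$. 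The fiber is therefore a reduced scheme of $s+1$ points, and the standard spreading argument then transports reducedness to the generic fiber. The main obstacle is this reducedness step, as it relies on pinning down both the location and the transversality of the zeros of $V_{e+\w}$ for a generic semisimple $\w$.
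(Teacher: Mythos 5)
Your proposal is correct, and three of its four steps (containment via the contracting $\Cs$-action, the degree count, and the regular-sequence/complete-intersection argument) coincide with the paper's proof up to reorganization: where you quotient by $v_1,\dots,v_r$ first and invoke unmixedness in the Cohen--Macaulay ring $\C[x_1,\dots,x_n]$ to turn the Artinian quotient $\C[X_o]/(\bar f_1,\dots,\bar f_n)$ into a regular sequence, the paper applies the same principle in one stroke, observing that the $n+r$ homogeneous functions $(V_{e+\ttt})_1,\dots,(V_{e+\ttt})_n,v_1,\dots,v_r$ of positive degree have the single common zero $(0,o)$ in the $(n+r)$-dimensional affine space $\ttt\times X_o$, hence form a homogeneous system of parameters and thus a regular sequence. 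The genuine difference is in the reducedness step. The paper first proves a global statement, Theorem \ref{unif}: a morphism $M:\ttt^\reg\to\Hs$ with $\Ad_{M_\w}(\w)=e+\w$ depending algebraically on $\w$ (proved via Zariski's main theorem), which gives an isomorphism $\pi^{-1}(\ttt^\reg)\cong\ttt^\reg\times X^\Ts$; this is manifestly reduced, and then a density argument (equidimensionality from Cohen--Macaulayness plus finiteness of the fibres of $\pi$, exactly as in your dimension count) yields generic reducedness, and $R_0+S_1$ finishes. You instead stay with the pointwise conjugation of Corollary \ref{corre}, add a transversality observation --- for $\w$ off the finitely many root hyperplanes and the finitely many hyperplanes where a weight of $\Ts$ on some $T_{\zeta_i}X$ vanishes (these are genuine hyperplanes because $X^\Ts$ is finite, so no tangent weight is identically zero), the linearisation of $V_{e+\w}$ at each zero $A\zeta_i$ is conjugate to the $\w$-action on $T_{\zeta_i}X$ and hence invertible, making the fibre $s+1$ simple points --- and then pass from closed fibres over a dense open set to the generic fibre. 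Your route buys independence from Theorem \ref{unif} at this point (the paper needs that theorem later anyway, for the Weyl-group comparison in the reductive case, so nothing is saved globally), at the price of the ``standard spreading argument'', which is the one place you should be explicit: it is not true in general that reduced closed fibres over a dense open force reducedness of the total space, so you really do need both the $S_1$ input and a precise statement transporting reducedness to the generic fibre, e.g.\ constructibility of the locus of geometrically reduced fibres (EGA IV, 9.7.7), or miracle flatness ($\ZZ$ Cohen--Macaulay, $\ttt$ regular, zero-dimensional fibres) making $\pi$ finite flat and then a discriminant/generic-\'etaleness argument; with either of these spelled out, your argument is complete.
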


\begin{proof}
First, let us show that $\ZZ$ is contained in $\ttt\times X_o$. Let $(\w,x)\in \ZZ$. We then know that $x$ is a zero of the vector field $V_{e+\w}$. For any $t\in\Cs$, by Lemma~\ref{lemad} we have that $H^t\cdot x$ is a zero of $V_{\Ad_{H^t}(e+\w)}$. As $\Ad_{H^t}(e+\w) = t^2e + \w$, this means that $H^t \cdot x$ is a zero of $e+t^{-2}\w$. When we take $t\to\infty$, this converges to $e$. Therefore, $\lim_{t\to\infty} H^t\cdot x = o$. This means that $x\in X_o$.

Now we will prove that $\left(V_{e+\ttt}\right)_i$ is homogeneous of degree $a_i+2$. We have
$$\left(V_{e+\ttt}\right)_i |_{t\cdot(x,\w)} = 
\left(V_{e+\w/t^2}|_{H^t\cdot x}\right)_i =
\left(H^t_* \left(V_{\Ad_{H^{t^{-1}}}(e+\w/t^2)}|_x\right)\right)_i =
\left(H^t_* \left(V_{e/t^2+\w/t^2}|_x\right)\right)_i, 
$$
and $H^t$ acts on $\supth{i}$ coordinate of tangent space by multiplying it by $t^{-a_i}$, so 
$$\left(H^t_* \left(V_{e/t^2+\w/t^2}|_x\right)\right)_i = t^{-a_i} \left(V_{e/t^2+\w/t^2}|_x\right)_i
=t^{-a_i-2} \left(V_{e+\ttt}\right)_i |_{(x,\w)}.$$
Since $v_1$, $v_2$, \dots, $v_{r}$ have degree $2$, the sequence
$$\left(V_{e+\ttt}\right)_1,\left(V_{e+\ttt}\right)_2,\dots, \left(V_{e+\ttt}\right)_n, v_1, v_2, \dots, v_{r}$$
consists of homogeneous functions on the $(r+n)$-dimensional affine space $\ttt\times X_o$. There are $r+n$ of them, and they have only one common zero. Therefore (see \cite[Proposition 4.3.4]{Benson}), they form a regular sequence. In particular, $\ZZ$ is the zero scheme of a regular sequence $\left(V_{e+\ttt}\right)_1$, $\left(V_{e+\ttt}\right)_2$, \dots, $\left(V_{e+\ttt}\right)_n$; therefore, it is complete intersection.

Now we have to prove that $\ZZ$ is reduced. Let $\pi\colon \ZZ\to\ttt$ be the first projection. By Theorem~\ref{unif} we get an isomorphism $\pi^{-1}(\ttt^\reg) \cong \ttt^\reg\times X^\Ts$. The first factor, as an open subscheme of affine space, is reduced. The fixed points of the torus are also reduced (\textit{cf.} \cite[Theorem 13.1]{Milne}); therefore, $\pi^{-1}(\ttt^\reg)$ is reduced.

Now note that $\pi^{-1}(\ttt^\reg)$ is an open dense subset in $\ZZ$. It is open because $\ttt^\reg$ is open in $\ttt$. To prove that it is dense, assume to the contrary that there exists an $x\in \ZZ\setminus\overline{\pi^{-1}(\ttt^\reg)}$. Let $Y$ be its irreducible component in $\ZZ$. As $\ZZ = \overline{\pi^{-1}(\ttt^\reg)} \cup \pi^{-1}(\ttt\setminus\ttt^\reg)$ and both sets are closed, by irreducibility $Y$ has to be contained in one of them. As $x$ is not contained in the former, $Y$ has to be contained in the latter, so that $\pi(Y)\subset \ttt\setminus\ttt^\reg$. As $\ttt\setminus\ttt^\reg$ is a union of hyperplanes in $\ttt$, the same argument shows that $\pi(Y)$ lies within one of them (of dimension $r-1$). Considering $\pi|_Y$ as mapping to $\overline{\pi(Y)}$ and reducing if needed, we get a dominant map between integral schemes. Note that as $\ZZ$ is complete intersection, it is Cohen--Macaulay, and thus equidimensional by \cite[Theorems 17.6 and 6.5]{Matsu}. As $\ttt\times\{o\}$ is closed in $\ZZ$ and of dimension $r$, the dimension of $\ZZ$ is at least $r$. Therefore, by the fiber dimension theorem (see\cite[Exercise~II.3.22(b)]{Hart}), the fibers of $\pi|_Y$ are at least $1$-dimensional. But they are finite by Lemma~\ref{regzer}, so we get a contradiction.

Now as $\pi^{-1}(\ttt^\reg)$ is an open dense subset in $\ZZ$, it contains its generic points; hence $\ZZ$ is generically reduced. Using that $\ZZ$ is Cohen--Macaulay, by \cite[Proposition 14.124]{GW} we get that $\ZZ$ is reduced.
\end{proof}

\subsection{The homomorphism \texorpdfstring{$\rho$}{ρ}}\label{sectionmap}
Let $c \in H_\Ts^*(X)$. In Section~\ref{structure} we show that every element $(\w,x)$ of $\ZZ$ satisfies $x = M_\w \zeta_i$. Here $M_\w$ is some element of $\Hs$ depending on $\w$, and $\zeta_i$ is a uniquely determined fixed point of $\Ts$-action. The localisation $c|_{\zeta_i}$ of $c$ to the torus-fixed point can be now seen as a polynomial in $\ttt$ because $H_\Ts^*(\pt) = \C[\ttt]$.
We then define
\begin{align} \label{rho} \rho(c)(\w,x) = c|_{\zeta_i}(\w).\end{align}
This follows the idea of \cite{BC}, where $\rho$ is defined this way for $\Bs_2$.
For any $c\in H_\Ts^*(X)$ this defines a function $\rho(c)$ on the set of closed points $\ZZ$. This clearly gives a $\C[\ttt]$-homomorphism between $H_\Ts^*(X)$ and the algebra of all $\C$-valued functions on $\ZZ$. We have to prove that for any $c\in H_\Ts^*(X)$ the image $\rho(c)$ defines a regular function, which is unique by Lemma~\ref{lemcm}. Thus we get a $\C[\ttt]$-homomorphism
$$\rho\colon  H_\Ts^*(X)\lra \C[\ZZ].$$
In general, assume that we are given an algebraic group $\Hs$ and an $\Hs$-variety $A$. For any $\Hs$-linearised bundle $\Ee$ on $A$, we may consider its equivariant Chern classes $c^\Hs_k(\Ee)\in H^{2k}_\Hs(A)$. Let $p\in A$ be a fixed point of $\Hs$. From the naturality of Chern classes, we get $c^\Hs_k(\Ee)|_p = c^\Hs_k(\Ee_p)$, where $\Ee_p$ is the fiber of $\Ee$ over $p$. This belongs to $H_\Hs^*(\pt)\subset \C[\he]$, and for any $y\in \he$ we get
\begin{align}
\label{cherntr}
c^\Hs_k(\Ee)|_p(y) = \Tr_{\Lambda^k \Ee_p}(\Lambda^k y_p).
\end{align}
Here $y_p$ is the infinitesimal action of $y\in\he$ on $\Ee_p$, which is a representation of $\Hs$.

\begin{lemma}\label{nicefun}
 Let $\Ee$ be an $\Hs$-linearised vector bundle on $X$, and let $k$ be a non-negative integer. Then for any $(\w,x)\in\ZZ$ we have
 $$\rho\left(c_k^\Ts(\Ee)\right)(\w,x) = \Tr_{\Lambda^k \Ee_x}\left(\Lambda^k (e+\w)_x\right).$$
 In particular, $\rho(c_k^\Ts(\Ee))\in \C[\ZZ]$.
\end{lemma}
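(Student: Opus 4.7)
My plan has three steps: transport the trace from $\zeta_i$ to $x=M_\w\zeta_i$ via the $\Hs$-linearisation of $\Ee$, use a Jordan decomposition argument on $\Ee_{\zeta_i}$ to discard the nilpotent $n'$-contribution, and verify regularity via local trivialisations.

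First, by Section~\ref{structure}, any $(\w,x)\in\ZZ$ has $x=M_\w\zeta_i$ with $e+\w=\Ad_{M_\w}(\w+n')$, $[\w,n']=0$, $n'\in\he_n$, and $\zeta_i$ a common zero of $\ttt$ and $n'$. The $\Hs$-linearisation provides a linear isomorphism $M_\w\colon\Ee_{\zeta_i}\xrightarrow{\cong}\Ee_x$, and running the diagram chase of Lemma~\ref{lemad} on the $\Hs$-action on the total space of $\Ee$ (rather than on $X$) yields the intertwining relation $(e+\w)_x=M_\w\circ(\w+n')_{\zeta_i}\circ M_\w^{-1}$. In particular
$$\Tr_{\Lambda^k\Ee_x}(\Lambda^k(e+\w)_x)=\Tr_{\Lambda^k\Ee_{\zeta_i}}(\Lambda^k(\w+n')_{\zeta_i}).$$

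Next, I would argue that $(\w+n')_{\zeta_i}$ is the Jordan decomposition $\w_{\zeta_i}+n'_{\zeta_i}$: the $\Ts$-representation on $\Ee_{\zeta_i}$ is diagonalisable, so $\w_{\zeta_i}$ is semisimple, whereas $n'$ integrates to an algebraic $\Gs_a$-subgroup $\{\exp(tn')\}\subset\Hs_u$ fixing $\zeta_i$ (since $V_{n'}$ vanishes there), and any algebraic $\Gs_a$-representation has the form $t\mapsto\exp(tN)$ with $N$ nilpotent, forcing $n'_{\zeta_i}$ to be nilpotent. Commutativity $[\w_{\zeta_i},n'_{\zeta_i}]=0$ follows from $[\w,n']=0$. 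Because the eigenvalues of a commuting semisimple-plus-nilpotent sum coincide with those of its semisimple summand,
$$\Tr_{\Lambda^k\Ee_{\zeta_i}}(\Lambda^k(\w+n')_{\zeta_i})=\Tr_{\Lambda^k\Ee_{\zeta_i}}(\Lambda^k\w_{\zeta_i})=c_k^\Ts(\Ee)|_{\zeta_i}(\w)=\rho(c_k^\Ts(\Ee))(\w,x),$$
where the last two equalities use \eqref{cherntr} and the defining formula \eqref{rho}. Chaining with the previous display gives the claimed pointwise identity.

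Finally, to promote this pointwise identity to regularity, I would trivialise $\Ee|_U\cong U\times F$ on an open $U\subset X$; the infinitesimal $\he$-action on $\Ee|_U$ then has the shape $\tilde V_y(x,\xi)=V_y(x)\partial_x+L_y(x)\xi\partial_\xi$ with $L_y(x)\in\End(F)$ linear in $y\in\he$, regular in $x$, and equal to the intrinsic endomorphism $y_p$ at any zero $p$ of $V_y$. Then $(\w,x)\mapsto\Tr(\Lambda^k(L_e(x)+L_\w(x)))$ is polynomial in $\w$ with regular coefficients in $x$, hence a regular function on $\ttt\times U$; its restriction to $\ZZ\cap(\ttt\times U)$ equals $\rho(c_k^\Ts(\Ee))$ by the formula just proved, and on points of $\ZZ$ this value is independent of the chosen trivialisation because the endomorphism $(e+\w)_x$ itself is intrinsic, so these local regular functions patch to a global regular function on $\ZZ$. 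The main technical obstacle is the Jordan decomposition step---specifically, ensuring $n'_{\zeta_i}$ is nilpotent---which depends on the subtle interplay between the scheme-theoretic condition $V_{n'}|_{\zeta_i}=0$ (which makes $\zeta_i$ fixed by the $\Gs_a$-subgroup generated by $n'$) and the algebraicity of the resulting $\Gs_a$-representation on $\Ee_{\zeta_i}$.
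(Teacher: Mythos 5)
Your proposal is correct and follows essentially the same route as the paper: transport the endomorphism to the fibre over the torus-fixed point $\zeta_i$ via the $\Hs$-linearisation, observe that $(\w+n')_{\zeta_i}$ has the same eigenvalues as its semisimple part $\w_{\zeta_i}$, and conclude by \eqref{cherntr} and \eqref{rho}. The only cosmetic differences are that you verify the semisimple/nilpotent structure by hand (diagonalisability of the $\Ts$-representation, integration of $n'$ to a $\Gs_a$-subgroup fixing $\zeta_i$) where the paper invokes the naturality of the Jordan decomposition under the derivative of $\Stab_\Hs(\zeta_i)\to\GL(\Ee_{\zeta_i})$, and that you spell out the local-trivialisation argument for regularity which the paper leaves implicit.
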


\begin{proof}
We have $x = M_\w \zeta_i$ for some $\zeta_i\in X^\Ts$ and $M_\w\in\Hs$. Moreover,
$$e+\w = \Ad_{M_\w}(\w+e')$$
for some $e'\in \he_n$ that vanishes at $\zeta_i$ and commutes with $\w$. Note that, as $\Ee$ is $\Hs$-linearised, 
$$\Tr_{\Lambda^k \Ee_x}\left(\Lambda^k (e+\w)_x\right) = \Tr_{\Lambda^k \Ee_{M_\w^{-1}x}}\left(\Lambda^k \left( \Ad_{M_\w^{-1}}(e+\w) \right)_{M_\w^{-1}x}\right)
= \Tr_{\Lambda^k \Ee_{\zeta_i}}\left(\Lambda^k(\w+e')_{\zeta_i}\right).
$$
From \eqref{rho} and \eqref{cherntr} we have
$$\rho\left(c_k^\Ts(\Ee)\right)(\w,x) = c_k^\Ts(\Ee)|_{\zeta_i}(\w) = \Tr_{\Lambda^k \Ee_{\zeta_i}}\left(\Lambda^k w_{\zeta_i}\right).$$
Thus we have to prove that 
$$ \Tr_{\Lambda^k \Ee_{\zeta_i}}\left(\Lambda^k(\w+e')_{\zeta_i}\right) = \Tr_{\Lambda^k \Ee_{\zeta_i}}\left(\Lambda^k w_{\zeta_i}\right).$$
But by the assumptions that $[\w,e']=0$, $\w$ is semisimple and $e'$ is nilpotent, we get that the sum $\w + e'$ is the Jordan decomposition of $\Ad_{M_\w^{-1}}(e+\w)$ in the sense of \cite[Theorem~4.4]{Borel}. Then by the naturality of the Jordan decomposition, the derivative of the representation $\Stab_\Hs(\zeta_i)\to \GL(\Ee_{\zeta_i})$ preserves it. Therefore, $\w_{\zeta_i}$, seen as an element of $\gl(\Ee_{\zeta_i})$, is the semisimple part of $(\w+e')_{\zeta_i}$, seen as an element of $\gl(\Ee_{\zeta_i})$.

But for the Jordan decomposition in the general linear group, the eigenvalues of the semisimple part are the same as the eigenvalues of the decomposed element. Because traces of external powers are polynomials in eigenvalues, this concludes the proof.
\end{proof}
{The following lemma is based on \cite[Proposition 3]{CarDef}, which asserts it for $\Bs_2$.}

\begin{lemma}\label{chern1}
The cohomology ring $H^*(X)$ is generated, as a $\C$-algebra, by Chern classes of\, $\Hs$-linearised vector bundles on $X$.
\end{lemma}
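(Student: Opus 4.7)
The plan, following Carrell's argument for the case $\Hs=\Bs(\SL_2)$ in \cite{CarDef}, is to combine the Białynicki--Birula cellular structure on $X$ with the general fact that on a cellular smooth projective variety cohomology is generated by Chern classes of algebraic vector bundles, and then to upgrade these bundles to $\Hs$-linearised ones.

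First, I would use the $\Cs$-action via $\{H^t\}_{t\in\Cs}$ to produce the BB plus-decomposition $X=\bigsqcup_i W_i^+$ into affine cells. By Theorem~\ref{ABBst} each closure $\overline{W_i^+}$ is $\Hs$-stable, in particular $H^{\mathrm{odd}}(X)=0$, the cycle class map $CH^{*}(X)_{\C}\to H^{*}(X)$ is an isomorphism, and the classes $[\overline{W_i^+}]$ form a $\C$-basis of $H^{*}(X)$. In this cellular setting the Atiyah--Hirzebruch spectral sequence collapses and the Chern character $K^{0}(X)\otimes\C\to H^{*}(X)$ is surjective, so $H^{*}(X)$ is generated as a $\C$-algebra by Chern classes of algebraic vector bundles on $X$.

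Next I would check that the relevant bundles may be taken $\Hs$-linearised. For a line bundle $L$, equivariant formality \eqref{formal} makes the forgetful map $H^{2}_{\Ts}(X)\to H^{2}(X)$ surjective, so $c_{1}(L)$ lifts to an equivariant class and $L$ admits a $\Ts$-linearisation; along the semidirect decomposition $\Hs=\Ts\ltimes \Hs_u$ this $\Ts$-linearisation extends to an $\Hs$-linearisation because $\Hs_u$ is unipotent, so the obstruction to lifting the action to a line bundle vanishes. For higher-rank bundles, the splitting principle applied on an $\Hs$-equivariant full flag bundle over $X$ reduces the problem to the line-bundle case, the restriction map on equivariant Picard groups being compatible with the splitting.

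The main obstacle is turning the abstract generation statement above into an explicit realisation of the basis classes $[\overline{W_i^+}]$ as polynomials in Chern classes of $\Hs$-linearised bundles on $X$. The cleanest route I see is a direct construction: take an $\Hs$-equivariant resolution of singularities $\widetilde Y_i\to\overline{W_i^+}$, resolve $\OO_{\widetilde Y_i}$ by an $\Hs$-equivariant bounded complex of locally free sheaves on $X$, and apply the equivariant Grothendieck--Riemann--Roch theorem to its pushforward in $\Hs$-equivariant $K$-theory. This expresses each $[\overline{W_i^+}]$ as a polynomial in Chern classes of $\Hs$-linearised bundles, so the collection of these Chern classes, together with those of the tangent bundle, generates $H^{*}(X)$ as a $\C$-algebra and completes the argument.
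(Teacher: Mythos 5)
Your closing construction is essentially the paper's proof: the paper applies Baum--Fulton--MacPherson Grothendieck--Riemann--Roch \cite[Theorem 18.3]{Ful} directly to the structure sheaves of the $\Hs$-stable plus-cell closures (Theorem \ref{ABBst}), observes that $\ch(\OO_{\overline{W_i^+}})$ equals the dual class of $[\overline{W_i^+}]$ plus higher-degree terms, so that by triangularity these Chern characters generate $H^*(X)$, and then invokes \cite[Corollary 5.8]{Thomason} to replace $\Hs$-equivariant coherent sheaves by $\Hs$-equivariant vector bundles. Relative to this, your version carries unnecessary baggage: no equivariant resolution of singularities is needed (BFM--GRR applies to the singular closures as they stand), ordinary rather than equivariant GRR suffices since the target is ordinary cohomology, and GRR does not directly ``express $[\overline{W_i^+}]$ as a polynomial in Chern classes'' --- it gives the Chern character of the sheaf as the dual cell class plus higher-order corrections, and one must invert this triangular system (e.g.\ by downward induction over the cells) to conclude generation. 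Also, the step you phrase as ``resolve by an $\Hs$-equivariant bounded complex of locally free sheaves'' is precisely the equivariant resolution property, i.e.\ Thomason's theorem, which is the key input and should be cited rather than taken for granted.

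By contrast, your middle paragraph would not survive scrutiny, and fortunately your final route does not rely on it. The splitting principle cannot reduce the problem to line bundles: the flag bundle of a non-linearised bundle $E$ carries no $\Hs$-action, the Chern roots live on the flag bundle rather than on $X$, and in any case Chern classes of line bundles do not generate $H^*(X)$ in general --- already for $X=\Gr(2,4)$ one has $\mathrm{Pic}(X)\cong\Z$ and the subring generated by $H^2(X)$ is proper. Likewise, lifting $c_1(L)$ to $H^2_{\Ts}(X)$ via equivariant formality does not by itself yield an algebraic $\Ts$-linearisation of $L$; linearisability of line bundles under a connected solvable group is true, but for Picard-group reasons, and it is in any case insufficient for the lemma.
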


\begin{proof}
We know that the fundamental classes of the plus-cells form a basis of $H_*(X)$; hence their Poincar\'{e} duals form a basis of $H^*(X)$. Now we use Baum--Fulton--MacPherson's Grothendieck--Riemann--Roch theorem (see \cite[Theorem 18.3(5)]{Ful}). We get that for any plus-cell $W_i\in X$, the homology class $(\ch(W_i) \td(X_i))\cap [X]$ is equal to the sum of $[W_i]$ and lower-degree terms. Therefore, $\ch(W_i)$ is equal to the sum of the dual class of $[W_i]$ and higher-degree terms. Therefore, Chern characters of the structure sheaves of plus-cells generate $H^*(X)$. 

As the plus-cells are $\Hs$-stable by Theorem~\ref{ABBst}, we get that $\ch$ is surjective when restricted to the Grothendieck group of $\Hs$-equivariant coherent sheaves. By \cite[Corollary 5.8]{Thomason} it is generated by the classes of $\Hs$-equivariant vector bundles, and the conclusion follows.
\end{proof}

\begin{remark}
We did not use the regularity of the action in the proof. In fact, it was enough to know that the fixed points of $\Ts$ are isolated. One could also argue the following in the general case. By \cite[Section~15.1, Example (2)]{Borel} a linear solvable group over $\C$ is split. Then the restriction $K^0_\Hs(X) \to K^0_\Ts(X)$ is an isomorphism, \textit{cf.} \cite[Corollary 2.16]{Merk}, and the restriction $K^0_\Ts(X)\to K^0(X)$ is a surjection, \textit{cf.} \cite[Proposition 3.1]{Merk}. The Chern character is an isomorphism from $K^0(X)\otimes \C$ to $A^*(X)\otimes \C$, \textit{cf.}  \cite[Theorem 18.3]{Ful}, and the cycle class map $A^*(X)\to H^*(X,\Z)$ is an isomorphism due to the paving given by the Bia{\l}ynicki-Birula decomposition, \textit{cf.} \cite[Example 19.1.11]{Ful}. Therefore, the (non-equivariant) Chern character gives a surjection $K^0_\Hs(X)\to H^*(X,\C)$.
\end{remark}

\begin{lemma}\label{chern2}
The equivariant cohomology $H_\Ts^*(X)$ is generated, as a $\C[\ttt]$-algebra, by $\Ts$-equivariant Chern classes of\, $\Hs$-equivariant vector bundles on $X$.
\end{lemma}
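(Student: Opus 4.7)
The plan is to deduce Lemma \ref{chern2} from Lemma \ref{chern1} by a standard graded Nakayama argument, using equivariant formality.

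First, I would pick finitely many $\Hs$-equivariant vector bundles $\Ee_1,\dots,\Ee_m$ on $X$ whose ordinary Chern classes $c_{k}(\Ee_i)$ generate $H^*(X)$ as a $\C$-algebra; these exist by Lemma \ref{chern1} (reducing to a finite set is fine since $H^*(X)$ is finite-dimensional). For each $i$, the $\Ts$-equivariant Chern classes $c^\Ts_k(\Ee_i)\in H^{2k}_\Ts(X)$ are defined (every $\Hs$-equivariant bundle is a fortiori $\Ts$-equivariant), and naturality of Chern classes with respect to the forgetful map $\varphi: H^*_\Ts(X)\to H^*(X)$ gives $\varphi(c^\Ts_k(\Ee_i)) = c_k(\Ee_i)$.

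Let $M\subset H^*_\Ts(X)$ be the graded $\C[\ttt]$-subalgebra generated by the classes $\{c^\Ts_k(\Ee_i)\}$. I want to show $M=H^*_\Ts(X)$. By equivariant formality (equation \eqref{formal}), the map $\varphi$ identifies $H^*(X)$ with $H^*_\Ts(X)/\I H^*_\Ts(X)$, where $\I=\bigoplus_{n>0}H^n_\Ts(\pt)$ is the augmentation ideal of $\C[\ttt]$. Since $\varphi(M)$ contains all ordinary Chern classes $c_k(\Ee_i)$, and these generate $H^*(X)$ as a $\C$-algebra by Lemma \ref{chern1}, we conclude
\begin{equation*}
M + \I\cdot H^*_\Ts(X) = H^*_\Ts(X).
\end{equation*}

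Now I would invoke the graded Nakayama lemma. Set $N = H^*_\Ts(X)/M$, a graded $\C[\ttt]$-module satisfying $\I N = N$. By equivariant formality $H^*_\Ts(X)$ is a free $\C[\ttt]$-module of finite rank, hence finitely generated; therefore so is its quotient $N$. Since $N$ is a finitely generated non-negatively graded module over the positively graded polynomial ring $\C[\ttt]$, the condition $N=\I N$ forces $N=0$ by the standard graded Nakayama argument (any minimal homogeneous generator would have to lie in $\I N$, contradicting minimality by degree). Thus $M = H^*_\Ts(X)$, which is the statement of the lemma.

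The main obstacle is largely cosmetic: one must be careful that the graded Nakayama argument applies, which requires finiteness of the $\C[\ttt]$-module $H^*_\Ts(X)$ and a grading in which $\I$ consists of elements of strictly positive degree. Both are guaranteed here by equivariant formality and the fact that the coordinates $v_1,\dots,v_r$ on $\ttt$ have positive degree $2$. Once these points are verified, the proof is essentially immediate from Lemma \ref{chern1} and the compatibility of equivariant and non-equivariant Chern classes under $\varphi$.
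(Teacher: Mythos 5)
Your proposal is correct and follows essentially the same route as the paper: lift the generators provided by Lemma \ref{chern1} to equivariant Chern classes, use equivariant formality to identify $H^*(X)$ with $H^*_\Ts(X)/\I H^*_\Ts(X)$, and conclude by graded Nakayama (the paper cites its Corollary \ref{cornak}, which does not even need the finite-generation hypothesis you verify). The only cosmetic difference is that you run Nakayama on the quotient by the subalgebra $M$ with a finiteness check, while the paper applies its module-generation form directly to monomials in the lifted classes.
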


\begin{proof}
Recall that $\I$ denotes the maximal ideal of $\C[\ttt]$ cutting out the zero point. Since $X$ is equivariantly formal, we have an exact sequence
$$0\lra \I H_\Ts^*(X)\lra H_\Ts^*(X)\lra H^*(X)\lra 0.$$
By Lemma~\ref{chern1} we get that the $\C$-algebra $H^*(X)$ is generated by Chern classes of $\Hs$-linearised vector bundles on $X$. Then by the graded Nakayama lemma (see Corollary~\ref{cornak}), the $\C[\ttt]$-algebra $H^*_\Ts(X)$ is generated by their equivariant Chern classes.
\end{proof}

\noindent
This together with Lemma~\ref{nicefun} gives the following. 

\begin{corollary}
The map $\rho$ is a homomorphism of\, $\C[\ttt]$-algebras $H_\Ts^*(X)\to\C[\ZZ]$.
\end{corollary}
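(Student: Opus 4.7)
The plan is to assemble the corollary from the pieces already in place: Lemma \ref{nicefun} gives regularity of $\rho(c)$ when $c$ is an equivariant Chern class of an $\Hs$-linearised bundle, Lemma \ref{chern2} says such classes generate $H_\Ts^*(X)$ as a $\C[\ttt]$-algebra, and Lemma \ref{lemcm} guarantees that $\ZZ$ is reduced so that regular functions are detected by their values on closed points.

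First, I would observe that at the level of $\C$-valued functions on the set of closed points of $\ZZ$, the map $\rho$ is manifestly a $\C$-algebra homomorphism. Indeed, by the analysis in \S\ref{structure}, every closed point of $\ZZ$ has the form $(\w, M_\w \zeta_i)$, and the value $\rho(c)(\w,x) = c|_{\zeta_i}(\w)$ is obtained by first applying the restriction-to-fixed-point homomorphism $H_\Ts^*(X) \to H_\Ts^*(\{\zeta_i\}) = \C[\ttt]$ and then evaluating at $\w \in \ttt$. Both operations are ring homomorphisms, so $\rho$ preserves sums and products. Moreover $\rho$ is $\C[\ttt]$-linear, because a class $f \in \C[\ttt] \hookrightarrow H_\Ts^*(X)$ localises to itself at every $\zeta_i$, so $\rho(f)$ is simply the pullback of $f$ along the projection $\ZZ \to \ttt$, which is certainly regular on $\ZZ$.

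Next, let $R \subset H_\Ts^*(X)$ denote the set of classes $c$ for which $\rho(c) \in \C[\ZZ]$; note that this makes sense because $\ZZ$ is reduced by Lemma \ref{lemcm}, so the assignment $\rho(c)$ --- which a priori is only a set-theoretic function on closed points --- lies in $\C[\ZZ]$ as soon as it agrees with a regular function on all closed points, and such a regular function is unique if it exists. Since $\C[\ZZ]$ is a $\C[\ttt]$-subalgebra of the ring of $\C$-valued functions on the closed points of $\ZZ$, and $\rho$ is a $\C[\ttt]$-algebra homomorphism into that larger ring, the subset $R$ is itself a $\C[\ttt]$-subalgebra of $H_\Ts^*(X)$.

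Finally, by Lemma \ref{nicefun} every equivariant Chern class $c_k^\Ts(\Ee)$ of an $\Hs$-equivariant vector bundle $\Ee$ on $X$ belongs to $R$. By Lemma \ref{chern2} these classes generate $H_\Ts^*(X)$ as a $\C[\ttt]$-algebra, so $R = H_\Ts^*(X)$. Thus $\rho$ takes values in $\C[\ZZ]$, and the corollary follows. There is no real obstacle here; the content is entirely in the preceding lemmas, and what remains is the bookkeeping observation that the locus of classes with regular $\rho$-image is a $\C[\ttt]$-subalgebra containing a generating set.
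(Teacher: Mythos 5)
Your proposal is correct and follows essentially the same route as the paper: $\rho$ is visibly a $\C[\ttt]$-algebra homomorphism into $\C$-valued functions on closed points of the reduced scheme $\ZZ$, Lemma \ref{nicefun} gives regularity of the images of equivariant Chern classes, and Lemma \ref{chern2} upgrades this to all of $H_\Ts^*(X)$ since those classes generate it as a $\C[\ttt]$-algebra. The only addition is your explicit "subalgebra of classes with regular image" bookkeeping, which the paper leaves implicit.
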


\subsection{Proof of  isomorphism}

\begin{proof}[Proof of Theorem~\ref{finsolv}]
Clearly, $\rho$ preserves the grading. For the injectivity, note that for any $c\in H_\Ts^*(X)$ we can extract from $\rho(c)$ the localisations $c|_{\zeta_i}$ for all $i$ -- as on the regular locus the function $\rho(c)$ is defined by all those localisations. Recall that $X$ is equivariantly formal; see \eqref{formal}. Therefore, we get the injectivity of $\rho$ from the injectivity of localisation on equivariantly formal spaces; \textit{cf.} \cite[Theorem 1.6.2]{GKM}.

Hence to prove that the map is an isomorphism, it suffices to check that the Poincar\'{e} series of the two sides coincide. Since $X$ is equivariantly formal, $H_\Ts^*(X)$ is a free $\C[\ttt]$-module and
$$H_\Ts^*(X)/\I H_\Ts^*(X) \cong H^*(X).$$
Therefore,
\begin{align}
\label{PHT}
    P_{H^*(X)}(t) = P_{H_\Ts^*(X)}(t)(1-t^2)^{r}.
\end{align}

On the other hand, from Lemma~\ref{lemcm} we know that the generating set of $\I$ is a regular sequence in $\C[\ZZ]$; hence 
\begin{align}
\label{PCZ}
P_{\C[\ZZ]/\I\C[\ZZ]}(t) = P_{\C[\ZZ]}(t)\left(1-t^2\right)^{r}.
\end{align}

Now $\C[\ZZ]/\I\C[\ZZ]$ is the zero scheme of the vector field given by $e$. In addition, the action of the torus $H^t$ satisfies $\Ad_{H^t}(e) = t^2 e$. Therefore, by the Akyildiz--Carrell version of the Carrell--Liebermann theorem (see \cite{ACLS}, and \cite[Theorem 1.1]{AC} for this particular case),  
we have $\C[\ZZ]/\I\C[\ZZ]\cong H^*(X)$ and in particular
$$P_{\C[\ZZ]/\I\C[\ZZ]}(t) = P_{H^*(X)}(t).$$
Therefore, from \eqref{PHT} and \eqref{PCZ} we get
\begin{equation*}\pushQED{\qed}
  P_{\C[\ZZ]}(t) = P_{H_\Ts^*(X)}(t).
\qedhere \popQED
	\end{equation*}
\renewcommand{\qed}{}       
\end{proof}

\begin{remark}
From Theorem~\ref{finsolv} we get that $\C[\ZZ]$ is a finitely generated free module over $\C[\ttt]$. Therefore, the map $\pi\colon \ZZ\to\ttt$ is finite flat.
\end{remark}

\begin{remark}
The theorem can in fact be proved for a slightly larger class of solvable groups. We need $\Hs$ to be a connected linear algebraic solvable group and as before $(e,h)$ to be an integrable $\bb(\ssl_2)$-pair, but it does not necessarily have to be principal. For the proof of Theorem~\ref{ABBst}, we need to assume $\alpha(h) > 0$ for any root $\alpha$ of $\Hs$. However, even this assumption can be made unnecessary as we can consider the subgroup $\Hs'$ generated by $\Ts$ and the additive group generated by $e$. By \cite[Theorem 7.6]{Borel} it is algebraic, and its Lie algebra is generated by $\ttt$ and $e$. As the Lie bracket of $h$-weight vectors adds the weights, we clearly see that all the weights on $\Hs'$ are non-negative multiples of $2$.

Even if we assume that $\Hs$ is generated by $\Ts$ and the additive group generated by $e$, it does not follow that $e$ is regular. Take for example
 $$
 \Hs = \left\{\left.
 \begin{pmatrix}
 t/u^2 & * & * & * \\
 0 & t & * & * \\
 0 & 0 & u & * \\
 0 & 0 & 0 & u/t^2
 \end{pmatrix}\right| t,u\in \Cs
 \right\},
 $$
 where the asterisks are understood to stand for any complex numbers. We choose
 $$
 h = 
 \begin{pmatrix}
 3 & 0 & 0 & 0 \\
 0 & 1 & 0 & 0 \\
 0 & 0 & -1 & 0 \\
 0 & 0 & 0 & -3
 \end{pmatrix}, \qquad
 e = 
 \begin{pmatrix}
 0 & 1 & 0 & 0 \\
 0 & 0 & 1 & 0 \\
 0 & 0 & 0 & 1 \\
 0 & 0 & 0 & 0
 \end{pmatrix}.
 $$
 Because the maximal torus is $2$-dimensional and the centraliser of $e$ is $3$-dimensional,  $e$ is not regular. However, together with the diagonal matrices, it generates $\he$ as a Lie algebra.
 
 In all of our examples of regular action, we only consider principally paired groups, and this extension seems to only include very tropical cases. Therefore, we formulate our results in terms of principally paired groups.
\end{remark}

\subsection{Functoriality}

We now prove  that Theorem~\ref{finsolv} is actually functorial, with respect to both the group and the variety. We prove the latter first.

\begin{proposition} \label{funcprop}
 Assume that $X$ and $Y$ are two $\Hs$-regular varieties and $\phi\colon X\to Y$ is an $\Hs$-equivariant morphism between them. Let $\ZZ_X \cong \Spec H^*_\Ts(X)$ and $\ZZ_Y \cong \Spec H^*_\Ts(Y)$ be the schemes constructed above for $X$ and $Y$, respectively. The map $(\id,\phi)\colon \ttt\times X\to \ttt\times Y$ induces a morphism $\ZZ_X\to \ZZ_Y$, and the following diagram commutes:
$$
\begin{tikzcd}
H^*_\Ts(Y) \arrow[r, "\phi^*"] \arrow[dd, "\rho_Y"]
& H^*_\Ts(X) \arrow[dd, "\rho_X"]
\\ \\
\C[\ZZ_Y] \arrow[r, "{(\id,\phi)^*}"]
& \C[\ZZ_X]\rlap{.}
\end{tikzcd}
$$
In other words, $\rho$ is a natural isomorphism between the functors $H^*_\Ts$ and $\C[\ZZ]$ on the category of $\Hs$-regular varieties.
\end{proposition}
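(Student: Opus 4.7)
The plan is to verify the two assertions separately: first that $(\id,\phi)$ descends to a morphism of schemes $\ZZ_X \to \ZZ_Y$, and then that the square commutes. For the first claim, I would use that the total vector fields $V_{e+\ttt}^X$ on $\ttt\times X$ and $V_{e+\ttt}^Y$ on $\ttt\times Y$ are $(\id,\phi)$-related. Since $\phi$ is $\Hs$-equivariant, the infinitesimal $\he$-actions on $X$ and $Y$ are intertwined by $d\phi$, so for every $y\in\he$ the vector fields $V_y^X$ and $V_y^Y$ are $\phi$-related; unpacking the construction of the total vector field in Definition \ref{totvec} from these fibrewise ones, this implies that for every local section $f$ of $\OO_{\ttt\times Y}$,
$$(\id,\phi)^*\bigl(V^Y_{e+\ttt} f\bigr) = V^X_{e+\ttt}\bigl((\id,\phi)^* f\bigr).$$
Consequently the pullback sends the defining ideal of $\ZZ_Y$ into that of $\ZZ_X$, giving the desired morphism $\ZZ_X\to\ZZ_Y$.

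For the commutativity of the diagram, since $\ZZ_X$ is reduced by Lemma \ref{lemcm}, it suffices to check equality of the two regular functions on closed points. Fix $(\w,x)\in\ZZ_X$. By the analysis in Section \ref{structure} there exist $M_\w\in\Hs$ and a torus fixed point $\zeta\in X^\Ts$ with $x = M_\w \zeta$. By $\Hs$-equivariance, $\phi(x) = M_\w\phi(\zeta)$, and $\phi(\zeta)\in Y^\Ts$ since $\phi$ is in particular $\Ts$-equivariant, so $(\w,\phi(x))\in\ZZ_Y$ admits the analogous presentation via the fixed point $\phi(\zeta)$. Using the definition \eqref{rho} of $\rho$ on both sides, together with the naturality $\phi^*(c)|_\zeta = c|_{\phi(\zeta)}$ of the restriction to fixed points, we compute
$$(\id,\phi)^*(\rho_Y(c))(\w,x) = \rho_Y(c)(\w,\phi(x)) = c|_{\phi(\zeta)}(\w) = (\phi^* c)|_\zeta(\w) = \rho_X(\phi^*(c))(\w,x).$$
Hence the two elements of $\C[\ZZ_X]$ agree on every closed point, and therefore coincide.

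The main technical point is the scheme-theoretic assertion in the first step: once the $\phi$-relatedness of the total vector fields is correctly translated into a statement about defining ideals, the commutativity reduces to a tautology from the pointwise definition of $\rho$ and the obvious naturality of localisation at torus fixed points under $\Hs$-equivariant maps. No further hypotheses on $\phi$ (such as smoothness or surjectivity) are needed, since both $X$ and $Y$ are assumed to be $\Hs$-regular so the descriptions of $\ZZ_X$ and $\ZZ_Y$ from Section \ref{structure} apply simultaneously.
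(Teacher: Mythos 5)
Your proof is correct and follows essentially the same route as the paper: write $x = M_\w\zeta$ as in Section \ref{structure}, use equivariance to get $\phi(x)=M_\w\phi(\zeta)$ with $\phi(\zeta)\in Y^\Ts$, and compare the two functions pointwise via the definition \eqref{rho} and naturality of restriction to fixed points, invoking reducedness of $\ZZ_X$. The only difference is that you justify the induced morphism $\ZZ_X\to\ZZ_Y$ scheme-theoretically via $(\id,\phi)$-relatedness of the total vector fields (the identity $(\id,\phi)^*(V^Y f)=V^X((\id,\phi)^*f)$), a point the paper leaves implicit; this is a correct and slightly more careful treatment, since it works at the level of defining ideals without appealing to reducedness.
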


\begin{proof}
Consider a class $c\in H^*_\Ts(Y)$. We want to show that for any $(\w,x)\in\ZZ_X$ the functions $\rho_X(\phi^*(c))$ and $(\id,\phi)^*(\rho_Y(c))$ take the same value on $(\w,x)$. We know from Section~\ref{structure} that $x = M_\w \zeta$, where $M_\w$ is some element of $\Hs$ depending on $\w$ and $\zeta$ is one of the $\Ts$-fixed points of $X$. Obviously, then $\phi(\zeta)$ is a $\Ts$-fixed point in $Y$ and $\phi(x) = M_\w \phi(\zeta)$. We then have
$$(\id,\phi)^*(\rho_Y(c))(\w,x) = \rho_Y(c)(\w,\phi(x)) = c|_{\phi(\zeta)}(\w).$$
On the other hand, 
$$\rho_X(\phi^*(c))(\w,x) = \phi^*(c)|_{\zeta}(\w).$$
Now the equality  above follows from the functoriality of $H^*_\Ts$ and the commutativity of
\begin{equation*}\pushQED{\qed}
\begin{tikzcd}
\{\zeta\} \arrow[dd, "\iota_\zeta"] \arrow[r, "\phi"]
& \{\phi(\zeta)\} \arrow[dd, "\iota_{\phi(\zeta)}"]
\\ \\
X \arrow[r, "\phi"] 
& Y\rlap{.}
\end{tikzcd}
\qedhere \popQED
	\end{equation*}
\renewcommand{\qed}{}
\end{proof}

\begin{proposition}\label{funcgrp}
 Assume that $\Hs_1$, $\Hs_2$ are solvable principally paired groups. Let $\Ts_i\subset \Hs_i$ be the corresponding maximal tori and $e_i\in(\he_i)_n$ the corresponding nilpotent elements in their Lie algebras. Let $\psi\colon \Hs_1\to\Hs_2$ be a homomorphism of algebraic groups satisfying
 $$\psi(\Ts_1)\subset \Ts_2,\quad \psi_*(e_1) = e_2.$$
 Assume that $\Hs_2$ acts regularly on a smooth projective variety $X$. Then the map $\psi$ together with the $\Hs_2$-action induce an action of\, $\Hs_1$ on $X$, which is also regular. In turn, the map $(\psi_*,\id)$ induces a morphism $\ZZ_{\Hs_1} \to \ZZ_{\Hs_2}$, and the following diagram commutes:
$$
\begin{tikzcd}
H^*_{\Ts_2}(X) \arrow[r, "\psi^*"] \arrow[dd, "\rho_{\Hs_2}"]
& H^*_{\Ts_1}(X) \arrow[dd, "\rho_{\Hs_1}"]
\\ \\
\C[\ZZ_{\Hs_2}] \arrow[r, "{(\psi_*,\id)^*}"]
& \C[\ZZ_{\Hs_1}]\rlap{.}
\end{tikzcd}
$$
\end{proposition}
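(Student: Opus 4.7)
The plan is to mirror the proof of Proposition \ref{funcprop}, replacing variety-side functoriality by group-side functoriality of the infinitesimal action. Three items must be established: regularity of the induced $\Hs_1$-action on $X$, existence of a scheme morphism $\ZZ_{\Hs_1}\to\ZZ_{\Hs_2}$ induced by $(\psi_*,\id)$, and commutativity of the square.

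For the first two I would use a single observation. If the $\Hs_1$-action on $X$ is pulled back from $\Hs_2$ via $\psi$, then for every $y\in\he_1$ the vector field on $X$ generated by $y$ coincides with $V_{\psi_*(y)}$. Taking $y=e_1$ and using $\psi_*(e_1)=e_2$, the vector field $V_{e_1}$ equals $V_{e_2}$; since the latter has the unique zero guaranteed by the regularity of the $\Hs_2$-action, so does the former, hence $\Hs_1$ acts regularly. Taking $y=e_1+w$ for $w\in\ttt_1$, and noting that $\psi_*(\ttt_1)\subset\ttt_2$ follows by differentiating $\psi|_{\Ts_1}$, the condition $V_{e_1+w}|_x=0$ is equivalent to $V_{e_2+\psi_*(w)}|_x=0$, so $(\psi_*,\id)$ sends the closed points of $\ZZ_{\Hs_1}$ into $\ZZ_{\Hs_2}$. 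Because Lemma \ref{lemcm} asserts that both zero schemes are reduced, this set-theoretic statement upgrades to a morphism of schemes.

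For commutativity of the square, both compositions $\rho_{\Hs_1}\circ\psi^*$ and $(\psi_*,\id)^*\circ\rho_{\Hs_2}$ are $\C$-algebra homomorphisms $H^*_{\Ts_2}(X)\to\C[\ZZ_{\Hs_1}]$ that agree on the structure subring $\C[\ttt_2]\subset H^*_{\Ts_2}(X)$, since both factor through $\psi_*^{*}:\C[\ttt_2]\to\C[\ttt_1]$. By Lemma \ref{chern2}, $H^*_{\Ts_2}(X)$ is generated as a $\C[\ttt_2]$-algebra by equivariant Chern classes $c_k^{\Ts_2}(\Ee)$ of $\Hs_2$-equivariant vector bundles $\Ee$, so it suffices to check the equality on such generators. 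Naturality of equivariant Chern classes along the group homomorphism $\psi$ yields $\psi^*c_k^{\Ts_2}(\Ee)=c_k^{\Ts_1}(\Ee)$, where on the right $\Ee$ is regarded as $\Hs_1$-equivariant via $\psi$; applying Lemma \ref{nicefun} on each side writes both values at $(w,x)\in\ZZ_{\Hs_1}$ as traces on $\Lambda^k\Ee_x$ of the $k$-th exterior power of the infinitesimal operator attached either to $e_1+w$ acting via $\psi$, or to $e_2+\psi_*(w)$ acting directly, and these two operators coincide.

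The only substantive hurdle is recognising that the two $\C[\ttt_2]$-algebra homomorphisms can be compared on the generating set supplied by Lemma \ref{chern2}; once this reduction is in place, the verification becomes pure functoriality of equivariant Chern classes combined with Lemma \ref{nicefun}. The reducedness of $\ZZ_{\Hs_1}$ and $\ZZ_{\Hs_2}$ from Lemma \ref{lemcm} dissolves any concern that $(\psi_*,\id)$ might interact badly with a hypothetical nilpotent structure, so the scheme morphism is automatic and the algebraic equality can be tested on closed points.
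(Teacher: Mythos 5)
Your three steps are all sound, and the first two (regularity of the pulled-back action via $V_{y}=V_{\psi_*(y)}$, and the induced morphism $\ZZ_{\Hs_1}\to\ZZ_{\Hs_2}$, with reducedness from Lemma~\ref{lemcm} upgrading the set-theoretic containment) coincide with what the paper does. Where you genuinely diverge is the commutativity of the square: the paper argues pointwise, using the structure theory of Section~\ref{structure} — every $(\w,x)\in\ZZ_{\Hs_1}$ has $x=M_\w\zeta$ for a unique $\Ts_1$-fixed point $\zeta$ (Remark~\ref{remun}), which by Lemma~\ref{lemfix} is also the relevant $\Ts_2$-fixed point — so that both composites evaluate at $(\w,x)$ to $c|_{\zeta}$ precomposed with $\psi_*$, and the identity reduces to the compatibility of $H^*_{\Ts_2}(\pt)\to H^*_{\Ts_1}(\pt)$ with $(\psi_*)^*:\C[\ttt_2]\to\C[\ttt_1]$. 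You instead reduce to generators: both composites are algebra maps agreeing on $\C[\ttt_2]$, Lemma~\ref{chern2} supplies Chern-class generators, naturality of equivariant Chern classes under the group homomorphism gives $\psi^*c_k^{\Ts_2}(\Ee)=c_k^{\Ts_1}(\psi^*\Ee)$, and Lemma~\ref{nicefun} turns both values at $(\w,x)$ into the same trace, since the infinitesimal operator of $e_1+\w$ through $\psi$ is that of $e_2+\psi_*(\w)$. Your route buys independence from the fixed-point bookkeeping (no appeal to Lemma~\ref{lemfix} or Remark~\ref{remun}), at the cost of invoking Chern-class generation and the change-of-group naturality of equivariant Chern classes, inputs the paper's pointwise check does not need; the paper's version is the more direct one given that $\rho$ is defined by its values, while yours is arguably more robust since it only tests the identity on a generating set. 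Both are complete proofs.
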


\begin{proof}
 As $\psi_*(e_1) = e_2$, the group $\Hs_1$ clearly acts regularly on $X$. Obviously, if $(\w,x)\in\ZZ_{\Hs_1}$, then $e_1+\w$ vanishes at $x$, and therefore $\psi_*(e_1+\w) = e_2 + \psi(\w)$ vanishes at $x$, hence $(\psi_*,\id)$ maps $\ZZ_{\Hs_1}$ to $\ZZ_{\Hs_2}$.
 
 Now let $c\in H^*_{\Ts_2}(X)$ and $(\w,x)\in \ZZ_{\Hs_1}$. We want to prove that
 $$(\psi_*,\id)^*(\rho_{\Hs_2}(c))(\w,x) = \rho_{\Hs_1}(\psi(c))(\w,x).$$
 We know that $x = M_\w \zeta$ for some $M_\w\in \Hs_1$ depending on $\w$ and an isolated $\Ts_1$-fixed point $\zeta$. Then by Lemma~\ref{lemfix} the point $\zeta$ is fixed by $\Ts_2$. Therefore (\textit{cf.} Remark~\ref{remun}), we have 
 $$(\psi_*,\id)^*(\rho_{\Hs_2}(c))(\w,x) = \rho_{\Hs_2}(c)(\psi_*(\w),x) = c|_{\zeta}(\psi_*(\w))$$
 and
 $$\rho_{\Hs_1}(\psi(c))(\w,x) = \psi(c)|_{\zeta}(\w).$$
 Now the equality follows from the commutativity of
 \begin{equation*}\pushQED{\qed}
\begin{tikzcd}
H^*_{\Ts_2}(\pt) \arrow[r, "\psi^*"] \arrow[dd, "\cong"]
& H^*_{\Ts_1}(\pt) \arrow[dd, "\cong"]
\\ \\
\C[\ttt_2] \arrow[r, "(\psi_*)^*"]
& \C[\ttt_1]\rlap{.}
\end{tikzcd}
\qedhere \popQED
	\end{equation*}
\renewcommand{\qed}{}     
\end{proof}

\subsection{Examples and comments}
We illustrate Theorem~\ref{finsolv} with a few examples.

\begin{figure}[ht!]
\begin{center}
 \includegraphics[width=10cm]{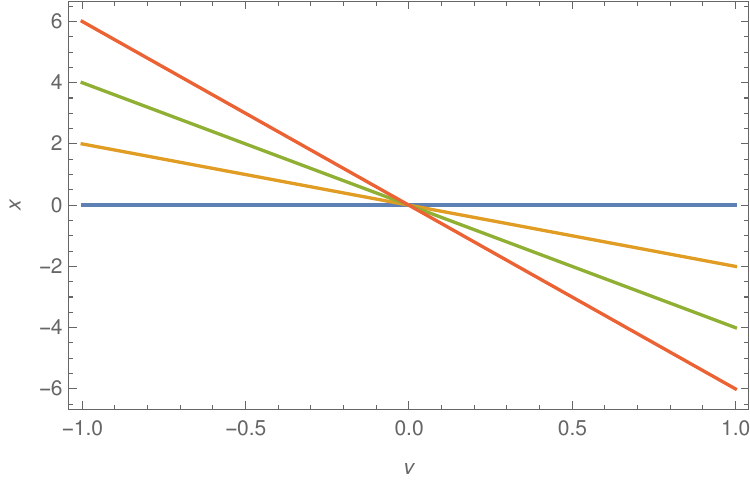}
\end{center}
\caption{$\Spec H_{\Cs}^*(\PP^3)$.}
\label{hcp4}
\end{figure}

\begin{example}\label{exsl22}
 We continue Example~\ref{exsl2}, which already appears in \cite{BC}. The point $o = [1:0:\dots:0]$ is the unique zero of $e$. 
 If $[z_0:z_1:\dots:z_n]$ are the homogeneous coordinates of $\PP^n$, then the scheme $\ZZ$ lies completely in the affine chart $X_o$ of $o$, with affine coordinates $x_i = z_i/z_0$ for $i=1,2,\dots,n$. We have
 $$V_h|_{x_1,\dots,x_n} = (-2x_1,-4x_2,\dots,-2nx_n)$$
 and
 $$V_e|_{x_1,\dots,x_n} = (x_2-x_1x_1, x_3 - x_1x_2, x_4 - x_1x_3,\dots,x_n-x_1x_{n-1},-x_1x_n).$$
 Then
 $$
V_{e+vh}|_{x_1,\dots,x_n} = (x_2-x_1(x_1+2v), x_3 - x_2(x_1+4v), \dots, x_n-x_{n-1}(x_1+2(n-1)v), -x_n(x_1+2n)).
 $$
 If we consider the zero scheme $\ZZ$ of $e+vh$ within $\ttt\times X_o$, then the coordinates $x_2,\dots, x_n$ are clearly determined by $x_1$ and $v$, and we can identify $\ZZ$ with the subscheme of $\Spec \C[v,x_1]$ cut out by the equation
 \[x_1(x_1+2v)(x_1+4v)\cdots(x_1+2nv) = 0.\]
 In other words, $H_{\Cs}^*(\PP^n) = \C[v,x]/\big(x(x+2v)(x+4v)\cdots(x+2nv)\big)$ with $\deg v = \deg x = 2$. See Figure~\ref{hcp4}.
\end{example}

\begin{remark}
Clearly, a product $X\times Y$ of two varieties with a regular $\Hs$-action is also regular, and its equivariant cohomology scheme can be represented as a fiber product; \textit{i.e.} $H_{\Ts}^*(X,Y) = H_{\Ts}^*(X) \otimes_{H_{\Ts}^*} H_{\Ts}^*(Y)$.

In particular, the product $\PP^1 \times \PP^1$ is regular under the action of $\SL_2$, hence also of $\Bs_2$. It embeds in $\PP^3$ via the Segre embedding. The action of $\SL_2$ on $\PP^3$ from Example~\ref{exsl2} is also regular. However, the Segre embedding cannot be $\SL_2$- or even $\Bs_2$-equivariant with respect to those two actions. In fact, using Theorem~\ref{finsolv} we can prove a more general statement. 
\end{remark}

\begin{corollary} \label{surj}
Let a principally paired solvable group $\Hs$ act regularly on a smooth projective variety $X$. Assume that $Z$ is its closed, smooth, $\Hs$-invariant subvariety. Then the induced map on cohomology rings
\[f^*\colon  H^*(X,\C) \lra H^*(Z,\C)\]
is surjective.
\end{corollary}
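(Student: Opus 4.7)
The plan is to first show that $Z$ inherits a regular $\Hs$-action, apply Theorem~\ref{finsolv} to both $X$ and $Z$, reduce the surjectivity question to a scheme-theoretic statement about the zero schemes $\ZZ_X$ and $\ZZ_Z$, and finally descend from $\Ts$-equivariant to ordinary cohomology via equivariant formality.

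Since $Z$ is closed and $\Hs$-invariant, the additive unipotent one-parameter subgroup of $\Hs$ generated by $e$ acts on the projective variety $Z$ and hence has a fixed point there by \cite{Horrocks}. Any such point is a zero of $V_e$ on $X$, so by regularity of the $\Hs$-action on $X$ it must coincide with the unique fixed point $o$. Thus $o\in Z$ and the induced $\Hs$-action on the smooth projective $Z$ is itself regular, so Theorem~\ref{finsolv} applies to both varieties and yields graded $\C[\ttt]$-algebra isomorphisms $H^*_\Ts(X)\cong \C[\ZZ_X]$ and $H^*_\Ts(Z)\cong \C[\ZZ_Z]$.

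The key claim is the scheme-theoretic identity $\ZZ_Z = \ZZ_X \cap (\ttt\times Z)$ inside $\ttt\times X$. Since $Z$ is $\Hs$-stable, for every $y\in\he$ the vector field $V_y$ is tangent to $Z$, so $V_{e+\ttt}$ is tangent to $\ttt\times Z$. Working in local coordinates $x_1,\dots,x_n$ on $X$ in which $Z$ is cut out by $x_{k+1}=\cdots=x_n=0$, tangency forces the last $n-k$ components of $V_{e+\ttt}$ to vanish on $\ttt\times Z$; hence the image of the defining ideal $(V_1,\dots,V_n)$ of $\ZZ_X$ in $\OO_{\ttt\times Z}$ equals $(V_1|_{\ttt\times Z},\dots,V_k|_{\ttt\times Z})$, which is precisely the defining ideal of $\ZZ_Z$ in $\ttt\times Z$ (cf.\ Lemma~\ref{lemcm}). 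This realizes $\ZZ_Z$ as a closed subscheme of the affine scheme $\ZZ_X$, so the restriction $\C[\ZZ_X]\twoheadrightarrow \C[\ZZ_Z]$ is surjective. By Proposition~\ref{funcprop} applied to the $\Hs$-equivariant inclusion $Z\hookrightarrow X$, this map is identified with $\iota^*:H^*_\Ts(X)\to H^*_\Ts(Z)$, which is therefore surjective.

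To descend to ordinary cohomology, observe that $X$ and $Z$ are both equivariantly formal for their $\Ts$-actions, since the $\Cs$-action via $H^t$ yields a Białynicki-Birula decomposition of each, forcing vanishing of odd cohomology. Hence $H^*_\Ts(-)$ is a free $\C[\ttt]$-module on both sides, with $H^*_\Ts(-)/\I H^*_\Ts(-)\cong H^*(-)$ naturally in equivariant maps; reducing the surjection $\iota^*$ modulo $\I$ produces the desired surjection $H^*(X)\twoheadrightarrow H^*(Z)$. The only delicate step is the local identification $\ZZ_Z = \ZZ_X\cap (\ttt\times Z)$; once that is in hand, the rest is a straightforward combination of Theorem~\ref{finsolv}, Proposition~\ref{funcprop} and equivariant formality.
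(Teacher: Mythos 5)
Your proposal is correct and follows essentially the same route as the paper: deduce that $Z$ is itself $\Hs$-regular, apply Theorem~\ref{finsolv} to both varieties, identify $\ZZ_Z$ with the intersection $\ZZ_X\cap(\ttt\times Z)$ so that $\C[\ZZ_X]\to\C[\ZZ_Z]$ is surjective, match this with $\iota^*$ on equivariant cohomology via Proposition~\ref{funcprop}, and descend by equivariant formality. You merely spell out two steps the paper labels as clear (regularity of the induced action on $Z$ and the local tangency computation identifying $\ZZ_Z$ inside $\ZZ_X$), which is harmless added detail rather than a different argument.
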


\begin{proof}
Clearly, $Z$ is also an $\Hs$-regular variety. From Theorem~\ref{finsolv} we have $H^*_\Ts(X) = \C[\ZZ_X]$ and $H^*_\Ts(Z) = \C[\ZZ_Z]$, where $\ZZ_X$ and $\ZZ_Z$ are the zero schemes constructed for $X$ and $Z$ according to Definition~\ref{defz}. But clearly from the definition we see that $\ZZ_Z$ is the (reduced) intersection $\ZZ_X\cap Z$, hence a closed subvariety of $\ZZ_X$. This means that the induced map $\C[\ZZ_X]\to \C[\ZZ_Z]$ is surjective. By Proposition~\ref{funcprop} this is the same as the map induced on equivariant cohomology. By equivariant formality we get the non-equivariant cohomology by tensoring with $\C$ over $H^*_\Ts$, and this operation is right-exact; hence it preserves surjectivity.
\end{proof}

In particular, as $h^2(\PP^1\times\PP^1) = 2$, the product $\PP^1\times\PP^1$ cannot be embedded $\Bs_2$-equivariantly in any of the $\PP^m$ with regular action.

\begin{figure}[ht!]
\begin{center}
\subfloat{
  \includegraphics[width=7cm]{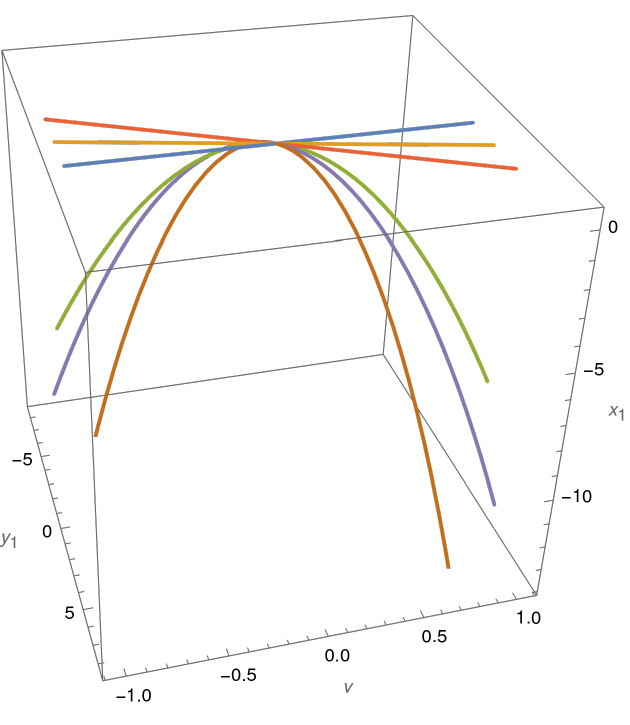}}
  \hfill
\subfloat{
  \includegraphics[width=8.2cm]{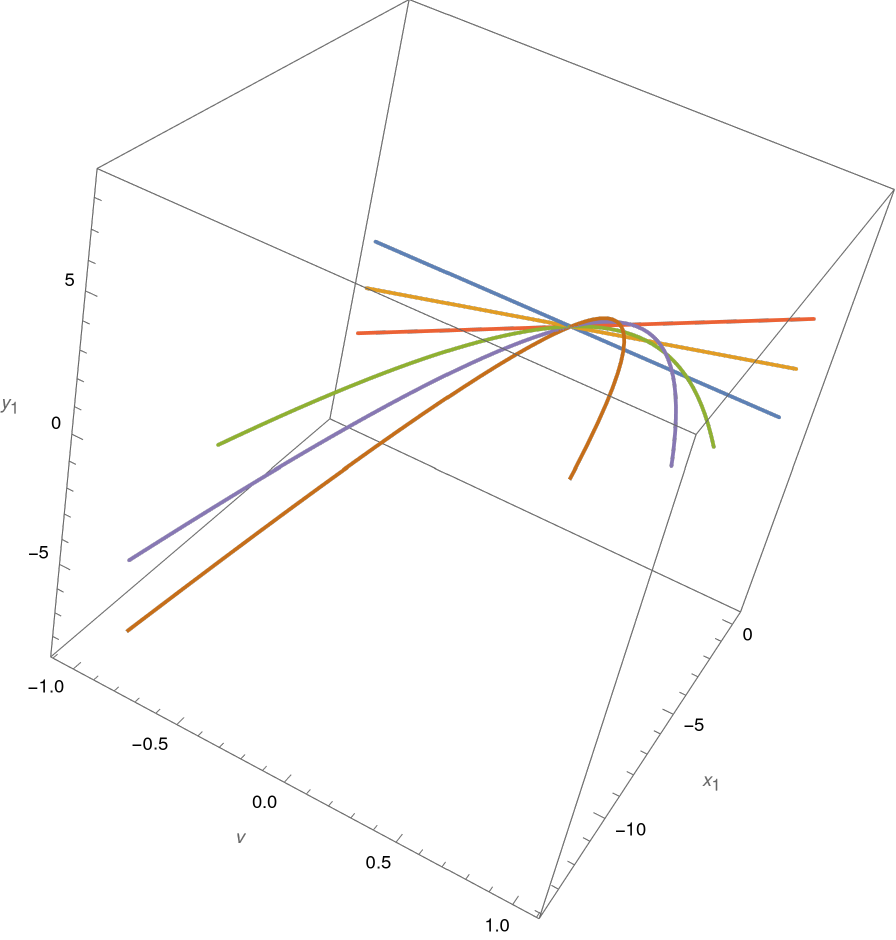}}
\end{center}
\caption{Two different views of $\Spec H_{\Cs}^*(\Gr(2,4))$. Note that all of the components project bijectively to the $v$ axis.}
\label{hcgr}
\end{figure}

\begin{example}\label{csgr}
 As we have defined an action of $\SL_2(\C)$ on any $\C^n$, we can use this to define actions on partial or full flag varieties. Let us consider the action of the upper Borel subgroup of $\SL_2$ on $\C^4$ and the induced action on the Grassmannian $\Gr(2,4)$ of two-planes in $\C^2$. We can identify it with $\SL_4(\C)/\Ps$, where $\Ps$ is the parabolic group of matrices of the form
$$\begin{pmatrix}
    * & * & * & * \\
    * & * & * & * \\
    0 & 0 & * & * \\
    0 & 0 & * & * \\
   \end{pmatrix}.
$$
The only fixed point of $e$ is $o=\Span(e_1,e_2)$, and in the representation above $X_o$ can be thought of as the set of classes of matrices of the form
$$\begin{pmatrix}
    1 & 0 & * & * \\
    0 & 1 & * & * \\
    x_1 & y_1 & * & * \\
    x_2 & y_2 & * & * \\
   \end{pmatrix}.
$$
Then if we write down the coordinates $x_1$, $y_1$, $x_2$, $y_2$ in this order, one checks that
$$V_e|_{x_1,y_1,x_2,y_2} = (x_2-x_1y_1,-x_1-y_1^2+y_2,-x_1y_2,-x_2-y_1y_2)$$
and
$$V_h|_{x_1,y_1,x_2,y_2} = (4x_1,2y_1,6x_2,4y_2).$$
Therefore, the equations of $\ZZ$ in $\C[v,x_1,y_1,x_2,y_2]$ are
$$4vx_1 + x_2-x_1y_1 = 0,\quad
2vy_1-x_1-y_1^2+y_2 = 0, \quad
6vx_2-x_1y_2 = 0,\quad
4vy_2-x_2-y_1y_2 = 0.$$
We can determine $x_2$ and $y_2$ from the first two equations, and plugging them into the other two, we get
$$
x_1 (x_1 + 24 v^2 - 8 v y_1 + y_1^2) = 0,\quad
(y_1-4v) (2 x1 - 2 vy_1 + y_1^2) = 0.
$$
This gives six one-parameter families of solutions (one for each torus-fixed point):
\begin{align*}
&(x_1 = 0, y_1 = 0);&\quad 
&(x_1 = 0, y_1 = 2v);&\quad 
&(x_1 = -8v^2, y_1 = 4v);\\
&(x_1 = 0, y_1 = 4v);&\quad 
&(x_1 = -12v^2, y_1 = 6v);&\quad
&(x_1 = -24v^2, y_1 = 8v);
\end{align*}
see Figure~\ref{hcgr}.
\end{example}

\begin{figure}[ht!]
\begin{center}
 \includegraphics[width=7cm]{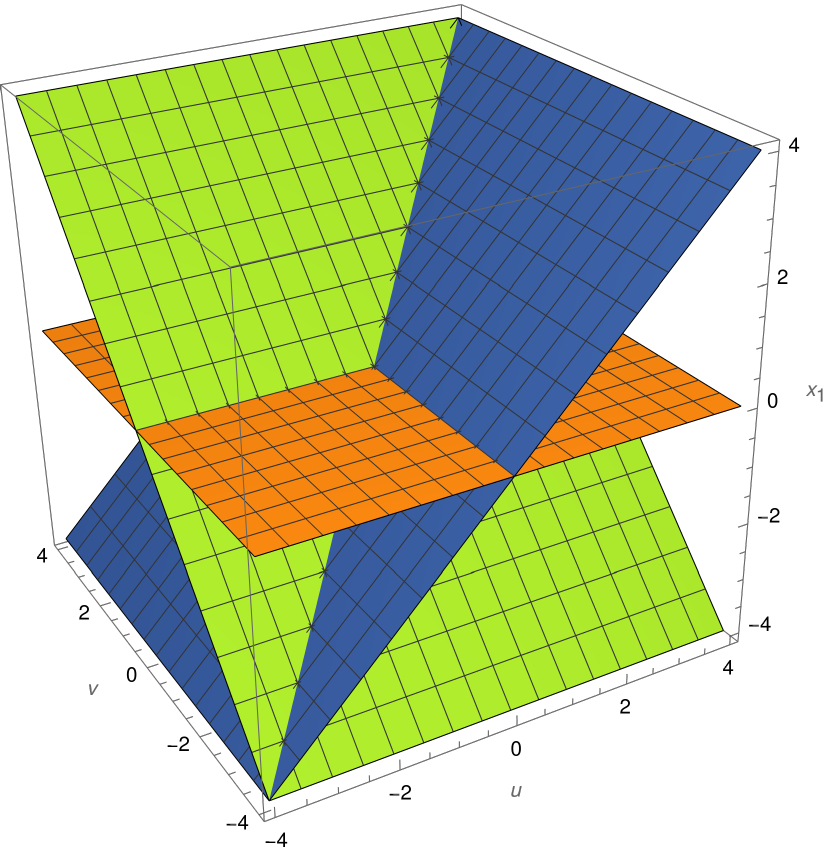}
\end{center}
\caption{$\Spec H_{\Ts}^*(\PP^2)$.}
\label{htpn}
\end{figure}

\begin{example}\label{exsl3p2}
 Let us now switch to groups of higher rank. As in Example~\ref{exgr2}, we can consider the regular nilpotent
 $$e = \begin{pmatrix}
       0 & 1 & 0 & 0 & \dots & 0\\
       0 & 0 & 1 & 0 & \dots & 0\\
       0 & 0 & 0 & 1 & \dots & 0\\
       \vdots & \vdots & \vdots & \vdots & \ddots & \vdots \\
       0 & 0 & 0 & 0 & \dots & 1 \\
       0 & 0 & 0 & 0 & \dots & 0
      \end{pmatrix}
 $$
 in $\SL_{n+1}$. We have the regular action of $\SL_{n+1}$ on $\PP^n$, which in particular restricts to a regular action of its upper Borel subgroup. We continue using notation from Example~\ref{exgr} for the elements of $\ttt$. As in Example~\ref{exsl22}, we have 
 $$V_e|_{x_1,\dots,x_n} = (x_2-x_1x_1, x_3 - x_1x_2, x_4 - x_1x_3,\dots,x_n-x_1x_{n-1},-x_1x_n).$$
 For the element $(v_1,v_2,\dots,v_{n})\in\C^n$, which corresponds to $\diag(0,v_1,v_2,\dots,v_{n}) - \frac{v_1+v_2+\dots+v_{n}}{n+1} I_{n+1}$, the associated vector field at $(x_1,x_2,\dots,x_n)$ is equal to
 $(v_1x_1,v_2x_2,\dots,v_nx_n)$. Hence 
 $$V_{e+(v_1,v_2,\dots,v_n)}|_{x_1,\dots,x_n} = (x_2-x_1(x_1-v_1), x_3 - x_2(x_1-v_2), \dots, x_n-x_{n-1}(x_1-v_{n-1}), -x_n(x_1-v_n)).$$
 Thus we can determine $x_2$, $x_3$, \dots, $x_n$ from $x_1$ and $v_1$, $v_2$, \dots, $v_n$. The scheme $\ZZ$ can then be realised within $\Spec\C[v_1,v_2,\dots,v_n,x_1]$ and cut out by one equation
 $$x_1(x_1-v_1)(x_1-v_2)\cdots(x_1-v_n) = 0.$$
 This scheme consists of $n+1$ hyperplanes. Their intersections, when projected on the $(v_1,\dots,v_n)$-plane, form the toric fan of $\PP^{n}$. The functions on the scheme consist of $n+1$ polynomials, one for each component, that agree on the intersections. This agrees with the classical description of equivariant cohomology of a toric variety as piecewise polynomials on the fan (see \textit{e.g.} \cite[Section 2.2]{Briontor}).
 For $n=2$ the scheme is depicted in Figure~\ref{htpn}.
\end{example}

 \begin{example}
 \label{exflag}
  We can extend the previous example to full flag varieties. Take for example the variety $F_3 = \SL_3/\Bs$ of full flags in $\C^3$. The only zero of $e$ is the flag $\Span(e_1)\subset\Span(e_1,e_2)$, and in this case the cell $X_o$  consists of the flags represented by matrices of the form
 $$
 \begin{pmatrix}
  1 & 0 & * \\
  a & 1 & * \\
  b & c & *
 \end{pmatrix} \in \SL_3(\C).
 $$
 One finds that
 $$V_e|_{a,b,c} = (-a^2+b,-ab,-b+ac-c^2).$$
 If, as before, we consider a pair $\w = (v_1,v_2)\in\C^2$ as an element of $\ttt$, then we have
 $$V_\w|_{a,b,c} = 
 (v_1a,v_2b,(v_2-v_1)c).$$
 Hence the equations for $V_{e+\w} = 0$ are
 $$-a^2+b+v_1a = 0,\qquad -ab+v_2b,\qquad -b+ac-c^2+(v_2-v_1)c.$$
 Plugging $b$ from the first one into the others yields two equations
 $$a(a-v_1)(a-v_2) = 0,\qquad -a^2+av_1+ac-c^2-cv_1+cv_2 =0.$$
 By splitting the first equation into cases, we easily get the six families of solutions (one for each coordinate flag):
 \begin{align*}
 &(a = 0, c = 0);&\quad 
 &(a = v_1, c = 0);&\quad 
 &(a = v_1, c = v_2);\\
 &(a = v_2, c = v_2);&\quad 
 &(a = 0, c = -v_1+v_2);&\quad
 &(a = v_2, c = -v_1+v_2).
 \end{align*}
 \end{example}

\begin{example}
Another natural family of examples are the Bott--Samelson resolutions of Schubert varieties (see~\cite{BS,Hansen,Demazure}). We first recall their construction here. Let $\Gs$ be a semisimple group of rank $r$, with simple roots $\alpha_1$, $\alpha_2$, \dots, $\alpha_r$. The reflections $s_1$, $s_2$, \dots, $s_r$ in the simple roots generate the Weyl group $\W$ of $\Gs$. Let $(e_i,f_i,h_i)$ be an $\ssl_2$-triple corresponding to $\alpha_i$. For any sequence $\underline{\omega}=(\alpha_{i_1},\alpha_{i_2},\dots,\alpha_{i_l})$ of simple roots, we can construct the \emph{Bott--Samelson variety} as follows:
$$X_{\underline{\omega}}
=\Ps_{i_1}\times_{\Bs}\Ps_{i_2}\times_{\Bs}\dots \times_{\Bs} \Ps_{i_l}/\Bs,
$$
where $\Bs$ is the Borel subgroup of $\Gs$ and $\Ps_i$ is the minimal (non-Borel) parabolic subgroup corresponding to the root $\alpha_i$. Here $\Bs$ acts on $\Ps_i$ both on the left and on the right; hence we can define the mixed quotients as above, and the last quotient is by the right $\Bs$-action on $\Ps_{i_l}$. The variety admits the multiplication map $X_{\underline{\omega}}\to \Gs/\Bs$. If $\underline{\omega}$ is a reduced word representing an element $\omega\in\W$, then this map is a resolution of the Schubert variety $X_\omega = \ov{\Bs\omega \Bs/\Bs}$. The Borel subgroup $\Bs$ acts on the Bott--Samelson variety on the left.

\begin{lemma}\label{botsamreg}
 The Bott--Samelson resolutions are regular $\Bs$-varieties.
\end{lemma}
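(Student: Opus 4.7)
The plan is to reduce the statement to the regularity of the $\Gs$-action on the full flag variety, which is already established in Example \ref{exgr2}. The key tool is a natural $\Bs$-equivariant injective morphism from the Bott--Samelson variety into a power of $\Gs/\Bs$ with the diagonal action.

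First I would define
\[
\phi: X_{\underline{\omega}} \to (\Gs/\Bs)^l, \qquad [g_1,g_2,\dots,g_l] \longmapsto (g_1\Bs,\; g_1 g_2 \Bs,\; \dots,\; g_1 g_2\cdots g_l \Bs),
\]
and verify it is a well-defined $\Bs$-equivariant morphism (where $\Bs$ acts on the target diagonally). Well-definedness is immediate: under the mixed-quotient equivalence $(g_1,\dots,g_l)\sim (g_1 b_1, b_1^{-1} g_2 b_2,\dots, b_{l-1}^{-1}g_l b_l)$ with $b_i\in\Bs$, each partial product $g_1 g_2\cdots g_j$ gets multiplied on the right by $b_j\in\Bs$, which preserves the class in $\Gs/\Bs$. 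Equivariance is clear since the left action by $b\in\Bs$ only changes $g_1$ by $b g_1$, hence multiplies every partial product on the left by $b$.

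Next I would check that $\phi$ is injective. If $\phi([g_1,\dots,g_l]) = \phi([g_1',\dots,g_l'])$, then $g_1\Bs = g_1'\Bs$ gives $g_1' = g_1 b_1$ for some $b_1\in\Bs$; comparing second coordinates gives $g_1 g_2 \Bs = g_1 b_1 g_2' \Bs$, hence $g_2' = b_1^{-1} g_2 b_2$ for some $b_2\in\Bs$; iterating produces $b_1,\dots,b_l\in\Bs$ realizing precisely the mixed-quotient equivalence, so the two classes agree in $X_{\underline{\omega}}$.

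Once $\phi$ is in place, the conclusion is immediate. By Example \ref{exgr2} the $\Gs$-action on $\Gs/\Bs$ is regular, so the one-parameter subgroup $\exp(te)$ has the unique fixed point $o=[\Bs]$ on $\Gs/\Bs$; hence on $(\Gs/\Bs)^l$ (with the diagonal action) the unique $\exp(te)$-fixed tuple is $(o,o,\dots,o)$. Equivariance and injectivity of $\phi$ then force the unique $e$-fixed point of $X_{\underline{\omega}}$ to be the class of $[1,1,\dots,1]$, so the $\Bs$-action is regular in the sense of Definition \ref{defreg} (noting that $\Bs$ is principally paired by Lemma \ref{parabolic}). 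There is no real obstacle in the argument; the only point requiring minor care is the bookkeeping of the mixed-quotient equivalence when verifying that $\phi$ is well-defined and injective.
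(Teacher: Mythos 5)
Your proof is correct, but it takes a genuinely different route from the paper's. The paper argues intrinsically and inductively in the mixed-quotient coordinates: from $[(ug_1,g_2,\dots,g_l)]=[(g_1,\dots,g_l)]$ with $u=\exp(e)$ it deduces $u\in g_1\Bs g_1^{-1}$, invokes the uniqueness of the Borel subgroup containing a regular unipotent together with $N_\Gs(\Bs)=\Bs$ to get $g_1\in\Bs$, observes that the induced element $b_2$ is again a regular unipotent, and then peels off one factor at a time until all $g_i\in\Bs$. You instead package the same core fact (the regular unipotent fixes only the base point of $\Gs/\Bs$, i.e.\ Example~\ref{exgr2}) through the standard $\Bs$-equivariant injection $X_{\underline{\omega}}\into(\Gs/\Bs)^l$ sending a class to its partial products; since fixed points of $\exp(te)$ on a product are componentwise fixed, the image of any fixed point must be $(o,\dots,o)$, and injectivity pins it down to $[(1,\dots,1)]$. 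What your approach buys is brevity and reuse of the already-established regularity of the flag variety, at the cost of checking well-definedness and injectivity of $\phi$ (which you do correctly); the paper's induction avoids introducing the auxiliary embedding and makes explicit that every $g_i$ lands in $\Bs$. One small point you should make explicit: your argument as stated gives \emph{at most} one fixed point, while Definition~\ref{defreg} asks for a unique zero; either note directly that $[(1,\dots,1)]$ is fixed by $\exp(te)$ (absorb $\exp(te)\in\Bs$ through the mixed quotient), or invoke the Borel fixed point theorem for the unipotent subgroup on the projective variety $X_{\underline{\omega}}$ to guarantee nonemptiness. This is immediate and does not affect the validity of the proof.
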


\begin{proof}
Assume that an element $x\in X_{\underline{\omega}}$ represented by $(g_1,g_2,\dots,g_l)$ is fixed by the regular nilpotent $e$. As $e$ generates an additive subgroup $\exp(te)$ inside $\Bs$, every zero of $e$ is fixed by this subgroup, and in particular by $b_1 = u = \exp(e)$. This means that in the Bott--Samelson variety
\begin{align}\label{bs1}
[(b_1g_1,g_2,g_3,\dots,g_l)] = [(g_1,g_2,g_3,\dots,g_l)].
\end{align}
First, this means that $b_1 g_1 = g_1 b_2$ for some $b_2\in \Bs$, hence $b_1\in g_1 \Bs g_1^{-1}$. As $b_1$ is a regular unipotent element, there is only one Borel subgroup, namely $\Bs$, which contains $b_1$ (see the discussion in Example~\ref{exgr2}). As $N_{\Gs}(\Bs) = \Bs$, we have $g_1\in \Bs$. From $b_1 g_1 = g_1 b_2$ we have that $b_2$ is conjugate to $b_1$; hence it is also a regular unipotent in $\Bs$. From \eqref{bs1} we have
$$[(b_2g_2,g_3,\dots,g_l)] = [(g_2,g_3,\dots,g_l)]$$
in the Bott--Samelson variety corresponding to the sequence $(\alpha_{i_2},\dots,\alpha_{i_l})$. Applying the same reasoning, we get inductively that $g_1,g_2,\dots,g_l\in\Bs$, hence $[(g_1,g_2,g_3,\dots,g_l)] = [(1,1,\dots,1)]$ in $X_{\underline{\omega}}$.\footnote{We thank  Jakub L\"{o}wit for this argument.}
\end{proof}

\noindent This means that using the method above, we can determine $H_{\Ts}^*(X_{\underline{\omega}})$, where $\Ts$ is the maximal torus inside $\Bs$. The open Bia{\l}ynicki-Birula cell $X_o$ consists of the classes
$$[(\exp(x_1f_{i_1}),\exp(x_2f_{i_2}),\dots,\exp(x_lf_{i_l})]_{x_1,x_2,\dots,x_l\in\C},$$
and we would like to find the scheme $\ZZ$ inside $X_o\times \ttt$. We need to determine the infinitesimal action of $\Bs$ on that cell. We will proceed coordinate by coordinate. Note that for $i\in\{1,2,\dots,r\}$ the group $\Ps_{i}$ contains $\{\exp(t \cdot f_i): t\in\C\} \cdot \Bs$ as a dense subset. Therefore, for any $x\in\C$ there exists an open neighbourhood $U\subset \Bs$ of $1_B$ such that for all $g\in U$ we have
$$g \cdot \exp(x f_i) = \exp(b(g)f_i) h(g)$$
for some maps $b\colon U\to \C$ and $h\colon U\to \Bs$ with $b(1) = x$ and $h(1) = 1$. The two sides of the equality are functions of $g$. Let us differentiate them at $g=1$ in the direction of $y\in \bb$. We get
$$y \cdot \exp(x f_i) = \exp(x f_i) \cdot (Db|_1(y) f_i + Dh|_1(y)),$$
where on the left-hand side, the dot denotes the right translation by $\exp(x f_i)$ and on the right-hand side, it analogously denotes the left translation. Therefore, 
$$Db|_1(y) f_i + Dh|_1(y) = \Ad_{\exp(-x f_i)}(y).$$
Now let $y = e + w$, where $w\in\Ts$. Then
\begin{multline*}
\Ad_{\exp(-x f_i)}(y) = \Ad_{\exp(-x f_i)}(e) + \Ad_{\exp(-x f_i)}(w) = (e+xh_i-x^2f_i) + (w-\alpha_i(w)x f_i) \\
= (-\alpha_i(w)x-x^2)f_i + (e + w + xh_i).
\end{multline*}
Thus we get $Db|_1(y) = -\alpha_i(w)x-x^2$ and $Dh|_1(y) = e + w + xh_i$. Hence the infinitesimal action on $X_{\underline{\omega}}$ in the direction $e+w$ yields the vector of the first coordinate $-\alpha_{i_1}(w)x_1-x_1^2$ and induces
an infinitesimal action of $e + w + x_1h_{i_1}$ on the second coordinate. We can apply this procedure inductively and get that the $\supth{j}$ coordinate is acted upon by
$$e+w+\sum_{k=1}^{j-1} x_k h_{i_k}$$
and the corresponding coordinate of the vector field $V_{e+w}$ is
$$-\sum_{k=1}^{j-1} \alpha_{i_j}(h_{i_k})x_k x_j -\alpha_{i_j}(w)x_j-x_j^2.$$
Therefore, if we define the numbers $b_{jk} = \alpha_{i_j}(h_{i_k})$, we obtain the following presentation of the equivariant cohomology ring:
$$H_{\Ts}^*(X_{\underline{\omega}}) = 
\C[\ttt][x_1,x_2,\dots,x_l]/\left( x_j^2 = -\sum_{k<j} b_{jk} x_kx_j -\alpha_{i_j}(w)x_j \right),
$$
where $w$ denotes the $\ttt$ coordinate. Note that \textit{e.g.} for $\alpha_1, \dots, \alpha_r$ being the standard simple roots of $\SL_n$, those numbers vanish whenever $|i_j-i_k|>1$.

The variety has $2^l$ torus-fixed points, and hence the equivariant cohomology ring is a free module over $\C[\ttt]$ of rank $2^l$. An additive basis consists of all the square-free monomials in $x_1,x_2,\dots,x_l$. We then recover  the results obtained \textit{e.g.} in \cite[Proposition 4.2]{BS} or \cite[Proposition 3.7]{Willems}. 
\end{example}

\section{Reductive and arbitrary principally paired algebraic groups}
\label{redarbsec}

\subsection{Reductive groups}
In this section we will make a transition from solvable groups to reductive groups. We do that by restricting to Borel subgroups and utilizing Theorem~\ref{finsolv}.

Let  $\Gs$ be a complex reductive algebraic group of rank $r$. We assume that $e\in\geg = \Lie(\Gs)$ is a regular nilpotent element. Let $f, h\in\geg$ denote the remaining elements of an $\ssl_2$-triple $(e,f,h)$ (see the discussion in Section~\ref{sl2p}). In fact, all of the regular nilpotents are conjugate (see \cite[Section 3, Theorem 1]{Kostsec}). Hence, we can actually assume $e = x_1 + x_2 + \dots + x_s$, as in Example~\ref{exgr}. In particular, $h$ is semisimple and contained in the unique Borel subalgebra $\bb$ of $\geg$ containing $e$. It integrates to a map $H^t\colon \Cs\to \Gs$ with finite kernel. We denote by $\Ss = e+C_\geg(f)$ the corresponding Kostant section (\textit{cf.} \cite[Theorem 0.10]{Kostsec}). Kostant's theorem also gives $\C[\Ss] = \C[\geg]^\Gs = \C[\ttt]^\Ws = H^*_\Gs(\pt)$. The goal will be to prove the following result.

\begin{theorem}
\label{semisimp}
Let $\Gs$ be as above, and assume that $\Gs$ acts regularly on a connected smooth projective variety~$X$. Let $\ZZ_\Gs$ be the closed subscheme of $\Ss\times X$ defined as the zero set of the total vector field $($cf.\ Definition~\ref{totvec}\,$)$ restricted to $\Ss\times X$.
Then $\ZZ_\Gs$ is an affine, reduced scheme and $H^*_\Gs(X) \cong \C[\ZZ_\Gs]$ as graded $\C[\Ss]$-algebras, where the grading on right-hand side is defined by the action of\, $\Cs$ on $\Ss$ via $\frac{1}{t^2}\Ad_{H^t}$ and on $X$ via $H^t$. In other words, $\ZZ_\Gs = \Spec H^*_\Gs(X)$, $\Ss = \Spec H^*_\Gs$, and the pullback of functions along the projection $\ZZ_\Gs\to \Ss$ yields the structure map $H^*_\Gs\to H^*_\Gs(X)$, so we have the following diagram: 

$$ \begin{tikzcd}
 \ZZ_\Gs  \arrow{d}{\pi} \arrow{r}{\cong}  &
  \Spec H^*_\Gs(X;\C) \arrow{d} \\
   \Ss \arrow{r}{\cong}&
  \Spec H^*_\Gs\rlap{.}
\end{tikzcd}$$

Moreover, the isomorphism $H^*_\Gs(X) \cong \C[\ZZ_\Gs]$ of graded $\C[S]$-algebras is functorial  in both $X$ and $\Gs$. The admissible morphisms are those that map a $\Gs_1$-regular variety $X$ to $\Gs_2$-regular variety $Y$ in a $\Gs_1$-equivariant way, where $\Gs_1\to\Gs_2$ is a homomorphism between two reductive algebraic groups which maps the fixed principal $\ssl_2$-triple to the other fixed principal $\ssl_2$-triple.

$$ \begin{tikzcd}
 \ZZ_{\Gs_1}(X)  \arrow{d}{} \arrow{r}{\cong}  &
  \Spec H^*_{\Gs_1}(X;\C) \arrow{d} \\
 \ZZ_{\Gs_2}(Y)  \arrow{r}{\cong}&
  \Spec H^*_{\Gs_2}(Y;\C).
\end{tikzcd}$$

\end{theorem}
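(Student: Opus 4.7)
The plan is to deduce the reductive case from the solvable case via the Borel subgroup. Let $\Bs\subset\Gs$ be the unique Borel subgroup with $e\in\bb=\Lie(\Bs)$, and let $\Ts\subset\Bs$ be the maximal torus with $h\in\ttt$. By Lemma~\ref{parabolic} the Borel $\Bs$ is principally paired, and the $\Bs$-action on $X$ is regular since the regular unipotent $\exp(e)\in\Bs$ remains regular in $\Gs$ and therefore has the same unique fixed point as in the $\Gs$-action. Theorem~\ref{finsolv} then supplies a graded $\C[\ttt]$-algebra isomorphism $\rho_\Bs\colon H^*_\Ts(X;\C)\xrightarrow{\cong}\C[\ZZ_\Bs]$, where $\ZZ_\Bs\subset\ttt\times X$ is the zero scheme of the total vector field along $e+\ttt$.

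I would next identify $\ZZ_\Bs$ as a Weyl cover of $\ZZ_\Gs$. Proposition~\ref{isoquot} supplies the Chevalley-quotient map $\chi\colon\ttt\to\Ss=\ttt/\!\!/\Ws$ together with a regular map $A\colon\ttt\to\U^-\subset\Gs$ satisfying $\Ad_{A(\w)}(e+\w)=\chi(\w)$. The automorphism $\alpha(\w,y)=(\w,A(\w)\cdot y)$ of $\ttt\times X$ sends, by Lemma~\ref{lemad}, a zero of $V_{e+\w}$ to a zero of $V_{\chi(\w)}$, and hence identifies $\ZZ_\Bs$ scheme-theoretically with the fibre product $\ZZ_\Gs\times_{\Ss}\ttt$. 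This yields
\[
\C[\ZZ_\Bs]\;\cong\;\C[\ZZ_\Gs]\otimes_{\C[\Ss]}\C[\ttt]
\]
compatibly with the $\Ws$-action that is trivial on $\C[\ZZ_\Gs]$ and standard on $\C[\ttt]$. Taking $\Ws$-invariants and using $\C[\ttt]^\Ws=\C[\Ss]$ gives $\C[\ZZ_\Bs]^\Ws\cong\C[\ZZ_\Gs]$. Since the standard identification $H^*_\Gs(X;\C)\cong H^*_\Ts(X;\C)^\Ws$ holds for any $\Gs$-space with $\C$-coefficients, combining with $\rho_\Bs$ produces the sought isomorphism $H^*_\Gs(X;\C)\cong\C[\ZZ_\Gs]$.

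The remaining assertions follow cleanly. Affineness of $\ZZ_\Gs$ is automatic once we observe that the projection $\ZZ_\Gs\to\Ss$ is proper (as $X$ is projective) and quasi-finite (every element of $\Ss$ is regular by Theorem~\ref{kostarb} and so has isolated zeros on $X$ by Lemma~\ref{isoreg}), hence finite. Reducedness is inherited from $\ZZ_\Bs$ (Lemma~\ref{lemcm}) via the inclusion $\C[\ZZ_\Gs]\hookrightarrow\C[\ZZ_\Bs]$ coming from flatness of $\C[\ttt]$ over $\C[\Ss]$. The gradings match because all ingredients, including $\chi$ and $A$, are equivariant for the Kazhdan-type $\Cs$-action $\lambda\cdot x=\lambda^{-2}\Ad_{\tau(\lambda)}(x)$, as one checks directly from $A(\lambda^{-2}\w)=\tau(\lambda)A(\w)\tau(\lambda)^{-1}$. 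Functoriality in $X$ and in $\Gs$ is inherited from the Borel-level statements Propositions~\ref{funcprop}~and~\ref{funcgrp} together with the naturality of the Chevalley map.

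The main technical obstacle is verifying that the geometric $\Ws$-action on $\C[\ZZ_\Bs]$ coming from the identification with $\ZZ_\Gs\times_\Ss\ttt$ matches, under $\rho_\Bs$, the classical Weyl action on $H^*_\Ts(X;\C)$ induced by lifts $n_w\in N_\Gs(\Ts)$. I would verify this on the regular locus $\ttt^\reg$: by Theorem~\ref{unif} the scheme $\ZZ_\Bs$ trivialises there as $\ttt^\reg\times X^\Ts$, and on this open subset both actions reduce to the permutation $\zeta\mapsto n_w\zeta$ of torus fixed points combined with the standard Weyl action on $\C[\ttt^\reg]$; the equality then extends to all of $\ttt$ by density, using injectivity of the localisation map on the $\C[\ttt]$-free module $H^*_\Ts(X;\C)$.
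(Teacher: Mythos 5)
Your proposal follows essentially the same route as the paper's proof: reduce to the Borel subgroup via Theorem~\ref{finsolv}, transport $\ZZ_\Bs$ by $(\w,x)\mapsto(\w,A(\w)x)$ to the zero scheme $\ZZ'$ lying over $\chi(\ttt)\subset\Ss$, observe that the Weyl group then acts only through the $\ttt$-factor, and take $\Ws$-invariants using $H^*_\Gs(X)=H^*_\Ts(X)^\Ws$ and $\C[\ttt]^\Ws=\C[\Ss]$. Your packaging of $\ZZ'$ as the fibre product $\ZZ_\Gs\times_\Ss\ttt$, the affineness argument (proper plus quasi-finite over $\Ss$, hence finite, hence affine over the affine base) and the deduction of reducedness from the injection $\C[\ZZ_\Gs]\hookrightarrow\C[\ZZ_\Bs]$ are all correct, and if anything a bit more direct than the paper's remarks; the grading and functoriality statements also match the paper's treatment.

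The one place where you assert the crux rather than prove it is the claim that on $\ttt^\reg$ ``both actions reduce to the permutation $\zeta\mapsto n_w\zeta$ of torus fixed points''. For the classical action on $H^*_\Ts(X)$ this is the standard localisation computation, but for the geometric action $\psi_\eta(\w,x)=(\eta\w,A(\eta\w)^{-1}A(\w)x)$ it is exactly the content of the paper's Lemma~\ref{lemweyl}: in the trivialisation of Theorem~\ref{unif} one must show that $C_{\eta,\w}=M_{\eta\w}^{-1}A(\eta\w)^{-1}A(\w)M_\w$ sends $\zeta_i$ to $\eta\zeta_i$, and this is not automatic. The missing argument is the computation $\Ad_{C_{\eta,\w}}(\w)=\eta\w$, which shows $\tilde\eta^{-1}C_{\eta,\w}$ centralises $\w$, together with the fact that for regular $\w$ the centraliser $C_\Gs(\w)$ equals $\Ts$ (connectedness of centralisers, Steinberg), so that $C_{\eta,\w}$ lies in $N_\Gs(\Ts)$ and represents the class of $\eta$. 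Without this verification the identification of $\C[\ZZ_\Bs]^\Ws$ (for the geometric action) with $H^*_\Ts(X)^\Ws$ (for the classical action) is unjustified; with it, your argument is complete and coincides with the paper's proof.
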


Note that $H^*_\Gs(X) = H^*_\Ts(X)^\W$, where $\Ts$ is the maximal torus and $\W = N_\Gs(\Ts)/\Ts$ is the Weyl group of $\Gs$ (see \textit{e.g.} \cite[Proposition III.1]{equiv}). Therefore, we will be able to make use of the result for solvable groups (Theorem~\ref{finsolv}).

\subsection{Motivating example: \texorpdfstring{$\Gs = \SL_2(\C)$}{G=SL2(C)}}

For $\Gs = \SL_2(\C)$ we can choose the canonical $e$, $f$, $h$:

$$e = \begin{pmatrix}
0 & 1 \\
0 & 0
\end{pmatrix}, \quad
f = \begin{pmatrix}
0 & 0 \\
1 & 0
\end{pmatrix}, \quad
h = \begin{pmatrix}
1 & 0 \\
0 & -1
\end{pmatrix}.
$$
Then we get $\Ss = \{e+vf : v\in\C \}$. Again, let us adapt the convention from Example~\ref{exgr} for the basis of $\ttt$; \textit{i.e.} a number $v\in\C$ will denote $-vh/2$. We know that $H_\Ts^*(X) = \C[\ZZ_{\Bs_2}]$, where $\ZZ_{\Bs_2}$ is defined as before for solvable (Borel) subgroup $\Bs_2$ of $\SL_2(\C)$ consisting of upper-triangular matrices. 
Let us now see how the Weyl group (in this case $\Sigma_2 = \{1, \epsilon\}$) acts on $H^*_\Ts(X)$. We have the following commutative diagram: 
$$
\begin{tikzcd}
H^*_\Ts(X) \arrow[r, "\epsilon^*"] \arrow[dd, "\iota_{\epsilon\zeta_i}^*"]
& H^*_\Ts(X) \arrow[dd, "\iota_{\zeta_i}^*"]
\\ \\
H^*_\Ts(\epsilon\zeta_i) \arrow[r, "\epsilon^*"]
& H^*_\Ts(\zeta_i)\rlap{.}
\end{tikzcd}
$$
Note that in the bottom row we have the (contravariant) action of $\W$ on $H^*_\Ts(\pt) \cong \C[\ttt]$, which is defined by the (covariant) adjoint action of $\W$ on $\ttt$. In the case of $\SL_2$, the element $\epsilon$ acts on $\ttt$ by $v\mapsto -v$.

Therefore, we get that for any $c\in H^*_\Ts(X)$ and any $\Ts$-fixed point $\zeta_i$, we have
$$(\epsilon^*c)|_{\zeta_i} = (c|_{\epsilon\zeta_i}) \circ \epsilon,$$
where $\epsilon$ is  seen as a map $\ttt\to\ttt$. This determines $\epsilon^*c$ completely as the restriction $H^*_\Ts(X)\to \bigoplus H^*_\Ts(\zeta_i)$ is injective.
Hence when we apply the isomorphism $\rho\colon H^*_\Ts(X)\to \C[\ZZ_{\Bs_2}]$, we  get
$$\rho(\epsilon^* c)(\w,M_{\w}\zeta_i) = \rho(c)(\epsilon \w,M_{\epsilon\w}\epsilon\zeta_i).$$
We get an algebra homomorphism $\C[\ZZ_{\Bs_2}]\to\C[\ZZ_{\Bs_2}]$, which has to come from a morphism $\ZZ_{\Bs_2}\to\ZZ_{\Bs_2}$. This morphism sends $M_{\w}\zeta_i$ to $M_{\epsilon\w}\epsilon\zeta_i$.
\\
We will now look at the adjoint action of elements of the form
$$\exp(sf) = \begin{pmatrix}
1 & 0 \\
s & 1
\end{pmatrix}
\in \SL_2.
$$
We have
\begin{align}
\label{conquo}
\Ad_{\exp(tf)}(e+th) &= e + t^2 f,  \\
\label{conmin}
\Ad_{\exp(2tf)}(e+th) &= e - th.
\end{align}
From \eqref{conmin} we infer (by Lemma~\ref{lemad}) that the map
$$\psi_\epsilon\colon (v,x) \longmapsto (-v,\exp(-vf) x)$$
is an isomorphism of $\ZZ_{\Bs_2}$ (note that in our choice of basis, the number $v$ denotes $-vh/2$). We claim that it is equal to the above (\textit{i.e.} it is dual to $\rho\circ \epsilon^*\circ \rho^{-1}$). Clearly, the action on the first factor agrees. Now we have
$$\exp(-vf)(M_v \zeta_i) = 
\begin{pmatrix}
1 & 0 \\
-v & 1
\end{pmatrix}
\begin{pmatrix}
1 & 1/v \\
0 & 1
\end{pmatrix}
\zeta_i, 
$$
and we get
$$M_{-v}^{-1}\exp(-vf)(M_v \zeta_i)=
\begin{pmatrix}
1 & 1/v \\
0 & 1
\end{pmatrix}
\begin{pmatrix}
1 & 0 \\
-v & 1
\end{pmatrix}
\begin{pmatrix}
1 & 1/v \\
0 & 1
\end{pmatrix}
\zeta_i
=
\begin{pmatrix}
0 & 1/v \\
-v & 0
\end{pmatrix}
\zeta_i = \epsilon\zeta_i.
$$
Therefore, 
$$\psi_\epsilon(M_v\zeta_i)= \exp(-vf)(M_v \zeta_i) = M_{-v}\epsilon\zeta_i $$
and indeed
$$\rho(\epsilon^*c)(v,x) = \rho(c)(\psi_\epsilon(v,x)).$$
Thus $\Spec H^*_{\SL_2(\C)}(X)$ is the GIT quotient of $\ZZ_{\Bs_2}$ over this action.

Now from \eqref{conquo} we get that the map $\phi\colon  (v,x)\mapsto (v,\exp(-vf/2)x)$ is an isomorphism between $\ZZ_{\Bs_2}$ and $\ZZ' = \{(v,x)\in \C\times X: (V_{e+v^2/4 f})|_{x} = 0\}$. Therefore, we might as well look for the GIT quotient of $\ZZ'$ by $\phi \circ \psi_\epsilon\circ\phi^{-1}$.
We get
$$\phi \circ \psi_\epsilon\circ\phi^{-1}(v,x) = \phi\circ\psi_\epsilon(v,\exp(vf/2)x) = \phi(-v,\exp(-vf/2)x) = (-v,x).$$
The GIT quotient of $\ZZ' = \{(v,x): (V_{e+v^2/4 f})|_{x} = 0\}$ by this action is clearly isomorphic to $\ZZ_\Gs = \{(t,x)\in \C\times X: (V_{e+tf})|_{x} = 0\}$.

\subsection{General case}
\label{secred}
We will want to mimic the proof for $\SL_2$ in the general reductive case. Let $\ZZ_\Bs$ be the scheme from Section~\ref{solvsec}, defined for the Borel subgroup $\Bs$ of $\Gs$. We need the following:
\begin{itemize}
 \item We need regular maps $A\colon \ttt\to \Gs$ and $\chi\colon \ttt\to \Ss$ that satisfy
 $$\Ad_{A(\w)}(e+\w) = \chi(\w),$$
 so that $(\id_{\ttt},A(\w))$ maps $\ZZ_\Bs$ to $\ZZ'$, where
 $$ \ZZ' = \{(\w,x)\in \ttt\times X: V_{\chi(\w)}|_{x} = 0\}.$$
 \item Moreover, we want $\chi$ to be $\W$-invariant and induce an isomorphism $\ttt/\!\!/\W\to \Ss$, so that we can construct 
 $\ZZ_{\Gs}$ as a quotient of $\ZZ'$.
 \item We want to realise the Weyl group action on $\ZZ_{\Bs}$ by the action on the second factor; \textit{i.e.} for each $\eta\in\W$ we want to define a map $B_{\eta}\colon \ttt\to \Gs$ such that
 $$(\w,x) \longmapsto (\eta(\w), B_\eta(\w) \cdot x)$$
 is the action of Weyl group.
 \item If we conjugate above with the isomorphism $\ZZ_{\Bs}\to\ZZ'$, we want to get a map that fixes the $X$-coordinate. In other words,
 $$A(\eta\w) B_\eta(\w) A^{-1}(\w) = 1;$$
 \textit{i.e.} $B_\eta(\w) = A(\eta\w)^{-1} A(\w)$.
\end{itemize}
We will now formalise these ideas. First, let $\Bs$ be the unique Borel subgroup of $\Gs$ containing the regular nilpotent $e$ (\textit{cf.} Section~\ref{sl2p}). We denote by $\U$ the corresponding maximal unipotent subgroup and by $\Bs^-$, $\U^-$ the opposite Borel and unipotent subgroup. Let $\bb$, $\uu$, $\bb^-$, $\uu^-$ denote the corresponding Lie algebras. As above, by $\ZZ_\Bs\subset \ttt\times X$ we denote the zero scheme defined by the action of $\Bs$, which by Theorem~\ref{finsolv} is isomorphic to $\Spec H^*_\Ts(X)$. We start by finding the map $A$. We know from Lemma~\ref{lembal} that the map
$$ \Ad_{-}(-)\colon  \U^- \times \Ss \lra e+\bb^-$$
is an isomorphism. Let us consider the preimage of $e+\ttt$ and denote by $A(\w)\in \U^-$, $\chi(\w)\in \Ss$ the elements such that
 \begin{equation}
 \label{adchi}
 \Ad_{A(\w)}(e+\w) = \chi(\w).
 \end{equation}
 We then know  from Proposition~\ref{isoquot} and \eqref{adchi} that the map $\phi$ defined as
 $$\phi(\w,x) = (\w,A(\w)x)$$
 is an isomorphism from
 $$\ZZ_\Bs = \{(\w,x)\in \ttt\times X: V_{e+\w}|_{x} = 0\}$$
 to 
 $$\ZZ' = \{(\w,x)\in \ttt\times X: V_{\chi(\w)}|_{x} = 0\}.$$
 Moreover, let 
 $$B_\eta(\w) = A(\eta\w)^{-1} A(\w)$$
 for any $\eta\in\W$, $\w\in \Ts$. Then the map $\psi_\eta$ defined as
 $$\psi_\eta = \phi^{-1} \circ (\eta,\id) \circ\phi,$$
 \textit{i.e.} $\psi_\eta(\w,x) = (\eta\w, B_\eta(\w)x)$, is an automorphism of $\ZZ_\Bs$. Here $\eta$ is seen as a map $\ttt\to\ttt$.

\begin{lemma}
\label{lemweyl}
 The map $\psi_\eta$ defines the action of\, $\W$ on $\ZZ_\Bs$. In other words, $\W$ acts on the right on $H^*_\Ts(X)$, and the dual left action on $\ZZ_\Bs$ is defined by $\psi$.
\end{lemma}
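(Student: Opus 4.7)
The plan is to verify first that the maps $\psi_\eta$ define a left $\W$-action on $\ZZ_\Bs$, and then that the induced right action on $\C[\ZZ_\Bs]$ matches the standard Weyl group action on $H^*_\Ts(X)$ under the isomorphism $\rho$ of Theorem~\ref{finsolv}. For the first step, observe that $\chi$ factors through $\ttt/\!\!/\W$ by Proposition~\ref{isoquot}, so $\chi(\eta\w)=\chi(\w)$; combining the defining equation $\Ad_{A(\w)}(e+\w)=\chi(\w)$ with the same equation applied to $\eta\w$ yields $\Ad_{B_\eta(\w)}(e+\w)=e+\eta\w$. By Lemma~\ref{lemad}, this means $\psi_\eta$ carries a zero of $V_{e+\w}$ to a zero of $V_{e+\eta\w}$, so $\psi_\eta$ is an automorphism of $\ZZ_\Bs$ permuting the fibres of $\pi$ via $\eta$. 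The cocycle identity $B_\eta(\eta'\w)B_{\eta'}(\w)=B_{\eta\eta'}(\w)$ follows immediately from $B_\eta(\w)=A(\eta\w)^{-1}A(\w)$, so $\psi$ is indeed a left $\W$-action.

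For the second step, since $\rho$ is an isomorphism and $\pi^{-1}(\ttt^\reg)$ is Zariski-dense in $\ZZ_\Bs$, it suffices to verify $\rho(\eta^*c)(\w,x)=\rho(c)(\psi_\eta(\w,x))$ on this open locus, where a point has the form $(\w,M_\w\zeta_i)$ for a unique $\Ts$-fixed point $\zeta_i$ (see Section~\ref{structure}). The standard fixed-point formula, coming from naturality of $n^*$ along $n\circ\iota_{\zeta_i}=\iota_{\eta\zeta_i}$ for any lift $n\in N_\Gs(\Ts)$ of $\eta$, reads $(\eta^*c)|_{\zeta_i}(\w)=c|_{\eta\zeta_i}(\eta\w)$, where $\eta\zeta_i:=n\zeta_i$ is well-defined since $\zeta_i$ is $\Ts$-fixed. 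The heart of the proof is therefore to establish
\[
\psi_\eta(\w,M_\w\zeta_i)=(\eta\w,\,M_{\eta\w}\,\eta\zeta_i),
\]
from which both sides of the desired equality collapse to $c|_{\eta\zeta_i}(\eta\w)$.

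This reduces to showing that the element $N:=M_{\eta\w}^{-1}\,A(\eta\w)^{-1}\,A(\w)\,M_\w$ lies in $N_\Gs(\Ts)$ with class $\eta\in\W$. A direct chain of substitutions using $\Ad_{M_\w}(\w)=e+\w$ and $\Ad_{A(\w)}(e+\w)=\chi(\w)$, together with $\chi(\eta\w)=\chi(\w)$, gives $\Ad_N(\w)=\eta\w$. Since $\w\in\ttt^\reg$ is regular semisimple, $\ttt$ is the unique Cartan subalgebra of $\geg$ containing $\w$; applying $\Ad_N$ shows that $\Ad_N(\ttt)$ is a Cartan subalgebra containing $\eta\w\in\ttt$, hence equals $\ttt$. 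Thus $N$ normalises $\ttt$ and therefore $\Ts$; the class of $N$ in $\W$, being determined by its action on the regular element $\w$, must be $\eta$. Consequently $N\zeta_i=\eta\zeta_i$, so $B_\eta(\w)M_\w\zeta_i=M_{\eta\w}\eta\zeta_i$ as required.

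The main obstacle, as anticipated, is this Cartan-uniqueness step identifying the $\W$-class of $N$; once it is in place, everything else reduces to routine manipulation of the explicit formulas together with density of $\pi^{-1}(\ttt^\reg)$ and injectivity of $\rho$.
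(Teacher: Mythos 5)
Your proof is correct and follows the same overall strategy as the paper's: restrict to the dense locus over $\ttt^{\reg}$ using reducedness of $\ZZ_\Bs$, use the localisation formula $(\eta^*c)|_{\zeta_i}=(c|_{\eta\zeta_i})\circ\eta$, and reduce everything to showing that $N=M_{\eta\w}^{-1}A(\eta\w)^{-1}A(\w)M_\w$ lies in $N_\Gs(\Ts)$ with class $\eta$, via the same chain of substitutions giving $\Ad_N(\w)=\eta\w$ (including the same use of $\chi(\eta\w)=\chi(\w)$, which you justify correctly from Proposition~\ref{isoquot}). The one genuine divergence is the concluding group-theoretic step: the paper notes that $\tilde\eta^{-1}N$ centralises $\w$ and invokes connectedness of $C_\Gs(\w)$ (citing \cite{Steinberg}) to get $\tilde\eta^{-1}N\in\Ts$, and hence $N\zeta_i=\eta\zeta_i$ directly; you instead argue that $\Ad_N(\ttt)$ is a Cartan subalgebra containing $\eta\w$ -- which is again regular, since $\W$ permutes the roots, a point worth stating explicitly -- so that uniqueness of the Cartan through a regular semisimple element gives $N\in N_\Gs(\ttt)=N_\Gs(\Ts)$, and then you use triviality of the $\W$-stabiliser of a regular element (standard, e.g. by Steinberg's fixed-point theorem for reflection groups) to pin the class down to $\eta$. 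Both routes are sound: yours avoids the connectedness-of-centralisers input at the cost of two standard facts and an extra normaliser step, while the paper's is marginally shorter. Your additional check of the cocycle identity for $B_\eta$ is harmless but not needed, since $\eta\mapsto\eta^*$ is already an action on $H^*_\Ts(X)$ and $\psi_\eta$ is identified with its dual.
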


\begin{proof}
 For any $\eta\in\W$ we have the commutative diagram
 $$
\begin{tikzcd}
H^*_\Ts(X) \arrow[r, "\eta^*"] \arrow[dd, "\iota_{\eta\zeta_i}^*"]
& H^*_\Ts(X) \arrow[dd, "\iota_{\zeta_i}^*"]
\\ \\
H^*_\Ts(\eta\zeta_i) \arrow[r, "\eta^*"]
& H^*_\Ts(\zeta_i)\rlap{.}
\end{tikzcd}
$$
 In the bottom row both entries are isomorphic to $\C[\ttt]$, and the map is precomposition with $\eta\colon \ttt\to\ttt$.
 Therefore, for any $c\in H^*_\Ts(X)$ and any $\Ts$-fixed point $\zeta_i$, we get
 $$(\eta^*c)|_{\zeta_i} = (c|_{\eta\zeta_i}) \circ \eta.$$
 This determines $\eta^*c$ completely as the restriction $H^*_\Ts(X)\to \bigoplus H^*_\Ts(\zeta_i)$ is injective. We want to determine what this action of $\W$ induces on $\ZZ_\Bs$. The action of $\Ws$ on $\ZZ_\Bs$, which we will denote by $\eta\mapsto \eta_*$, has to satisfy the equality
 $$\rho(c)(\eta_* (\w,x)) = \rho(\eta^*(c))(\w,x).$$
 From the proof of Lemma~\ref{lemcm}, we know that $\ZZ_\Bs\cap (\ttt^\reg\times X)$ is dense in $\ZZ_\Bs$. Therefore, to determine $\eta^*$, it is enough to determine its values $\eta_*(\w,x)$ for $\w$ regular. In this case if $(\w,x)\in\ZZ_\Bs$, then by Section~\ref{structure} we have that $\w = M_\w \zeta_i$, where $M_\w\in \Bs$ is such that $\Ad_{M_\w}(\w) = e+\w$. Then $\rho(c)(\w,x) = c|_{\zeta_i}(\w)$. Hence 
 $$\rho(\eta^*(c))(\w,x) = \eta^*(c)|_{\zeta_i}(\w) = (c|_{\eta\zeta_i}) (\eta\w)
 = \rho(c)(\eta\w,M_{\eta\w}\eta\zeta_i).$$
 Thus 
 $$\eta_*(\w,M_{\w}\zeta_i) = (\eta\w,M_{\eta\w}\eta\zeta_i).$$
 We claim that $\eta_* = \psi_\eta$, \textit{i.e.} $B_\eta(\w) M_{\w}\zeta_i = M_{\eta\w}\eta\zeta_i$. We have to prove that $C_{\eta,\w} = M_{\eta\w}^{-1} B_\eta(\w) M_{\w}$ sends $\zeta_i$ to $\eta\zeta_i$. 

  Note that
 \begin{multline*}
 \Ad_{C_{\eta,\w}}(\w) = \Ad_{M_{\eta\w}^{-1} B_\eta(\w) M_{\w}}(\w) = 
 \Ad_{M_{\eta\w}^{-1} A_{\eta\w}^{-1}A_{\w}M_{\w}}(\w) = 
 \Ad_{M_{\eta\w}^{-1} A_{\eta\w}^{-1}A_{\w}}(e+\w) \\
 =
 \Ad_{M_{\eta\w}^{-1} A_{\eta\w}^{-1}}(\chi(\w))= 
 \Ad_{M_{\eta\w}^{-1} A_{\eta\w}^{-1}}(\chi(\eta\w))= 
 \Ad_{M_{\eta\w}^{-1}} (e + \eta\w)= \eta\w.
 \end{multline*}
 Therefore, for any representative $\tilde{\eta} \in N_{\Gs}(\Ts)$ of $\eta$, we have
 $$\tilde{\eta}^{-1}C_{\eta,\w} \in C_{\Hs}(\w).$$
 As $\w$ is regular, its centraliser within $\he$ is just $\ttt$. It is the Lie algebra of $C_{\Gs}(\w)$, which is connected by \cite[Corollary 3.11]{Steinberg}, hence equal to $\Ts$. Therefore, $\tilde{\eta}^{-1}C_{\eta,\w}\in \Ts$; hence $C_{\eta,\w}$ represents the class of $\eta$ in $N_\Gs(\Ts)/\Ts$. Thus $C_{\eta,\w}$ sends $\zeta_i$ to $\eta\zeta_i$, as we wanted to prove.
\end{proof}

\begin{proof}[Proof of Theorem~\ref{semisimp}]
 We saw above that the map $\phi$ defined as $\phi(\w,x) = (\w,A(\w)x)$ is an isomorphism from $\ZZ_\Bs$ to $\ZZ' = \{(\w,x)\in \ttt\times X: V_{\chi(\w)}|_{x} = 0\}$. Then we can conjugate the maps $\psi_\eta$ with this isomorphism,  getting maps $\phi\circ\psi_\eta\circ\phi^{-1}\colon \ZZ'\to \ZZ'$. We have
 \begin{multline}
 \label{actw}
 \phi\circ\psi_\eta\circ\phi^{-1}(\w,x) = \phi\circ\psi_\eta(\w,A(\w)^{-1}x)
 = \phi(\eta\w,B_\eta(\w)A(\w)^{-1}x) \\ 
 = (\eta\w,A(\eta\w)B_\eta(\w)A(\w)^{-1}x) = (\eta\w,x).
 \end{multline}
The last equality follows from the definition $B_\eta(\w) = A(\eta\w)^{-1}A(\w)$. By Lemma~\ref{lemweyl} the map $\phi\circ\psi_\eta\circ\phi^{-1}$ gives the action of $\W$ on $\ZZ'\cong \ZZ_\Bs \cong \Spec H^*_\Ts(X)$.
 
 We have $H^*_\Gs(X) = H^*_\Ts(X)^\W$ and therefore $\Spec H^*_\Gs(X) = \Spec H^*_\Ts(X)/\!\!/\W =\ZZ'/\!\!/\W$. But we know from \eqref{actw} that $\W$ acts only on the $\ttt$-coordinate of $\ZZ'$ and moreover from Proposition~\ref{kostiso} that the map $\chi$ induces an isomorphism $\ttt/\!\!/\W\to\Ss$. Therefore, 
 $$\Spec H^*_\Gs(X) = \ZZ'/\!\!/W = 
 \{(\w,x)\in \ttt\times X: V_{\chi(\w)}|_{x} = 0\}/\!\!/\W =
 \{(v,x)\in \Ss\times X: V_{v}|_{x} = 0\} = \ZZ_\Gs.
 $$
 The zero scheme $\ZZ_\Gs$ is reduced because $\ZZ' \cong \ZZ_{\Bs}$ is reduced by Lemma~\ref{lemcm}. The agreement of $\C[\Ss]$-algebra structures follows from the commutativity of the diagram
 $$
\begin{tikzcd}
\Spec H^*_\Ts(X) = \ZZ_{\Bs} \arrow[r, "\cong"]  \arrow[dd, "\pi_{\Bs}"] & \ZZ' \arrow[r, "/\!\!/\W"]
& \Spec H^*_\Gs(X) = \ZZ_\Gs \arrow[dd, "\pi_\Gs"]
\\ \\
\Spec H^*_\Ts = \ttt \arrow[rr, "/\!\!/\W"]
& & \Spec H^*_\Gs = \ttt/\!\!/\W
\end{tikzcd}
$$
and the analogous statement for $\Bs$ in Theorem~\ref{finsolv}.

It remains to show that the grading agrees on the two sides. We know from Theorem~\ref{finsolv} that the grading in the solvable case is defined by the weights of the torus acting on $\ttt\times X$ by $\left(\frac{1}{t^2},H^t\right)$. We have to prove that it descends to the action by $\left(\frac{1}{t^2}\Ad_{H^t},H^t\right)$. But we have
$$\Ad_{A(\w)}(e+\w) = \chi(\w)$$
and thus
$$\Ad_{H^tA(\w)H^{-t}}(\Ad_{H^t}(e+\w)) = \Ad_{H^t}(\chi(\w)), $$
and dividing both sides by $t^2$ gives
$$\Ad_{H^tA(\w)H^{-t}}\left(e+\frac{\w}{t^2}\right) = \frac{1}{t^2}\Ad_{H^t}(\chi(\w)).$$
However, 
$$H^tA(\w)H^{-t}\in \U^-,\qquad \frac{1}{t^2}\Ad_{H^t}(\chi(\w))\in \Ss.$$ 
The latter follows from $\frac{1}{t^2}\Ad_{H^t}(e) = e$ and $\Ad_{H^t}$ preserving the centraliser of $f$, as $\Ad_{H^t}(f) = \frac{1}{t^2}f$. Therefore, by uniqueness we have
$$H^tA(\w)H^{-t} = A\left(\frac{\w}{t^2}\right), \qquad \frac{1}{t^2}\Ad_{H^t}(\chi(\w)) = \chi\left(\frac{\w}{t^2}\right).$$
The quotient map $\ZZ_\Bs\to \ZZ_\Gs$ sends $(\w,x)$ to $(\chi(\w),A(\w)x)$. And by the above it sends $t\cdot(\w,x) = \left(\frac{\w}{t^2},H^t x\right)$ to 
$$\left(\chi\left(\frac{\w}{t^2}\right),A\left(\frac{\w}{t^2}\right)H^t x\right) = \left(\frac{1}{t^2}\Ad_{H^t}\left(\chi(\w)\right),H^tA(\w)H^{-t} H^t x\right) = \left(\frac{1}{t^2}\Ad_{H^t}\left(\chi(\w)\right),H^t A(\w) x\right),$$
which proves that the action of $\Cs$ on $\ZZ_{\Bs}$ descends to the action by $\left(\frac{1}{t^2}\Ad_{H^t},H^t\right)$ on $\ZZ_{\Gs}$.

The functoriality follows immediately from the functoriality for $\Bs$ (\textit{cf.} Propositions~\ref{funcprop} and~\ref{funcgrp}).
\end{proof}

\begin{remark}\label{affred}
 We know $C_\geg(f)\subset \bb^-$ (see Section~\ref{sl2p}) and all the weights of the $\Hs^t$ action on $\bb^-$ are non-positive (\textit{cf.} Lemma~\ref{posint}). Therefore, the argument as in Lemma~\ref{lemcm} shows that $\ZZ_\Gs$ lies in $\Ss\times X_o$. This means that for any computations we have to consider only an affine part $X_o$ of $X$.
\end{remark}

\begin{remark}\label{chernred}
 In the spirit of Lemma~\ref{nicefun}, we can determine in the reductive case too what functions on $\ZZ_\Gs$ the particular Chern classes are mapped to. Assume that $\Ee$ is a $\Gs$-linearised vector bundle on $X$. Let $k$ be a non-negative integer, and consider $c_k^\Gs(\Ee)\in H^*_\Gs(X) = H^*_\Ts(X)^\Ws$. If we first consider the map $\rho\colon H^*_\Ts(X)\to \C[\ZZ_\Bs]$ from Section~\ref{sectionmap}, then from Lemma~\ref{nicefun} we know for any $(\w,x)\in \ZZ_\Bs$ that 
 $$\rho(c_k^\Ts(\Ee))(\w,x) = \Tr_{\Lambda^k \Ee_x}(\Lambda^k (e+\w)_x).$$
 The map $\phi$ defined as
 $$\phi(\w,x) = (\w,A(\w)x)$$
 maps $\ZZ_\Bs$ isomorphically to $\ZZ'$. Then $c_k^\Ts(\Ee)$ defines on $\ZZ'$ the function $\rho(c_k^\Ts(\Ee))\circ\phi^{-1}$ which satisfies
 $$\rho(c_k^\Ts(\Ee))\circ\phi^{-1}(\w,y) = \rho(c_k^\Ts(\Ee))(\w,A(\w)^{-1}y)
 = \Tr_{\Lambda^k \Ee_{A(\w)^{-1}y}}(\Lambda^k (e+\w)_{A(\w)^{-1}y}).
 $$
 As $\Ee$ is $\Gs$-invariant, this is equal to
 $$\Tr_{\Lambda^k \Ee_{y}}(\Lambda^k \Ad_{A(\w)}(e+\w)_{y}) = 
 \Tr_{\Lambda^k \Ee_{y}}(\Lambda^k \chi(\w)_{y}). 
 $$
 This means that on the quotient $\ZZ_\Gs$ the function $\rho_\Gs(c_k^\Gs(\Ee))$ corresponding to $c_k^\Gs(\Ee)$ satisfies
 $$\rho_\Gs(c_k^\Gs(\Ee))(v,x) = \Tr_{\Lambda^k \Ee_{x}}(\Lambda^k v_{x}).$$
\end{remark}
\noindent Let us also note that the $\Gs$-equivariant Chern classes generate the equivariant cohomology ring.

\begin{lemma}\label{genred}
 In the setting above, the $\Gs$-equivariant cohomology $H^*_\Gs(X)$ is generated as a $\C[\ttt]^\Ws$-algebra by the equivariant Chern classes of\, $\Gs$-equivariant vector bundles.
\end{lemma}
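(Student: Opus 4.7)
The plan is to mirror the proof of Lemma \ref{chern2} in the reductive setting, combining $\Gs$-equivariant formality of $X$ with the graded Nakayama lemma. First I would verify that $X$ is $\Gs$-equivariantly formal: since $H^{\mathrm{odd}}(X;\C)=0$ by the Białynicki-Birula argument used in the proof of Theorem \ref{finsolv}, the space $X$ is $\Ts$-equivariantly formal via \eqref{formal}, so $H^*_\Ts(X)$ is free over $\C[\ttt]$. Taking $\W$-invariants then makes $H^*_\Gs(X)=H^*_\Ts(X)^\W$ a free module over $H^*_\Gs=\C[\ttt]^\W$. Letting $\I'\subset\C[\ttt]^\W$ denote the augmentation ideal, this gives
\begin{equation*}
H^*_\Gs(X)/\I' H^*_\Gs(X) \;\cong\; H^*(X;\C).
\end{equation*}

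By the graded Nakayama lemma (Corollary \ref{cornak}), it suffices to show that $H^*(X;\C)$ is generated as a $\C$-algebra by the non-equivariant Chern classes $c_k(\Ee)$ of $\Gs$-equivariant vector bundles $\Ee$ --- the reductive analogue of Lemma \ref{chern1}. For this I would invoke the K-theoretic chain from the remark after Lemma \ref{chern2}: Thomason's Corollary~5.8 gives that $K^0_\Gs(X)$ is generated by classes of $\Gs$-equivariant vector bundles, the Baum--Fulton--MacPherson theorem yields an isomorphism $\ch:K^0(X)\otimes\C\xrightarrow{\cong} A^*(X)\otimes\C$, and the Białynicki-Birula paving makes the cycle class map $A^*(X)\otimes\C\xrightarrow{\cong} H^*(X;\C)$ an isomorphism. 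Hence the statement reduces to surjectivity of the forgetful map $K^0_\Gs(X)\otimes\C\to K^0(X)\otimes\C$.

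This last surjectivity is where I expect the main obstacle to lie. In the solvable case it was supplied by Merkurjev's results; to upgrade to reductive $\Gs$ I would pass through the Borel $\Bs$ using the change-of-groups identification $K^0_\Bs(X)\cong K^0_\Gs(\Gs\times^{\Bs} X)$ followed by the proper pushforward along the $\Gs$-equivariant fibration $\pi:\Gs\times^{\Bs} X\to X$, $[g,x]\mapsto g\cdot x$, whose fibers are isomorphic to $\Gs/\Bs$. Alternatively one can appeal to equivariant K-theoretic formality of $X$, which in our setting is a consequence of the isolated $\Ts$-fixed point hypothesis (Corollary \ref{fintor}) together with the Białynicki-Birula cellular structure. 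Once the surjectivity of $K^0_\Gs(X)\otimes\C\to K^0(X)\otimes\C$ is secured, the graded Nakayama step concludes the proof exactly as in Lemma \ref{chern2}.
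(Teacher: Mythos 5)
Your first step coincides with the paper's: equivariant formality plus the graded Nakayama lemma (Corollary \ref{cornak}) reduces the claim to showing that the ordinary cohomology $H^*(X)$ is generated by Chern classes of $\Gs$-equivariant vector bundles, i.e.\ essentially to surjectivity of $\ch$ on $K^0_\Gs(X)\otimes\C$. You correctly identify this as the crux, but neither of your two suggestions actually establishes it. The route through $K^0_\Bs(X)\cong K^0_\Gs(\Gs\times^{\Bs}X)$ and $\pi_*$ does not do what you need: given $\alpha\in K^0(X)$ and a lift $\tilde\alpha\in K^0_\Bs(X)$, the class $\pi_*\tilde\alpha\in K^0_\Gs(X)$ does not have underlying nonequivariant class $\alpha$. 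Nonequivariantly $\Gs\times^{\Bs}X\cong \Gs/\Bs\times X$ with $\pi$ becoming the second projection, so $\pi_*$ integrates over the $\Gs/\Bs$-direction rather than restricting to a slice; already for $X=\pt$ a lift of $1\in K^0(\pt)$ to $R(\Bs)=R(\Ts)$ is an arbitrary character $t^\lambda$, and $\pi_*$ of the associated line bundle on $\Gs/\Bs$ has rank $\chi(\Gs/\Bs,L_\lambda)$, which need not be $1$ (it can even vanish). What is true is only $\pi_*\circ\mathrm{res}=\id$ on $K^0_\Gs(X)$, which gives a splitting of the restriction, not surjectivity onto $K^0(X)$; and restriction $K^0_\Gs(X)\to K^0_\Bs(X)$ itself is not surjective (already $R(\Gs)\to R(\Ts)$ is not, even after $\otimes\,\C$). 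Your alternative, ``equivariant K-theoretic formality of $X$'' for the nonabelian group $\Gs$, is not a formal consequence of isolated $\Ts$-fixed points and the Białynicki-Birula paving; it is essentially equivalent to the statement you are trying to prove and requires exactly the kind of nontrivial input (Merkurjev-type comparison for reductive groups, or a Weyl-invariants description of $K_\Gs$) that you have not supplied. So as written the proposal has a genuine gap at its key step.

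The paper closes this gap differently: starting from the fact (proof of Lemma \ref{chern1}) that $H^*(X)$ is generated by Chern characters of $\Ts$-equivariant coherent sheaves, it averages each sheaf over the Weyl group, using that $\Gs$ is connected so that $\ch(g_*\F)=\ch(\F)$; this shows $H^*(X)$ is generated by Chern characters of $N_\Gs(\Ts)$-equivariant coherent sheaves, hence by \cite[Corollary 5.8]{Thomason} of $N_\Gs(\Ts)$-equivariant vector bundles. Such a bundle gives a $\Ws$-invariant class in $K_\Ts(X)$, and the identification $K_\Ts(X)^\Ws=K_\Gs(X)$ of Harada--Landweber--Sjamaar \cite[Corollary 6.7]{HLS} then produces the desired $\Gs$-equivariant classes. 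If you want to salvage your K-theoretic reduction, you would need to import precisely such a result (HLS, or Merkurjev's comparison theorem for split reductive groups) rather than the induction-pushforward mechanism.
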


\begin{proof}
  By the Nakayama lemma it is enough to prove (see the proof of Lemma~\ref{chern2}) that the non-equivariant cohomology $H^*(X)$ is generated by Chern classes of $\Gs$-equivariant vector bundles.
 
 We know from the proof of Lemma~\ref{chern1} that $H^*(X)$ is generated by Chern characters of $\Ts$-equivariant coherent sheaves. For any such sheaf $\F$, we can consider the ``averaged'' sheaf $\F_\Ws = \frac{1}{|\Ws|} \bigoplus_{\eta\in \Ws} \eta_* \Ws$. As the group $\Gs$ is connected, for any $g\in \Gs$ we have $\ch(g_* \F) = \ch(\F)$, hence $\ch(\F_\Ws) = \ch(\F)$. Therefore, $H^*(X)$ is generated by Chern characters of $N_\Gs(\Ts)$-equivariant coherent sheaves. Then again by \cite[Corollary~5.8]{Thomason} it is generated by Chern characters of $N_\Gs(\Ts)$-equivariant vector bundles. Every $N_\Gs(\Ts)$-equivariant vector bundle is a $\Ws$-invariant element of $K_\Ts(X)$. However, we know by \cite[Corollary 6.7]{HLS} that $K_\Ts(X)^\Ws = K_\Gs(X)$; hence $H^*(X)$ is generated by Chern classes of $\Gs$-equivariant vector bundles.
\end{proof}

\subsection{Examples} \label{examples}
We finish this section by providing examples for Theorem~\ref{semisimp}. These are extensions of the examples above for Theorem~\ref{finsolv}.

\begin{figure}[ht!]
\begin{center}
 \subfloat{
  \includegraphics[width=7cm]{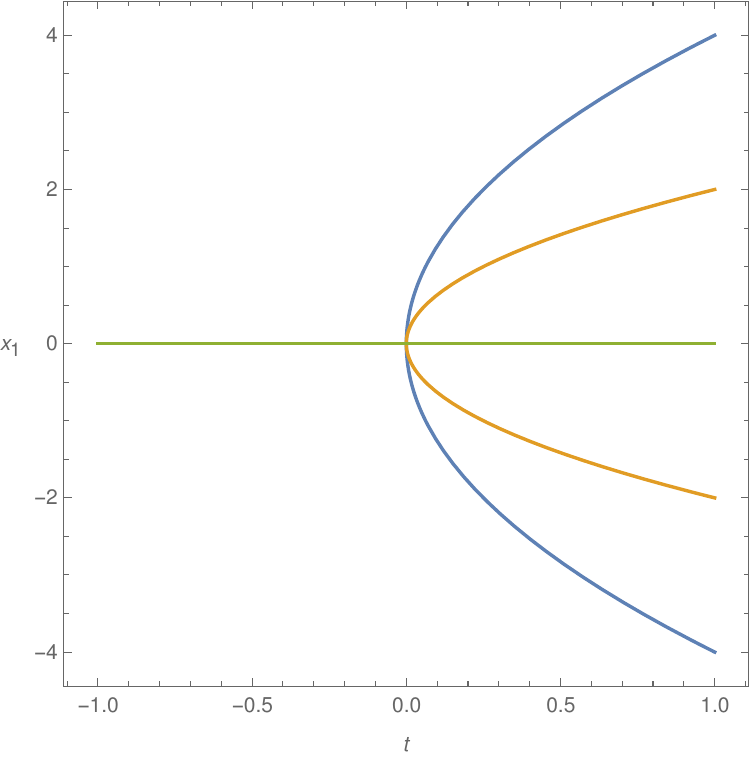}}
  \hfill
\subfloat{
  \includegraphics[width=7cm]{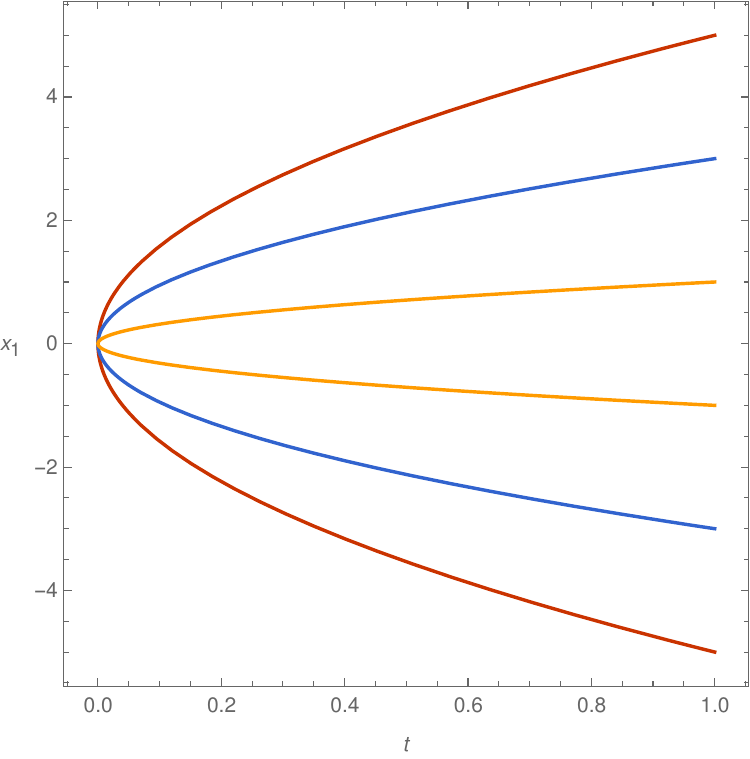}}
\end{center}
\caption{$\Spec H_{\SL_2(\C)}^*(\PP^4)$ and $\Spec H_{\SL_2(\C)}^*(\PP^5)$.}
\label{sl2pn}
\end{figure}

\begin{example}\label{exslpn}
 We continue Example~\ref{exsl22}. There, we found the $\Cs$-equivariant cohomology of $\PP^n$. Now, using the tools above, we can also find $\Spec H_{\SL_2(\C)}(\PP^n)$. We know that the map $(v,x)\mapsto (v,(I+vf)x)$ maps the zeros of $V_{e+vh}$ isomorphically to the zeros of $V_{e+v^2 f}$. The former form the subscheme cut out by $x_1(x_1+2v)(x_1+4v)\cdots(x_1+2nv) = 0$ in the $(v,x_1)$-plane. Note that 
 $$f = \begin{pmatrix}
       0 & 0 & 0 & 0 & \dots & 0\\
       1\cdot n & 0 & 0 & 0 & \dots & 0\\
       0 & 2\cdot (n-1) & 0 & 0 & \dots & 0\\
       \vdots & \vdots & \vdots & \vdots & \ddots & \vdots \\
       0 & 0 & (n-1)\cdot 2 & 0 & \dots & 0 \\
       0 & 0 & 0 & n\cdot 1 & \dots & 0
      \end{pmatrix}; 
 $$
 hence the map $I+vf$ acts on the $x_1$-coordinate by adding $nv$. This means that the zeros of $V_{e+v^2 f}$ are defined by 
 $$(x_1-nv)(x_1-(n-2)v)(x_1-(n-4)v)\cdots(x_1+(n-2)v)(x_1+nv) = 0.$$
 Bringing the symmetric factors together, we get
 $$
 \begin{cases}
  (x_1^2-n^2v^2)(x_1^2-(n-2)^2v^2)\cdots(x_1^2-4v^2)x_1 = 0 
  &\text{ for $n$ even,}\\
  (x_1^2-n^2v^2)(x_1^2-(n-2)^2v^2)\cdots(x_1^2-9v^2)(x_1^2-v^2)= 0 
  &\text{ for $n$ odd.}\\
 \end{cases}
 $$
 Therefore, 
 $$
 H_{\SL_2(\C)}^*(\PP^n)=
 \begin{cases}
  \C[t,x_1]/\big((x_1^2-n^2t)(x_1^2-(n-2)^2t)\cdots(x_1^2-4t)x_1\big) &\text{ for $n$ even,}\\
  \C[t,x_1]/\big((x_1^2-n^2t)(x_1^2-(n-2)^2t)\cdots(x_1^2-9t)(x_1^2-t)\big) &\text{ for $n$ odd.}
 \end{cases}
 $$
 The scheme has $\lceil\frac{n+1}{2}\rceil$ components, one for each orbit of the action of $\W=\Z/2\Z$ on $(\PP^n)^{\Cs}$. The parabolas correspond to two-element orbits, and the line (for even $n$) corresponds to the unique fixed point of $\Cs$ fixed by $\W$. It is equal to $[\underbrace{0:0:\dots:0}_{n/2}:1:\underbrace{0:\dots:0:0}_{n/2}]$. Examples of the scheme for $n=4$ and $n=5$ are depicted in Figure~\ref{sl2pn}.
\end{example}

\begin{figure}[ht!]
\begin{center}
\subfloat{
  \includegraphics[width=7cm]{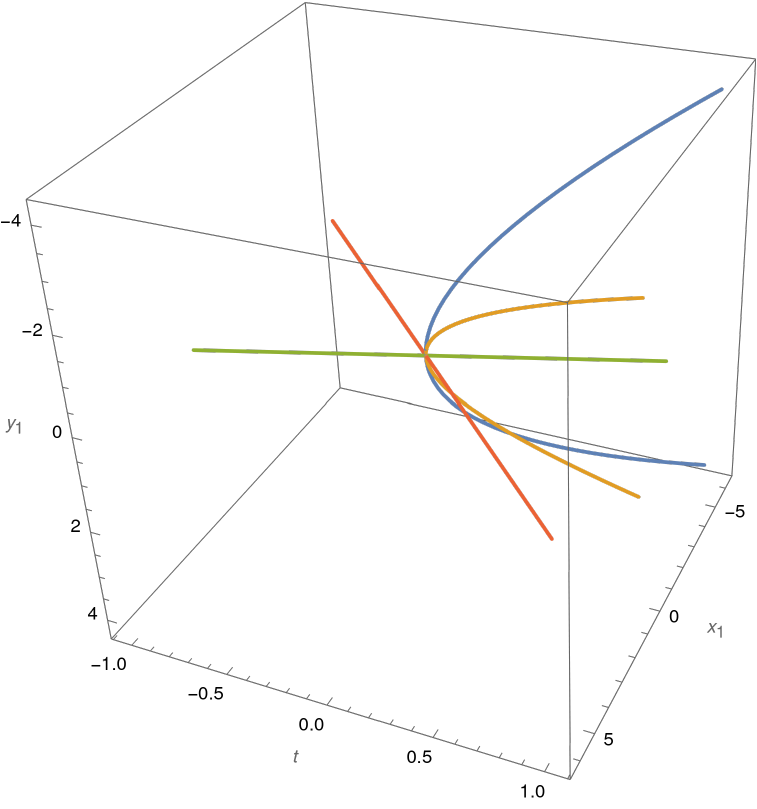}}
  \hfill
\subfloat{
  \includegraphics[width=7cm]{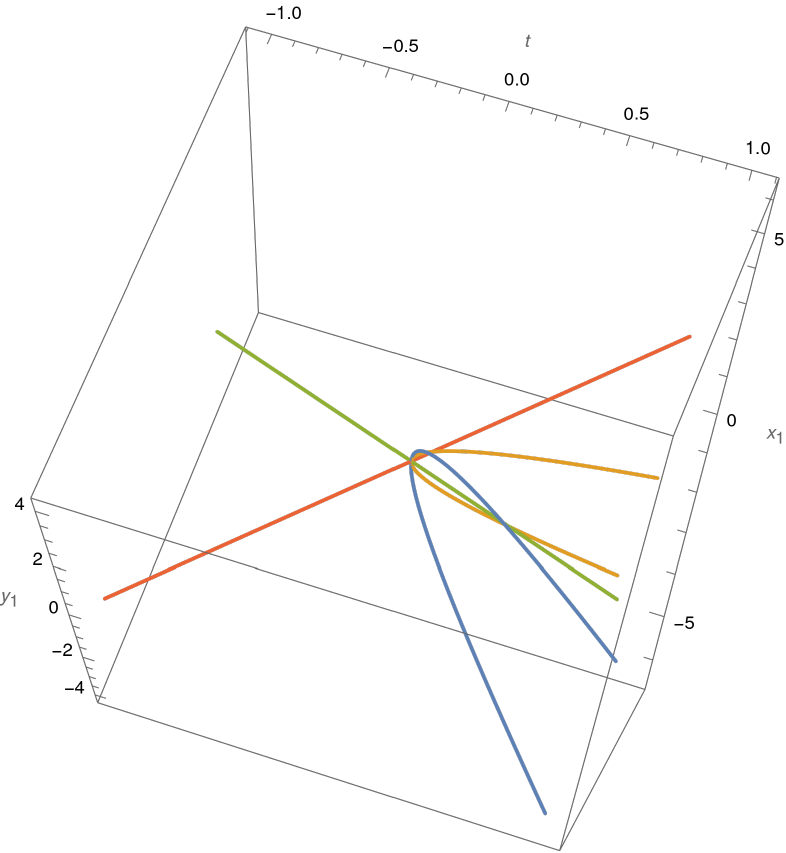}}
\end{center}
\caption{Two different views of $\Spec H_{SL_2(\C)}^*(\Gr(2,4))$.}
\label{sl2gr}
\end{figure}

\begin{example}
 We continue Example~\ref{csgr}. The principal subgroup $\SL_2(\C)\subset \SL_4(\C)$  acts on $\Gr(2,4)$. One can check that
 $$V_f|_{x_1,y_1,x_2,y_2}
 = (-3y_1,4,3x_1-3y_2,3y_1).$$
 Then
 $$V_{e+tf}|_{x_1,y_1,x_2,y_2}
 =(x_2-x_1y_1 -3ty_1,
 -x_1-y_1^2+y_2 +4t,
 -x_1y_2 +3tx_1 -3ty_2,
 -x_2-y_1y_2 +3ty_1).
 $$
 As before, from the first two equations of $V_{e+tf} = 0$, we can determine $x_2$ and $y_2$, so $\Spec H_{\SL_2(\C)}^*(\Gr(2,4))$ can be embedded in $\C[t,x_1,y_1]$. Its equations are
 $$12t^2+4tx_1-x_1^2-3ty_1^2-x_1y_1^2 = 0,\quad
 y_1(4t-2x_1-y_1^2) = 0.$$
 By considering two possibilities in the latter, one easily arrives at four possibilities:
 $$
 (x_1 = -2t, y_1 = 0),
 \quad
 (x_1 = 6t, y_1 = 0),
 \quad
 (x_1 = -6t, y_1^2 = 16t),
 \quad
 (x_1 = 0, y_1^2 = 4t).
 $$
 As in the previous example, the components correspond to orbits of $\W$ acting on $\Gr(2,4)^{\Cs}$. The former two correspond to one-element orbits, \textit{i.e.} $\{\Span(e_2,e_3)\}$ and $\{\Span(e_1,e_4)\}$, and the latter come from two two-element orbits. The scheme embedded in $(t,x_1,y_1)$-space is presented in Figure~\ref{sl2gr}.
\end{example}

\begin{example}
 We consider an example for a group of higher rank, $\SL_3(\C)$, that we can still draw. Let it act on $\PP^2$ in the standard way. In Example~\ref{exsl3p2} we calculated the equivariant cohomology of $\PP^2$ with respect to a ($2$-dimensional) torus. Now we will compute the $\SL_3$-equivariant cohomology. The Kostant section is
 $$\Ss = \left\{
 \begin{pmatrix}
 0 & 1 & 0 \\
 c_2 & 0 & 1 \\
 c_3 & c_2 & 0
 \end{pmatrix}: c_2,c_3\in\C
 \right\}. 
 $$
The coordinates $c_2,c_3\in \C[\Ss]\cong H^*(B\SL_3(\C))$ are (up to scalar multiples) the universal Chern classes of principal $\SL_3(\C)$-bundles, or equivalently, of rank $3$ vector bundles with trivial determinant.
 We have already computed that $V_e|_{x_1,x_2} = (x_2-x_1^2,-x_1x_2)$. Then it is easy to see that for 
 $$M = \begin{pmatrix}
 0 & 1 & 0 \\
 c_2 & 0 & 1 \\
 c_3 & c_2 & 0
 \end{pmatrix}$$
 we have
 $V_M|_{x_1,x_2} = (x_2-x_1^2+c_2,-x_1x_2+c_2x_1+c_3)$. As before, we can eliminate $x_2$ by substituting from the first equation, and we get the equation $x_1^3-2c_2x_1-c_3 = 0$. The corresponding scheme $\Spec H^*_{\SL_3(\C)}(\PP^2)$ in the coordinates $c_2$, $c_3$, $x_1$ is illustrated in Figure~\ref{hsl3p2}. It is irreducible as all three torus-fixed points lie in one orbit of the Weyl group. The projection to the $(c_2,c_3)$-plane is generically a $3-1$ map.
 On the right-hand side of Figure~\ref{hsl3p2}, the slice $c_3 = 0$ is marked in red. The elements of $\Ss$ that satisfy $c_3 = 0$ form the Kostant section of the principal $\SL_2$ subgroup -- which acts as in Example~\ref{exslpn}. Therefore, the red scheme is equal to $\Spec H^*_{\SL_2}(\PP^2)$. Additionally, the functoriality of Theorem~\ref{semisimp} implies that restriction to $c_3 = 0$ yields the base restriction map
 $$H^*_{\SL_3}(\PP^2)\lra H^*_{\SL_2}(\PP^2).$$
\end{example}

\begin{figure}[ht!]
\begin{center}
\subfloat{
  \includegraphics[width=7.5cm]{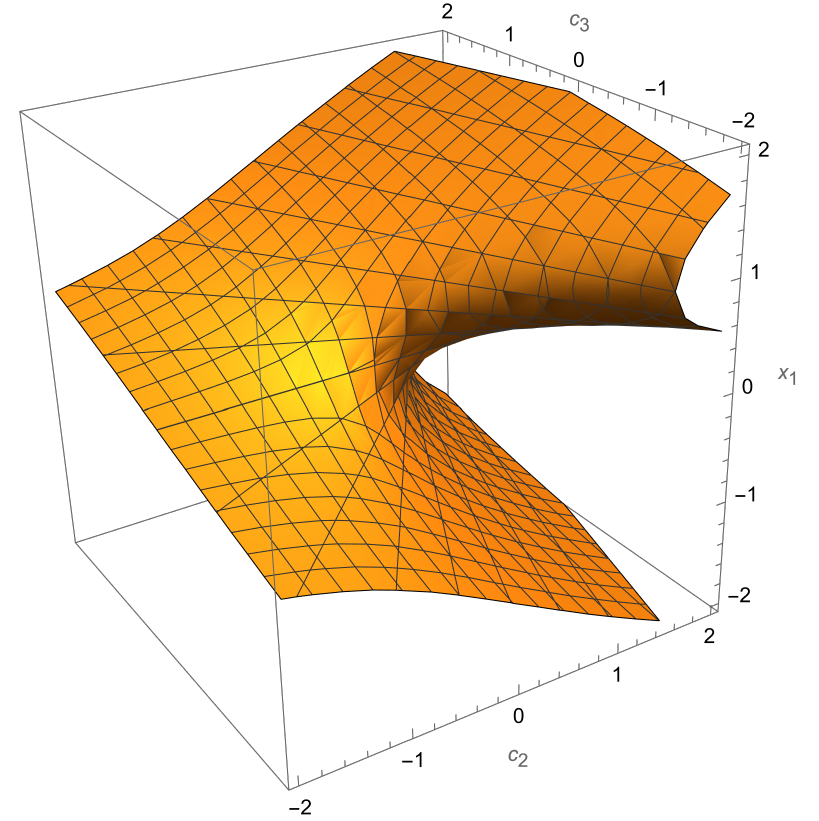}}
  \hfill
\subfloat{
  \includegraphics[width=7.5cm]{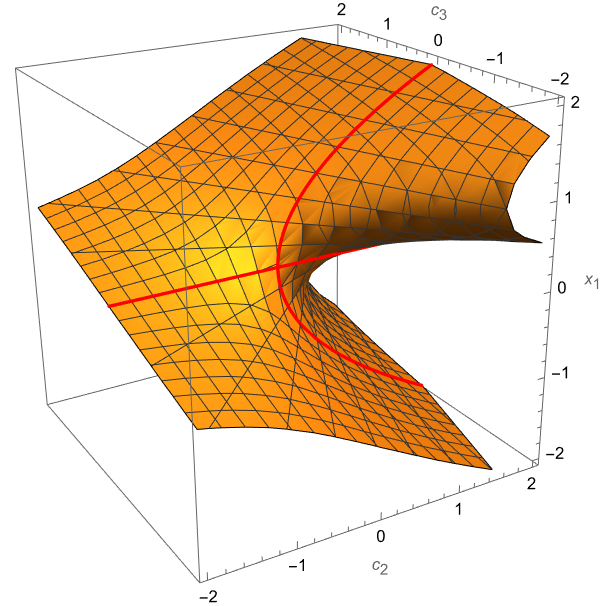}}
\end{center}
\caption{$\Spec H_{\SL_3(\C)}^*(\PP^2)$. On the right the subscheme $\Spec H_{\SL_2(\C)}^*(\PP^2)$ is marked. Compare with Figure~\ref{sl2pn}.}
\label{hsl3p2}
\end{figure}

\begin{example}
 As in Example~\ref{exflag} we now consider the action of $\SL_3(\C)$ on the variety $F_3$ of full flags in $\C^3$. We determined $V_e$ in Example~\ref{exflag}. We can analogously determine the vector fields corresponding to lower-triangular matrices. Then for 
 $$M = \begin{pmatrix}
 0 & 1 & 0 \\
 c_2 & 0 & 1 \\
 c_3 & c_2 & 0
 \end{pmatrix}$$ we easily get
 $$V_M|_{a,b,c} =
 (-a^2+b+c_2,-ab+ac_2+c_3,-b+ac-c^2+c_2).$$
 Plugging in $b$ from the first equation, we obtain
 $$a^3-2c_2a+c_3 = 0,\quad
 a^2-ac+c^2 = 2c_2.$$
 The first equation for $a$ clearly coincides with the equation for $x_1$ from the previous example. One can easily see that the equations mean that $a$ and $-c$ are two of the three roots of the polynomial $x^3-2c_2x+c_3=0$. The map to the $(c_2,c_3)$-plane is generically $6-1$.
 As all the torus-fixed points, \textit{i.e.} coordinate flags, lie in one orbit of the Weyl group, in the GIT quotient of $\Spec(H_\Ts^*(F_3))$, they are joined together and the scheme is irreducible.
\end{example}

\subsection{Principally paired algebraic groups}\label{secarb}
In fact, we can prove the equivalent of Theorem~\ref{semisimp} for a principally paired, but not necessarily reductive, algebraic group. This version will yield a common generalisation to Theorems~\ref{semisimp} and~\ref{finsolv}. Note that $H_\Hs^* = \C[\ttt]^\Ws = \C[\Ss]$ -- see the comment above Theorem~\ref{restkos}. We will prove the following.

\begin{theorem}\label{general}
 Assume that $\Hs$ is a principally paired algebraic group which acts regularly on a smooth projective variety $X$. Let $\ZZ_\Hs$ be the closed subscheme of $\Ss\times X$, defined as the zero set of the total vector field $($cf.\ Definition~\ref{totvec}\,$)$ restricted to $\Ss\times X$.
 
 Then $\ZZ_\Hs$ is an affine reduced scheme, and $H_{\Hs}^*(X)\cong \C[\ZZ_\Hs]$ as graded $\C[\Ss]$-algebras, where the grading on the right-hand side is defined on $\Ss$ via $\frac{1}{t^2}\Ad_{H^t}$ and on $X$ by the action of $\Cs$ via $H^t$. In other words, $\ZZ_\Hs = \Spec H_\Hs^*(X)$, $\Ss = \Spec H_\Hs^*$, and the projection $\ZZ_\Hs\to \Ss$ yields the structure map $H_\Hs^*\to H_\Hs^*(X)$. This isomorphism is functorial as in Theorem~\ref{semisimp}.

$$ \begin{tikzcd}
 \ZZ_\Hs  \arrow{d}{\pi} \arrow{r}{\cong}  &
  \Spec H^*_\Hs(X;\C) \arrow{d} \\
   \Ss \arrow{r}{\cong}&
  \Spec H^*_\Hs.
\end{tikzcd}$$
\end{theorem}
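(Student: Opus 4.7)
The plan is to deduce the principally paired case from the Borel case (Theorem \ref{finsolv}) via a Weyl-group quotient, exactly in the spirit of Theorem \ref{semisimp}. Let $\Bs = \Ns \rtimes \Bs_l$ be the Borel subgroup of $\Hs$ constructed in Section \ref{kostgensec}, with maximal torus $\Ts \subset \Bs_l$ and $\W = \W_\Ls$ the Weyl group of the Levi $\Ls$. Since $\Hs$ acts regularly on $X$, so does the solvable principally paired $\Bs$, and Theorem \ref{finsolv} yields a graded $\C[\ttt]$-algebra isomorphism $H_\Ts^*(X) \cong \C[\ZZ_\Bs]$, where $\ZZ_\Bs \subset \ttt \times X$ is the zero scheme of the vector field $V_{e + \ttt}$.

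Next I would transport $\ZZ_\Bs$ to a scheme naturally comparable with $\ZZ_\Hs$. By Lemma \ref{congen}, $\Ad_{-}(-)\colon \U_l^- \times \Ss \to e + \bb_l^-$ is an isomorphism; restricting to the preimage of $e + \ttt$ produces regular maps $A\colon \ttt \to \U_l^-$ and $\chi\colon \ttt \to \Ss$ with $\Ad_{A(\w)}(e+\w) = \chi(\w)$. Then $\phi(\w,x) = (\w, A(\w) x)$ is an isomorphism $\ZZ_\Bs \xrightarrow{\cong} \ZZ' := \{(\w, x) \in \ttt \times X : V_{\chi(\w)}|_x = 0\}$, identifying the solvable picture with one whose first coordinate already lives over $\Ss$ after quotienting.

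For the equivariant cohomology side, $\Ns$ is contractible (being unipotent), so $\Hs$ is homotopy equivalent to $\Ls$; hence $H^*_\Hs(X) = H^*_\Ls(X) = H^*_\Ts(X)^\W$ by the standard reductive fact. I would then verify that the natural $\W$-action on $H^*_\Ts(X)$ transports via $\rho$ to the action $\psi_\eta(\w,x) = (\eta \w, B_\eta(\w) x)$ with $B_\eta(\w) = A(\eta \w)^{-1} A(\w)$, exactly as in Lemma \ref{lemweyl}. The key input is that for regular $\w \in \ttt$ one has $C_\Hs(\w) = \Ts$: indeed $C_\he(\w) = \ttt$ forces $C_\nen(\w) = 0$ and hence $C_\Ns(\w) = \{1\}$, while $C_\Ls(\w) = \Ts$ by connectivity of centralisers of regular semisimple elements in the reductive Levi. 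Under $\phi$, the action $\psi_\eta$ becomes the pure first-factor action $(\w, y) \mapsto (\eta \w, y)$, so passing to GIT quotients gives
$$
\Spec H^*_\Hs(X) \;=\; \Spec H^*_\Ts(X)^\W \;=\; \ZZ_\Bs /\!\!/ \W \;\cong\; \ZZ' /\!\!/ \W \;=\; \ZZ_\Hs,
$$
using that $\chi$ descends to the isomorphism $\ttt/\!\!/\W \cong \Ss$ from Theorem \ref{restkos}. Reducedness, affineness, and agreement with the structure map over $\Ss$ all descend from the corresponding properties of $\ZZ_\Bs$. The grading compatibility follows from the identities $H^t A(\w) H^{-t} = A(\w/t^2)$ and $t^{-2} \Ad_{H^t} \chi(\w) = \chi(\w/t^2)$, each obtained by uniqueness in Lemma \ref{congen} together with the fact that $\tfrac{1}{t^2} \Ad_{H^t}$ preserves $\Ss$.

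The main obstacle I expect is the Weyl-group bookkeeping: identifying the $\W$-action on $\ZZ_\Bs$ precisely in the presence of the unipotent radical $\Ns$, and matching the action of $N_\Ls(\Ts) \subset \Hs$ on $X$ with the algebraically defined action on $\C[\ZZ_\Bs]$ under $\rho$. Once that is done, everything else is a transparent adaptation of Section \ref{secred}. Functoriality in both $X$ and $\Hs$ follows by combining Propositions \ref{funcprop} and \ref{funcgrp} for $\Bs$ with the naturality of the GIT quotient by $\W$.
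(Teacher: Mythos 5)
Your proposal is correct and follows essentially the same route as the paper's proof: reduce to the Borel subgroup via Theorem \ref{finsolv}, transport $\ZZ_\Bs$ to $\ZZ'$ using $A$ and $\chi$ from Lemma \ref{congen}, identify the Weyl action as acting only on the $\ttt$-factor (the adaptation of Lemma \ref{lemweyl}, where your computation of $C_\Hs(\w)=\Ts$ for regular $\w$ is exactly the needed input), and quotient by $\W$ using $\ttt/\!\!/\W\cong\Ss$, with the grading identities obtained by uniqueness — the only point the paper spells out more carefully being that $\Ad_{H^t}$ preserves $C_\lel(f_l)$ because $h-h_l\in Z(\lel)$ (Lemma \ref{semcen}), which justifies your claim that $\frac{1}{t^2}\Ad_{H^t}$ preserves $\Ss$.
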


\begin{remark}
 As $\Ns$ is contractible, the Levi subgroup $\Ls\subset \Hs$ is a homotopy retract of $\Hs$, and for any $\Hs$-space $X$ we have $H_\Hs^*(X) = H_\Ls^*(X)$. In particular, if $\Hs$ is solvable, we have $H_\Hs^*(X) = H_\Ts^*(X)$, where $\Ts$ is a maximal torus within $\Hs$. This explains how the theorem above generalises Theorem~\ref{finsolv}.
\end{remark}

\begin{proof}[Proof of Theorem~\ref{general}.]
We will proceed analogously to the proof in Section~\ref{secred}. We follow the notation from Section~\ref{kostgensec}. In particular, $\Bs$ is the Borel subgroup of $H$ such that its Lie algebra $\bb$ contains $e$. We first consider the scheme $\ZZ_\Bs\subset \ttt\times X$, defined as in Section~\ref{solvsec}, \textit{i.e.} the zero scheme of the total vector field on $\geg\times X$, restricted to $(\ttt+e)\times X$.
Then from Lemma~\ref{congen} we get morphisms $A\colon \ttt\to\U^-$, $\chi\colon \ttt\to\Ss$ such that
$$\Ad_{A(\w)}(e+\w) = \chi(\w),$$
so that $(\id,A(\w))$ maps $\ZZ_\Bs$ to $\ZZ'$, where
$$\ZZ' = \{(\w,x)\in \ttt\times X: V_{\chi(\w)}|_{x} = 0\}.$$
In fact, $A$ and $\chi$ are exactly the same as in Section~\ref{secred} (see the proof of Lemma~\ref{congen}). In particular, $\chi$ induces the isomorphism $\ttt/\!\!/\Ws\to\Ss$.
For any regular $\w\in\ttt$, we have the element $M_w\in \Bs$ such that $\Ad_{M_w}(w) = e+w$. Just like in Lemma~\ref{lemweyl}, for any $\eta\in \Ws$ we get that for any regular $\w$ the element
$C_{\eta,\w} = M_{\eta(\w)}^{-1} A(\eta\w)^{-1}A(\w) M_{\w}$ is in the class of $\eta$ in $N_\Ls(\Ts)/\Ts$. Note that here $M_{\eta(\w)}\in \Bs_l$.

This then proves,  as in Section~\ref{secred}, that the Weyl group action on $\ZZ_\Bs$, when transported to $\ZZ'$, is defined by $\eta\mapsto (\eta,\id)$. And then as $\chi:\ttt/\!\!/\Ws \to \Ss$ is an isomorphism, we get that
$$\ZZ_\Hs \cong \ZZ_{\Bs}/\!\!/\Ws = \Spec H^*_\Ts(X)/\!\!/\Ws = \Spec H^*_\Hs(X).$$ 

We have to prove that the grading on $\C[\ZZ_\Hs]$ defined by the grading on $H^*_\Hs(X)$ agrees with the one described in the theorem. We know that in the solvable case the grading is defined by the action of $\Cs$ on $\ZZ_\Bs$ via $\left(\frac{1}{t^2},H^t\right)$ (\textit{cf.} Definition~\ref{defz}). Just like in the reductive case, we need to prove that under the quotient by $\Ws$, it descends to the action by $\left(\frac{1}{t^2}\Ad_{H^t},H^t\right)$. The argument for reductive groups does not translate exactly, as \textit{a priori} we do not know whether $H^t$ preserves the centraliser of $f$. However, we know that $H_l^t$, the one-parameter subgroup generated by $h_l$, does.

On the other hand, as $[h,e] = [h_l,e] = 2e$, from Lemma~\ref{semcen} we infer $h-h_l\in Z(\lel)$. As in the proof of Theorem~\ref{semisimp}, we have
$$\Ad_{H^tA(\w)H^{-t}}\left(e+\frac{\w}{t^2}\right) = \frac{1}{t^2}\Ad_{H^t}(\chi(\w))$$
and
$$H^tA(\w)H^{-t}\in \U^-,\qquad \frac{1}{t^2}\Ad_{H^t}(\chi(\w))\in \Ss,$$ 
where now the latter follows from $\frac{1}{t^2}\Ad_{H^t}(e) = e$ and $\Ad_{H^t} = \Ad_{H_l^t}$ preserving the centraliser of $f$ as $\Ad_{H_l^t}(f) = \frac{1}{t^2}f$. Therefore, we have
$$H^tA(\w)H^{-t} = A\left(\frac{\w}{t^2}\right), \qquad \frac{1}{t^2}\Ad_{H^t}(\chi(\w)) = \chi\left(\frac{\w}{t^2}\right),$$
and the same reasoning follows. This proves Theorem~\ref{general}.
\end{proof}

\begin{example}
Basic examples of non-reductive, non-solvable linear groups are parabolic subgroups of reductive groups. Let us consider such a group $\Ps\subset \Gs$, where $\Gs$ is reductive, and assume that $\Bs\subset \Ps$ is a Borel subgroup of $\Gs$ contained in $\Ps$. Then we can consider a principal $\bb(\ssl_2)$-triple $(e,f,h)$ in $\geg$ such that $e,h\in \bb$. This makes $\Ps$ into a principally paired group, and we can make use of Theorem~\ref{general}.

Suppose that $X$ is a Schubert variety in some partial flag variety $\Gs/\Qs$. Its stabiliser $\Ps$ in $\Gs$ contains $\Bs$; hence it is a parabolic subgroup. In general it is larger than $\Bs$ (see more in \cite[Section 2]{SanVan}). Remember that $\Bs$ acts regularly on $\Gs/\Qs$  (\textit{cf.}~Example~\ref{exgr2}). Therefore, if $X$ is smooth, Theorem~\ref{general} gives the $\Ps$-equivariant cohomology of $X$.
\end{example}

\begin{example}
As in the previous example, assume that $X$ is Schubert variety in $\Gs/\Qs$ fixed by $\Ps$. One can then construct a Bott--Samelson resolution of $X$, \textit{cf.} \cite[Section 2, p. 446]{SanVan}, which is $\Ps$-equivariant. As in Lemma~\ref{botsamreg} such a resolution will be a smooth regular $\Ps$-variety. Hence we can use Theorem~\ref{general} to compute its $\Ps$-equivariant cohomology.
\end{example}

We also extend Lemma~\ref{genred} to principally paired groups.

\begin{lemma}\label{genprinc}
 Assume that a principally paired group $\Hs$ acts regularly on a smooth projective variety $X$. Then the $\Hs$-equivariant cohomology $H^*_\Hs(X)$ is generated as a $\C[\ttt]^\Ws$-algebra by equivariant Chern classes of\, $\Hs$-equivariant vector bundles.
\end{lemma}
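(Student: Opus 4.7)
The plan is to adapt the strategy of Lemma \ref{genred} to the principally paired setting, bootstrapping from the Borel subgroup $\Bs \subset \Hs$ and averaging over the Weyl group of the Levi quotient.

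First, by the graded Nakayama argument of Lemma \ref{chern2}, it suffices to show that $H^*(X)$ is generated as a $\C$-algebra by Chern classes of $\Hs$-equivariant vector bundles: $X$ is equivariantly formal for the $\Ts$-action and thus for $\Hs$ through $H^*_\Hs(X) = H^*_\Ts(X)^\Ws$, and reduction modulo the augmentation ideal of $H^*_\Hs$ recovers $H^*(X)$. The Borel $\Bs \subset \Hs$ whose Lie algebra contains $e$ is itself solvable and principally paired with the same pair $(e,h)$ and inherits the regular action on $X$, so Lemma \ref{chern1} applied to $\Bs$ immediately yields that $H^*(X)$ is generated by Chern characters of $\Bs$-equivariant coherent sheaves.

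Next, following the Weyl group averaging of Lemma \ref{genred}, for each $\Bs$-equivariant coherent sheaf $\F$ I would form $\F_\Ws = \bigoplus_{\eta \in \Ws} \eta_* \F$, choosing representatives $\eta \in \Ns_\Ls(\Ts) \subset \Ls$ for $\Ws = \Ws_\Ls$. Since $\Ns$ is normal in $\Hs$, the conjugate Borel is $\eta \Bs \eta^{-1} = \Ns \rtimes \eta \Bs_l \eta^{-1}$; restricting each summand to the common subgroup $\Ns \rtimes \Ts$ and accounting for the $\Ws$-permutation of summands, $\F_\Ws$ acquires a natural $\Ns \rtimes \Ns_\Ls(\Ts)$-equivariant structure. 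Connectedness of $\Ls$ forces $\ch(\eta_* \F) = \ch(\F)$, so $\ch(\F_\Ws) = |\Ws|\,\ch(\F)$, and Thomason's theorem replaces coherent sheaves by vector bundles, producing $\Ns \rtimes \Ns_\Ls(\Ts)$-equivariant vector bundles whose Chern characters still generate $H^*(X)$.

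The main obstacle is then to descend from $\Ns \rtimes \Ns_\Ls(\Ts)$-equivariance to full $\Hs$-equivariance, playing the role of the HLS identification $K^0_\Ts(X)^\Ws \cong K^0_\Ls(X)$ in Lemma \ref{genred}. The plan is to combine HLS for the reductive Levi $\Ls$ with Merkurjev's isomorphism $K^0_{\Ns \rtimes \Ts}(X) \cong K^0_\Ts(X)$ for the split solvable group $\Ns \rtimes \Ts$, together with the analogous isomorphism $K^0_\Hs(X) \cong K^0_\Ls(X)$ -- the algebraic counterpart of the homotopy equivalence $B\Hs \simeq B\Ls$ arising from the contractibility of the split unipotent radical $\Ns$, already underlying the identification $H^*_\Hs = H^*_\Ls$ used throughout. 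Composing these yields $K^0_\Hs(X) \cong K^0_{\Ns \rtimes \Ts}(X)^\Ws$, so any $\Ws$-invariant class in the right-hand side -- in particular the class of an $\Ns \rtimes \Ns_\Ls(\Ts)$-equivariant bundle -- lifts to a virtual $\Hs$-equivariant vector bundle with the same ordinary Chern classes. Combined with the previous steps and Nakayama, this completes the argument.
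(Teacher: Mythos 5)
Your overall architecture (graded Nakayama reduction, generation of $H^*(X)$ by Chern characters coming from the solvable case, Weyl-group averaging) parallels the paper, but the step you yourself call ``the main obstacle'' is resolved by assertion rather than argument, and it is precisely the content of the paper's proof. The isomorphism $K^0_\Hs(X)\cong K^0_\Ls(X)$ is not an ``algebraic counterpart'' that comes for free from contractibility of $\Ns$ or from $B\Hs\simeq B\Ls$: equivariant algebraic $K$-theory is built from $\Hs$-equivariant sheaves on $X$, not from the Borel construction, so invariance under passing to a Levi is a theorem that must be proved. The results you invoke do not give it: Merkurjev's isomorphism \cite{Merk} covers split solvable groups only (hence $\Ns\rtimes\Ts$), and HLS covers the reductive Levi; neither yields $K^0_\Hs(X)\cong K^0_\Ls(X)$ for $\Hs=\Ns\rtimes\Ls$. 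The paper devotes its proof to exactly this point: by Thomason's induction theorem \cite[6.2]{Thomason}, restriction gives $K^0_\Hs(\Hs\times^\Ls X)\cong K^0_\Ls(X)$; one then identifies $\Hs\times^\Ls X\cong\Ns\times X\cong\nen\times X$ $\Hs$-equivariantly (via the algebraic exponential), observes this is an $\Hs$-equivariant vector bundle over $X$ with linear adjoint action on the fibres, and applies Thomason's homotopy invariance \cite[4.1]{Thomason} to conclude that the projection induces an isomorphism on $K^0_\Hs$. Without this argument (or an equivalent citation), your composite $K^0_\Hs(X)\cong K^0_{\Ns\rtimes\Ts}(X)^\Ws$ has no starting point, and the descent from your averaged bundles to genuine $\Hs$-equivariant classes is unproven.

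Two smaller remarks. The detour through the Borel subgroup and a fresh averaging is unnecessary: the paper simply applies Lemma \ref{genred} to the Levi $\Ls$ (reductive, containing the maximal torus, whose fixed points on $X$ are isolated) to get generation of $H^*(X)$ by Chern characters of $\Ls$-equivariant bundles, so that the entire proof reduces to the single $K$-theoretic statement above. Also, your claim that $\bigoplus_{\eta\in\Ws}\eta_*\F$ carries an $\Ns\rtimes N_\Ls(\Ts)$-equivariant structure ``by choosing representatives'' glosses over the fact that $N_\Ls(\Ts)$ is in general a non-split extension of $\Ws$ by $\Ts$; what one honestly extracts, as in Lemma \ref{genred}, is a $\Ws$-invariant class in the $K$-theory of the smaller group, which would indeed suffice once the descent isomorphism is in place --- but that isomorphism is the missing ingredient.
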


\begin{proof}
 As in the proof of Lemma~\ref{genred}, it is enough to prove that the non-equivariant cohomology $H^*(X)$ is generated by Chern classes of $\Hs$-equivariant vector bundles. By Lemma~\ref{genred} we know that it is generated by Chern classes, and in fact by Chern characters of $\Ls$-equivariant vector bundles for $\Ls$ being a Levi subgroup of $\Hs$. We have the following two maps: 
 $$ K_\Hs^0(X) \lra K_\Ls^0(X) \xrightarrow{\;\ch\;} H^*(X). $$
 The former is simply the restriction of $\Hs$-equivariant bundles to $\Ls$-equivariant bundles, and the latter is the Chern character map. We know that the image of composition generates the whole $H^*(X)$. We prove that the first map is, in fact, an isomorphism, which will prove the claim.

 By \cite[Proposition~6.2]{Thomason} the restriction along $X\to \Hs\times^\Ls X$ induces an isomorphism
$$K_\Hs^0(\Hs\times^\Ls X)\lra K_\Ls^0(X).$$
Now $\Hs\times^\Ls X$ maps $\Hs$-equivariantly to $X$ (simply by $[(h,x)] \mapsto hx$), and we will show that this map induces an isomorphism on $K_\Hs^0$.\footnote{This argument is based on a suggestion by Andrzej Weber.} Let $\Ns$ be the unipotent radical of $\Hs$, so that $\Hs = \Ns \rtimes \Ls$. Notice that we then have the $\Hs$-equivariant isomorphism
$$\Hs\times^\Ls X \simeq \Ns \times X,$$
where $\Hs$ acts on $\Ns\times X$ diagonally by conjugation and action.

Indeed, every element of $\Hs$ is uniquely decomposed as $ul$ for $u\in \Ns$, $l\in \Ls$. This means that $\Hs\times^\Ls X \simeq \Ns\times X$. Now we need to see how  $\Hs$ acts on this product. Note that in $\Hs\times^\Ls X$ we have
$$h\cdot[(u,x)] = [(hu,x)] = [(huh^{-1},hx)],$$
and as $huh^{-1}\in \Ns$, upon identification with $\Ns\times X$ we have $h\cdot(u,x) = (huh^{-1},hx)$.

We want to prove that the map $\Ns\times X\to X$ induces an isomorphism on $K_\Hs^0$. Note that it is not the projection but the action of $N$ on $X$. However, we can split it into the isomorphism $N\times X\to N\times X$ given by $(u,x)\mapsto (u,ux)$, and the projection. Note that this isomorphism is in fact $\Hs$-invariant as 
$$h\cdot (u,ux) = (huh^{-1}, hux) = (huh^{-1}, huh^{-1} hx).$$
Therefore, we have to show that the projection $\Ns\times X \to X$ yields an isomorphism on~$K_\Hs^0$.

Now by \cite[Proposition 14.32]{Milne} the algebraic exponential map for the unipotent group $\exp\colon \nen\to \Ns$ is an isomorphism of schemes. Thus in fact $\Ns\times X \simeq \nen\times X$ has the structure of a (trivial) vector bundle over $X$. Note that $\Hs$ acts on it linearly. Indeed, we have $h\exp(v) h^{-1} = \exp(h v h^{-1})$, and the adjoint representation of $\Hs$ on $\nen$ is linear. Then by \cite[Theorem~4.1]{Thomason} the projection $\Ns\times X\to X$ gives an isomorphism on~$K_\Hs^0$.
\end{proof}

\section{Extensions: Singular varieties and total zero schemes}

In this section we discuss two directions to extend our results. 
First we discuss generalisations to singular varieties.

\subsection{Singular varieties}
\label{secsing}

Our main Theorem~\ref{general} may be generalised to singular varieties, in the spirit of \cite[Section 7]{BC}. A sufficient condition will be the existence of
an embedding in a smooth regular variety such that the corresponding map on ordinary cohomology is surjective (\textit{cf.} Corollary~\ref{surj}).

\begin{proposition} \label{sing}
 Assume that $\Hs$ is a principally paired algebraic group, and let $\Ss$ be the Kostant section within $\Hs$, as defined in Section~\ref{secarb}. Let $\Bs$ be a Borel subgroup of $\Hs$. Assume that $\Hs$ acts regularly on a smooth projective variety $X$, and let $\ZZ_\Hs^X$ be the zero scheme defined in Theorem~\ref{general} for the $\Hs$-action on $X$.
 
 Assume $Y$ is a closed $\Hs$-invariant subvariety whose cohomology is generated by Chern classes of\, $\Bs$-linearised vector bundles. Then analogously to Section~\ref{secarb}, we can define an isomorphism of graded $\C[\Ss]$-algebras $H_\Hs^*(Y)\to\C[\ZZ_\Hs^Y]$, where $\ZZ_\Hs^Y$ is the reduced intersection $\ZZ_\Hs^Y = \ZZ_\Hs^X\cap (\Ss\times Y)$. The isomorphism makes the diagram
\begin{equation}\label{diagsing}
\begin{tikzcd}
H^*_\Hs(X) \arrow[r] \arrow[dd]
& H^*_\Hs(Y) \arrow[dd]
\\ \\
\C[\ZZ_\Hs^X] \arrow[r]
& \C[\ZZ_\Hs^Y]
\end{tikzcd}
\end{equation}
 commutative.
 The assumption on the cohomology of $Y$ holds in particular if the inclusion $Y\to X$ induces a surjective map $H^*(X)\to H^*(Y)$ on ordinary cohomology.
\end{proposition}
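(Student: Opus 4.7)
The plan is to imitate the constructions of Sections~\ref{solvsec}--\ref{secarb}, with $X$ replaced by the possibly singular $Y$ throughout, and to invoke the ambient smooth regular $X$ only where smoothness is genuinely needed. Exactly as in Section~\ref{secarb}, the Weyl-group quotient step reduces everything to the Borel case, so it suffices to build a $\Ws$-equivariant $\C[\ttt]$-algebra isomorphism $\rho_Y\colon H^*_\Ts(Y)\to \C[\ZZ_\Bs^Y]$ compatible with $\rho_X$.

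Since equivariant Chern classes live in even degrees, the hypothesis forces $H^{\mathrm{odd}}(Y)=0$, so by the Goresky--Kottwitz--MacPherson criterion \cite{GKM} the space $Y$ is $\Ts$-equivariantly formal; the graded Nakayama argument of Lemma~\ref{chern2} then upgrades the hypothesis to the statement that $H^*_\Ts(Y)$ is generated as a $\C[\ttt]$-algebra by $\{c^\Ts_k(\Ee)\}$ for $\Ee$ a $\Bs$-linearised vector bundle on $Y$. On such generators I would define
\[
\rho_Y\bigl(c^\Ts_k(\Ee)\bigr)(\w,x) \;=\; \Tr_{\Lambda^k \Ee_x}\!\bigl(\Lambda^k(e+\w)_x\bigr),
\]
a regular function on $\ttt\times Y$, hence on $\ZZ_\Bs^Y$. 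The pointwise argument of Lemma~\ref{nicefun} extends verbatim: any $(\w,x)\in \ZZ_\Bs^Y$ has $x=M_\w\zeta$ with $\zeta\in Y^\Ts$ (the limit $\lim_{t\to 0}H^t x$ lies in $Y$ by $\Hs$-invariance and Theorem~\ref{ABBst}), so $\rho_Y(c^\Ts_k(\Ee))(\w,x)=c^\Ts_k(\Ee)|_\zeta(\w)$. Equivariant formality makes the collection of such localisations injective, so every relation among the generators in $H^*_\Ts(Y)$ is respected on the reduced scheme $\ZZ_\Bs^Y$; this both defines $\rho_Y$ as a ring homomorphism and proves its injectivity. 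Commutativity of diagram~\eqref{diagsing} is immediate from the pointwise trace formula, since restriction of a $\Bs$-linearised bundle from $X$ to $Y$ preserves fibres.

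The hard part will be surjectivity of $\rho_Y$, which I approach by a Poincar\'e-series count. Equivariant formality of $Y$ gives $P_{H^*_\Ts(Y)}(t)=P_{H^*(Y)}(t)(1-t^2)^{-r}$. On the other side, by Section~\ref{structure} applied inside $\ZZ_\Bs^X$, the scheme $\ZZ_\Bs^Y$ is set-theoretically the union, indexed by $\zeta\in Y^\Ts$, of the graphs $\w\mapsto(\w,M_\w\zeta)$, each finite and flat over $\ttt$, with generic fibre of cardinality $|Y^\Ts|=\chi(Y)$; the fibre over $0\in\ttt$ recovers $\C[Z(V_e|_Y)]\cong H^*(Y)$ by the Akyildiz--Carrell theorem \cite{AC}, yielding the matching identity $P_{\C[\ZZ_\Bs^Y]}(t)=P_{H^*(Y)}(t)(1-t^2)^{-r}$. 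Combined with injectivity, this forces $\rho_Y$ to be an isomorphism of graded modules, and hence of algebras. Passing to $\Ws$-invariants exactly as in Section~\ref{secarb} produces the desired $H^*_\Hs(Y)\cong \C[\ZZ_\Hs^Y]$ and the commutative diagram.

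Finally, the sufficient condition is immediate: if $H^*(X)\twoheadrightarrow H^*(Y)$, then Lemmas~\ref{genred} and~\ref{genprinc} imply that $H^*(X)$ is generated by Chern classes of $\Hs$-linearised, hence $\Bs$-linearised, vector bundles on $X$, whose restrictions then generate $H^*(Y)$.
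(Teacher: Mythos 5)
Your construction of $\rho_Y$ (pointwise via localisation at the torus-fixed point $\zeta=\lim_{t\to 0}H^tx\in Y$, regularity of the image via the trace formula of Lemma~\ref{nicefun} and the Chern-class hypothesis), the injectivity via localisation on the equivariantly formal $Y$, the reduction to $\Bs$ followed by the Weyl-group quotient, and the deduction of the last sentence from the Chern-class generation of $H^*(X)$ all match the paper's argument. The genuine gap is in your surjectivity step. The Poincar\'e-series count you propose imports two facts that were proved only for smooth $X$ and are not available for a singular $Y$: first, the identity $P_{\C[\ZZ_\Bs^Y]}(t)=P_{H^*(Y)}(t)(1-t^2)^{-r}$ rests on $\C[\ZZ_\Bs^Y]$ being free (or at least flat, cut out by a regular sequence) over $\C[\ttt]$, which in the smooth case came from the complete-intersection argument of Lemma~\ref{lemcm} inside $\ttt\times X_o$; for $Y$ singular, $\ZZ_\Hs^Y$ is merely a reduced intersection and no such structure is established (nor is the density of the part over $\ttt^{\reg}$, which also used Cohen--Macaulayness). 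Second, the identification of the fibre over $0$ with $H^*(Y)$ invokes Akyildiz--Carrell \cite{AC}, a theorem about \emph{smooth} projective varieties; for singular invariant subvarieties it can genuinely fail without the Chern-class hypothesis (the orbit closure $Y\subset\PP^3$ in the remark following the proposition has $\ZZ^Y=\ZZ^X$, so its fibre over $0$ computes $H^*(\PP^3)$, not $H^*(Y)$), and with the hypothesis it is essentially what one is trying to prove, so the count is circular as written.

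The paper avoids all of this: surjectivity of $\rho_Y$ is immediate from the ambient smooth case. Since $\ZZ_\Hs^Y$ is a closed subvariety of the affine, reduced scheme $\ZZ_\Hs^X$, the restriction map $\C[\ZZ_\Hs^X]\to\C[\ZZ_\Hs^Y]$ is surjective; combined with the commutativity of \eqref{diagsing} (which you already have from the pointwise trace formula) and $\C[\ZZ_\Hs^X]\cong H^*_\Hs(X)$, every regular function on $\ZZ_\Hs^Y$ is the restriction of some $\rho_X(c)$ and hence equals $\rho_Y(c|_Y)$. This is exactly where the embedding into the smooth regular $X$ is used, and it replaces your dimension count entirely; with that substitution the rest of your argument goes through.
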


\begin{proof}
The proof is essentially the same as in \cite[Section 7]{BC}. We only sketch it here. First assume  that $\Hs$ is solvable. Every point of the variety $\ZZ_\Hs^Y$ is of the form $(\w,M_\w \zeta)$, where $M_\w\in\Hs$ depends on $\w$ and $\zeta$ is a $\Ts$-fixed point contained in $Y$. Therefore, for any $c\in H^*_\Ts(Y)$ we can define $\rho_Y(c)$ (we only localise to points in $Y$). The condition on the cohomology of $Y$ allows us to use Lemma~\ref{nicefun} to show that $\rho_Y$ actually maps $H^*_\Ts(Y)$ to $\C[\ZZ_\Hs^Y]$. The injectivity follows again from the injectivity of localisation on equivariantly formal spaces (\textit{cf.} \cite[Theorem 1.2.2]{GKM}). The diagram is obviously commutative, and the surjectivity then follows from the surjectivity of the restriction $\C[\ZZ_\Hs^X]\to\C[\ZZ_\Hs^Y]$ to closed subvariety.

Now assume that $\Hs$ is arbitrary principally paired group. Let $\Bs$ be its Borel subgroup, and by $\ZZ_\Bs^Y$ denote the appropriate zero scheme defined for $\Bs$ acting on $Y$. As $Y$ is $\Hs$-invariant, the arguments from the proof of Theorem~\ref{general} show that $\C[\ZZ_\Hs^Y] = \C[\ZZ_\Bs^Y]^{\Ws}$, and the conclusion follows.
The last line of the proposition is implied by Lemma~\ref{chern2}.
\end{proof}

\begin{example}
  Let $\Hs = \Bs$, the Borel subgroup of a reductive group $\Gs$. Natural examples of singular regular $\Bs$-varieties are Schubert varieties in the flag variety $\Gs/\Bs$ or any other subvarieties that are unions of Bruhat cells; see \cite[Theorem 5 with remarks]{ACL}.
  In general, Schubert varieties are stabilised by parabolic subgroups (see \cite[Section 2]{SanVan}). Those are therefore singular $\Ps$-regular varieties for parabolic groups $\Ps$.
\end{example}

\begin{example}
Assume that $X = \Gs/\Bs$ is the flag variety of type A, hence $\Gs = \SL_m(\C)$. Then if $Y$ is any Springer fiber within $X$, the restriction on cohomology $H^*(X)\to H^*(Y)$ is surjective (see \cite{KumPro}); hence Proposition~\ref{sing} also holds in that case.
\end{example}
However, there exist $\Gs$-invariant subvarieties for which the restriction map on cohomology is not surjective.

\begin{remark}
 The assumption on the surjectivity on the cohomology of $Y$ is necessary in  Proposition~\ref{sing}. Consider the following. Let $\SL_2$ act on $\PP^3$ as in Example~\ref{exslpn}. It comes from a representation $\Sym^3 V$, where $V$ is the fundamental representation of $\SL_2$. It has two extreme (highest and lowest) weights and two ``middle'' weights. The point $o$ of $\PP^3$ which represents the highest-weight space is fixed by the Borel subgroup of upper-triangular matrices, and hence one sees that its orbit is isomorphic to the full flag variety $\SL_2/\Bs_2\cong \PP^1$. However, if we consider a point $p\in \PP^3$ representing a non--highest-weight space, its stabiliser is a torus; \textit{i.e.}~$\Stab_{\SL_2}(p)\cong\Ts$. Hence its $\SL_2$-orbit is not closed. We denote its closure by $Y:=\overline{\SL_2\cdot p}$. We claim that $Y$ is not smooth. We can see this directly, by noticing that it is the projectivised variety of polynomials $a_0 x^3 + a_1 x^2y + a_2 xy^2 + y^3$ with at least two equal roots (vanishing lines), and writing down the discriminant equation. We can also see this using our results. If $Y$ were smooth, by Corollary~\ref{surj} the map $H^*(\PP^3)\to H^*(Y)$ would be surjective, but both varieties admit an action of $\Ts$, with the same set of fixed points; therefore, the map would have to be an isomorphism. This is impossible for dimensional reasons ($H^6(Y) = 0$).
 
 Moreover, not only is $Y$ singular, but in any case the map $H^*(\PP^3)\to H^*(Y)$ cannot be surjective. This would mean that Proposition~\ref{sing} applies. However, as all the $\Ts$-fixed points are already in $Y$, one sees immediately that $\ZZ$ is already in $Y$. Then again, we would have $H^*(Y) = H^*(\PP^3)$, which is impossible for the same reason as above. Thus $H^*(\PP^3)\to H^*(Y)$ is not surjective, and moreover $H^*(\PP^3)$ is not generated by Chern classes of $\Bs_2$-equivariant bundles. This shows that the assumption is necessary in Proposition~\ref{sing}.
\end{remark}

\begin{remark}
 Assume we are given an $\Hs$-invariant subvariety $Y$ of a regular smooth $\Hs$-variety $X$. By Proposition~\ref{sing} and Corollary~\ref{surj}, the surjectivity of the restriction $H^*(X)\to H^*(Y)$ is necessary and sufficient for the existence of an isomorphism $H^*_\Hs(Y)\to \C[\ZZ_\Hs^Y]$ which makes \eqref{diagsing} commutative. Carrell and Kaveh prove in \cite[Theorem 2]{CarKav}, for the case of $\Hs = \Bs_2$, that this is equivalent to $H_\Ts^*(Y)$ being generated by Chern classes of $\Bs_2$-equivariant bundles.
\end{remark}

\subsection{Total zero scheme}
\label{sectot}

Assume that $\Gs$ is a principally paired algebraic group, \textit{e.g.} $\Gs$ reductive. We proved in Theorem~\ref{general} how to see geometrically the spectrum of $\Gs$-equivariant cohomology of $X$ for $\Gs$ acting regularly on a projective variety $X$. However, this needed a choice -- of a concrete $\bb(\ssl_2)$-pair $(e,h)$ and the associated Kostant section. A natural challenge would be to try to find equivariant cohomology as global functions on a scheme that does not depend on choices. 

\begin{definition}\label{totzer}
Let an algebraic group $\Gs$ act on a smooth projective variety $X$. Consider the total vector field on $\geg\times X$ (\textit{cf.} Definition~\ref{totvec}). We call its reduced zero scheme
$$\ZZ_{\tot}\subset \geg\times X$$
the {\em total zero scheme}.
\end{definition}
Now we are ready to show the following.

\begin{theorem}\label{wholeg}
 Assume that $\Gs$ is principally paired. Let it act regularly on a smooth projective variety. Consider the action of\, $\Cs$ on the total zero scheme $\ZZ_{\tot}$ by $t\cdot(v,x) = \left(\frac{1}{t^2} v,x\right)$ and the action of\, $\Gs$  by $g\cdot (v,x) = (\Ad_g(v),g\cdot x)$. Then the ring $\C[\ZZ_{\tot}]^\Gs$ of\, $\Gs$-invariant functions on $\ZZ_{\tot}$ is a graded algebra over $\C[\geg]^\Gs\cong H^*_{\Gs}(\pt)$ isomorphic to $H^*_{\Gs}(\pt)$, where the grading comes from the weights of the $\Cs$-action on $\C[\geg]^\Gs$:
 $$ \begin{tikzcd}
  \C[\ZZ_{\geg}]^\Gs \arrow{r}{\cong} &
  H_\Gs^*(X;\C)  \\
  \C[\geg]^\Gs \arrow{r}{\cong} \arrow{u}&
  H^*_\Gs \arrow{u}.
\end{tikzcd}$$
 \end{theorem}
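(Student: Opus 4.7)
The plan is to deduce Theorem~\ref{wholeg} from Theorem~\ref{general} by identifying $\ZZ_\Ss := \ZZ_\tot \cap (\Ss \times X)$ with the Kostant-section zero scheme used there. Theorem~\ref{general} then yields $\C[\ZZ_\Ss] \cong H^*_\Gs(X;\C)$ as graded $\C[\Ss] \cong H^*_\Gs$-algebras, and combined with Theorem~\ref{restkos}'s identification $\C[\geg]^\Gs \cong \C[\Ss]$, it suffices to show that the restriction map
$$r \colon \C[\ZZ_\tot]^\Gs \longrightarrow \C[\ZZ_\Ss]$$
is a graded isomorphism of $\C[\geg]^\Gs$-algebras.

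For injectivity of $r$, I would use the corollary that each regular orbit of $\Gs$ on $\geg$ meets $\Ss$ in exactly one point (stated after Theorem~\ref{restkos}). Combined with Lemma~\ref{lemad}, this forces every $(v,x) \in \ZZ_\tot$ with $v \in \geg^\reg$ to lie in $\Gs \cdot \ZZ_\Ss$: if $\Ad_g(v) = s \in \Ss$, then $(s, gx) \in \ZZ_\Ss$ and $(v,x) = g^{-1} \cdot (s, gx)$. Thus $\pi^{-1}(\geg^\reg) \subseteq \Gs \cdot \ZZ_\Ss$ for the projection $\pi \colon \ZZ_\tot \to \geg$, so any $\Gs$-invariant regular function vanishing on $\ZZ_\Ss$ must vanish on $\pi^{-1}(\geg^\reg)$. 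Provided the latter is dense in $\ZZ_\tot$, injectivity follows.

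For surjectivity, I would invoke Lemma~\ref{genprinc} that $H^*_\Gs(X;\C)$ is generated as an $H^*_\Gs$-algebra by equivariant Chern classes $c_k^\Gs(\Ee)$ of $\Gs$-equivariant bundles $\Ee$. For each such $\Ee$, the formula
$$\Psi_k(\Ee)(v,x) \; := \; \Tr_{\Lambda^k \Ee_x}\bigl(\Lambda^k v_x\bigr),$$
with $v_x \in \End(\Ee_x)$ the infinitesimal action of $v$, defines a regular $\Gs$-invariant function on $\ZZ_\tot$ --- well-defined because $V_v|_x = 0$ means $v \in \Lie(\Stab_\Gs(x))$, and $\Gs$-invariant via the identity $g_* \circ v_x \circ g_*^{-1} = (\Ad_g v)_{gx}$ coming from $\Gs$-equivariance of $\Ee$. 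By the argument of Remark~\ref{chernred} adapted to the principally paired setting, $r(\Psi_k(\Ee))$ equals the image of $c_k^\Gs(\Ee)$ under the isomorphism of Theorem~\ref{general}. Hence $r$ hits a set of $\C[\geg]^\Gs$-algebra generators, giving surjectivity. The grading is compatible: the $\Cs$-action $\lambda \cdot (v,x) = (\lambda^{-2} v, x)$ on $\ZZ_\tot$ differs from the action $(\lambda^{-2}\Ad_{\tau(\lambda)}(v), \tau(\lambda)x)$ used in Theorem~\ref{general} by the $\Gs$-action of $\tau(\lambda)$, hence agrees with it on $\Gs$-invariants.

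\textbf{Main obstacle.} The delicate step is density of $\pi^{-1}(\geg^\reg)$ in $\ZZ_\tot$; a priori there might be components of $\ZZ_\tot$ over $\geg \setminus \geg^\reg$ of dimension exceeding $\dim\geg$. Following the template of Lemma~\ref{lemcm}, locally in a trivialising chart for $T_X$ the scheme $\ZZ_\tot$ is the vanishing locus of $\dim X$ regular functions on $\geg \times X$, so by Krull's principal ideal theorem each component has dimension at least $\dim\geg$. Lemma~\ref{isoreg} then gives that generic fibres of $\pi$ are finite, forcing every component of $\ZZ_\tot$ to dominate $\geg$ and hence to intersect $\pi^{-1}(\geg^\reg)$ in an open dense subset. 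This yields the required density and completes the argument.
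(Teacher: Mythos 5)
Your overall strategy is the same as the paper's: reduce to Theorem~\ref{general} over the Kostant section, prove surjectivity of the restriction $\C[\ZZ_{\tot}]^\Gs\to\C[\ZZ_\Ss]$ via the $\Gs$-invariant functions $(v,x)\mapsto \Tr_{\Lambda^k\Ee_x}(\Lambda^k v_x)$ together with Lemma~\ref{genprinc} (this is exactly the paper's Lemma~\ref{surjtot}), prove injectivity over the regular locus by the fact that every regular orbit meets $\Ss$ (the paper's Lemma~\ref{inj1}), and compare the two $\Cs$-actions on invariants. The gap is in what you yourself flag as the main obstacle: density of $\pi^{-1}(\geg^{\reg})$ in $\ZZ_{\tot}$ is false in general, and your argument for it is a non sequitur. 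Krull's theorem does give that every component of $\ZZ_{\tot}$ has dimension at least $\dim\geg$, and Lemma~\ref{isoreg} gives finiteness of the fibres over $\geg^{\reg}$, but this says nothing about components whose image lies entirely in the irregular locus, where fibres can be large. Concretely, $\{0\}\times X\subset\ZZ_{\tot}$ always; whenever $\dim X>\dim\geg$ (e.g.\ the regular $\SL_2$- or $\Bs_2$-action on $\PP^n$ with $n\ge 4$) this is an irreducible subset of dimension $\dim X$, while $\overline{\pi^{-1}(\geg^{\reg})}$ has dimension at most $\dim\geg$ (finite fibres over a dense open subset of $\geg$), so $\{0\}\times X$ is not contained in it; and the component of $\ZZ_{\tot}$ containing it cannot dominate $\geg$ either, since a dominating component would have finite generic fibres and hence dimension $\dim\geg<\dim X$. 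So "every component dominates $\geg$" fails, and injectivity of restriction to the regular part needs a different mechanism.

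The paper avoids density altogether. Since $\ZZ_{\tot}$ is reduced, it suffices (Lemma~\ref{projinj}) that every closed point of $\ZZ_{\tot}$ lie on a projective closed subvariety meeting $\overline{\ZZ_\reg}$: regular functions are constant on projective varieties, so their values propagate along such subvarieties. Given $(v,p)\in\ZZ_{\tot}$, one takes $\{v\}\times P$, where $P$ is the projective irreducible component through $p$ of the zero scheme of $V_v$; by the fixed-point Lemma~\ref{grpzer}, applied inside a Borel subgroup whose Cartan subalgebra $\ttt$ contains the semisimple part $d$ of $v$, the component $P$ contains a simultaneous zero $x$ of $\ttt$, and then $(\ttt+\C\cdot v)\times\{x\}\subset\ZZ_{\tot}$ contains a dense set of points with regular first coordinate, so $(v,x)\in\overline{\ZZ_\reg}$. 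This yields injectivity of $\C[\ZZ_{\tot}]\to\C[\overline{\ZZ_\reg}]$ even before taking invariants, which combined with your Lemma~\ref{inj1}-type step completes the injectivity. Without this projective-propagation argument (or an equivalent replacement), your proof is incomplete; the rest of your proposal, including the grading comparison on invariants, agrees with the paper.
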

 
Following the notation from Theorem~\ref{general}, we show that the restriction $\C[\ZZ_\tot]^\Gs \to \C[\ZZ_\Gs]$ is an isomorphism, so that we get the following diagram: 
$$
\begin{tikzcd}
\C[\ZZ_\tot]^\Gs \arrow[r] 
& \C[\ZZ_\Gs] \arrow[r]
& H^*_{\Gs}(X,\C) 
\\ \\
\C[\geg]^\Gs \arrow[r]\arrow[uu]
& \C[\ttt]^\Ws \arrow[r]\arrow[uu]
& H^*_{\Gs}(\pt,\C)\rlap{,}\arrow[uu]
\end{tikzcd}
$$
with all horizontal arrows being isomorphisms. The bottom row follows from Lemma~\ref{genrest}. First we prove that the restriction is an epimorphism.

\begin{lemma}\label{surjtot}
 Under the assumptions of Theorem~\ref{wholeg}, the restriction $\C[\ZZ_\tot]^\Gs \to \C[\ZZ_\Gs]$ is surjective.
\end{lemma}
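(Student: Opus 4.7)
The plan is to exhibit an explicit $\Gs$-invariant regular function on the ambient space $\geg\times X$ for each generator of $H^*_\Gs(X)\cong \C[\ZZ_\Gs]$, and then appeal to Lemma~\ref{genprinc} to conclude surjectivity. The key observation is that the formula for equivariant Chern classes already exhibited in Remark~\ref{chernred} makes sense globally on $\geg\times X$, not only on $\Ss\times X$.

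First I would note that, by Lemma~\ref{genprinc}, the algebra $H^*_\Gs(X)\cong\C[\ZZ_\Gs]$ is generated over $\C[\Ss]\cong\C[\geg]^\Gs$ by equivariant Chern classes $c_k^\Gs(\Ee)$ of $\Gs$-equivariant vector bundles $\Ee$ on $X$. Clearly every element of $\C[\geg]^\Gs$ pulls back along the first projection $\geg\times X\to\geg$ to a $\Gs$-invariant regular function on $\geg\times X$, whose restriction to $\ZZ_\Gs\subset\Ss\times X$ recovers the corresponding element of $\C[\Ss]$. So it remains to extend the Chern class generators.

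For a $\Gs$-equivariant vector bundle $\Ee$ on $X$ and an integer $k\ge 0$, define
\[
F_{\Ee,k}:\geg\times X\to\C,\qquad F_{\Ee,k}(v,x)=\Tr_{\Lambda^k\Ee_x}\bigl(\Lambda^k v_x\bigr),
\]
where $v_x\in\End(\Ee_x)$ is the infinitesimal action of $v\in\geg$ on the fiber $\Ee_x$. This is regular in $(v,x)$ because locally $\Ee$ is trivial, the infinitesimal $\geg$-action on $\Ee$ is algebraic in both arguments, and trace of exterior powers is polynomial in the matrix entries. I would then verify $\Gs$-invariance: for $g\in\Gs$, the $\Gs$-equivariance of $\Ee$ gives an isomorphism $g_*:\Ee_x\xrightarrow{\cong}\Ee_{gx}$, and by the same argument as in Lemma~\ref{lemad} applied to the linearised bundle $\Ee$, we have $(\Ad_g v)_{gx}=g_*\circ v_x\circ g_*^{-1}$. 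Taking $\Lambda^k$ and then the trace yields $F_{\Ee,k}(\Ad_g v,gx)=F_{\Ee,k}(v,x)$, so $F_{\Ee,k}\in\C[\geg\times X]^\Gs$, and in particular its restriction lies in $\C[\ZZ_\tot]^\Gs$.

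Finally, by Remark~\ref{chernred} the restriction of $F_{\Ee,k}$ to $\ZZ_\Gs\subset\Ss\times X$ is precisely $\rho_\Gs(c_k^\Gs(\Ee))\in\C[\ZZ_\Gs]$. Since the images of $\C[\geg]^\Gs$ and of the functions $\{F_{\Ee,k}\}$ under the restriction map $\C[\ZZ_\tot]^\Gs\to\C[\ZZ_\Gs]$ together generate $\C[\ZZ_\Gs]\cong H^*_\Gs(X)$ as a $\C[\Ss]$-algebra, the restriction is surjective. The only nontrivial step is the verification that $F_{\Ee,k}$ is $\Gs$-invariant as a function on all of $\geg\times X$, which reduces to the naturality of the infinitesimal action on a $\Gs$-linearised bundle; everything else is a direct application of already established facts.
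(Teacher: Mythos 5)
Your proposal is correct and is essentially the paper's own argument: the paper also invokes Lemma~\ref{genprinc} to reduce to equivariant Chern class generators and then observes that the function $(v,x)\mapsto \Tr_{\Lambda^k\Ee_x}(\Lambda^k v_x)$ is a $\Gs$-invariant regular function restricting to $\rho_\Gs(c_k^\Gs(\Ee))$ on $\ZZ_\Gs$ via Remark~\ref{chernred}. Your only addition is spelling out the invariance computation $(\Ad_g v)_{gx}=g_*\circ v_x\circ g_*^{-1}$, which the paper leaves as "clearly $\Gs$-invariant."
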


\begin{proof}
By Lemma~\ref{genprinc} we know
that $\C[\ZZ_\Gs]$ is generated over $\C[\ttt]^\Ws \cong \C[\geg]^\Gs$ by functions $\rho_\Gs(c_k^\Gs(\Ee))$ for positive integers $k$ and $\Gs$-equivariant vector bundles $\Ee$. Those functions are defined by
$$\rho_\Gs(c_k^\Gs(\Ee))(v,x) = \Tr_{\Lambda^k \Ee_{x}}(\Lambda^k v_{x});$$
see Remark~\ref{chernred}. For each such function, we can consider the regular function $f_{k,\Ee}$ defined on $\ZZ_\tot$ by its values:
$$f_{k,\Ee}(v,x) = \Tr_{\Lambda^k \Ee_{x}}(\Lambda^k v_{x}).$$
It is clearly $\Gs$-invariant and restricts to $\rho_\Gs(c_k^\Gs(\Ee))$ on $\ZZ_\Gs$. As $\C[\ZZ_\Gs]$ is generated by such functions, the conclusion follows.
\end{proof}

For the injectivity, let us start with an easy intermediate step. Let $\ZZ_\reg$ be the open subscheme of $\ZZ_\tot$ consisting of the part over $\geg^\reg\subset \geg$ (hence a closed subscheme of $\geg^\reg\times X$). Then we have the following. 
 
\begin{lemma}\label{inj1}
 Let $\Gs$ be a principally paired algebraic group. Assume it acts on a connected smooth projective variety, not necessarily regularly. The restriction $\C[\ZZ_\reg]^\Gs\to \C[\ZZ_\Gs]$ is injective, where $\ZZ_\reg$ and $\ZZ_\Gs$ are defined as above, as zero schemes over $\geg^\reg$ and $\Ss$.
\end{lemma}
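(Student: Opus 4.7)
The plan is to show the injectivity by a straightforward orbit argument, using the generalised Kostant section from Theorem~\ref{kostarb}. First, I would observe that $\Gs$ acts naturally on $\ZZ_\reg$: by Lemma~\ref{lemad}, for any $g \in \Gs$ and $(v,x) \in \geg \times X$ we have $V_{\Ad_g(v)}|_{g\cdot x} = \D g(V_v|_x)$, so $g \cdot (v,x) := (\Ad_g(v), g \cdot x)$ sends zeros of the total vector field to zeros, and preserves regularity of the first coordinate. Similarly the $\Cs$-action described in the theorem preserves the zero scheme.

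Next, I would use Theorem~\ref{kostarb} (and the corollary at the end of Section~\ref{kostgensec}): every regular adjoint orbit in $\geg$ meets the Kostant section $\Ss$ exactly once. Consequently, for any $(v,x) \in \ZZ_\reg$ there exists $g \in \Gs$ with $\Ad_{g^{-1}}(v) \in \Ss$, and then $g^{-1}\cdot (v,x) \in \ZZ_\Gs$, since $\Ss \times X \cap \ZZ_\reg = \ZZ_\Gs$ set-theoretically (both are reduced and cut out by the vanishing of the total vector field). In other words, every $\Gs$-orbit in $\ZZ_\reg$ meets $\ZZ_\Gs$.

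Now suppose $f \in \C[\ZZ_\reg]^\Gs$ restricts to $0$ on $\ZZ_\Gs$. For any closed point $(v,x) \in \ZZ_\reg$, choose $g$ as above; then
\[
 f(v,x) = f(g \cdot (g^{-1}\cdot (v,x))) = f(g^{-1}\cdot (v,x)) = 0,
\]
where the middle equality uses $\Gs$-invariance of $f$. Hence $f$ vanishes on every closed point of $\ZZ_\reg$. Since $\ZZ_\tot$ is reduced by Definition~\ref{totzer} and $\ZZ_\reg$ is its open subscheme over $\geg^\reg$, it is itself reduced, so $f = 0$ in $\C[\ZZ_\reg]^\Gs$. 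This gives the claimed injectivity.

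The argument is short and does not use the regularity of the action; the only nontrivial input is the surjectivity of $\Gs\cdot\Ss \to \geg^\reg$, which is exactly Theorem~\ref{kostarb}. The main point to be careful about is simply that the identification $\ZZ_\Gs = \ZZ_\reg\cap (\Ss\times X)$ is on the nose as reduced schemes, which follows from the definitions since both sides are cut out by the same vanishing condition and both are reduced.
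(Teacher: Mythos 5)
Your argument is correct and is essentially the paper's own proof: both rest on Theorem~\ref{kostarb} (every regular adjoint orbit meets $\Ss$), so that a $\Gs$-invariant function on the reduced scheme $\ZZ_\reg$ is determined by its values on the closed points of $\ZZ_\Gs$. The extra checks you spell out (that the $\Gs$-action preserves $\ZZ_\reg$ via Lemma~\ref{lemad}, and that $\ZZ_\Gs$ coincides with $\ZZ_\reg\cap(\Ss\times X)$ on closed points) are fine and only make explicit what the paper leaves implicit.
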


\begin{proof}
As $\ZZ_\reg$ is reduced, a function is determined by its values on closed points. 
By Lemma~\ref{kostarb} every $\Gs$-orbit in $\geg^\reg$ intersects $\Ss$; thus the $\Gs$-orbit of any closed point in $\ZZ_\reg$ intersects $\ZZ_\Gs$. It is therefore enough to specify a $\Gs$-invariant function on $\ZZ_\reg$ on closed points of $\ZZ_\Gs$. The result follows.
\end{proof}

To finish the proof, we are only left with proving the  injectivity of the restriction $\C[\ZZ_\tot]^\Gs\to \C[\ZZ_\reg]^\Gs$. We will utilise the following lemma to prove that the restriction $\C[\ZZ_\tot]\to\C[\ZZ_\reg]$ is injective.

\begin{lemma}\label{projinj}
 Let $Y$ be a reduced scheme over a field $k$. Assume $Z$ is a closed subvariety and every closed point $p\in Y$ is contained in a projective closed subvariety that intersects $Z$. Then the restriction map on regular functions $k[Y]\to k[Z]$ is injective.
\end{lemma}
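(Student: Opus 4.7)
The plan is to use the Nullstellensatz-flavoured fact that a regular function on a reduced, finite-type $\C$-scheme is determined by its values at closed points, together with the classical fact that a regular function on a connected projective $\C$-variety is constant.

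First, let $f\in k[Y]$ satisfy $f|_Z = 0$; I want to show $f = 0$. Since $Y$ is reduced and (in all applications here) of finite type over an algebraically closed field, it is a Jacobson scheme and $k[Y]\hookrightarrow \prod_{p\in Y_{\mathrm{cl}}} \kappa(p)$, so it suffices to prove that $f(p)=0$ at every closed point $p\in Y$.

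Next, fix a closed point $p\in Y$. By hypothesis there is a projective closed subvariety $W_p\subset Y$ with $p\in W_p$ and $W_p\cap Z\neq \emptyset$. Restriction gives $f|_{W_p}\in \OO_{W_p}(W_p)$. Since $W_p$ is an irreducible projective variety over the algebraically closed field $k=\C$, one has $\OO_{W_p}(W_p)=k$, so $f|_{W_p}$ is a constant $c\in k$. Choosing any point $q\in W_p\cap Z$ gives $c = f(q) = 0$, since $f|_Z=0$. In particular $f(p)=c=0$, as required.

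The only subtle point is the use of the word ``subvariety'', which we interpret (consistently with the conventions of the paper) to mean a reduced irreducible closed subscheme, so that $W_p$ is automatically connected and the global sections of its structure sheaf are just $\C$. If one instead allowed reducible $W_p$, the same argument would still go through: take an irreducible component of $W_p$ through $p$; since $W_p$ is connected (being covered by irreducible components that pairwise meet along closed subschemes in a projective variety, one can even drop connectedness by walking from the component containing $p$ to a component meeting $Z$), the constant values of $f$ on different components must agree at intersection points, and at least one component meets $Z$ where $f$ vanishes. The main (modest) obstacle is thus purely bookkeeping about how ``subvariety'' is interpreted and connectedness of $W_p$; the analytic heart of the argument is simply $H^0(W_p,\OO_{W_p})=\C$.
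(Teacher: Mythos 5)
Your proof is correct and follows essentially the same route as the paper: reduce to checking vanishing at closed points (using reducedness and finite type), then restrict to the projective subvariety through a given point, use constancy of regular functions on it, and evaluate at a point of its intersection with $Z$. The only cosmetic difference is that you specialize to $k=\C$ (the paper states the lemma over an arbitrary field, where one instead notes that global functions on an integral projective $k$-variety form a field, so a function vanishing at one closed point vanishes identically), but this does not change the argument.
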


\begin{proof}
Let us assume that $f\in k[Y]$ vanishes on $Z$. Consider any closed point $p\in Y$. Let $A_p$ be a projective closed subvariety that contains $p$ and intersects $Z$ in a closed point $q$. Then $f|_{A_p}$ is a regular function on a projective variety over $k$; hence it is has constant value on all closed points of $A_p$. As $f(q) = 0$, this means that it takes the value $0$. Therefore, $f(p)=0$. Hence $f$ vanishes on every closed point.

As $Y$ is reduced and of finite type over a field, we know that regular functions are uniquely determined by their values on closed points. Therefore, $f = 0$.
\end{proof}

To arrive at the lemma's assumptions, we first prove slightly stronger versions of Lemmas~\ref{lemfix} and~\ref{lemzer}, under the condition that the action of the Lie algebra is integrable.

\begin{lemma}\label{grpfix}
 Assume that a solvable algebraic group $\Hs$ acts on a smooth complex variety $X$. Let $P\subset X$ be a projective irreducible component of the reduced zero scheme of a linear subspace $\V\subset \he$. Then $P$ contains a simultaneous zero of $N_\he(\V)$.
\end{lemma}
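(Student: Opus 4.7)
The plan is to promote the infinitesimal tangency statement of Lemma~\ref{lemfix} to an honest group action that preserves $P$, and then invoke the Borel fixed point theorem. Let $Z$ denote the reduced zero scheme of $\V$, so $P$ is one of its irreducible components.

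First I would form the closed algebraic subgroup
\[
\Hs' := \{g\in\Hs : \Ad_g(\V)=\V\}\subset\Hs,
\]
i.e.\ the setwise stabiliser of $\V\subset\he$ under the adjoint action. Its Lie algebra $\Lie(\Hs')$ contains $N_\he(\V)$: if $X\in N_\he(\V)$ then $\ad_X$ preserves $\V$, hence $\Ad_{\exp(tX)}=\exp(t\,\ad_X)$ preserves $\V$ for all $t$, which means the one-parameter subgroup generated by $X$ lies in $\Hs'$. Next, I would check that $\Hs'$ preserves $Z$ set-theoretically: given $g\in\Hs'$ and $x\in Z$, Lemma~\ref{lemad} gives $V_{\Ad_g(v)}|_{gx}=\D g(V_v|_x)=0$ for every $v\in\V$, and since $\Ad_g(\V)=\V$ this says $\V$ vanishes at $gx$, so $gx\in Z$.

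Let $\Hs'^0$ be the identity component of $\Hs'$. Because $\Hs'^0$ is connected and $P$ is an irreducible component of $Z$, the orbit $\Hs'^0\cdot P$ is contained in $Z$, irreducible, and contains $P$, so $\Hs'^0\cdot P=P$; in other words $\Hs'^0$ acts on $P$. As a connected subgroup of the solvable group $\Hs$, the group $\Hs'^0$ is itself solvable, and $P$ is projective by hypothesis, so the Borel fixed point theorem produces a point $x_0\in P$ fixed by $\Hs'^0$. Every element of $\Lie(\Hs'^0)$ then vanishes at $x_0$, and since $N_\he(\V)\subset\Lie(\Hs'^0)$, the point $x_0$ is the desired simultaneous zero of $N_\he(\V)$.

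The main obstacle I anticipate is the bookkeeping around algebraic versus Lie-algebraic integration: a priori $N_\he(\V)$ is only a Lie subalgebra of $\he$ and need not integrate to an algebraic subgroup of $\Hs$. The stabiliser construction $\Hs'$ sidesteps this entirely, since it is algebraic by definition and its Lie algebra automatically contains $N_\he(\V)$. All other ingredients—tangency of $\Hs'^0$ to $P$, solvability, and the existence of fixed points on a projective variety—are then standard. Note the hypothesis of solvability of $\Hs$ enters only in the final step, precisely where Borel's theorem is used.
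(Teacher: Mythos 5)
Your proof is correct and follows essentially the same route as the paper: your $\Hs'$ is precisely the normaliser $N_\Hs(\V)$, and the paper likewise shows it preserves the zero set via Lemma~\ref{lemad}, lets its identity component act on $P$, applies the Borel fixed point theorem, and concludes that $N_\he(\V)$ vanishes at the fixed point. The only cosmetic difference is that you verify the inclusion $N_\he(\V)\subset\Lie(N_\Hs(\V))$ by hand via the exponential, whereas the paper cites \cite[Lemma 7.4]{Borel} for the equality $N_\he(\V)=\Lie(N_\Hs(\V))$.
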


\begin{proof}
By \cite[Lemma 7.4]{Borel} we have $N_{\he}(\V) = \Lie(N_\Hs(\V))$. Let $N_\Hs(\V)^{o}$ be the connected component of unity within $N_\Hs(\V)$. We know from Lemma~\ref{lemad} that $N_\Hs(\V)$ preserves the zero set of $\V$. Thus $N_\Hs(\V)^{o}$ preserves its irreducible components, in particular $P$. By the Borel fixed point theorem, \textit{cf.}  \cite[Corollary~17.3]{Milne}, $N_\Hs(\V)^{o}$ must have a fixed point $p\in P$. Then its Lie algebra $ \Lie(N_\Hs(\V)^{o}) = \Lie(N_\Hs(\V)) = N_{\he}(\V)$ vanishes on $p$.
\end{proof}

\begin{lemma}\label{grpzer}
 Assume that an algebraic group $\Hs$ acts on a smooth variety $X$. Let $d,n \in \he$ commute, and assume that the Lie subalgebra generated by $[\he,\he]$ and $n$ is nilpotent. Let $P$ be a projective irreducible component of the reduced zero scheme of $j = d+n$. Then $P$ contains a simultaneous zero of $C_{\he}(d)$, in particular, a zero of any abelian subalgebra of\, $\he$ containing $d$.
\end{lemma}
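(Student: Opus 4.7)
First I would observe that the hypothesis forces $\he$ to be solvable: $[\he,\he]\subset\kek$ is nilpotent as a subalgebra of the nilpotent $\kek$, so $\he$ itself is solvable. Replacing $\Hs$ by its identity component is harmless (only vanishing of Lie-algebraic vector fields is at stake), so I may assume $\Hs$ is a connected solvable algebraic group, making Lemma~\ref{grpfix} directly applicable. Set $C'(d):=C_\he(d)\cap\kek$; it is nilpotent and contains $n$. I would also note that $\kek$ is an ideal of $\he$: both generating sets $[\he,\he]$ and $\{n\}$ are sent into $[\he,\he]\subset\kek$ by $\ad\he$. Write $Z(\m)\subset X$ for the reduced simultaneous zero scheme of a subspace $\m\subset\he$.

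The plan is then to imitate the iterative proof of Lemma~\ref{lemzer}, replacing ``isolated zero'' by ``projective irreducible component'' and Lemma~\ref{lemfix} by Lemma~\ref{grpfix}. I would construct an ascending chain of subspaces $\V_0:=\C\cdot j$, $\V_1:=\Span(d,n)$, and more generally $\V_i:=\Span(d)+N_{C'(d)}^{i-1}(\C\cdot n)$ for $i\ge 1$, together with a descending chain $P=P_0\supset P_1\supset\cdots$ of projective irreducible subvarieties of $X$, maintaining the invariant that $P_i$ is an irreducible component of $Z(\V_i)$. Each inductive step goes as follows: Lemma~\ref{grpfix} applied to $(\V_i,P_i)$ produces a simultaneous zero of $N_\he(\V_i)$ inside $P_i$; a direct bracket computation shows $\V_{i+1}\subset N_\he(\V_i)$ (the elements of $N_{C'(d)}^{i}(\C\cdot n)$ commute with $d$ and normalise $N_{C'(d)}^{i-1}(\C\cdot n)$), so $P_i\cap Z(\V_{i+1})$ is non-empty, and I take $P_{i+1}$ to be an irreducible component of this intersection through the new zero.

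Since $C'(d)$ is nilpotent, the normaliser chain $N_{C'(d)}^{i}(\C\cdot n)$ stabilises at $C'(d)$ after finitely many steps, so eventually $\V_N=\Span(d)+C'(d)$ for some $N$. A final application of Lemma~\ref{grpfix} to $(\V_N,P_N)$ yields a simultaneous zero of $N_\he(\V_N)$ in $P_N\subset P$; because $\kek$ is an ideal and $C_\he(d)$ is a subalgebra, $[C_\he(d),C'(d)]\subset C_\he(d)\cap\kek=C'(d)\subset\V_N$, so $C_\he(d)\subset N_\he(\V_N)$ and we obtain the desired simultaneous zero of $C_\he(d)$ in $P$. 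The ``in particular'' clause is immediate since any abelian subalgebra of $\he$ containing $d$ lies in $C_\he(d)$.

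The main technical obstacle will be maintaining the invariant that $P_{i+1}$ is an irreducible component of $Z(\V_{i+1})$, not merely of the intersection $P_i\cap Z(\V_{i+1})$ -- Lemma~\ref{grpfix} applies only to irreducible components of $Z(\V_{i+1})$ itself. The argument I envisage is: if $P'$ is an irreducible closed subset of $Z(\V_{i+1})$ strictly containing $P_{i+1}$, then $P'\subset Z(\V_i)$ and it meets the irreducible component $P_i$ of $Z(\V_i)$; the complement $P'\setminus P_i$ is open in $P'$ and contained in the closed union of the remaining components of $Z(\V_i)$, hence also closed in $P'$, so irreducibility forces $P'\subset P_i$; maximality of $P_{i+1}$ inside $P_i\cap Z(\V_{i+1})$ then yields $P'=P_{i+1}$, contradicting the strict inclusion.
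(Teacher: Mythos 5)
Your overall strategy is the one the paper uses: reduce to a connected solvable group (noting $[\he,\he]\subset\kek$ nilpotent forces $\he$ solvable), run the normaliser chain $N^i_{C'(d)}(\C\cdot n)$ exactly as in Lemma~\ref{lemzer}, apply Lemma~\ref{grpfix} at each stage to a nested sequence of projective irreducible subvarieties of $P$, and finish with $C_\he(d)\subset N_\he(\C\, d + C'(d))$. All the Lie-algebraic bookkeeping ($\V_{i+1}\subset N_\he(\V_i)$, stabilisation of the chain at $C'(d)$, the final normalisation step, the ``in particular'' clause) matches the paper and is fine.

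The gap sits exactly at the step you yourself flag as the main obstacle, and your proposed repair does not work. The statement you need -- that an irreducible component of $P_i\cap Z(\V_{i+1})$ is automatically an irreducible component of $Z(\V_{i+1})$ -- is false for general closed sets: take $Z(\V_i)$ to be the two lines $P_i=\{y=0\}$ and $P_i'=\{x=0\}$ in the plane and $Z(\V_{i+1})=P_i'$; then $P_i\cap Z(\V_{i+1})$ is the origin, which is its own irreducible component but is not a component of $Z(\V_{i+1})$. Correspondingly, the inference ``contained in the closed union of the remaining components of $Z(\V_i)$, hence also closed in $P'$'' is a non sequitur: a subset of a closed set need not be closed (in the example $P'\setminus P_i=\{x=0,\ y\neq 0\}$ lies in the closed line $P_i'$ but is not closed in $P'$), so the irreducibility/connectedness dichotomy you invoke never gets off the ground, and indeed in the example the intended conclusion $P'\subset P_i$ fails. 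Since Lemma~\ref{grpfix} genuinely requires $P_{i+1}$ to be a component of the zero scheme of $\V_{i+1}$ in all of $X$ (its proof needs $N_\Hs(\V_{i+1})^{o}$ to permute the components of $Z(\V_{i+1})$ and hence preserve $P_{i+1}$), your induction cannot be sustained on purely point-set grounds; some input beyond topology is required. The paper makes the choices the other way around: at each stage $P_{i+1}$ is taken to be an irreducible component of the full simultaneous zero set of $\V_{i+1}$ through the fixed point produced inside $P_i$, so the hypothesis of Lemma~\ref{grpfix} holds by construction, and what is then argued (using that $P_i$ is irreducible and already consists of zeros of the smaller space) is the containment $P_{i+1}\subset P_i$, which also secures projectivity and the final landing in $P$. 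To salvage your variant you would have to prove a containment statement of that kind, not the topological claim above.
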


\begin{proof}
 By restricting to the connected component of the identity, we can assume that $\Hs$ is connected. As $[\he,\he]$ is nilpotent, $\he$ must be solvable; hence $\Hs$ is solvable too.
 
 Let $\kek$ be the Lie subalgebra generated by $[\he,\he]$ and $n$. By Lemma~\ref{grpfix} we get that inside $P$ there is a zero $p$ of $N_{\he}(\C\cdot j)$, which in particular contains $d$ and $n$. As $P$ is irreducible, any irreducible component of the simultaneous zero set of $d$ and $n$ which contains $p$ is completely contained in $P$. Let $P_1\subset P$ be one such irreducible component. As it is closed inside $P$, it also has the structure of a projective scheme.
 
 We will first show that $P_1$ contains a simultaneous zero of $C'(d) = C_{\he}(d)\cap \kek$. As $\kek$ is nilpotent, $C'(d)$ is as well.  By Lemma~\ref{grpfix}, $P_1$ contains a simultaneous zero of $N_{\he}(\Span_\C(d,n))$, hence in particular of $N_{C'(d)}(\C\cdot n)$. Note that by definition everything in $C'(d)$ centralises $d$. As $P_1$ consists of zeros of $d$, it will contain an irreducible component $P_2$ of the common fixed point set of $d$ and $N_{C'(d)}(\C\cdot n)$. As a closed subscheme of $P_1$, $P_2$ is also projective. By the same argument, $P_2$ contains a projective irreducible component $P_3$ of the common fixed point set of $d$ and $N^2_{C'(d)}(\C\cdot n)$. As in the proof of Lemma~\ref{lemzer}, there exists a positive integer $k$ such that $N^k_{C'(d)}(\C\cdot n) = C'(d)$; hence we get a projective irreducible component $P_{k+1}$ of the common fixed point set of $d$ and $C'(d)$. But again as in Lemma~\ref{lemzer}, $C'(d)$, as well as $d$, is normalised by $C_{\he}(d)$. Hence inside $P_{k+1}$ there is a zero of $C_{\he}(d)$.
\end{proof}

\begin{lemma}\label{inj2}
Let $\Gs$ be a principally paired algebraic group. Assume that it acts on a connected smooth projective variety $X$, not necessarily regularly. The restriction $\C[\ZZ_\tot]^\Gs\to \C[\ZZ_\Gs]$ is injective, where $\ZZ_\reg$ and $\ZZ_\Gs$ are defined as before, as zero schemes over $\geg^\reg$ and $\Ss$.
\end{lemma}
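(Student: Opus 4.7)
The strategy is to reduce via Lemma~\ref{inj1} to showing that the restriction $\C[\ZZ_\tot]^\Gs \to \C[\ZZ_\reg]^\Gs$ is injective. Thus suppose $f \in \C[\ZZ_\tot]^\Gs$ vanishes on $\ZZ_\reg$ (which follows from vanishing on $\ZZ_\Gs$ together with $\Gs$-invariance of $f$, since $\Gs \cdot \Ss = \geg^\reg$ by Theorem~\ref{kostarb}); I will show $f(v, x) = 0$ for an arbitrary closed point $(v, x) \in \ZZ_\tot$. Using the Jordan decomposition $v = d + n$, and after conjugating by a suitable element of $\Gs$, I may assume that $d$ lies in a fixed maximal torus $\ttt$ of $\Gs$ sitting inside a Borel subalgebra $\bb \subset \geg$ whose nilpotent radical $\uu$ contains $n$, with $[d,n]=0$.

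Let $P$ be the projective irreducible component of the zero scheme $X^v \subset X$ of the vector field $V_v$ containing $x$; since $X$ is projective and $X^v$ closed, $P$ is a connected projective closed subvariety of $X$, and $\{v\} \times P$ is correspondingly a connected projective closed subvariety of $\ZZ_\tot$. Therefore $f|_{\{v\} \times P}$ is a regular function on a connected projective variety, hence constant, so it suffices to prove $f(v, x_0) = 0$ for a single conveniently chosen $x_0 \in P$. Applying Lemma~\ref{grpzer} inside the solvable $\bb$---where the Lie subalgebra generated by $[\bb,\bb]=\uu$ and the nilpotent $n$ is nilpotent---produces an $x_0 \in P$ that is a simultaneous zero of $C_\bb(d)$, and in particular of $\ttt \subset C_\bb(d)$, so $x_0 \in X^\ttt$.

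The final step is a genericity argument along a line. Choose any $r \in \ttt$ that is regular in $\geg$; such elements form a Zariski-dense open subset of $\ttt$. Since $x_0 \in X^\ttt$, we have $(r, x_0) \in \ZZ_\reg$ and $f(r, x_0) = 0$. Moreover $d, n, r$ all lie in the linear subspace $\Stab_\geg(x_0) \subset \geg$, so the affine line $L = \{(1-s)v + s r : s \in \C\}$ lies in $\Stab_\geg(x_0)$, giving $L \times \{x_0\} \subset \ZZ_\tot$. Because regularity is an open condition on $\geg$ and $r$ is regular, the set of $s \in \C$ with $(1-s)v + sr \in \geg^\reg$ is open and nonempty, hence cofinite; on it $((1-s)v+sr, x_0) \in \ZZ_\reg$ and $f$ vanishes. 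Hence $s \mapsto f((1-s)v + sr, x_0)$ is a polynomial vanishing on a cofinite subset of $\C$, so identically; setting $s=0$ gives $f(v, x_0) = 0$, and combining with the projective step yields $f(v, x) = 0$.

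The main obstacle to be navigated is that connected projective closed subvarieties of $\ZZ_\tot \subset \geg \times X$ must project to single points of the affine $\geg$, so Lemma~\ref{projinj} cannot be applied directly with $Z = \ZZ_\reg$: no single projective subvariety of $\ZZ_\tot$ connects a non-regular $(v,x)$ to a regular point. The argument therefore proceeds by a two-stage bridge---first move within the projective fiber $\{v\} \times P$ to a torus-fixed $x_0$ via Lemma~\ref{grpzer}, then travel along an affine line in $\Stab_\geg(x_0)$ from $v$ to a regular element, exploiting polynomial vanishing on an open dense subset. Since Jordan decomposition, conjugation of a semisimple element into a maximal torus, and the choice of a suitable Borel subalgebra all work for principally paired $\Gs$, the argument applies in that generality without modification.
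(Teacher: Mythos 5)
Your proof is correct and follows essentially the same route as the paper's: reduce via Lemma \ref{inj1}, take the Jordan decomposition $v=d+n$, place $d,n$ in a Borel subalgebra and apply Lemma \ref{grpzer} to produce a zero $x_0$ of $\ttt$ inside the projective irreducible component $P$ of the zero set of $V_v$ through $x$, then use density of regular elements. The only difference is in packaging the last step: where the paper applies Lemma \ref{projinj} with $Z=\overline{\ZZ_\reg}$, observing that $(\ttt+\C\cdot v)\times\{x_0\}\subset\overline{\ZZ_\reg}$, you unpack the same idea into constancy of $f$ on $\{v\}\times P$ together with a polynomial-vanishing argument along the line joining $v$ to a regular element $r\in\ttt$.
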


\begin{proof}
We have the sequence of restrictions $\C[\ZZ_\tot]^\Gs \to \C[\ZZ_\reg]^\Gs\to \C[\ZZ_\Gs]$. By Lemma~\ref{inj1} we only need to prove that the first map is injective. Obviously, the restriction $\C[\overline{\ZZ_\reg}] \to \C[\ZZ_\reg]$ is injective (here we take the closure of $\ZZ_\reg$ in $\ZZ_\tot$). We will prove that $\C[\ZZ_\tot]\to \C[\overline{\ZZ_\reg}]$ is injective, and this will prove the lemma.

We employ Lemma~\ref{projinj} for this. We have to prove that every point of $\ZZ_\tot$ is contained in a projective subvariety which intersects $\overline{\ZZ_\reg}$. Let $(v,p) \in \ZZ_\tot \subset \geg\times X$. Then $p$ is contained in the zero scheme of the vector field $V_v$, hence in some irreducible component $P$ thereof. It is a closed subscheme of $X$; hence it is projective. Then we have $\{v\}\times P \subset \ZZ_\tot$ as a projective closed subvariety. Let $v = d + n$ be the Jordan decomposition of $v$ (\textit{cf.} \cite[Theorem~4.4]{Borel}). As $d$ and $n$ commute, they are contained in a Lie algebra $\bb$ of some Borel subgroup $\Bs\subset \Gs$. Let $\Ts$ be a maximal torus within $\Bs$ such that $d\in\ttt = \Lie(\Ts)$. Then from Lemma~\ref{grpzer} (take $\Hs = \Bs$) we get that $P$ contains a simultaneous zero $x$ of $\ttt$. It is also a zero of $v$; hence we have $(\ttt+\C\cdot v)\times \{x\} \subset \ZZ_\tot$. Note that $\ttt$ contains a regular element, and as the regular elements within $\geg$ form an open subset, the regular elements of $\ttt+\C\cdot v$ form an open non-empty subset, so they are dense. This means that $(\ttt+\C\cdot v)\times \{x\}\subset \overline{\ZZ_\reg}$, hence in particular $(v,x)\in\overline{\ZZ_\reg}$, and $(v,x)\in \{v\}\times P$, where $\{v\}\times P$ is a projective subvariety of $\ZZ_\tot$; therefore, we are done.
\end{proof}

\begin{proof}[Proof of Theorem~\ref{wholeg}]
The isomorphism follows from Lemmas~\ref{surjtot} and~\ref{inj2}.

For the grading we just have to show that the defined action of $\Cs$ descends under the restriction $\C[\ZZ_\tot]^\Gs \to \C[\ZZ_\Gs]$ to the action defined in Theorem~\ref{general}. Let $f$ be a $\Gs$-invariant function on $\ZZ_\tot$. Then for any $t\in\Cs$ the pullback $t^*f$ of $f$ by $t$ is defined by
$$t^*f(v,x) = f\left(\frac{1}{t^2} v,x\right).$$
As $f$ is $\Gs$-invariant, this means
$$t^*f(v,x) = f\left(\frac{1}{t^2} \Ad_{H^t}v,H^t x\right).$$
When we restrict to $\ZZ_\Gs$, the group $\Cs$ acts precisely by $\left(\frac{1}{t^2} \Ad_{H^t},H^t\right)$ (\textit{cf.} Theorem~\ref{general}). Therefore, the actions agree.
\end{proof}

\begin{example}
Assume that $\Gs$ is a reductive group acting on a partial flag variety $X = \Gs/\Ps$. Then the zero scheme is 
\[\tilde{\geg}_\Ps:=\{(x,\p^\prime)\in \geg\times\Gs/\Ps:x\in\p^\prime\},\]
which agrees with the partial Grothendieck--Springer resolution. Thus  as a $\C[\geg]^\Gs\cong H^*_\Gs$-module, the ring of invariant functions $\C[\tilde{\geg}_\Ps]^\Gs$ is equal to $H^*_\Gs(\Gs/\Ps) = H^*_\Ps = \C[\ttt]^{\Ws_\Ps}$.
\end{example}

\begin{figure}[ht!]
\begin{center}
 \subfloat{
  \includegraphics[width=7cm]{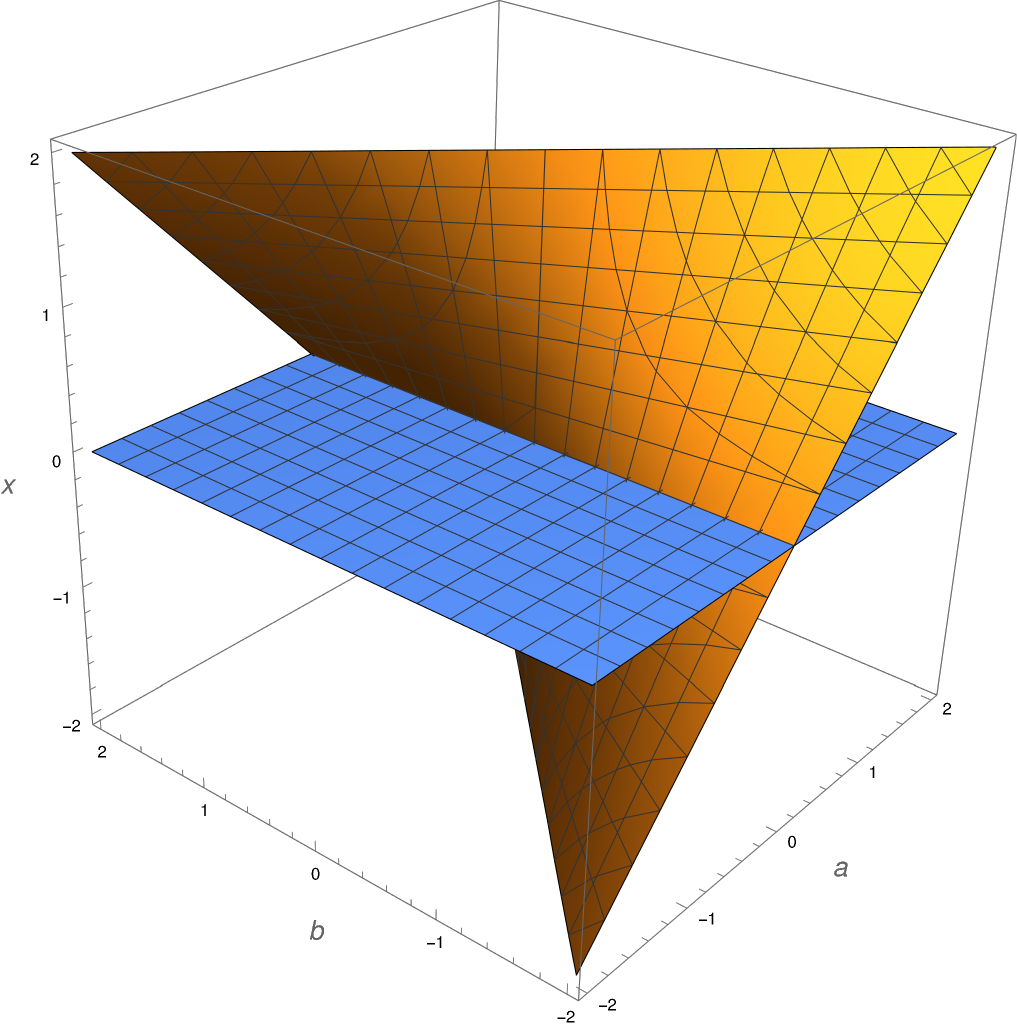}}
  \hfill
\subfloat{
  \includegraphics[width=7cm]{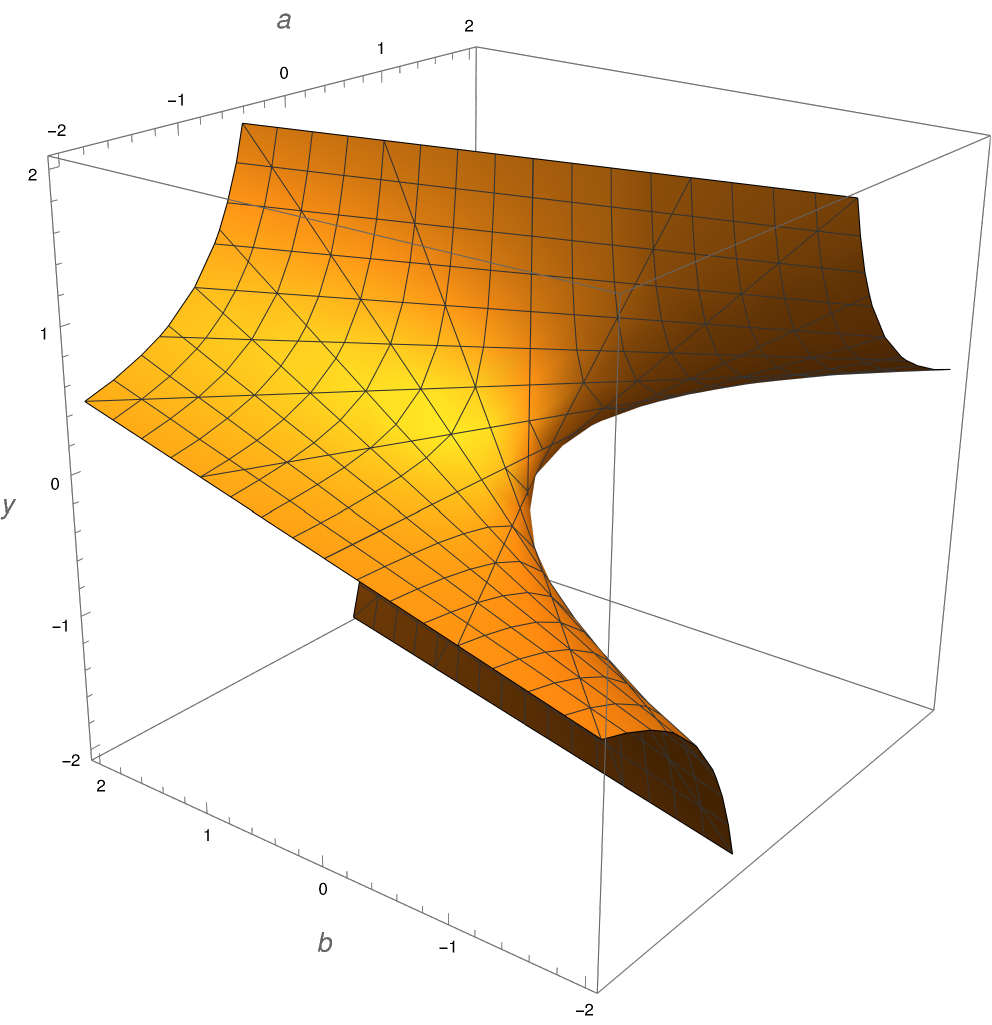}}
\end{center}
\caption{Affine parts of the total zero scheme for the action of $\Bs_2$ on $\PP^1$. The left part misses a line (over $b=0$); the right part misses the blue component.}
\label{totalp1}
\end{figure}

\begin{example}
There is one example that we are able to draw. It is the action of $\Bs_2$ on $\PP^1$ (see Example~\ref{exsl22}). The total zero scheme is not affine, but we can cover it with two affine pieces, coming from affine covers of~$\PP^1$. The first  will be the part contained in $\bb_2 \times \{[1:x]: x\in \C\}$, and the other will be the part contained in $\bb_2 \times \{[y:1] : y\in \C\}$. If we consider coordinates $(a,b)$ on $\bb_2$ that correspond to matrices
$$\begin{pmatrix}
a & b\\
0 & -a
\end{pmatrix} \in \bb_2,
$$
then the surface has the equations $2ax + bx^2 = 0$ in the $(a,b,x)$-plane (the first piece) and $2ay+b = 0$ in the $(a,b,y)$-plane (the second piece). The scheme has two irreducible components; the pieces are drawn in Figure~\ref{totalp1}.

One sees that the $\Bs_2$-invariant functions on the blue part depend only on $a$; hence they form $\C[a]$. Analogously, on the orange part the $\Bs_2$-invariant functions only depend on $a = bx$, as for $a\neq 0$ any two points with the same $a$ are conjugate, and for $a = 0$ we get $b = 0$ or $x = 0$. The former line is a projective line on which an invariant function must attain the same value, and the latter lies in the blue part. This leaves us with two functions from $\C[a]$ with the same constant term. One easily sees that this ring is isomorphic to \textit{e.g.} $\C[a,x]/x(x+2a)$.
\end{example}

\subsection{Equivariant cohomology of GKM spaces via the total zero scheme}
\label{secgkm}

We suspect that the description of equivariant cohomology as the ring of regular functions on the total zero scheme might still hold in a larger generality. For example, one could presume that a sufficiently regular torus action might lead to such a description, even without embedding the torus in a larger solvable group (as in Section~\ref{solvsec}). Here we prove this equality for GKM spaces (see \cite{GKM}), whose equivariant cohomology we know.

\begin{theorem}\label{gkmthm}
Let a torus $\Ts \cong (\Cs)^r$ act on a smooth projective complex variety $X$ with finitely many zero and $1$-dimensional orbits. In other words, the $\Ts$-action makes $X$ a GKM space. Let $\ZZ=\ZZ_{\tot} \subset \ttt\times X$ be the reduced total zero scheme of this action $($cf.\ Definition~\ref{totzer}\,$)$. Then $\C[\ZZ] \cong H^*_\Ts(X)$ as algebras over $\C[\ttt]\simeq H^*_\Ts$:
$$ \begin{tikzcd}
  \C[\ZZ] \arrow{r}{\cong}  &
  H_\Ts^*(X;\C)   \\
   \C[\ttt] \arrow{r}{\cong} \arrow{u}&
  H^*_\Ts \arrow{u}.
\end{tikzcd}$$
\end{theorem}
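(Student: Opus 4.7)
The plan is to identify $\C[\ZZ]$ directly with the classical GKM presentation of $H^*_\Ts(X)$ recalled in \cite{GKM}. First I would analyze $\ZZ$ set-theoretically: since $\Ts$ is abelian, $V_\w|_x = 0$ is equivalent to $\w \in \ttt_x := \Lie(\Stab_\Ts(x))$, and this Lie algebra depends only on the $\Ts$-orbit of $x$. It follows that every irreducible component of the reduced $\ZZ$ is a product $V \times Y$, with $V \subset \ttt$ a linear subspace and $Y \subset X$ a closed irreducible $\Ts$-invariant projective subvariety whose generic stabilizer has Lie algebra $V$; in particular $\C[V \times Y] = \C[V]$ since $Y$ is projective. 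The fixed-point components are $\ttt \times \{p\}$ for $p \in X^\Ts$, and each 1-dim orbit $O$ with closure $\overline{O} \cong \PP^1$ and weight $\alpha_O$ gives a component $\ker \alpha_O \times \overline{O}$.

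Next I would study the restriction $\rho : \C[\ZZ] \to \bigoplus_{p \in X^\Ts} \C[\ttt]$ to the fixed-point components. By Borel's fixed point theorem every component $V \times Y$ contains some $V \times \{p\}$ with $p \in Y^\Ts$, and the restriction $\C[V] \to \C[V \times \{p\}]$ is an isomorphism. Writing $\rho(f) = (f_p)$, this yields the identity $f|_{V \times Y} = f_p|_V$ for any $p \in Y^\Ts$, which immediately gives injectivity of $\rho$ and shows the image lies in
$$ R := \Bigl\{(f_p) \,\Big|\, \forall \text{ component } V \times Y,\ \forall p, q \in Y^\Ts,\ f_p|_V = f_q|_V\Bigr\}. $$
Conversely every tuple in $R$ defines a consistent regular function on $\ZZ$, since pairwise compatibility on intersections $(V_1 \cap V_2) \times (Y_1 \cap Y_2)$ reduces to the same constraint via a fixed point in $Y_1 \cap Y_2$ (Borel again). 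Thus $\rho$ induces an isomorphism $\C[\ZZ] \cong R$ of graded $\C[\ttt]$-algebras.

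Finally I would identify $R$ with the GKM ring
$$ R_{\mathrm{GKM}} := \bigl\{(f_p) : f_p \equiv f_q \!\!\!\pmod{\alpha_O} \text{ for each 1-dim orbit } O \text{ joining } p, q \text{ with weight } \alpha_O\bigr\}. $$
For 1-dim orbit components, $V = \ker \alpha_O$ and the two conditions coincide. For a higher-dimensional component $V \times Y$, I would show that any two $\Ts$-fixed points of $Y$ are joined by a chain of 1-dim $\Ts$-orbits lying inside $Y$, each with weight annihilating $V$, so that telescoping the GKM congruences along the chain gives $f_p|_V = f_q|_V$. Such a chain exists because the normalization $\widetilde{Y}$ carries a dense orbit of the quotient torus $\Ts/(\text{connected subgroup with Lie algebra } V)$ and is hence a toric variety, whose 1-skeleton is connected via the moment polytope and maps finitely and equivariantly onto the 1-skeleton of $Y$; each such edge has weight annihilating $V$ because $V$ stabilizes every point of $Y$ infinitesimally. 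Hence $R = R_{\mathrm{GKM}} \cong H^*_\Ts(X)$ by \cite{GKM}, and both the $\C[\ttt]$-algebra structure and the grading (the $\Cs$-weight $-2$ on $\ttt$ matches cohomological degree $2$) are manifestly preserved under the diagonal $\C[\ttt] \hookrightarrow \bigoplus_p \C[\ttt]$. The main technical obstacle is this connectivity argument for fixed points inside higher-dimensional invariant subvarieties that are not themselves toric; the rest is direct bookkeeping on the explicit product decomposition of $\ZZ$.
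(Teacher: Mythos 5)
Your reduction of $\ZZ$ to a union of product components $V\times Y$ and the injectivity of the restriction to the fixed--point components $\ttt\times X^{\Ts}$ are fine (this is essentially how the paper gets injectivity, via Lemma~\ref{projinj} and the Borel fixed point theorem). The genuine gap is your converse step, i.e.\ surjectivity onto $R$: you assert that any tuple in $R$ ``defines a consistent regular function on $\ZZ$'' because the candidate functions agree on pairwise intersections of components. For a reduced scheme with several components, agreement on the (set-theoretic, pairwise) intersections does not imply that the functions patch to a global regular function: already for the three concurrent lines $V(xy(x+y))\subset\A^2$, the triple of functions $(t,0,0)$ on the three lines agrees at the origin but is not the restriction of any regular function. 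Since the surjectivity of $\C[\ZZ]\to H^*_\Ts(X)$ is exactly the nontrivial content of the theorem, this step begs the question. The paper circumvents it by \emph{producing} regular functions rather than gluing them: choose a generic one-parameter subgroup $H^t\subset\Ts$, define $\rho(c)(v,x)=c|_{\zeta}(v)$ with $\zeta=\lim_{t\to\infty}H^t\cdot x$ (Bia{\l}ynicki-Birula decomposition), and check regularity of $\rho(c)$ by reducing to equivariant Chern classes (the generation argument of Lemma~\ref{chern2} only needs isolated fixed points), whose images are the manifestly regular trace functions $(v,x)\mapsto\Tr_{\Lambda^k\Ee_x}(\Lambda^k v_x)$. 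Then injectivity of $r:\C[\ZZ]\to\C[\ttt]^s$, the containment $r(\C[\ZZ])\subseteq H$ coming only from the one-dimensional orbit components $\kek_i\times\overline{E_i}$, and the fact that $r\circ\rho$ is the localisation isomorphism onto $H$ of Theorem~\ref{gkmres} together force $\rho$ to be an isomorphism; no analysis of the higher-dimensional components of $\ZZ$ is needed.

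A secondary flaw: your proof that higher-dimensional components impose no conditions beyond the GKM relations rests on the claim that the normalization of such a $Y$ is a toric variety for the quotient torus. That is false in general: the generic stabilizer algebra on $Y$ being $V$ does not give a dense orbit — e.g.\ for $X=\Gr(2,4)$ with its $3$-dimensional torus (a GKM space with no dense orbit), the component $\{0\}\times X$ has $V=0$ and $Y=X$ not toric. The statement you want is nevertheless true and can be repaired: $Y$ is a connected component of the fixed locus of a subtorus, hence a smooth connected projective GKM space whose one-dimensional orbits all have weights annihilating $V$, and its GKM graph is connected because $H^0_\Ts(Y)=\C$; telescoping the congruences along a path then gives $f_p|_V=f_q|_V$. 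But even with this fixed, the gluing gap above remains, so the proposal as it stands does not prove the theorem.
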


Let us denote the $\Ts$-fixed points by $\zeta_1$, $\zeta_2$, \dots, $\zeta_s$ and the $1$-dimensional orbits by $E_1$, $E_2$, \dots, $E_{\ell}$. The closure of any $E_i$ is an embedding of $\PP^1$ and contains two fixed points $\zeta_{i_0}$ and $\zeta_{i_\infty}$, which for any $x\in E_i$ are equal to the limits $\lim_{t\to 0} t x$ and $\lim_{t\to\infty} tx$. The action of $\Ts$ on $E_i$ has kernel of codimension $1$, which is uniquely determined by its Lie algebra $\kek_i$.
We then have the following result (\textit{cf.} \cite[Theorem 1.2.2]{GKM}). 

\begin{theorem}[Goresky--Kottwitz--MacPherson, 1998]\label{gkmres}
Assume that a torus $\Ts$ acts on a smooth GKM space $X$. Then the restriction $H_\Ts^*(X,\C)\to H_\Ts^*(X^T,\C) \cong \C[\ttt]^s$ is injective, and its image is
$$
H = \left\{
(f_1,f_2,\dots,f_s)\in \C[\ttt]^s :  f_{i_0}|_{\kek_i} = f_{i_\infty}|_{\kek_i} \text{ for } j=1,2,\dots,\ell
\right\}.
$$
\end{theorem}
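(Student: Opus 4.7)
The plan is to follow the classical strategy via equivariant formality, the Borel localization theorem, and the Chang--Skjelbred lemma. I would proceed in four steps.

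First I would establish that $X$ is equivariantly formal. Because the zero- and one-dimensional orbits are finite, one can choose a generic cocharacter $\lambda:\Cs\to\Ts$ whose fixed point set on $X$ coincides with $X^\Ts$. The Bia\l ynicki-Birula decomposition for this $\Cs$-action gives a filtration of $X$ by even-real-dimensional cells, so $H^{\rm odd}(X;\C)=0$; by a standard spectral sequence argument this forces the Serre spectral sequence of $X\to X_\Ts\to B\Ts$ to degenerate, producing the isomorphism $H^*_\Ts(X;\C)\cong H^*_\Ts\otimes H^*(X;\C)$ of $\C[\ttt]$-modules. In particular $H^*_\Ts(X;\C)$ is free over $\C[\ttt]$, of rank equal to $\sum_i \dim H^i(X;\C)$, which equals the number $s$ of $\Ts$-fixed points (since the cells correspond bijectively to fixed points).

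Second, injectivity of the restriction $H^*_\Ts(X;\C)\to H^*_\Ts(X^\Ts;\C)\cong\C[\ttt]^s$ follows at once: Borel's localization theorem asserts that this map becomes an isomorphism after inverting the nonzero weights of $\Ts$ on $\ttt$, and freeness over $\C[\ttt]$ implies the unlocalized map is already injective. Third, for the image to lie in $H$, I would restrict to the closure $\overline{E_j}\cong\PP^1$ of each one-dimensional orbit. The subtorus $\Ts_j\subset\Ts$ with Lie algebra $\kek_j$ acts trivially on $\overline{E_j}$, so $H^*_\Ts(\overline{E_j};\C)\cong H^*_{\Ts_j}\otimes H^*_{\Ts/\Ts_j}(\PP^1;\C)$. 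Restricting further to the two fixed points $\zeta_{j_0},\zeta_{j_\infty}\in\overline{E_j}$ and using the well-known equivariant cohomology of $\PP^1$ shows that the image consists of pairs $(f_{j_0},f_{j_\infty})$ whose difference is divisible by the character of $\Ts$ on the tangent line to $E_j$ at $\zeta_{j_0}$, equivalently $f_{j_0}|_{\kek_j}=f_{j_\infty}|_{\kek_j}$.

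The main obstacle is the reverse inclusion, i.e.\ showing that every GKM tuple lies in the image. For this I would invoke the Chang--Skjelbred lemma: under equivariant formality, the image of $H^*_\Ts(X;\C)$ in $H^*_\Ts(X^\Ts;\C)$ equals the intersection, over all corank-one subtori $\Ts'\subset\Ts$, of the images of $H^*_\Ts(X^{\Ts'};\C)\to H^*_\Ts(X^\Ts;\C)$. The GKM hypothesis forces each $X^{\Ts'}$ to be a disjoint union of isolated $\Ts$-fixed points together with finitely many $\PP^1$'s whose two poles are connected by exactly the one-dimensional orbits $E_j$ with $\kek_j\supset\Lie(\Ts')$; since the equivariant cohomology of such a $\PP^1$ cuts out precisely the relation $f_{j_0}|_{\kek_j}=f_{j_\infty}|_{\kek_j}$, the intersection over all $\Ts'$ recovers exactly $H$. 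To justify Chang--Skjelbred I would either cite \cite{GKM} directly or reprove it via the $\Ts$-equivariant cohomology long exact sequence of the pair $(X^{(1)},X^\Ts)$, where $X^{(1)}$ is the one-skeleton; the key input is that $H^*_\Ts(X,X^{(1)};\C)$ vanishes in the relevant degrees because the $\C[\ttt]$-support of $H^*_\Ts(X,X^\Ts;\C)$ has codimension at least two away from the one-skeleton, which in turn uses equivariant formality established in the first step.
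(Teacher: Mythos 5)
The paper does not prove this statement at all: it is quoted verbatim as Theorem 1.2.2 of the cited Goresky--Kottwitz--MacPherson paper and used as a black box, so there is no internal proof to compare yours against. Your sketch is the standard modern argument for the GKM theorem (equivariant formality from the Bia\l ynicki-Birula decomposition, injectivity from freeness plus Borel localization, the $\PP^1$ computation for one direction of the image description, and the Chang--Skjelbred lemma for the reverse inclusion), and it is essentially correct. Two points deserve tightening if you intend this as more than a citation. First, in the localization step the phrase ``inverting the nonzero weights of $\Ts$ on $\ttt$'' is not what you mean ($\Ts$ acts trivially on $\ttt$); the correct statement is that the kernel of restriction to $X^\Ts$ is a torsion $\C[\ttt]$-module (supported on the union of the isotropy subspaces of non-fixed points), hence vanishes inside a free module. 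Second, in the Chang--Skjelbred step you assert without argument that $X^{\Ts'}$ is at most one-dimensional for every corank-one subtorus $\Ts'$; this needs the observation that a component of $X^{\Ts'}$ of dimension at least two would carry a one-parameter family of one-dimensional orbits of the residual $\Ts/\Ts'\cong\Cs$-action, each of which is a one-dimensional $\Ts$-orbit in $X$, contradicting the GKM finiteness hypothesis. With those repairs your outline reproduces the standard proof; citing \cite{GKM} directly, as the paper does, is of course also legitimate.
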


We will proceed by finding an injective map $\rho\colon H^*_\Ts(X)\to \C[\ZZ]$ and an injective left inverse $r\colon \C[\ZZ]\to H \cong H^*_\Ts(X)$. We will use Lemma~\ref{projinj} with $Y = \ZZ$ as defined above and $Z = \ttt \times X^\Ts$. Take any $(v,p)\in \ZZ$. The point $p$ lies in the zero scheme $\ZZ_v$ of the vector field on $X$ corresponding to $v$. As $\Ts$ is a commutative group and hence acts trivially on its Lie algebra, it preserves zeros of $v\in\ttt$. Therefore, $\{v\}\times \Ts\cdot p\subset \ZZ$ and $\{v\}\times \overline{\Ts\cdot p}$ is a closed projective subvariety of $\ZZ$. As $\Ts$ acts on it,  by the Borel fixed point theorem,  it contains a fixed point of $\Ts$, hence intersects $Z$ non-trivially. Therefore, this choice of $Y$ and $Z$ satisfies the conditions of Lemma~\ref{projinj}.

We know that there are finitely many distinct types of orbits of the $\Ts$-action on $X$.  This can be seen by embedding $X$ equivariantly in a projective space with a linear action of $\Ts$; see \cite[Theorem~7.3]{Dolgachev}. Therefore, there exists a one-parameter subgroup $\{H^t\}_{t\in\Cs}\subset \Ts$ that is not contained in any proper centraliser. Then the fixed points of $H^t$ are automatically the fixed points of $\Ts$. Consider the Bia{\l}ynicki-Birula minus-decomposition, consisting of cells
$$W_i^{-} = \{x\in X: \lim_{t\to\infty} H^t \cdot x = \zeta_i\}$$
for $\zeta_1$, $\zeta_2$, \dots, $\zeta_s$  the fixed points of $\Ts$.

We first define the map $\rho\colon H^*_\Ts(X)\to \C[\ZZ]$. We will define it on closed points, using the reducedness of $\ZZ$. Let $c\in H^*_\Ts(X)$. Assume that $(v,x)\in \ZZ$, \textit{i.e.} the vector field $v$ is zero at $x$. We know that $x \in W_i^{-}$ for some $i\in\{1,2,\dots,s\}$. The restriction $c|_{\zeta_i}$ is an element of $H_\Ts^*(\pt) \cong \C[\ttt]$. Then define 
$$\rho(c)(v,x) = c|_{\zeta_i} (v).$$
We first have to prove that this defines a regular function for each $c$.

\begin{lemma}
Let $\Ee$ be a $\Ts$-equivariant bundle on $X$. Then
$$\rho(c_k(\Ee))(v,x) = \Tr_{\Lambda^k(\Ee_x)}(\Lambda^k(v)).$$
In particular, $\rho(c_k(\Ee))$ is a regular function on $\ZZ$.
\end{lemma}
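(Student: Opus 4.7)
The plan is to mirror the strategy of Lemma~\ref{nicefun}, replacing the conjugation transport used in the solvable case by the Białynicki--Birula flow along $\{H^t\}$. Unwinding the definition of $\rho$, we have $\rho(c_k(\Ee))(v,x) = c_k(\Ee)|_{\zeta_i}(v)$ where $x \in W_i^-$. Applying the general formula \eqref{cherntr} at the fixed point $\zeta_i$ gives
$$\rho(c_k(\Ee))(v,x) = \Tr_{\Lambda^k \Ee_{\zeta_i}}(\Lambda^k v_{\zeta_i}).$$
Hence the lemma reduces to verifying the equality $\Tr_{\Lambda^k \Ee_{\zeta_i}}(\Lambda^k v_{\zeta_i}) = \Tr_{\Lambda^k \Ee_x}(\Lambda^k v_x)$ for any $(v,x) \in \ZZ$ with $x \in W_i^-$.

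I would introduce the auxiliary function
$$\phi_v: X^v \longrightarrow \C, \qquad \phi_v(y) = \Tr_{\Lambda^k \Ee_y}(\Lambda^k v_y),$$
where $v_y \in \End(\Ee_y)$ denotes the infinitesimal action of $v$ on the fiber, which is well-defined precisely because $y$ is a zero of $V_v$. The first step is to check that $\phi_v$ is a regular function on $X^v$: trivialising $\Ee$ over an affine open $U \subset X$, the infinitesimal $\ttt$-action on sections takes the form of a first-order operator $D_v = V_v + A(v,\cdot)$ with $A(v,\cdot)$ a matrix-valued regular function on $\ttt \times U$, linear in $v$. On $(\ttt\times U) \cap \ZZ$ the vector-field part vanishes, so $v$ acts on $\Ee_y$ by the matrix $A(v,y)$, and $\phi_v|_U = \Tr(\Lambda^k A(v,\cdot))$ is patently a regular function. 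Gluing over an affine cover simultaneously shows that $(v,x) \mapsto \Tr_{\Lambda^k \Ee_x}(\Lambda^k v_x)$ defines a regular function on $\ZZ$.

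Next, since $\Ts$ is abelian, $v \in \ttt$ commutes with the one-parameter subgroup $\{H^t\}$, so $\{H^t\}$ preserves $X^v$. Because $\Ee$ is $\Ts$-equivariant, the isomorphism $\Ee_y \to \Ee_{H^t y}$ induced by $H^t$ intertwines $v_y$ and $v_{H^t y}$, so $\phi_v(H^t y) = \phi_v(y)$ for every $y \in X^v$ and $t \in \Cs$. Now consider the orbit closure $Y := \overline{\{H^t x\}_{t \in \Cs}}$: it is a closed irreducible $\Cs$-invariant subvariety of the projective variety $X$, hence projective and connected, and it is contained in $X^v$ (which is Zariski closed). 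By construction $\zeta_i = \lim_{t\to\infty} H^t x$ lies in $Y$. A regular function on a connected projective variety is constant, so $\phi_v|_Y$ is constant, giving $\phi_v(x) = \phi_v(\zeta_i)$, which is exactly the desired identity.

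The one mildly delicate point — and the only real obstacle — is the justification that $\phi_v$ is regular on $X^v$ despite $v_y$ only being a well-defined endomorphism of $\Ee_y$ at fixed points of $v$; the local trivialisation and the decomposition $D_v = V_v + A(v,\cdot)$ resolve this cleanly. Everything else (equivariance, the projective-curve rigidity argument, and the reduction from $\rho$ to the trace formula) is routine.
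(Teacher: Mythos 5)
Your proof is correct and follows essentially the same route as the paper: reduce via \eqref{cherntr} to comparing the fiberwise trace at $x$ with that at $\zeta_i=\lim_{t\to\infty}H^t\cdot x$, and use commutativity of $\Ts$ together with the $\Ts$-equivariant structure on $\Ee$ to transport that trace along the $H^t$-orbit. The only difference is one of explicitness: where the paper passes to the limit by continuity, you justify this step (and the regularity of $(v,x)\mapsto\Tr_{\Lambda^k\Ee_x}(\Lambda^k v_x)$ on $\ZZ$, which the paper leaves implicit) via the local decomposition $D_v=V_v+A(v,\cdot)$ and the constancy of regular functions on the projective orbit closure.
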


\begin{proof}
Let $c = c_k(\Ee)$.
Consider the curve $C = \overline{H^t \cdot x}$. In particular, let $\zeta_i = \lim_{t\to\infty} H^t \cdot x\in C$. We then define $\rho(c)(v,x) = c|_{\zeta_i}(v)$. But we know that this is equal to
$$c_k(\Ee)|_{\zeta_i}(v) = \Tr_{\Lambda^k(\Ee_{\zeta_i})}(\Lambda^k v).$$
However, as $\Ts$ is commutative, the action of any of its elements, in particular of $H^t$, on $X$ is $\Ts$-equivariant; therefore,
$$\Tr_{\Lambda^k(\Ee_{x})}(\Lambda^k v) = \Tr_{\Lambda^k(\Ee_{H^t\cdot x})}(\Lambda^k v)$$
for any $t\in \Cs$. Therefore, the equality also stays true in the limit; hence
\begin{equation*}\pushQED{\qed}
\Tr_{\Lambda^k(\Ee_{x})}(\Lambda^k v) = \Tr_{\Lambda^k(\Ee_{\zeta_i})}(\Lambda^k v) = \rho(c)(v,x).
\qedhere \popQED
	\end{equation*}
\renewcommand{\qed}{}  
\end{proof}

\begin{proof}[Proof of Theorem~\ref{gkmthm}]
We have defined the map $\rho$; we just have to prove that it is an isomorphism. For the injectivity, note that $\ttt\times X^\Ts$ is contained in $\ZZ$. By definition, if $\rho(c)$ is zero on this subspace, then all localisations to $\Ts$-fixed points vanish. But by Theorem~\ref{gkmres} the localisation is injective; hence $c=0$.

The set $\ttt\times X^{\Ts}\subset \ZZ$ is closed, and considering it as a reduced subvariety, by Lemma~\ref{projinj} we get that the restriction map
$$r\colon \C[\ZZ] \lra \C[\ttt\times X^{\Ts}] \cong \C[\ttt]^s$$
is injective. We need to prove that the image lies in $H$ and that $r\circ \rho$ is the localisation map $H^*_{\Ts}(X)\to H$. The latter comes directly from the definition as $\rho(c)(v,\zeta_i) = c|_{\zeta_i}(v)$.

Now we need to prove that for any $E_i$ and $v\in\kek_i$ and $f\in \C[\ZZ]$, we have $f(v,\zeta_{i_0}) = f(v,\zeta_{i_\infty})$. Note that as the infinitesimal action of $\kek_i$ is trivial on $E_i$, we have $\kek_i \times \overline{E_i} \subset \ZZ$. This means that over each $v\in\kek_i$ there is a closed subset $\{v\}\times \overline{E_i}\subset \ZZ$. As the reduced subscheme structure makes this is a projective variety ($\PP^1$, precisely), every global function on $\ZZ$ needs to be constant along this subvariety. As it contains $(v,\zeta_{i_0})$ and $(v,\zeta_{i_\infty})$, we get $f(v,\zeta_{i_0}) = f(v,\zeta_{i_\infty})$.
\end{proof}

\begin{remark}
Thus the ring of regular functions on the total scheme is isomorphic to the equivariant cohomology for regular actions of principally paired group on smooth projective varieties, by Theorem~\ref{wholeg}, as well as for GKM spaces by Theorem~\ref{gkmthm}. We expect this to hold for a larger class of group actions on smooth projective varieties, including spherical varieties.

In the above we used the fact that the torus-fixed points are isolated, but we also needed the GKM cohomology result, \textit{i.e.}~Theorem~\ref{gkmres}. This way we know that any function on the zero scheme will be a cohomology class, as it will determine an element that already lies in $H$. Note that for arbitrary torus actions, every 1-orbit defines a similar condition on the image of localisation, but the image of localisation will in general be  strictly smaller than similarly defined $H$.

We can see that it is not enough to assume for the torus to act with isolated fixed points. Indeed, let us consider $X = \PP^2$, but we restrict the standard action of the $2$-dimensional torus to $1$-dimensional $\Cs$. Take \textit{e.g.} the action $t\cdot [x:y:z] = [x:t^2y:t^4z]$. The only fixed points are $[1:0:0]$ and $[0:1:0]$ and $[0:0:1]$, and hence if we consider any non-zero $v\in \C \cong \Lie(\Cs)$, the associated vector field only has those three zeros. On the other hand, for $v = 0$ the zero scheme is the whole $\PP^2$. Therefore, $\ZZ_\tot \subset \C\times X$ will consist of a vertical $\PP^2$ and three horizontal lines. The action of $t\in \Cs$ multiplies by $t^{-2}$ on each of those lines.

Any global function on $\ZZ_\tot$ determines polynomials $f_1$, $f_2$, $f_3$ on those lines. Then $\C[\ZZ_\tot] = \{(f_1,f_2,f_3)\in \C[x]^3 : f_1(0) = f_2(0) = f_3(0)\}$. There is an injective map $H^*_{\Cs}(\PP^2) \to \C[\ZZ_\tot]$, but it is not surjective. From Example~\ref{exsl22} we have $H_{\Cs}^*(\PP^2) = \C[x,v]/\big(x(x+2v)(x+4v))$. Geometrically, we see the map $\ZZ_\tot\to \Spec H_{\Cs}^*(\PP^2)$ which contracts $\PP^2$ to the point. We see that $h_{\Cs}^2(\PP^2) = 2$, but $\C[\ZZ_\tot]^2 = \{(ax,bx,cx) : a,b,c\in \C\}$ is $3$-dimensional.

Work is ongoing to determine under what assumptions this result holds. For example for many affine Bott--Samelson varieties, Jakub L\"owit proved $H^*_G(X,\C)\cong \C[\ZZ_\tot]$ (private communication), as well as a version for equivariant $K$-theory. Other examples and applications can be found in \cite[Section~4]{hausel-big}.
\end{remark}

\renewcommand\thesection{\Alph{section}}
\setcounter{section}{0}
\section*{Appendix. Graded Nakayama lemma}
\addcontentsline{toc}{section}{Appendix. Graded Nakayama lemma}
\refstepcounter{section}
\setcounter{subsection}{1}

For the sake of completeness, we provide here the proof of the version of the graded Nakayama lemma that we need (see also \cite[Corollary 4.8 and Exercise 4.6]{Eis}).

\medskip
\noindent Let $R$ be an $\Z_{\ge 0}$-graded ring $R = \bigoplus_{n\ge 0} R_n$ and $I = \bigoplus_{n > 0} R_n$ the ideal generated by elements of positive degree.

\begin{lemma}
 If a $\Z_{\ge 0}$-graded $R$-module $M$ satisfies $M = IM$, then $M = 0$.
\end{lemma}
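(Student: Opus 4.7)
The approach will be a short proof by contradiction, exploiting the fact that a $\Z_{\ge 0}$-grading is bounded below. Suppose $M \neq 0$. Since $M = \bigoplus_{n \ge 0} M_n$ is non-zero and the grading is bounded below by $0$, there exists a minimal integer $d \ge 0$ such that $M_d \neq 0$. Pick a non-zero homogeneous element $m \in M_d$.

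By the hypothesis $M = IM$, we may write $m$ as a finite sum $m = \sum_i r_i n_i$ with $r_i \in I$ and $n_i \in M$. Since $I = \bigoplus_{k>0} R_k$ and $M = \bigoplus_{k \ge 0} M_k$ are graded, we may decompose each $r_i$ and $n_i$ into homogeneous components and reduce to the case in which every $r_i$ and every $n_i$ is homogeneous. Extracting the degree-$d$ part of the resulting expression, we may further assume $\deg(r_i) + \deg(n_i) = d$ for all $i$.

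Now comes the key observation: each $r_i \in I$ is homogeneous of strictly positive degree, so $\deg(n_i) = d - \deg(r_i) < d$. But this forces each $n_i$ to lie in $M_{<d} = \bigoplus_{k < d} M_k = 0$ by minimality of $d$. Hence $m = \sum_i r_i n_i = 0$, contradicting the choice of $m$.

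There is no real obstacle here; the only point worth being careful about is the reduction to homogeneous $r_i$ and $n_i$, which is automatic because the graded submodule structures on $I \subset R$ and on $M$ let us project the equation $m = \sum r_i n_i$ onto the degree-$d$ component and collect terms. Hence the assumption $M = IM$ is inconsistent with $M \neq 0$, so $M = 0$, as claimed.
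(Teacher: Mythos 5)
Your proof is correct and follows essentially the same route as the paper: both arguments pick a nonzero homogeneous element of minimal degree $d$ in $M$ and observe that, since every element of $I$ has components only in positive degrees, any element of $IM$ has vanishing component in degree $d$, contradicting $M = IM$. Your extra step of projecting onto the degree-$d$ component is just a slightly more explicit phrasing of the same degree count.
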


\begin{proof}
 Suppose to the contrary that $a\in M$ is a non-zero homogeneous element of minimal degree $d\in\Z_{\ge 0}$ present in $M$. By the assumption $M = IM$, we have that
 $$a = \sum_{i=1}^k r_i a_i$$
 for some $r_i\in I$, $a_i\in M$. But as $r_i\in I$, the minimal degree present in $r_i$ is at least $1$. As $a_i\in M$, the minimal degree present in $a_i$ is at least $d$. Therefore, the elements $r_ia_i$ have zero parts in degrees less than $d+1$. In particular, we cannot get $a$ as a sum of them, as it has non-zero part in degree $d$.
\end{proof}

\begin{corollary}\label{cornak}
Let $M$ be an $\Z_{\ge 0}$-graded $R$-module $M$. Suppose that elements $(a_j)_{j\in J}$ of $M$ generate the $R/I$-module $M/IM$. Then they generate $M$ as an $R$-module.
\end{corollary}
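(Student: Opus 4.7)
The plan is to reduce the statement to the preceding lemma by passing to the quotient $M/N$, where $N$ is the submodule generated by the given elements. The one subtlety is ensuring that $M/N$ inherits a $\Z_{\ge 0}$-grading, so that the previous lemma applies.

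First, I would replace $(a_j)_{j\in J}$ by the collection of its homogeneous components. Since each $a_j$ is a finite sum of its homogeneous parts, the original collection and the refined one generate the same $R$-submodule of $M$; moreover, since $M/IM$ is a graded $R/I$-module (with $R/I = R_0$), the images of the homogeneous components still generate $M/IM$. So I may assume without loss that every $a_j$ is homogeneous. Let $N \subset M$ be the $R$-submodule they generate; since the $a_j$ are homogeneous, $N$ is a graded submodule and $M/N$ is a $\Z_{\ge 0}$-graded $R$-module.

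Next I would translate the hypothesis. The assumption that the $\bar a_j \in M/IM$ generate $M/IM$ over $R/I$ is equivalent to saying that they generate $M/IM$ over $R$, which in turn says $M = N + IM$. Reducing modulo $N$ gives $M/N = I\cdot(M/N)$.

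Finally, I would invoke the preceding lemma applied to the graded module $M/N$: it forces $M/N = 0$, hence $M = N$, which is exactly the desired conclusion that $(a_j)_{j\in J}$ generates $M$ as an $R$-module. There is no real obstacle here; the only point deserving care is the initial reduction to homogeneous generators, which is needed because the lemma is stated for graded modules, whereas the corollary does not a priori require the $a_j$ to be homogeneous.
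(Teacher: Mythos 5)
Your core argument is the same as the paper's: with $N\subset M$ the submodule generated by the $a_j$, the hypothesis gives $M=N+IM$, hence $M/N=I\cdot(M/N)$, and the preceding lemma forces $M/N=0$; the paper packages the identical computation as $(\coker\phi)\otimes_R R/I=0$ for $\phi:R^J\to M$ and then applies the lemma to $\coker\phi$. The genuine gap is your opening ``without loss of generality'' step. It is not true that the $a_j$ and their homogeneous components generate the same $R$-submodule: the components generate a submodule that contains $N$, in general strictly. For instance, in $R=M=\C[x]$ the element $1+x$ generates the proper ideal $(1+x)$, while its homogeneous components $1$ and $x$ generate all of $R$. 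So proving that the refined collection generates $M$ says nothing about the original collection, and the reduction collapses.

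Moreover, this step cannot be repaired, because the corollary is actually false for non-homogeneous elements: in the same example, with $I=(x)$, the single element $a=1+x$ maps to a generator of $M/IM\cong\C$ over $R/I\cong\C$, yet the submodule it generates is $(1+x)\neq M$ since $1+x$ is not a unit. The statement has to be read with the $a_j$ homogeneous (equivalently, with $N$ a graded submodule), which is precisely what makes $M/N$ a $\Z_{\ge 0}$-graded module so that the lemma applies; note that the paper's own proof uses this tacitly as well, since $\coker\phi$ is graded only under that assumption, and in the paper's application (Lemma \ref{chern2}) the generators are equivariant Chern classes, hence homogeneous, so nothing is lost there. Once homogeneity is assumed, the remainder of your argument is correct and coincides with the paper's proof.
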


\begin{proof}
Let us consider the map of $R$-modules $\phi\colon R^J\to M$ defined by the elements $a_j$. We have the exact sequence
$$R^J\xrightarrow{\,\;\phi\;\,} M \lra \coker \phi \lra 0.$$
As tensor product is right-exact, by tensoring with $R/I$ we get an exact sequence of $R/I$-modules:
$$(R/I)^J\lra M/IM \lra (\coker \phi)\otimes_R R/I \lra 0.$$
By assumption the first map is an epimorphism; hence $(\coker \phi)\otimes_R R/I =0$. In other words, $\coker \phi$ satisfies the conditions of lemma. Therefore, $\coker\phi = 0$, so $\phi$ is surjective.
\end{proof}


\end{document}